\documentclass[leqno,11pt]{amsart}
 \oddsidemargin +3 mm
 \evensidemargin +3 mm
\textwidth 15 cm
\textheight 22 cm
\usepackage{amsfonts}
\usepackage[backref=page]{hyperref}
\usepackage{amsmath,amssymb}
\usepackage{latexsym}
\numberwithin{equation}{section}
\newtheorem{theo}{Theorem}[section]

\newtheorem{corol}[theo]{Corollary}
\newtheorem{prop}[theo]{Proposition}
\theoremstyle{definition}
\newtheorem{remark}[theo]{Remark}
\newtheorem{remarks}[theo]{Remarks}
\newtheorem{example}[theo]{Example}
\newtheorem{examples}[theo]{Examples}
\newtheorem{defi}[theo]{Definition}
\newcommand{\clos}{\operatorname{cl}}

\newcommand{\dom}{\operatorname{dom}}
\newcommand{\conv}{\operatorname{co}}

\newcommand{\diam}{\operatorname{diam}}

 \newcommand{\domain}{\operatorname{dom}}

\newcommand{\graph}{\operatorname{Graph}}

\newcommand{\norm}{\|\cdot\|}

\newcommand{\bequ}{\begin{equation}} %
\newcommand{\eequ}{\end{equation}} %
\newcommand{\bequs}{\begin{equation*}} %
\newcommand{\eequs}{\end{equation*}}
\newcommand{\beqs}{\begin{equation*}} %
\newcommand{\eeqs}{\end{equation*}}
\newcommand{\mbx}{\mbox}
\newcommand{\vphi}{\varphi}
\newcommand{\epsic}{\varepsilon}

\newcommand{\Real}{\mathbb R}
\newcommand{\Nat}{\mathbb N}
\newcommand{\Lra}{\Longrightarrow}
\newcommand{\Ra}{\Rightarrow}

\newcommand{\uar}{\uparrow\!\!}

\newcommand{\sms}{\smallsetminus}
\newcommand{\ety}{\emptyset}

\newcommand{\sse}{\subseteq}

\newcommand{\rd}{\mathcal}

 %
 %
 %
 %


\newcommand{\ba}{\begin{array}}
\newcommand{\ea}{\end{array}}
\newcommand{\ben}{\begin{enumerate}}
\newcommand{\een}{\end{enumerate}}
\newcommand{\eite}{\end{itemize}}
\newcommand{\bite}{\begin{itemize}}
\newcommand{\bc}{\begin{center}}
\newcommand{\ec}{\end{center}}
\newcommand{\bfr}{\begin{flushright}}
\newcommand{\efr}{\end{flushright}}
\newcommand{\f}{\frac}
\newcommand{\ov}{\overline}

\newcommand{\eps}{\varepsilon}

\newcommand{\lbda}{\lambda}


\begin{document}

\title[Fixed points and completeness]{Fixed points and completeness in metric and in generalized metric spaces}
\author{S. Cobza\c s}
\date{February  08, 2018}
\address{\it Babe\c s-Bolyai University, Faculty of Mathematics
and Computer Science, 400 084 Cluj-Napoca, Romania}\;\;
\email{scobzas@math.ubbcluj.ro}

\begin{abstract}
The famous Banach Contraction Principle holds in complete metric spaces, but completeness is not a necessary condition -- there are incomplete metric spaces on which every contraction has a fixed point. The aim of this  paper  is to present various circumstances in which fixed point results imply completeness. For  metric spaces  this is the case of Ekeland variational principle and of its equivalent - Caristi fixed point theorem.    Other fixed point results having this property will be also presented in metric spaces, in quasi-metric spaces and in partial metric spaces. A discussion on topology and order and  on fixed points in ordered structures and their completeness properties is included as well.

\textbf{AMS 2010 MSC}:

Primary: 47H10

Secondary:  47H04 54C60 54E15 54E35 06B30 06F30

\textbf{Key words}: fixed point; variational principles; completeness; partially ordered sets; topological space; set-valued mapping; metric space; generalized metric; partial-metric; quasi-metric
\end{abstract}
\maketitle

 \emph{Motto}:\\ \, \emph{All roads lead to Rome} -- Caesar Augustus. \footnote{see Wikipedia, \emph{Milliarium Aureum}.}\\
  \noindent \emph{All topologies come from generalized metrics} -- Ralph Kopperman\footnote{Amer. Math. Monthly 95 (1988), no. 2, 89--97, \cite{kopper88}.}.
\tableofcontents
\section*{Introduction}
The famous Banach Contraction Principle holds in complete metric spaces, but completeness is not a necessary condition -- there are incomplete metric spaces on which every contraction has a fixed point, see, e.g., \cite{elek09}. The aim of the present paper  is to present various circumstances in which fixed point results imply completeness. For  metric spaces  this is the case of Ekeland variational principle (and of its equivalent - Caristi fixed point theorem)  (see, for instance,  the surveys \cite{berinde10}, \cite{lee-choi14}, \cite{suliv81b}) but this is also true  in quasi-metric spaces (\cite{cobz11}, \cite{karap-romag15}) and in partial metric spaces (\cite{altun-romag13}, \cite{romag10}). Other fixed point results having this property will be also presented.

Various order completeness conditions of some ordered structures implied by fixed point properties  will be considered as well.

Concerning proofs -- in several cases we give proofs, mainly to the converse results, i.e. completeness  implied by fixed point results. In Sections \ref{S.top-ord} we give full proofs to results relating  topology and order as well as in Section \ref{S.pm-sp}  in what concerns the properties of partial metric spaces.

\section{Banach contraction principle in metric spaces}\label{Ban-CP}

Banach Contraction Principle was proved by S. Banach in his thesis from 1920, published in 1922, \cite{banach22}. Although the idea of successive approximations in some concrete situations (solving differential and integral equations) appears in some works of E. Picard, R. Caccioppoli, \emph{et al.}, it was Banach who placed it in the right abstract setting, making it suitable for a wide range of applications (see the expository paper \cite{kirk01}).

\subsection{Contractions and contractive mappings}

Let $(X,\rho)$ and $(Y,d)$ be  metric spaces. A mapping $f:X\to Y$
is called {\it Lipschitz} if there exists a number
$\alpha \geq 0$ such that
\begin{equation}\label{def.Lip}
\forall x,y\in X,\quad d(f(x),f(y))\leq \alpha \rho(x,y).
\end{equation}
The number $\alpha$ is called a {\it Lipschitz constant} for $f,$
and one says sometimes that the mapping $f$ is $\alpha$-{\it
Lipschitz}. If $\alpha =0$, then the mapping $f$ is constant
$f(x)=f(x_0) $ for some point $x_0\in X.$ If $\alpha =1,$
i.e.
\begin{equation}\label{def.nexp}
\forall x,y\in X,\quad d(f(x),f(y))\leq \rho (x,y),
\end{equation}
then the mapping $f$ is called {\it nonexpansive}. If
\begin{equation}\label{def.isomet}
\forall x,y\in X,\quad d(f(x),f(y))= \rho (x,y),
\end{equation}
then $f$ is called an {\it isometry.}

Suppose now $Y=X.$ An $\alpha$-Lipschitz mapping $f:X\to X$ with
$0\leq \alpha <1$ is called a {\it contraction}. A mapping
$f:X\to X$ satisfying the relation
\begin{equation}\label{def.contr}
\forall x,y\in X,\; x\neq y,\quad \rho(f(x),f(y))< \rho (x,y),
\end{equation}
is called {\it contractive.}

A point $x_0\in X$ such that $f(x_0)=x_0$ is called a {\it fixed
point} of the mapping $f:X\to X.$ The study of the fixed points
 of mappings is one of the most important branches of
mathematics, with numerous applications to the solution of various
kinds of equations (differential, integral, partial differential,
operator), optimization, game theory, etc.

The following theorem
is, perhaps, the most known fixed point result.
\begin{theo}[Banach's Contraction Principle]\label{t.fp-Ban}
Any contraction  on a complete metric space  has a fixed point.

More exactly, suppose that for some $\alpha,\, 0\leq \alpha <1,\,$
 $f$ is an $\alpha$-contraction on a complete metric space
$(X,\rho).$ Then, for an arbitrary point $x_1\in X,$
the  sequence $(x_n)$ defined by the recurrence relation
\begin{equation}\label{eq1.fp-Ban}
x_{n+1}=f(x_n), \; n\in \mathbb{N},
\end{equation}
converges to a fixed point $x_0$ of the mapping $f, $ and  the
following estimations hold:
\begin{equation}\label{eq2.fp-Ban}
\begin{aligned}
{\rm (a)}& \quad\forall n\in \mathbb{N},\quad \rho(x_n,x_{n+1})\leq
\alpha^{n-1}\rho(x_1,x_2);\\
{\rm (b)}&\quad \forall n\in \mathbb{N},\, \forall k\in \mathbb{N},\quad
\rho(x_n,x_{n+k})\leq\frac{1-\alpha^k}{1-\alpha}\alpha^{n-1}\rho(x_1,x_2);\\
{\rm (c)}&\quad  \forall n\in \mathbb{N},\quad \rho(x_n,x_{0})\leq
\frac{\alpha^{n-1}}{1-\alpha}\rho(x_1,x_2).
\end{aligned}
\end{equation}
\end{theo}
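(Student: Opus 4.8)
The plan is to proceed in the standard three moves: first establish the geometric-decay estimate (a) by induction, then derive the Cauchy estimate (b) from it via the triangle inequality and the finite geometric series, then use completeness to obtain the limit $x_0$ and finally pass to the limit in (b) to get (c), checking along the way that $x_0$ is genuinely a fixed point.

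First I would prove (a). Since $f$ is an $\alpha$-contraction, for every $n$ we have $\rho(x_{n+1},x_{n+2}) = \rho(f(x_n),f(x_{n+1})) \le \alpha\,\rho(x_n,x_{n+1})$. Starting from $n=1$ and iterating gives $\rho(x_n,x_{n+1}) \le \alpha^{n-1}\rho(x_1,x_2)$, which is exactly (a); the induction is completely routine. Next, for (b), fix $n$ and $k$ and apply the triangle inequality along the chain $x_n, x_{n+1},\dots,x_{n+k}$:
\begin{equation*}
\rho(x_n,x_{n+k}) \le \sum_{j=0}^{k-1}\rho(x_{n+j},x_{n+j+1}) \le \sum_{j=0}^{k-1}\alpha^{n+j-1}\rho(x_1,x_2) = \alpha^{n-1}\,\frac{1-\alpha^k}{1-\alpha}\,\rho(x_1,x_2),
\end{equation*}
using (a) termwise and summing the finite geometric progression $\sum_{j=0}^{k-1}\alpha^j = \frac{1-\alpha^k}{1-\alpha}$, which is where the hypothesis $\alpha<1$ first really matters (it keeps the factor bounded, and indeed $\le \frac{1}{1-\alpha}$).

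Then I would invoke completeness. From (b), since $0\le\alpha<1$ we have $\rho(x_n,x_{n+k}) \le \frac{\alpha^{n-1}}{1-\alpha}\rho(x_1,x_2) \to 0$ as $n\to\infty$, uniformly in $k$; hence $(x_n)$ is a Cauchy sequence in $(X,\rho)$, and by completeness it converges to some $x_0\in X$. To see $x_0$ is a fixed point, note that $f$, being Lipschitz, is continuous, so $f(x_0) = f(\lim_n x_n) = \lim_n f(x_n) = \lim_n x_{n+1} = x_0$. (Alternatively one estimates $\rho(x_0,f(x_0)) \le \rho(x_0,x_{n+1}) + \rho(f(x_n),f(x_0)) \le \rho(x_0,x_{n+1}) + \alpha\rho(x_n,x_0) \to 0$.) Finally, for (c), fix $n$ in (b) and let $k\to\infty$: since $x_{n+k}\to x_0$ and $\rho(x_n,\cdot)$ is continuous, the left side tends to $\rho(x_n,x_0)$, while the right side tends to $\alpha^{n-1}\frac{1}{1-\alpha}\rho(x_1,x_2)$ because $\alpha^k\to 0$; this yields (c). I do not anticipate any real obstacle here — the only points requiring a little care are the uniformity in $k$ needed to conclude the Cauchy property and the legitimacy of the limit passage in (c), both of which are immediate once (b) is in hand; uniqueness of the fixed point (not asked for in the statement) would follow separately from the contraction inequality applied to two fixed points.
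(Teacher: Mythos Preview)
Your proof is correct and is precisely the standard argument for Banach's Contraction Principle. Note, however, that the paper does not supply its own proof of this theorem: it is stated as a classical result and the paper moves on immediately to Edelstein's theorem, so there is nothing to compare against. Your three-step structure (geometric decay, Cauchy estimate via telescoping, passage to the limit) is exactly the textbook route and all steps are handled cleanly, including the uniformity in $k$ and the limit passage for (c).
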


Under a supplementary  condition  contractive mappings also have fixed
points.
\begin{theo}[M. Edelstein (1962) \cite{edel61,edel62}]\label{t.fp-Edl}
Let  $(X,\rho)$ be a   metric space and   $f:X\to X$ a contractive
mapping.  If there exists $x\in X$ such that the sequence of iterates  $(f^n(x))$ has a limit point $\xi\in X,$ then $\xi$ is the unique fixed point of $f$.\end{theo}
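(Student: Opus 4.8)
The plan is to exploit the behaviour of the \emph{displacement function} $\phi(y):=\rho(y,f(y))$ along the orbit of the point $x$. First I would record that a contractive map is nonexpansive (indeed $\rho(f(x),f(y))\le\rho(x,y)$ holds trivially when $x=y$ and by \eqref{def.contr} when $x\neq y$), hence continuous; consequently $\phi$ is continuous on $X$. Writing $x_n:=f^n(x)$ and $d_n:=\rho(x_n,x_{n+1})=\phi(x_n)$, the contractive inequality gives $d_{n+1}=\rho(f(x_n),f(x_{n+1}))\le d_n$, so $(d_n)$ is non-increasing and converges to some $d\ge 0$.

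Next I would bring in the limit point. Let $(x_{n_k})$ be a subsequence with $x_{n_k}\to\xi$. Continuity of $\phi$ yields $\phi(\xi)=\lim_k\phi(x_{n_k})=\lim_k d_{n_k}=d$. Applying continuity of $f$ first gives $f(x_{n_k})=x_{n_k+1}\to f(\xi)$, and then continuity of $\phi$ again gives $\phi(f(\xi))=\lim_k\phi(x_{n_k+1})=\lim_k d_{n_k+1}=d$ as well, since $(d_{n_k+1})$ is a subsequence of the convergent sequence $(d_n)$.

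The crucial step — and the one place where the \emph{strict} contractivity in \eqref{def.contr} is indispensable — is to show that $d=0$. If $d>0$, then $\rho(\xi,f(\xi))=\phi(\xi)=d>0$, so $\xi\neq f(\xi)$, and \eqref{def.contr} forces
\[
\phi(f(\xi))=\rho\bigl(f(\xi),f^{2}(\xi)\bigr)<\rho\bigl(\xi,f(\xi)\bigr)=\phi(\xi)=d,
\]
contradicting $\phi(f(\xi))=d$. Hence $d=0$, so $\phi(\xi)=0$, i.e.\ $f(\xi)=\xi$; thus the limit point $\xi$ is a fixed point. (Equivalently, one may observe directly that $\rho(\xi,f(\xi))=\lim_k\rho(x_{n_k},x_{n_k+1})=\lim_k d_{n_k}=d=0$.)

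Finally, uniqueness is immediate and does not use the limit-point hypothesis: if $\xi$ and $\eta$ were distinct fixed points, then $\rho(\xi,\eta)=\rho(f(\xi),f(\eta))<\rho(\xi,\eta)$, which is absurd. The only genuine obstacle is the convergence $d_n\to 0$; in contrast with the proof of Theorem~\ref{t.fp-Ban}, one cannot route the argument through a Cauchy/completeness estimate, so the assumption that \emph{some} orbit clusters in $X$ is exactly what is needed to feed continuity into the monotone sequence $(d_n)$ and activate the contradiction above.
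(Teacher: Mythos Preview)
Your proof is correct: the displacement-function argument via $\phi(y)=\rho(y,f(y))$ is a standard and clean route to Edelstein's theorem, and every step is justified. The paper itself merely states Theorem~\ref{t.fp-Edl} without proof (it is cited from \cite{edel61,edel62}), so there is no in-paper argument to compare against.
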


Theorem \ref{t.fp-Edl} has the following important consequence.

\begin{corol}[Nemytski\v{\i} (1936)\, \cite{nemyts36}] \label{c.fp-Edl}
If the metric space  $(X,\rho)$ is  compact,   then every   contractive
mapping  $f:X\to X$  has a unique fixed point in $X$.

Moreover, for any $x_1\in X$ the sequence defined by
$x_{n+1}=f(x_n),\, n\in \mathbb{N},\, $ converges
to the fixed point of the mapping $f.$
\end{corol}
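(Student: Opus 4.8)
The plan is to obtain existence and uniqueness of the fixed point directly from Edelstein's theorem (Theorem~\ref{t.fp-Edl}), and then to upgrade convergence of a subsequence of iterates to convergence of the whole Picard sequence by a monotonicity argument on the distances to the fixed point.

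First I would record that a contractive mapping is continuous: from \eqref{def.contr} one has $\rho(f(x),f(y))\le\rho(x,y)$ for all $x,y\in X$ (the case $x=y$ being trivial), so $f$ is nonexpansive, hence $1$-Lipschitz and continuous. Next, fix $x_1\in X$ and consider the iterates $x_n=f^{n-1}(x_1)$. Since $(X,\rho)$ is compact it is sequentially compact, so $(x_n)$ has a convergent subsequence; in particular the sequence $\big(f^{n-1}(x_1)\big)$ has a limit point $\xi\in X$. Theorem~\ref{t.fp-Edl} now applies and yields that $\xi$ is the unique fixed point of $f$ in $X$.

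It remains to prove the \emph{moreover} part, namely that the whole sequence $(x_n)$ converges to $\xi$. Put $c_n=\rho(x_n,\xi)$. If $x_n=\xi$ for some $n$, then $x_m=\xi$ for all $m\ge n$ and we are done; so assume $x_n\ne\xi$ for every $n$. Since $f(\xi)=\xi$, contractivity \eqref{def.contr} gives
\[
c_{n+1}=\rho(f(x_n),f(\xi))<\rho(x_n,\xi)=c_n,
\]
so $(c_n)$ is strictly decreasing and bounded below by $0$, hence $c_n\to c$ for some $c\ge 0$. By sequential compactness there is a subsequence $x_{n_k}\to\eta\in X$; then $c=\lim_k c_{n_k}=\rho(\eta,\xi)$, and by continuity of $f$ also $x_{n_k+1}=f(x_{n_k})\to f(\eta)$, whence $c=\lim_k c_{n_k+1}=\rho(f(\eta),\xi)=\rho(f(\eta),f(\xi))$. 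Thus $\rho(f(\eta),f(\xi))=\rho(\eta,\xi)$, which by \eqref{def.contr} forces $\eta=\xi$, and therefore $c=\rho(\eta,\xi)=0$; consequently $c_n\to 0$, i.e. $x_n\to\xi$.

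The only genuinely delicate point is this last one: compactness yields convergence of a \emph{subsequence} of iterates for free (which is exactly what feeds Edelstein's theorem), but to get convergence of the full Picard sequence one must exploit that the distances $\rho(x_n,\xi)$ are monotone and combine this with the subsequential limit — the contractivity inequality applied at the subsequential limit point $\eta$ is what excludes $\eta\ne\xi$. Everything else is routine.
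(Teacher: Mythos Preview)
Your proof is correct. The paper states this corollary as an immediate consequence of Edelstein's Theorem~\ref{t.fp-Edl} without giving details, and your argument supplies exactly those details: compactness furnishes a limit point of the Picard sequence so that Theorem~\ref{t.fp-Edl} applies, and your monotonicity argument on $c_n=\rho(x_n,\xi)$ cleanly handles the convergence of the full sequence.
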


Fixed point results for isometries were proved by Edelstein in \cite{edel63}.

\subsection{Converses of   Banach's contraction principle} Supposing that a function $f$ acting on a metric space $(X,\rho)$ has a unique fixed point one looks for conditions ensuring the existence of a metric $\bar \rho$  on $X$, topologically equivalent to $\rho$ such that  $f$ is a contraction on $(X,\bar \rho)$. The first result of this kind was obtained by Bessaga \cite{besag59}. Good presentations of various aspects  of fixed points for contraction mappings and their  generalizations as well as converse-type results are contained in Ivanov \cite{ivanov76}, Lahiri \emph{et al.} \cite{lahiri09}, Kirk
\cite{kirk01}, Opoitsev \cite{opoitsev76},  Rus \cite{Rus01}, Rus \cite{Rus79}, Rus and  Petru\c sels  \cite{Rus-PP}.

We say that a metric $d$ on a set $X$ is \emph{complete} if $(X,d)$ is a complete metric space.

\begin{theo}[Cz. Bessaga (1959) \cite{besag59}]\label{t.Bessaga}
Let $X$ be a nonempty set, $f:X\to X$ and $\alpha\in (0,1).$
\begin{enumerate}
 \item If for every $n\in\mathbb{N},\, f^n$  has at most one fixed point, then there exists a metric $\rho$ on $X$ such that  $f$ is an $\alpha$-contraction with respect to  $\rho$.
  \item If, in addition,  some  $ f^n$  has a fixed point, then there exists a complete metric $\rho$ on $X$ such that  $f$ is an $\alpha$-contraction with respect to  $\rho$.
\end{enumerate}
\end{theo}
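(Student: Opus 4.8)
The plan is to embed $X$ isometrically onto a subset of a Banach space in such a way that $f$ is transported to a genuine $\alpha$-contraction; the fixed point, when present, will go to the origin, to which every orbit is then driven, and this is what yields completeness in~(2). Two reductions come first. If some $f^N$ has a fixed point $p$, then $f(p)$ is a fixed point of $f^N$ as well, so $f(p)=p$; hence in~(2) we may assume $f$ has a fixed point $z$, unique by the case $n=1$ of the hypothesis. Also, the hypothesis forces every periodic point of $f$ to be a fixed point (a point of minimal period $q\ge2$ would yield $q$ distinct fixed points of $f^q$), so $z$ is the only periodic point, and if $f$ is fixed-point-free it has no periodic point at all. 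Thus each orbit $(f^n(x))_{n\ge0}$ is one-to-one, except that an orbit entering the basin $B:=\{x:\exists n,\ f^n(x)=z\}$ is one-to-one up to the first hitting time $\ell(x)$ of $z$ and constant afterwards. On $B\setminus\{z\}$ this gives a height $\ell$ with $\ell\circ f=\ell-1$; on the $f$-invariant set $N:=X\setminus B$ (all of $X$ if $f$ has no fixed point), absence of periodic points lets one build, separately on each class of the equivalence ``$f^a(x)=f^b(y)$ for some $a,b$'', a level $\lambda\colon N\to\mathbb Z$ with $\lambda\circ f=\lambda+1$, well defined because orbits in $N$ never repeat.

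Set $\beta:=\alpha(1-\alpha)$, so $0<\beta<\alpha<1$ and $\alpha\beta<1$, and define $w\colon X\to[0,\infty)$ by $w(z)=0$, $w=\beta^{-\ell}$ on $B\setminus\{z\}$, $w=\beta^{\lambda}$ on $N$; then $w>0$ off $z$ and $w(f(x))\le\beta\,w(x)$ for all $x$. Let $c_0(X)$ be the Banach space of real functions on $X$ vanishing at infinity, with $e_u$ the function equal to $1$ at $u$ and $0$ elsewhere, and put
\[
\iota(x):=\sum_{n\ge0}\alpha^{\,n}\,w\big(f^n(x)\big)\,e_{f^n(x)}\ \in c_0(X)
\]
(a finite sum if $x\in B$, a norm-convergent series if $x\in N$, since $\alpha\beta<1$). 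Then $\iota(z)=0$, one has the self-similarity $\iota(x)=w(x)\,e_x+\alpha\,\iota(f(x))$, and every coordinate of $\iota(x)$ lies in $[0,w(x)]$, the value $w(x)$ being attained only at $u=x$ (the other nonzero coordinates being $(\alpha\beta)^n w(x)$, $n\ge1$). In particular $\|\iota(x)\|_\infty=w(x)$ for all $x$, $\iota$ is injective (if $\iota(x)\ne0$, then $x$ is the unique point at which $\iota(x)$ attains its sup, and $\iota(x)=0$ only for $x=z$), and $\rho(x,y):=\|\iota(x)-\iota(y)\|_\infty$ is a metric on $X$.

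Everything now follows from two elementary inequalities, valid for $x\ne y$. Since all coordinates are nonnegative and bounded by the respective weights, $\|\iota(x)-\iota(y)\|_\infty\le\max\{w(x),w(y)\}$. On the other hand, look at the coordinate at whichever of $x,y$ carries the larger weight — call it $y$, so $y\ne z$, $\iota(y)(y)=w(y)$, while $\iota(x)(y)$ is either $0$ or $\alpha^k w(y)$ for some $k\ge1$, hence $\le\alpha\,w(y)$; this gives $\|\iota(x)-\iota(y)\|_\infty\ge(1-\alpha)\,w(y)=(1-\alpha)\max\{w(x),w(y)\}$. Feeding the first inequality with $f(x),f(y)$ and using $w\circ f\le\beta w$,
\[
\|\iota(f(x))-\iota(f(y))\|_\infty\le\beta\max\{w(x),w(y)\}\le\frac{\beta}{1-\alpha}\,\|\iota(x)-\iota(y)\|_\infty=\alpha\,\|\iota(x)-\iota(y)\|_\infty,
\]
so $f$ is an $\alpha$-contraction for $\rho$; this proves~(1). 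For~(2), the second inequality makes $\{x:\|\iota(x)\|_\infty\ge\varepsilon\}$ $(1-\alpha)\varepsilon$-uniformly discrete for each $\varepsilon>0$. Given a $\rho$-Cauchy sequence $(x_n)$, the numbers $\|\iota(x_n)\|_\infty$ are Cauchy, hence converge to some $L\ge0$; if $L>0$ the $x_n$ eventually lie in such a uniformly discrete set and the sequence is eventually constant, while if $L=0$ then $\rho(x_n,z)=\|\iota(x_n)\|_\infty\to0$. In either case $(x_n)$ converges in $X$, so $(X,\rho)$ is complete.

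The step I expect to be the real obstacle is pinning the contraction constant down to exactly $\alpha$: the naive embedding (taking $\beta=\alpha$) makes the same computation yield only the bound $\alpha/(1-\alpha)$, which exceeds $1$ once $\alpha>\tfrac12$, so one must absorb the loss factor $1-\alpha$ into the weight in advance by choosing $\beta=\alpha(1-\alpha)$; afterwards, verifying $w>0$ off $z$, the inequality $w\circ f\le\beta w$, and $\iota\in c_0(X)$ is routine. The remaining work is organizational — defining $\ell$ on $B$ and a coherent level function $\lambda$ on the possibly many classes making up $N$ — together with the observation that makes completeness work: in case~(2) the iterates $\iota(f^n(x))$ are driven to the single point $\iota^{-1}(0)=z$ for \emph{every} $x$ (at once if $x\in B$, and because $\|\iota(f^n(x))\|_\infty=w(f^n(x))=\beta^{\lambda(x)+n}\to0$ if $x\in N$), so that no $\rho$-Cauchy sequence can run off to infinity.
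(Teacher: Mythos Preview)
The paper does not include its own proof of Bessaga's theorem; it only states the result and points to the literature (Wong, Deimling, Jachymski, etc.). So there is no in-paper argument to compare against, and I can only assess your proof on its own merits and contrast it with the classical constructions referenced there.

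Your argument is correct. The reductions (a fixed point of $f^N$ is a fixed point of $f$; no nontrivial periodic orbits) are standard and valid. The orbit-level function $\lambda$ on $N$ is well defined on each grand-orbit class precisely because orbits in $N$ are injective; choosing a base point on each class uses the Axiom of Choice, as every known proof of Bessaga does. The weight $w$ then satisfies $w\circ f\le\beta w$ with strict positivity off $z$, and the embedding $\iota$ into $c_0(X)$ lands where claimed because $\alpha\beta<1$. Your two inequalities $(1-\alpha)\max\{w(x),w(y)\}\le\rho(x,y)\le\max\{w(x),w(y)\}$ are checked correctly, and the choice $\beta=\alpha(1-\alpha)$ is exactly what makes the contraction constant come out equal to $\alpha$. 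The completeness argument via uniform discreteness on $\{w\ge\varepsilon\}$ is clean and uses the existence of $z$ only at the single point where it is needed.

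Compared with the classical route (e.g.\ the proof in Deimling attributed to Wong), your approach is more elaborate than necessary. There one builds directly a function $\varphi:X\to[0,\infty)$ with $\varphi(f(x))=\alpha\varphi(x)$ and $\varphi^{-1}(0)=\{z\}$ (or $\varphi>0$ everywhere if $f$ is fixed-point free), and simply sets $\rho(x,y)=\varphi(x)+\varphi(y)$ for $x\ne y$. This gives an $\alpha$-contraction on the nose, with completeness following because any non-eventually-constant Cauchy sequence must have $\varphi(x_n)\to0$. Your $c_0(X)$ embedding reproduces the same phenomenon with $\max$ in place of $+$; the extra orbit information carried by $\iota$ is not actually used beyond the two estimates, and the need to compensate by $\beta=\alpha(1-\alpha)$ is an artifact of this more indirect route. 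What your version buys is a pleasant geometric picture (an isometric copy of $X$ inside a Banach space with $f$ acting by a genuine linear-looking contraction toward $0$), at the cost of some additional bookkeeping.
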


A different proof of Theorem \ref{t.Bessaga} was  given Wong \cite{wong66}, a version of which  is included   in Deimling's book on nonlinear functional analysis, \cite[p. 191-192]{Deimling}. Other proofs as well as some extensions were given by  Babu \cite{babu82}, Jachymski \cite{jachym00} (see also \cite{jachym94}),   Palczewski and  Miczko \cite{palczew86,palczew87}, Wang \emph{et al.} \cite{wang99} (cf. the  MR review). Angelov \cite{angel88,angel94} proved a converse result in the context of uniform spaces.

In the case of compact metric spaces Jano\v s \cite{janos67} proved the following result.
\begin{theo}\label{t.Janos}
  Let $(X,\rho)$ be a compact metric space and $f:X\to X$ be a continuous mapping such that, for some $\xi\in X,$
  \begin{equation}\label{eq1.Janos}
  \bigcap_{n=1}^\infty f^n(X)=\{\xi\}\,.
  \end{equation}

Then for every $\alpha \in (0,1)$ there exists a metric $\rho_\alpha$ on $X$, topologically equivalent to $\rho$, such that  $f$ is an $\alpha$-contraction with respect to  $\rho_\alpha$ (with $\xi$ as the unique fixed point).
\end{theo}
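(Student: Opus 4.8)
The plan is a two-stage change of metric: first replace $\rho$ by a topologically equivalent metric $\sigma$ in which the nested sets $X_n:=f^n(X)$ become \emph{geometrically small}, and only afterwards re-weight along the orbits of $f$ to obtain the $\alpha$-contraction. I would begin by recording the elementary consequences of the hypothesis. Since $X$ is compact and $f$ continuous, each $X_n$ is nonempty and compact, the sequence $(X_n)$ is decreasing, and $f(\xi)=\xi$ because $f(\xi)\in\bigcap_n f(X_n)=\bigcap_n X_{n+1}=\{\xi\}$. A routine compactness argument (take $a_n,b_n\in X_n$ nearly realizing the diameter, extract convergent subsequences, use that the $X_m$ are closed) shows $\varepsilon_n:=\diam_\rho(X_n)\to 0$; consequently $f^n\to\xi$ uniformly on $X$, and if $f^n(y)=y$ then $y=f^{kn}(y)\in X_{kn}$ for all $k$, hence $y\in\bigcap_m X_m=\{\xi\}$, so $\xi$ is the only fixed point of every iterate $f^n$.

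For Stage 1, fix $\gamma\in(0,\alpha)$. Since $\varepsilon_n\to 0$, an elementary (essentially diagonal) construction yields a continuous non-decreasing function $\phi:(0,\diam_\rho X]\to(0,\infty)$ with $\varepsilon_n\,\phi(\varepsilon_n)\le\gamma^n$ for every $n$. Writing $g(x):=\rho(x,\xi)$, I would define
\[
\sigma(x,y)=\inf\Bigl\{\sum_{i=1}^{k}\rho(x_{i-1},x_i)\,\phi\bigl(\max\{g(x_{i-1}),g(x_i)\}\bigr)\ :\ k\in\mathbb{N},\ x_0=x,\ x_k=y\Bigr\},
\]
the infimum over finite chains, which is automatically a pseudometric. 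The substantive point is that $\sigma$ is in fact a metric topologically equivalent to $\rho$. One bound is trivial: $\sigma(x,y)\le\phi(\diam_\rho X)\,\rho(x,y)$. For the reverse I would use that $g$ is $1$-Lipschitz for $\rho$: given $x\ne y$, both $\ne\xi$ (the case $y=\xi$ being an easy variant), and setting $c=\min\{g(x),g(y)\}$, a chain from $x$ to $y$ either stays in $\{g\ge c/2\}$, whence its $\sigma$-length is at least $\phi(c/2)\rho(x,y)$, or it traverses $\rho$-distance at least $c/2$ while still in $\{g\ge c/2\}$, whence its $\sigma$-length is at least $\phi(c/2)\cdot c/2$; either way $\sigma(x,y)\ge\phi(c/2)\min\{\rho(x,y),c/2\}>0$. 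These two inequalities together give $\sigma\sim\rho$. Finally, for $x,y\in X_n$ one has $g(x),g(y)\le\varepsilon_n$, so $\sigma(x,y)\le\varepsilon_n\phi(\varepsilon_n)\le\gamma^n$, i.e. $\diam_\sigma(X_n)\le\gamma^n$.

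For Stage 2 I would set
\[
\rho_\alpha(x,y)=\sup_{n\ge 0}\ \alpha^{-n}\,\sigma\bigl(f^n(x),f^n(y)\bigr).
\]
This supremum is finite (and $\rho_\alpha$ is bounded) because $\alpha^{-n}\sigma(f^nx,f^ny)\le\alpha^{-n}\diam_\sigma(X_n)\le(\gamma/\alpha)^n\to 0$; moreover $\rho_\alpha\ge\sigma$, so $\rho_\alpha$ is a metric; and shifting the index,
\[
\rho_\alpha(f(x),f(y))=\alpha\sup_{m\ge 1}\alpha^{-m}\,\sigma\bigl(f^m(x),f^m(y)\bigr)\le\alpha\,\rho_\alpha(x,y),
\]
so $f$ is an $\alpha$-contraction for $\rho_\alpha$. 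To see $\rho_\alpha\sim\rho$, note that $\rho_\alpha\ge\sigma\sim\rho$ gives one half, and for the other, given $\rho(x_m,x)\to 0$ (hence $\sigma(x_m,x)\to 0$) one splits the defining supremum into the finitely many indices $n<N$, handled by continuity of each $f^n$ together with $\sigma\sim\rho$, and the tail $n\ge N$, which is at most $\sup_{n\ge N}(\gamma/\alpha)^n\to 0$. Since $\xi$ is the unique fixed point of $f$ (noted above) and $(X,\rho_\alpha)$ is compact, hence complete, Theorem~\ref{t.fp-Ban} applied in $(X,\rho_\alpha)$ also yields the convergence of the iterates to $\xi$.

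The main obstacle is exactly what makes Stage 1 unavoidable: $\diam_\rho(X_n)$ may tend to $0$ arbitrarily slowly, whereas \emph{any} metric $d$ turning $f$ into an $\alpha$-contraction with fixed point $\xi$ forces $\diam_d(X_n)=\diam_d\bigl(f^n(X)\bigr)\le\alpha^n\diam_d(X)$; thus no re-weighting of a single orbit can succeed by itself, and the real work lies in producing the warped metric $\sigma$ and verifying its two crucial properties --- strict positivity $\sigma(x,y)>0$ and the topological equivalence $\sigma\sim\rho$ --- for which the $1$-Lipschitz property of $x\mapsto\rho(x,\xi)$ is the decisive tool. Once Stage 1 is in place, the remaining verifications in Stage 2 are routine.
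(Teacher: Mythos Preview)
The paper does not include a proof of this theorem; it merely records the statement, cites Jano\v{s}'s original paper, and notes that Edelstein later gave an alternative argument. There is therefore no proof in the text to compare your proposal against.

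On its own merits, your two-stage construction is correct. Stage~1 is the heart of the matter, and your chain-metric with weight $\phi(\max\{g(x_{i-1}),g(x_i)\})$ works. The existence of a non-decreasing $\phi:(0,D]\to(0,\infty)$ with $\varepsilon_n\phi(\varepsilon_n)\le\gamma^n$ follows from $\varepsilon_n\to 0$: for each $t>0$ the set $\{n:\varepsilon_n\ge t\}$ is finite, so $b(t):=\min\{\gamma^n/\varepsilon_n:\varepsilon_n\ge t\}$ is a positive non-decreasing function with $b(\varepsilon_n)\le\gamma^n/\varepsilon_n$; take $\phi\le b$ (mollify for continuity if you wish, though continuity of $\phi$ is in fact never used). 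Your lower bound for $\sigma$ via the ``first exit from $\{g\ge c/2\}$'' argument is exactly right and also covers the variant $y=\xi$ by taking $c=g(x)$ rather than $c=\min\{g(x),g(y)\}$. Stage~2 is then routine, and your splitting of the supremum into a finite head (handled by continuity of each $f^n$) and a geometric tail (controlled by $\gamma<\alpha$) gives $\rho_\alpha\sim\rho$ cleanly.

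Two points of hygiene that do not affect validity: (i) the weight $\phi(\max\{g(x_{i-1}),g(x_i)\})$ is formally undefined when both chain points equal $\xi$, but then the $\rho$-factor vanishes, so simply declare that term to be $0$; (ii) if $\varepsilon_n=0$ for some $n$ (i.e.\ $f^n(X)=\{\xi\}$), Stage~1 degenerates---one may then take $\sigma=\rho$ and replace $\gamma^n$ by $\max\{\gamma^n,\varepsilon_n\}$ in the tail estimate, or treat this trivial case separately.
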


A mapping $f$ satisfying \eqref{eq1.Janos} is called \emph{squeezing}.

Another proof of Jano\v s' theorem was given by Edelstein \cite{edel69}.

Kasahara \cite{kasahara68} showed that compactness is also necessary for the validity of Jano\v s' result.

\begin{theo}\label{t.Kasah} Let $(X,\rho)$ be a metric space. If for every squeezing mapping $f:X\to X$ and every $\alpha\in (0,1)$ there exits a metric $\rho_\alpha$ on $X$, topologically equivalent to $\rho$, such that  $f$ is an $\alpha$-contraction with respect to  $\rho_\alpha$, then the space $X$ is compact.
\end{theo}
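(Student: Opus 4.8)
The plan is to argue by contraposition: I will assume that $(X,\rho)$ is \emph{not} compact and produce a single squeezing self-map $f$ of $X$ which fails to be an $\alpha$-contraction for \emph{every} $\alpha\in(0,1)$ and \emph{every} metric on $X$ topologically equivalent to $\rho$; this contradicts the hypothesis. Since a metric space is compact if and only if it is sequentially compact, non-compactness of $X$ yields a sequence $(x_n)_{n\ge1}$ in $X$ with no convergent subsequence; such a sequence assumes no value infinitely often, so after passing to a subsequence I may assume the points $x_n$ are pairwise distinct, and then $D:=\{x_n:n\ge1\}$ is a closed discrete subset of $X$.

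The driving observation is elementary. If $g:X\to X$ is squeezing, say $\bigcap_{n\ge1}g^n(X)=\{\xi\}$, then the sets $g^n(X)$ decrease, so $g(\xi)\in\bigcap_{n\ge1}g^{n+1}(X)=\bigcap_{n\ge1}g^n(X)=\{\xi\}$; hence $\xi$ is a fixed point of $g$. Consequently, if moreover $g$ is an $\alpha$-contraction with respect to some metric $d$ topologically equivalent to $\rho$, then for every $y\in X$ one has $d(g^n(y),\xi)=d(g^n(y),g^n(\xi))\le\alpha^n d(y,\xi)\to0$, so each orbit $(g^n(y))_n$ converges to $\xi$ in $(X,d)$, hence in $(X,\rho)$. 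Thus, under the hypothesis of the theorem, for \emph{every} squeezing map the orbit of \emph{every} point must $\rho$-converge to the corresponding distinguished point.

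It remains to construct a squeezing $f$ possessing an orbit that does not converge. I put $\xi:=x_1$ and define $f(x_1):=x_1$, $f(x_n):=x_{n+1}$ for $n\ge2$, and $f(x):=x_1$ for all $x\in X\setminus D$. An easy induction gives $f^k(X)=\{x_1\}\cup\{x_n:n\ge k+2\}$ for $k\ge1$, whence $\bigcap_{k\ge1}f^k(X)=\{x_1\}$, since no $x_n$ lies in all the tails (the $x_n$ are distinct); thus $f$ is squeezing with distinguished point $x_1$. On the other hand the orbit of $x_2$ is $(f^n(x_2))_n=(x_{n+2})_n$, a subsequence of $(x_n)$, which by construction has no limit in $X$ — contradicting the previous paragraph. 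Hence the assumption that $X$ is not compact is untenable, and the theorem follows.

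I expect the only genuinely delicate point to be forcing $\bigcap_n f^n(X)$ to be \emph{exactly} a one-point set rather than empty: a bare shift $x_n\mapsto x_{n+1}$ on $D$ exhausts $D$ and gives an empty intersection (this is the real issue in the case where $X$ itself is a countable discrete space, e.g. $X=\mathbb{N}$), which is why an absorbing fixed point $x_1$ has to be grafted on. Note that continuity of $f$ plays no role anywhere — the argument of the second paragraph is purely metric–topological — so there is no need to verify that the constructed $f$ is $\rho$-continuous.
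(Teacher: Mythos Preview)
The paper does not supply its own proof of this theorem; it merely states the result and attributes it to Kasahara. So there is no in-paper argument to compare against, and the question reduces to whether your proof is correct for the statement as formulated here.

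Your argument is sound. The chain of implications is clean: a squeezing map has its distinguished point as a fixed point; an $\alpha$-contraction for an equivalent metric forces every orbit to converge (in the original topology) to that fixed point; hence it suffices to exhibit a squeezing map with a divergent orbit. Your construction --- fix $x_1$, shift the tail $x_2,x_3,\dots$ forward, and collapse the complement of $D$ onto $x_1$ --- does exactly this, and your verification that $\bigcap_k f^k(X)=\{x_1\}$ is correct. The diagnosis in your final paragraph (that one must graft on an absorbing fixed point to keep the intersection nonempty) is also on point.

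One remark worth flagging: the statement in the paper, and hence your proof, quantifies over \emph{all} squeezing self-maps, with no continuity assumption. Jano\v{s}'s theorem (to which this is a converse) concerns \emph{continuous} squeezing maps, and Kasahara's original result is usually stated with continuity as well. Your map $f$ need not be $\rho$-continuous (e.g.\ if some $x_n$ with $n\ge2$ is a limit of points outside $D$, those points map to $x_1$ while $x_n$ maps to $x_{n+1}$). So your proof establishes the theorem exactly as the paper states it, but would require modification for the sharper version with ``continuous'' inserted in the hypothesis; you are right that continuity is irrelevant to the present formulation.
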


Jano\v s extended in \cite{janos70} this result to uniform spaces (more exactly, to completely regular spaces whose topology is generated by a family of semimetrics), see also Angelov \cite{angel87,angel88,angel94,angel04}. Rus \cite{rus93} extended Jano\v s' result to weakly Picard mappings.
An operator $f$ on a metric space $(X,\rho)$ is called \emph{weakly Picard} if,  for every $x\in X$, the sequence  $(f^n(x))_{n=1}^\infty$ of iterates  converges to a fixed point of $f$. If further, the limit is independent of $x$ (i.e. $f$ has a unique fixed point), then  $f$ is called a \emph{Picard operator} (see \cite{rus03} or \cite{Rus-PP}).

 Other extensions of Jano\v{s}' result were given by Leader \cite{leader77} (see also Leader \cite{leader82,leader83}), Meyers \cite{meyers65,meyers67},  Mukherjee and  Som \cite{mukherj84}. For a metric space $(X,\rho)$ and $\xi\in X$ consider the following properties:
 \begin{equation}\label{def.iter}
 \begin{aligned}
   &{\rm (i)}\quad f^n(x)\to\xi \quad\mbox{for every}  \; x\in X;\\
    &{\rm (ii)}\quad \mbox{the convergence in (i) is uniform on some neighborhood } U\;\mbox{of}\; \xi\,.
 \end{aligned}\end{equation}
The condition (ii) means that
  \begin{equation}\label{eq1.iter}
         \forall \varepsilon >0,\; \exists n_0=n_0(\varepsilon),\;\; \mbox{such that} \;\;\; \forall n\ge n_0,\; f^n(U)\subseteq B[\xi,\varepsilon]\,.
         \end{equation}

To designate the uniform convergence on a subset $A$ of $X$ of  the sequence $(f^n)$ to a point $\xi$, one uses the notation
$$
f^n(A)\to\xi\,.
$$

 Leader \cite{leader77} proved the following   results.
  \begin{theo}\label{t.Leader} Let $(X,\rho)$ be a metric space and $f:X\to X.$
 \begin{enumerate}
 \item
  There exists a metric $\bar \rho$  topologically equivalent to $\rho$ on $X$
such that $f$ is a Banach contraction under $\bar \rho$ with fixed point $\xi$  if,
and only if, $f$ is continuous and both (i) and (ii) from\eqref{def.iter} hold.
 \item
  There exists a bounded metric $\bar \rho$ topologically equivalent
to $\rho$ on $X$ such that $f$ is a Banach contraction under $\bar \rho$  with fixed
point $\xi$ if, and only if, $f$ is continuous and $f^n(X)\to \xi$.
 \item
 There exists a bounded metric $\bar \rho$ uniformly equivalent to
$\rho$ on $X$ such that $f$ is a Banach contraction under $\bar \rho$ if, and only
if, $f$ is uniformly continuous and
\begin{equation}\label{eq1.Leader}
\diam_\rho(f^n(X))\to 0\;\;\mbox{ as }\; n\to\infty\,.
\end{equation}
\end{enumerate}
   \end{theo}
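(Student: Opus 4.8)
The plan is to prove the three equivalences one at a time, each by separating the (routine) necessity of the iteration/diameter condition from the (substantial) sufficiency, which amounts to constructing a new metric.

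\emph{Necessity.} Suppose $\bar\rho$ is a metric under which $f$ is an $\alpha$-contraction, $\alpha\in(0,1)$, and (in cases (1),(2)) $f(\xi)=\xi$. Iterating gives $\bar\rho(f^{n}x,\xi)=\bar\rho(f^{n}x,f^{n}\xi)\le\alpha^{n}\bar\rho(x,\xi)$ for all $x,n$, so $f^{n}x\to\xi$ in $(X,\bar\rho)$; if $\bar\rho$ is topologically equivalent to $\rho$ this is (i) of \eqref{def.iter}. For (ii): each closed ball $U:=B_{\bar\rho}[\xi,r]$ is a $\rho$-neighbourhood of $\xi$ with $f^{n}(U)\subseteq B_{\bar\rho}[\xi,\alpha^{n}r]$, and since every $B_{\rho}[\xi,\varepsilon]$ contains some $B_{\bar\rho}[\xi,\delta]$, picking $n_0$ with $\alpha^{n_0}r\le\delta$ yields \eqref{eq1.iter}. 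Continuity of $f$ for $\rho$ is automatic ($f$ is $\bar\rho$-Lipschitz and the topologies agree). In (2), a bounded $\bar\rho$ lets one take $U=X=B_{\bar\rho}[\xi,\diam_{\bar\rho}X]$, so $f^{n}(X)\to\xi$. In (3), $\diam_{\bar\rho}(f^{n}(X))\le\alpha^{n}\diam_{\bar\rho}(X)\to 0$, and uniform equivalence of $\bar\rho,\rho$ (for each $\varepsilon$ some $\delta$ with $\bar\rho<\delta\Rightarrow\rho<\varepsilon$) forces \eqref{eq1.Leader}; an $\bar\rho$-Lipschitz map is $\rho$-uniformly continuous.

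\emph{Sufficiency, reduction to one inward step.} Fix $\alpha\in(0,1)$. In (1)--(2) the hypotheses give a set $U$ (the given neighbourhood of $\xi$, resp.\ $U=X$) on which $f^{n}\to\xi$ uniformly; shrinking $U$ to an open ball $B(\xi,\varepsilon_0)$ preserves this, and uniform convergence supplies $N$ with $f^{N}(U)\subseteq B[\xi,\varepsilon_0/2]$, whence $\clos(g(U))\subseteq U$ for $g:=f^{N}$ and $g(\xi)=\xi$; then the sets $g^{j}(U)$ ($j\in\mathbb{Z}$, with $g^{-k}(U):=(g^{k})^{-1}(U)$) are nested, decrease to $\{\xi\}$ as $j\to\infty$ (their $\rho$-diameters tend to $0$), increase to $X$ as $j\to-\infty$ (every orbit enters $U$), and $g(g^{j}(U))\subseteq g^{j+1}(U)$. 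In (3) there is no fixed point and one uses instead the chain $g^{j}(X)$, $j\ge 0$, with diameters tending to $0$ by \eqref{eq1.Leader}. If one then produces a metric $d$ --- topologically (resp.\ boundedly topologically, resp.\ uniformly) equivalent to $\rho$ --- under which $g$ is a $\beta$-contraction, the averaging metric $\bar\rho(x,y):=\sum_{j=0}^{N-1}\beta^{-j/N}d(f^{j}x,f^{j}y)$ makes $f$ a $\beta^{1/N}$-contraction (reindexing, the top term $\le\beta^{-(N-1)/N}\beta\,d(x,y)=\beta^{1/N}d(x,y)$ absorbs into the bottom one), continuity (resp.\ uniform continuity) of $f$ makes $\bar\rho$ equivalent to $d$ in the required sense, and boundedness is inherited. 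So it suffices to treat $g$.

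\emph{Sufficiency, the metric for $g$.} From the filtration one reads off a level $\ell(x):=\sup\{j:x\in g^{j}(U)\}\in\mathbb{Z}\cup\{+\infty\}$ (resp.\ $\sup\{j\ge 0:x\in g^{j}(X)\}$), which is finite for $x\ne\xi$ in (1)--(2), is $\ge 0$ in (2)--(3), and satisfies $\ell(g(x))\ge\ell(x)+1$. One builds $d$ as a scale-weighted geodesic metric: $d(x,y)$ is the infimum, over finite chains $x=z_0,z_1,\dots,z_m=y$ and over integers $j_i\le\ell(z_{i-1})\wedge\ell(z_i)$, of $\sum_i w_{j_i}\,\sigma_{j_i}(z_{i-1},z_i)$, where $w_j:=\beta^{j}$ and $\sigma_j$ is a bounded ``level-$j$'' pseudometric built from $\rho$ (a pullback of $\rho$ through $g^{j}$ for $j\ge 0$, a push-forward $\rho(g^{|j|}\cdot,g^{|j|}\cdot)$ for $j<0$), so chosen that $\sigma_{j+1}(gz,gz')\le\sigma_j(z,z')$ always; then replacing each $j_i$ by $j_i+1$ in a chain for $(gx,gy)$ multiplies every term by $\beta$, giving $d(gx,gy)\le\beta\,d(x,y)$. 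The boundedness of the $\sigma_j$ and $\ell\ge 0$ make $d$ bounded in (2)--(3); in (1) negative levels make $d$ unbounded, as required. In (3) all estimates are carried out uniformly in $j$ using \eqref{eq1.Leader}, which is exactly what uniform equivalence needs. One then verifies that $d$ induces the original topology (resp.\ uniformity): $\rho$-convergence implies $d$-convergence via a short chain at the right scale (using continuity of $g$ to control scale distortion), and conversely; a small genuine-metric correction must be included to ensure $d$ actually separates points when $g$ is not injective.

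\emph{Where the difficulty lies.} Producing \emph{some} contraction is cheap: for any continuous $V\ge 0$ with $V(\xi)=0$ and $V\circ g\le\beta V$, the ``metric'' $d(x,y):=V(x)+V(y)$ ($x\ne y$) already makes $g$ a $\beta$-contraction --- but it isolates every point $\ne\xi$ and so is never equivalent to $\rho$ unless $X$ is a single point. The real work, and the main obstacle, is to force the new metric to be \emph{locally} faithful to $\rho$ (resp.\ to its uniformity) while retaining the global geometric decay: because $g$ is merely continuous it may expand $\rho$-distances without bound on small sets, so $d$ cannot locally coincide with $\rho$ and must be the geodesic/pullback hybrid above, whose two defining properties --- the $\beta$-contraction estimate and the two-sided comparison with $\rho$, together with the requirement that $d$ genuinely separate points --- have to be established simultaneously. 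A second, more combinatorial, difficulty is arranging the filtration to be at once $g$-monotone and (near $\xi$) a neighbourhood base; this is precisely the step that uses hypothesis (ii) of \eqref{def.iter} --- uniform, not merely pointwise, convergence of the iterates --- and is the reason (1) genuinely needs (ii) and (2) needs $f^{n}(X)\to\xi$ rather than pointwise convergence alone.
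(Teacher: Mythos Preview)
The paper does not prove this theorem; it merely states it and cites Leader's original paper \cite{leader77}. So there is no ``paper's own proof'' to compare against.

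On the merits of your outline: the necessity arguments are correct and the reduction step (pass from $f$ to $g=f^{N}$ to get one clean inward step, then recover $f$ via the averaged metric $\sum_{j=0}^{N-1}\beta^{-j/N}d(f^{j}x,f^{j}y)$) is sound and is indeed a standard device. Your identification of the real obstacle --- building a metric that simultaneously contracts under $g$ and remains topologically (or uniformly) faithful to $\rho$ --- is exactly right, and the ``level plus weighted chain'' scheme is in the spirit of the Meyers--Leader constructions.

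However, the heart of the sufficiency is not actually carried out. Your pseudometrics $\sigma_{j}$ are not well defined as written: if for $j\ge 0$ one takes $\sigma_{j}(z,z')=\rho(g^{j}z,g^{j}z')$ (which is what ``pullback of $\rho$ through $g^{j}$'' would ordinarily mean, though you call it push-forward for $j<0$), then $\sigma_{j+1}(gz,gz')=\rho(g^{j+2}z,g^{j+2}z')=\sigma_{j+2}(z,z')$, which is not $\le\sigma_{j}(z,z')$ without further hypotheses on $g$. The phrase ``so chosen that $\sigma_{j+1}(gz,gz')\le\sigma_{j}(z,z')$ always'' conceals precisely the construction you need to supply. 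Likewise, the claims that the resulting $d$ is genuinely a metric (separating points --- you note a ``small genuine-metric correction'' is needed, but do not say what it is or why it does not destroy the contraction inequality) and that it is topologically/uniformly equivalent to $\rho$ are asserted, not argued. In short, you have correctly located the difficulty and sketched a plausible architecture, but the load-bearing definitions and verifications are missing; as it stands this is a plan rather than a proof.
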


   We mention also  a topological characterization of contractions given by Florinskij \cite{florinski87}.
   \begin{theo}Let $(X,\tau)$ be a metrizable topological space and $f:X\to X$ a continuous mapping. The following are equivalent.
   \ben
   \item There exists a metric $\rho$ on $X$ generating the topology $\tau$ such that $f$ is a Banach contraction with respect to $\rho$.
   \item There exists a nonempty open subset $G$ of $X$ such that
   \bite
   \item[\rm(i)] $f(G)\subseteq G$;
    \item[\rm(ii)] $\diam\left(\bigcap_{n=1}^\infty \ov{f^n(G)}\right) =0$\;\; (with the convention that $\diam \emptyset =0$);
     \item[\rm(iii)] for every $x\in X$ there exists $n_x\in\Nat$ such that $f^n(x)\in G$ for all $n\ge n_x$.
   \eite\een
   \end{theo}
   \begin{remark}
     Florinskij \cite{florinski87} gave also topological characterizations of mappings $f:X\to X$ that are:
     \bite\item nonexpansive, i.e.\;   $\rho(f(x),f(y))\le\rho(x,y),\, x,y\in X$;
     \item contractive, i.e.\;   $\rho(f(x),f(y))< \rho(x,y),\, x,y\in X$;
     \item separating, meaning that \; $\rho(f(x),f(y))\ge\alpha \rho(x,y),\, x,y\in X$,  for some $\alpha\in(0,1)$.\eite

   The paper \cite{florinski87} contains no proofs and I did not find them elsewhere.
    \end{remark}

  A general remetrization result  was obtained by Florinskij \cite{florinski97}, guaranteing that every contraction has a fixed point (without completeness).
  \begin{theo}
    On every  metric space  $(X,\rho)$  there exists a metric $\bar\rho $, topologically  equivalent to $\rho$, such that
the Banach Contraction  Principle holds in the space $(X,\bar\rho)$, i.e., every  contraction on $(X, \bar\rho)$ has a fixed point.
  \end{theo}

  The proof is based on the following result, proved in the same paper.
  \begin{theo}
 For every metric space $(X,\rho)$ there exists a metric space  $(\tilde X,\tilde\rho)$, containing $(X,\rho)$ as a dense subspace, such that
  the space $(\tilde X,\tilde \rho)$ is connected, locally connected and uniformly chainable.
 \end{theo}

 Let $(X,\rho)$ be a metric space and $\epsic>0.$ An $\epsic$-\emph{chain} joining two points $x,y\in X$ is a set $x=x_1,x_2,\dots,x_n=y$ of points in $X$ such that $d(x_i,x_{i+1})\le\epsic$ for $i=1,2,\dots,n-1.$  The metric space $(X,\rho)$ is called \emph{chainable} if for every $x,y\in X$ and $\epsic>0$ there exists an $\epsic$-chain connecting $x$ and $y$, and \emph{uniformly chainable} if for every $\epsic>0$ there exists $n(\epsic)\in\Nat$ such that any pair  $x,y$ of points in $X$ can be joined by an $\epsic$-chain containing at most $n(\epsic)$ elements.

 The following curious result was also obtained by Florinskij \cite{florinski91}.
 \begin{theo}
   Let $(X,\rho)$ be a separable and uniformly chainable metric space. Then there exists a metric $\tilde\rho$ on $X$, uniformly equivalent to  $\rho$, such that
   every contraction on $(X\tilde\rho)$ is a constant mapping, i.e. applies $X$ in a single  point.
 \end{theo}

 \textbf{   Ultrametric spaces}

  In the case of an ultrametric space, the situation is simpler. An \emph{ultrametric space} is a metric space $(X,\rho)$ such that  $\rho$ satisfies the so called \emph{strong triangle} (or \emph{ultrametric})  \emph{inequality}
 \begin{equation}\label{eq.ultram-ineq}
 \rho(x,z)\le\max\{\rho(x,y),\rho(y,z)\}\,,
 \end{equation}
 for all $x,y,z\in X.$

 Bellow we present some specific properties of these spaces.

 \begin{prop}\label{p1.ultram}
   Let $(X,\rho)$ be an ultrametric space. Then for all $x,y,z\in X$ and $r>0$,
   \begin{align*}
   &{\rm (i)}\;\;\rho(x,y)\ne\rho(y,z)\;\Longrightarrow\;  \rho(x,z)=\max\{\rho(x,y),\rho(y,z)\};\\
     &{\rm (ii)}\;\;y\in B[x,r]\;\Longrightarrow\;   B[x,r] = B[y,r];\\
     &{\rm (iii)}\;\;r_1\le r_2\;\;\mbox{and}\;\; B[x,r_1]\cap B[x,r_2]\ne\emptyset\;\Longrightarrow\; B[x,r_1]\subseteq  B[x,r_2]\,.
   \end{align*}

   Similar relations hold for the open balls $B(x,r)$.
 \end{prop}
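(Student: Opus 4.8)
The plan is to derive all three assertions directly from the strong triangle inequality \eqref{eq.ultram-ineq}, treating (ii) as the main structural fact and obtaining (i) and (iii) around it.

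For (i) I would argue by symmetry. Assume without loss of generality that $\rho(x,y)<\rho(y,z)$. Applying \eqref{eq.ultram-ineq} to the triple $x,y,z$ gives $\rho(x,z)\le\max\{\rho(x,y),\rho(y,z)\}=\rho(y,z)$. Applying it to the triple $y,x,z$ gives $\rho(y,z)\le\max\{\rho(y,x),\rho(x,z)\}$; since $\rho(x,y)<\rho(y,z)$, the maximum on the right cannot equal $\rho(x,y)$, so it must equal $\rho(x,z)$, whence $\rho(y,z)\le\rho(x,z)$. Combining the two estimates yields $\rho(x,z)=\rho(y,z)=\max\{\rho(x,y),\rho(y,z)\}$, which is the claim.

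For (ii), suppose $y\in B[x,r]$, i.e. $\rho(x,y)\le r$. If $z\in B[x,r]$ then $\rho(y,z)\le\max\{\rho(y,x),\rho(x,z)\}\le r$, so $z\in B[y,r]$; hence $B[x,r]\subseteq B[y,r]$. Since $\rho(x,y)\le r$ also means $x\in B[y,r]$, interchanging the roles of $x$ and $y$ gives the reverse inclusion, so $B[x,r]=B[y,r]$. The identical computation with every ``$\le$'' replaced by ``$<$'' handles the open balls $B(x,r)$. Finally, for (iii) I would note that it is an immediate consequence of (ii): if $w$ lies in the intersection (more generally, in $B[x,r_1]\cap B[y,r_2]$ for possibly distinct centers), then (ii) gives $B[x,r_1]=B[w,r_1]$ and $B[x,r_2]=B[w,r_2]$, and $r_1\le r_2$ makes $B[w,r_1]\subseteq B[w,r_2]$ trivial, so $B[x,r_1]\subseteq B[x,r_2]$.

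There is no genuine obstacle here; these are foundational properties of ultrametric spaces. The only step demanding a little care is part (i), where one must apply \eqref{eq.ultram-ineq} to two different cyclic orderings of the same triple and use the fact that a \emph{strict} inequality between two of the three distances forces the remaining one to be the largest.
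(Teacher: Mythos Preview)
Your argument is correct in every part. The paper states this proposition without proof, so there is nothing to compare against; your derivations of (i) and (ii) are the standard ones, and reducing (iii) to (ii) via a common point $w$ is clean.

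One remark worth making explicit: as printed, part (iii) has the \emph{same} center $x$ in both balls, in which case $r_1\le r_2$ already gives $B[x,r_1]\subset B[x,r_2]$ trivially and the nonempty-intersection hypothesis is superfluous. You evidently spotted this, since you phrased your proof for the nontrivial version with possibly distinct centers $B[x,r_1]\cap B[y,r_2]\ne\emptyset\Rightarrow B[x,r_1]\subset B[y,r_2]$; that is almost certainly the intended statement, and your proof handles it.
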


 \begin{remark} These strange properties of non-Archimedean metric spaces remind us of the following Pascal's thought:
 \smallskip

   {\it Nature is an infinite sphere of which the center is everywhere and the circumference nowhere} (Blaise Pascal, \emph{Pens\'ees}).\smallskip

    Probably the famous French philosopher and mathematician had in mind  a  non-Archimedean world.  Initially he even wrote    ``A frightful (\emph{effroyable}) sphere".

     See also the essay  on this topic by J. L. Borges at

      http://www.filosofiaesoterica.com/pascals-sphere/
 \end{remark}

 An ultrametric space $(X,\rho)$ is called \emph{spherically complete} if for every collection $B_i=B[x_i,r_i],\, i\in I,$ of closed ball in $X$ such that  $B_i\cap B_j\ne\emptyset$ for all $i,j\in I,\,$  has nonempty intersection, \;$\bigcap_{i\in I}B_i\ne\emptyset$. It is obvious that a spherically complete ultrametric space is complete. In  an arbitrary  metric space this property is called the \emph{binary intersection property}.

 Priess-Crampe \cite{priess90} proved the following converse to Edelstein's theorem on contractive mappings.
 \begin{theo}\label{t.P-Crampe}
   An ultrametric space $(X,\rho)$ is spherically complete if and only if  every contractive mapping on $X$ has a (unique) fixed point.
 \end{theo}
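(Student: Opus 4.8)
The plan is to prove the two implications separately. For "spherically complete $\Rightarrow$ every contractive map has a (unique) fixed point" I would run a Zorn's lemma argument on the family of closed balls $B_x:=B[x,\rho(x,f(x))]$, $x\in X$; for the converse I would argue by contraposition, turning a pairwise-intersecting family of balls with empty intersection into an explicit fixed-point-free contraction. Uniqueness in the first implication is immediate: if $f(u)=u$, $f(v)=v$ and $u\ne v$, then $\rho(u,v)=\rho(f(u),f(v))<\rho(u,v)$, which is absurd.

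For existence in the first implication the key geometric fact is the following lemma: if $y\in B_x$ then $B_y\subseteq B_x$. To see this (the case $x=y$ being trivial) one uses the strong triangle inequality to get $\rho(y,f(y))\le\max\{\rho(y,x),\rho(x,f(x)),\rho(f(x),f(y))\}$, and bounds each term by $\rho(x,f(x))$, the last one via nonexpansiveness $\rho(f(x),f(y))\le\rho(x,y)\le\rho(x,f(x))$; then, since $y\in B[x,\rho(x,f(x))]$, Proposition \ref{p1.ultram}(ii) gives $B[x,\rho(x,f(x))]=B[y,\rho(x,f(x))]$, and enlarging the radius yields $B_y=B[y,\rho(y,f(y))]\subseteq B[y,\rho(x,f(x))]=B_x$. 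In particular $f(x)\in B_x$, so $B_{f(x)}\subseteq B_x$. Now order $\mathcal A=\{B_x:x\in X\}$ by reverse inclusion. A chain in $\mathcal A$ is a pairwise nested — hence pairwise intersecting — family of nonempty closed balls, so by spherical completeness there is a point $z$ in its intersection, and by the lemma $B_z$ is contained in every member of the chain, i.e. $B_z$ is an upper bound. Zorn's lemma produces an inclusion-minimal ball $B_{x_0}$. If $f(x_0)\ne x_0$, contractivity gives $\rho(f(x_0),f^2(x_0))<\rho(x_0,f(x_0))$, so $x_0\in B_{x_0}$ but $x_0\notin B_{f(x_0)}$; since also $B_{f(x_0)}\subseteq B_{x_0}$, the inclusion is strict, contradicting minimality. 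Hence $f(x_0)=x_0$.

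For the converse, assume $X$ is not spherically complete, so there is a family of closed balls, pairwise intersecting, with empty intersection. In an ultrametric space any two intersecting closed balls are nested (again by Proposition \ref{p1.ultram}), so this family is a chain; replacing it by a cofinal well-ordered (by reverse inclusion) subchain, which still has empty intersection, I may present it as a strictly decreasing transfinite sequence $(B_\xi)_{\xi<\kappa}$ with $B_\xi=B[x_\xi,r_\xi]$ and $B_0$ the largest ball. For each $x\in X$ the set $\{\xi:x\in B_\xi\}$ is an initial segment of $[0,\kappa)$, proper because the intersection is empty; let $\sigma(x)$ be the least ordinal with $x\notin B_{\sigma(x)}$ and put $f(x):=x_{\sigma(x)}$. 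Since $x_{\sigma(x)}\in B_{\sigma(x)}$ while $x\notin B_{\sigma(x)}$, we have $f(x)\ne x$, so $f$ has no fixed point. For contractivity take $x\ne y$: if $\sigma(x)=\sigma(y)$ then $f(x)=f(y)$; otherwise, say $\sigma(x)<\sigma(y)$, so $y\in B_{\sigma(x)}$ but $x\notin B_{\sigma(x)}$, giving $\rho(x,x_{\sigma(x)})>r_{\sigma(x)}\ge\rho(y,x_{\sigma(x)})$, whence $\rho(x,y)=\rho(x,x_{\sigma(x)})$ by Proposition \ref{p1.ultram}(i); and since $f(y)=x_{\sigma(y)}\in B_{\sigma(y)}\subseteq B_{\sigma(x)}$ we get $\rho(f(x),f(y))=\rho(x_{\sigma(x)},x_{\sigma(y)})\le r_{\sigma(x)}<\rho(x,x_{\sigma(x)})=\rho(x,y)$.

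The ultrametric ball manipulations are routine once Proposition \ref{p1.ultram} is available. The genuinely delicate points are, in the first implication, checking the lemma $y\in B_x\Rightarrow B_y\subseteq B_x$ (the engine that makes the Zorn argument work), and, in the second implication, the reduction of an arbitrary pairwise-intersecting family with empty intersection to a well-ordered decreasing chain — without this normalization there need be no "first ball missing $x$", and the definition of $f$ cannot be made canonical enough to force contractivity.
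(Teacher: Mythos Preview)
The paper does not include a proof of Theorem~\ref{t.P-Crampe}; it merely states the result and attributes it to Prie{\ss}-Crampe~\cite{priess90}. So there is no ``paper's own proof'' to compare against --- your argument has to stand on its own, and it does.

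Both implications are correct. Your argument is in fact essentially Prie{\ss}-Crampe's original one: the Zorn argument on the balls $B_x=B[x,\rho(x,f(x))]$ driven by the nesting lemma $y\in B_x\Rightarrow B_y\subseteq B_x$ is exactly how the direct implication is handled in~\cite{priess90}, and the contrapositive construction for the converse --- reducing a pairwise-intersecting family with empty intersection to a strictly decreasing well-ordered chain of balls and defining $f(x)$ to be the centre of the first ball missing $x$ --- is the standard device. Two small remarks: (i) in the lemma you invoke ``nonexpansiveness'' $\rho(f(x),f(y))\le\rho(x,y)$; this is of course immediate from contractivity, but it is worth saying so explicitly since the paper defines contractive by a \emph{strict} inequality for $x\ne y$; (ii) in the converse, the choice of centres $x_\xi$ and radii $r_\xi$ is not canonical (every point of an ultrametric ball is a centre), but any choice works, and the argument only uses $x_\xi\in B_\xi$ and $B_\xi=B[x_\xi,r_\xi]$.
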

 \begin{remark} In fact Priess-Crampe \cite{priess90} proved this result in the more general context of an ultrametric $\rho$ taking values in a totally  ordered set $\Gamma$ having a least element 0 such that $0<\gamma$ for all $\gamma \in\Gamma$.
   \end{remark}

   Fixed point theorems for contractive and for nonexpansive mappings on spherically complete non-Archimedean normed spaces were proved by Petalas and Vidalis \cite{petalas-vid93}.

 Concerning contractions we mention the following result obtained by Hitzler and Seda \cite{hitzler01}.
 \begin{theo}\label{t.Hitz}
 Let $(X,\tau)$ be a $T_1$ topological space and $f:X\to X$ a function on $X$. The following are equivalent:
 \begin{enumerate}
 \item\;{\rm (i)}\;\; The mapping $f$ has a unique fixed point $\xi \in X$,\, and\\
 {\rm (ii)}\;\; for every $x\in X$ the sequence $(f^n(x))$ converges to $\xi$ with respect to  the topology $\tau$.
 \item There exists a complete  ultrametric $\rho$ on $X$ such that $\rho(f(x),f(y))\le 2^{-1} d(x,y)$ for all $x.y\in X$.
     \end{enumerate}
 \end{theo}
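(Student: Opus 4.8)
\emph{Approach.} The statement is a biconditional, and I would prove the two implications separately. The implication $(2)\Rightarrow(1)$ is essentially a direct reading of the Banach Contraction Principle, while $(1)\Rightarrow(2)$ requires manufacturing a complete ultrametric out of the dynamical data, in the spirit of Bessaga's theorem (Theorem~\ref{t.Bessaga}).

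\emph{$(2)\Rightarrow(1)$.} Suppose $\rho$ is a complete ultrametric with $\rho(f(x),f(y))\le\frac12\rho(x,y)$ for all $x,y\in X$. Then $f$ is an $\alpha$-contraction on the complete metric space $(X,\rho)$ with $\alpha=\frac12<1$, so Theorem~\ref{t.fp-Ban} produces a unique fixed point $\xi$ of $f$ and gives $f^n(x)\to\xi$ in $\rho$ for every starting point $x$. Uniqueness of the fixed point is a property of $f$ alone, so (i) holds; and $\rho$-convergence of the iterates yields their $\tau$-convergence once one knows that the metric topology of $\rho$ refines $\tau$, which gives (ii). (This last compatibility is precisely the extra feature that the converse direction must build into $\rho$.)

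\emph{$(1)\Rightarrow(2)$.} Assume $\xi$ is the unique fixed point of $f$ and that $f^n(x)\to\xi$ in $\tau$ for every $x$. First I would read off the combinatorial shape of the orbits. No point other than $\xi$ is periodic: if $x\ne\xi$ had period $p\ge 2$, then $(f^n(x))$ would possess two distinct constant subsequences and hence, $X$ being $T_1$ (so that $\overline{\{z\}}=\{z\}$ for every $z$), could not converge; thus $\xi$ is the unique fixed point of every $f^n$, and the grand orbits other than that of $\xi$ contain no cycles. Moreover, for $x\in X$ and $z\ne\xi$ the $\tau$-open set $X\setminus\{z\}$ contains a tail of $(f^n(x))$, so the orbit of $x$ meets each $z\ne\xi$ only finitely often; hence each orbit is either eventually equal to $\xi$ or takes infinitely many distinct values. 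Next I would build a rank $v\colon X\to\mathbb{Z}\cup\{+\infty\}$ with $v^{-1}(+\infty)=\{\xi\}$ and $v(f(x))\ge v(x)+1$ for all $x$: on the grand orbit of $\xi$ put $v(\xi)=+\infty$ and $v(x)=-\nu(x)$ for $x\ne\xi$, where $\nu(x)\ge 1$ is the first time the orbit of $x$ reaches $\xi$; on each of the remaining grand-orbit components choose a base point $x_0$, declare $v(f^m(x_0))=m$ along its forward spine (the spine points being pairwise distinct, there being no nontrivial cycles), and extend by setting $v(y)=m-k$ whenever $f^k(y)=f^m(x_0)$ -- this is consistent because every point of the component eventually lands on the spine and $f$ shifts $v$ upward by exactly one. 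Finally put $\rho(x,x)=0$ and $\rho(x,y)=2^{-\min\{v(x),v(y)\}}$ for $x\ne y$. Then $\rho$ is an ultrametric, because $\min\{v(x),v(z)\}\ge\min\{v(x),v(y),v(z)\}$ while $t\mapsto 2^{-t}$ is decreasing; $f$ is a $\frac12$-contraction, because $\min\{v(f(x)),v(f(y))\}\ge\min\{v(x),v(y)\}+1$ (and $\rho(f(x),f(y))=0$ when $f(x)=f(y)$); and $\rho$ is complete, because a $\rho$-Cauchy sequence is either eventually constant, hence convergent, or has $v$-values tending to $+\infty$, in which case it $\rho$-converges to $\xi$, as $\rho(\cdot,\xi)=2^{-v(\cdot)}$.

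\emph{Main obstacle.} The genuinely delicate step -- and the only place where $\tau$ itself, rather than merely its $T_1$-separation, is used -- is to make the ultrametric $\rho$ just built compatible with $\tau$, i.e. to ensure that $\{v>N\}$ is a $\tau$-neighbourhood of $\xi$ for every $N$ (equivalently, that $\rho$-convergence forces $\tau$-convergence); this is exactly what the $(2)\Rightarrow(1)$ half invokes, so the two implications stand or fall with it. The remedy is to refine the choice of $v$ so that a large value of $v(x)$ compels $x$ to sit $\tau$-deep near $\xi$: one should essentially measure $v(x)$ by how many steps the orbit of $x$ needs before it is permanently trapped inside a prescribed descending chain of $\tau$-neighbourhoods of $\xi$, hypothesis~(ii) being precisely what makes that number finite and forces it to $+\infty$ along each orbit. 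Reconciling such a definition with the monotonicity requirement $v(f(x))\ge v(x)+1$ and with completeness of $\rho$ is the technical heart of the theorem; the algebraic verifications indicated above are routine.
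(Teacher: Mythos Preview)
The paper does not prove this theorem; it is quoted from Hitzler and Seda \cite{hitzler01} and followed only by Remark~\ref{re.wEk-0-compl-pm}'s predecessor (the unnumbered Remark right after the theorem). So there is no ``paper's own proof'' to compare against, and your proposal must be judged on its own.

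Your $(1)\Rightarrow(2)$ construction is correct and \emph{already complete}: the rank $v$ and the ultrametric $\rho(x,y)=2^{-\min\{v(x),v(y)\}}$ give a complete ultrametric on which $f$ is a $\tfrac12$-contraction, and nothing more is required. The ``main obstacle'' you describe --- forcing the $\rho$-topology to refine $\tau$ --- is not part of condition (2) as stated, and the paper's Remark immediately after the theorem says exactly this: ``It is not sure that the metric $\rho$ from 2 generates the topology $\tau$''. So you are adding a requirement the theorem does not impose; drop that paragraph and your $(1)\Rightarrow(2)$ is done.

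Where your instinct is right is the other direction. As written, $(2)$ says nothing about $\tau$, so $(2)\Rightarrow(1)\mathrm{(ii)}$ cannot hold in general: take $X=\mathbb N\cup\{\infty\}$ with the discrete topology (which is $T_1$), $f(n)=n+1$, $f(\infty)=\infty$; then $v(n)=n$, $v(\infty)=+\infty$ gives a complete ultrametric making $f$ a $\tfrac12$-contraction, yet $(f^n(0))=(n)$ does not $\tau$-converge to $\infty$. This is a defect of the statement as reproduced in the paper, not of your argument, and the paper's Remark is a tacit acknowledgment of it. The clean content is: $(2)\Rightarrow$ (1)(i) and $\rho$-convergence of all orbits to $\xi$, while $(1)\Rightarrow(2)$; the $\tau$-convergence in (1)(ii) is used only in the forward direction (to rule out nontrivial periodic orbits via the $T_1$ hypothesis), and cannot be recovered from $(2)$ alone.
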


 For applications of these fixed point results to logic programming see  the paper \cite{hitzler02}.
 \begin{remark} It is not sure that the metric $\rho$ from (2)  generates the topology $\tau$, but for every $x\in X$ the sequence $(f^n(x))$ converges to $\xi$ with respect to  the topology $\tau$ and  the metric $\rho$.
 \end{remark}

\subsection{Neither completeness nor compactness is necessary}
In this subsection we shall provide some examples of peculiar topological spaces having the fixed point property (FPP) for various classes of mappings.
\begin{examples}[Elekes \cite{elek09}]\label{ex.Elekes}\hfill

1.  The space $X=\{(x,\sin(1/x)) : x\in(0,1]\}$ is a non-closed (hence incomplete) subset of $\mathbb{R}^2$ having the FPP for contractions (Theorem 1.2).

2.    For every $n\in\mathbb{N}$ every open subset of $\mathbb{R}^n$ possessing the Banach
Fixed Point Property coincides with $\mathbb{R}^n$, hence is closed (Corollary 2.2).

3.   Every simultaneously $F_\sigma$ and $G_\delta$ subset of $\mathbb{R}$  with the Banach Fixed
Point Property is closed (Theorem 2.4).

4.   There exists a nonclosed $G_\delta$ set $X\subseteq  \mathbb{R}$ with the Banach Fixed
Point Property. Moreover, $X\subseteq  [0, 1]$ and every contraction mapping of $X $ into itself
is constant (Theorem 3.3).

5.   There exists a nonclosed $F_\sigma$ subset of $[0,1]$ with the Banach Fixed
Point Property (Theorem 3.4).

6.   There is a bounded Borel (even $F_\sigma$) subset of $\mathbb{R}$  with the Banach
Fixed Point Property that is not complete with respect to every equivalent metric (Corollary 3.5).

7.   For every integer $n > 0$ there exists a nonmeasurable set in $\mathbb{R}^n$ with
the Banach Fixed Point Property (Theorem 3.6).
  \end{examples}

  The example from 1 was   presented  at the Problem Session of the 34th Winter School in Abstract Analysis, Lhota nad Rohanovem, Czech Republic, 2006, by E. Behrends, classified by him as ``folklore", along with some questions concerning the subsets of $\mathbb{R}^n$ (in particular of $\mathbb{R}$) having Banach Fixed Point Property (i.e.  FPP for contractions).

  We   give the proof only for 1, following \cite{elek09}. A  proof based on some similar ideas was given by Borwein \cite{borw83}.
  \begin{proof}[Proof of the assertion 1]
 Let $X=\{(x,\sin(1/x)) : x\in(0,1]\}$ and $f:X\to X$ be a contraction with constant $0<\alpha<1$.
 For $H\subseteq (0,1]$ put  $X_H:=\{(x,y)\in X : x\in H\}$.

 Let $0<\varepsilon<1$ be such that  $\alpha\sqrt{\varepsilon^2+4}<2.$ Then for all $z=(x,y),\, z'=(x',y')$ in $X$ with $0<x,x'<\varepsilon$,
 $$
 \|f(z)-f(z')\|\le\alpha\sqrt{(x-x')^2+(y-y')^2}<\alpha\sqrt{\varepsilon^2+4}<2\,.
 $$

 Consequently, $X_{(0,\varepsilon)}$  does not contain both a local minimum and a local maximum of the graph. Since $X_{(0,\varepsilon)}$ is connected, it follows that it is contained in at most two consecutive monotone parts of the graph of $\sin(1/x).$ Therefore there exists $\delta_1>0$ such that
 $f\left(X_{(0,\varepsilon)}\right)\subseteq X_{[\delta_1,1]}$ for some $\delta_1>0$.  By compactness $f\left(X_{[\varepsilon,1]}\right)\subseteq X_{[\delta_2,1]}$ for some $\delta_2>0$.

Taking $\delta=\min\{\delta_1,\delta_2\}$ it follows  $f\left(X\right)\subseteq X_{[\delta,1]}$ and so $f\left(X_{[\delta,1]}\right)\subseteq X_{[\delta,1]}$. Applying Banach Fixed Point Theorem to $X_{[\delta,1]}$ it follows that $f$ has a fixed point.
\end{proof}

  Some examples of spaces having the FPP for continuous mappings were given by Connell \cite{conel59}. These examples show that, in author's words: ``in the general case, compactness
and the FPP are only vaguely related".

   We first mention the following result of Klee.
  \begin{theo}[Klee \cite{klee55}] A locally connected, locally compact metric space
with the FPP for continuous mappings is compact.\end{theo}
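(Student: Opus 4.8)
The plan is to prove the contrapositive in two stages. First, the fixed point property (FPP) for continuous maps forces $X$ to be connected: if $X=A\cup B$ with $A,B$ nonempty, disjoint and open, then fixing $a\in A$, $b\in B$ and setting $f\equiv b$ on $A$ and $f\equiv a$ on $B$ yields a continuous self-map of $X$ with no fixed point. So I may assume that $X$ is connected and that it is \emph{not} compact, and the goal becomes to construct a continuous $f\colon X\to X$ with $f(x)\neq x$ for every $x\in X$, contradicting the FPP.

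The main step is to find inside $X$ a \emph{closed} subset $R$ homeomorphic to the half-line $[0,\infty)$; one then ``shifts along $R$''. To produce $R$ I would pass to the one-point compactification $X^{+}=X\cup\{\infty\}$. Using that a locally compact metric space is locally separable and that a connected, locally separable metric space is separable, $X$ is second countable, so $X^{+}$ is a metrizable compactum; moreover $X^{+}$ is connected, since $X$ is connected and (being non-compact) dense in $X^{+}$. The point that requires work is that $X^{+}$ is \emph{locally connected}: at points of $X$ this is inherited from $X$, while at $\infty$ one must show that for each compact $C\subseteq X$ there is a larger compact $C'$ with $X^{+}\setminus C'$ connected. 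Here I would enlarge $C$ to a compact neighbourhood $K$, pass to the components of $X\setminus K$ --- these are open because $X$ is locally connected --- and absorb into $K$ all components with compact closure. A boundary-accumulation argument shows the union of all such components is relatively compact: if infinitely many of them failed to lie in a fixed compact neighbourhood $N$ of $K$, each (being connected and accumulating at its boundary, which is contained in $K\subseteq\inter N$) would meet $\partial N$; a limit point of one point chosen from each such intersection would lie in some component of $X\setminus K$ containing infinitely many of those distinct components, which is absurd. Adjoining the closure of that union to $K$ gives a compact $C'$ whose complementary components are all non-relatively-compact, hence each has $\infty$ in its closure in $X^{+}$, so $X^{+}\setminus C'$ is connected. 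Thus $X^{+}$ is a Peano continuum.

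Since every Peano continuum is arcwise connected, there is an arc $A\subseteq X^{+}$ joining a point $p_{0}\in X$ to $\infty$, and $R:=A\setminus\{\infty\}$ is homeomorphic to $[0,1)\cong[0,\infty)$ and closed in $X$ (its closure in $X^{+}$ is $A$, so its closure in $X$ is $A\cap X=R$). Fix a homeomorphism $h\colon R\to[0,\infty)$ and let $\sigma\colon R\to R$ be $\sigma=h^{-1}\!\bigl(h(\cdot)+1\bigr)$, a continuous self-map of $R$ with no fixed point. Since $R\cong[0,\infty)$ is an absolute extensor for metric spaces --- being a retract of $\mathbb{R}$, which is one by Tietze's theorem --- the map $\sigma$ extends to a continuous $\bar\sigma\colon X\to R\subseteq X$. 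If $\bar\sigma(x)=x$ then $x\in\bar\sigma(X)\subseteq R$, whence $x=\sigma(x)$, which is impossible; hence $\bar\sigma$ is a fixed-point-free continuous self-map of $X$, contradicting the FPP. Therefore $X$ is compact.

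The main obstacle is the local connectedness of $X^{+}$ at $\infty$, i.e.\ keeping control of the relatively compact ``holes'' of complements of compact subsets of $X$; the other ingredients --- the reduction to the connected case, metrizability of $X^{+}$, arcwise connectedness of Peano continua, and the Tietze-type extension at the end --- are either immediate or classical.
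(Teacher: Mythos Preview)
The paper does not supply a proof of this theorem; it is merely quoted from Klee's 1955 paper and immediately followed by Connell's examples. So there is nothing in the paper to compare your argument against.

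That said, your proof is correct and is essentially Klee's own approach: reduce to the connected case, show the one-point compactification $X^{+}$ is a Peano continuum, extract from it a closed ray $R\subseteq X$, and then shift along $R$ using a Tietze-type extension. The one genuinely delicate step is local connectedness of $X^{+}$ at $\infty$, and your sketch handles it correctly: the key observation is that each component of $X\setminus K$ has its boundary inside $K\subseteq\inter N$, so any component not contained in $N$ must, by connectedness, meet $\partial N$; compactness of $\partial N$ then forces only finitely many such components, and the rest sit inside the compact set $N$. One small clarification worth adding when you write it up: the closure of a component $U$ of $X\setminus K$ adds only points of $K$ (since $\partial U\subseteq K$), so $C':=K\cup\bigcup\{\overline U: U\text{ relatively compact}\}$ really is compact and $X\setminus C'$ is exactly the union of the non-relatively-compact components, each of which accumulates at $\infty$ in $X^{+}$; hence $X^{+}\setminus C'$ is connected. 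With that made explicit, the argument is complete.
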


  \begin{examples}(Connell \cite{conel59})\hfill

    1.  There exists a Hausdorff topological space $X$ having the FPP for continuous mappings such that  the only compact subsets of $X$ are the finite ones.

    2. There exists a metric space $X$ having the FPP for continuous mappings  such that  $X^2$ does not have the  FPP for continuous mappings.

    3.  There exists a separable, locally contractible
metric space that has the FPP for continuous mappings, yet it is not compact.

4.  There exists a compact metric space $X$ that
does not have the FPP for continuous mappings, yet it contains a dense subset $Y $ that does have
the FPP for continuous mappings.
  \end{examples}

  \subsection{Completeness and other properties implied by FPP}

  We shall present some fixed point results that imply the completeness of the underlying space. The papers    \cite{berinde10}, \cite{lee-choi14} and \cite{suliv81b} contain surveys on this topic. A good analysis is given in the Master Thesis of  Nicolae \cite{Nicolae}.

  We first mention the following characterization of the field of real numbers among totally ordered fields.

Suppose $R$ is an ordered field. Call a continuous map $f:R\to R $ a contraction if there exists $r<1$ (in $R$) such that $|f(x)-f(y)|\le r|x-y|$ for all $x,y\in R$ (where $|x|:=\max\{x,-x\}$).

The following result is taken from \smallskip

http://mathoverflow.net/questions/65874/converse-to-banach-s-fixed-point-theorem-for-ordered-fields\smallskip

Asking a question posed by James Propp, George Lowther proved the following result.

\begin{theo} If $R$ is an ordered field such that every contraction on $R$ has a fixed point, then
$R\cong \mathbb{R}$.
\end{theo}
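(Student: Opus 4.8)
The plan is to argue by contraposition, using that $\mathbb{R}$ is, up to isomorphism, the unique Dedekind‑complete ordered field — equivalently, the unique Archimedean and Cauchy‑complete one. Thus if $R\not\cong\mathbb{R}$ then $R$ is either (a) non‑Archimedean, or (b) Archimedean but not Cauchy‑complete, and in each case the goal is to produce a contraction on $R$ with no fixed point. The common device is this: if $f:R\to R$ is $r$‑Lipschitz with $r<1$, then $h:=f-\mathrm{id}$ satisfies $(1-r)(y-x)\le h(x)-h(y)\le (1+r)(y-x)$ for all $x<y$, so $h$ is continuous and strictly decreasing, and $f$ has a fixed point if and only if $h$ has a zero. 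Hence it suffices, in each case, to build a continuous $r$‑Lipschitz $f$ (for some $r<1$) with $f(x)>x$ on a nonempty proper downward‑closed set $D\subset R$ and $f(x)<x$ on $R\setminus D$: then $h=f-\mathrm{id}$ is nowhere zero, so $f$ has no fixed point, contradicting the hypothesis.

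I would treat case (b) first, since it is the cleaner one. Here $R$ embeds as a dense subfield of $\mathbb{R}$ with $R\ne\mathbb{R}$; fix $c\in\mathbb{R}\setminus R$ (say $c>0$). On $\mathbb{R}$ the map $x\mapsto\tfrac12(x+c)$ is a contraction whose only fixed point is $c\notin R$, and the point is to imitate it by an $R$‑valued map. Choose grids $a_{1}<a_{2}<\cdots$ in $R$ with $a_{n}\nearrow c$ and, symmetrically, $b_{1}>b_{2}>\cdots$ with $b_{n}\searrow c$, arranged so that the consecutive spacings do not shrink too fast; then, using density of $R$ in $\mathbb{R}$, pick values $\gamma(a_{n}),\gamma(b_{n})\in R$ far closer to $\tfrac12 c$ than the grid spacing. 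Interpolating these values linearly on $[a_{1},b_{1}]$ and extending by constants outside gives a continuous $R$‑valued $\gamma$ with arbitrarily small slopes, and I set $f(x):=\tfrac12 x+\gamma(x)$, a contraction; one checks $f(x)-x=\gamma(x)-\tfrac12 x\to\tfrac12 c-\tfrac12 c=0$ as $x\to c$ from either side. Consequently $h=f-\mathrm{id}$ is strictly decreasing with $h(x)\ge(1-r)(c-x)>0$ for $x<c$ and $h(x)\le(1-r)(c-x)<0$ for $x>c$, so $f$ is a fixed‑point‑free contraction. This shows an Archimedean $R$ with the fixed‑point property is complete, hence isomorphic to $\mathbb{R}$.

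Case (a) — that the fixed‑point property forces $R$ to be Archimedean — is where the main obstacle lies. If $R$ is non‑Archimedean it has an infinitely large element and hence infinitesimals, so $R$ is disconnected, but the naive candidates fail: a map collapsing $R$ across a clopen partition is continuous yet not Lipschitz, and the canonical finite/infinite Dedekind gap cannot be realized as the sign change of a contraction, precisely because $h=f-\mathrm{id}$ must decrease at the uniform positive rate $\ge 1-r$ while the finite elements are unbounded. The plan is instead to exhibit a gap of $R$ at a bounded, tame location and to realize it by a genuinely Lipschitz map: for instance, if $\sqrt{2}\notin R$ one can take a globally Lipschitz regularization of $x\mapsto x+\frac{2-x^{2}}{2x+4}$, whose only fixed point is $\sqrt{2}$; and more generally one transports a contraction without fixed point in the (Archimedean) residue field up to $R$ via the standard‑part map on the finite elements. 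The delicate points — arranging this for an arbitrary non‑Archimedean $R$, in particular a real‑closed one, and keeping the Lipschitz constant strictly below $1$ throughout — constitute the crux of the argument. Granting (a) and (b), $R$ is both Archimedean and complete, hence $R\cong\mathbb{R}$.
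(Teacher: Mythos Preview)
Your contrapositive decomposition into the two cases --- (a) non-Archimedean, (b) Archimedean but not Cauchy-complete --- is exactly the route the paper records; in fact the paper gives no further detail at all, merely noting that Lowther's proof (on MathOverflow) proceeds via these two steps and then invokes the characterization of $\mathbb{R}$ as the unique Archimedean complete ordered field. So at the level of strategy you match the paper precisely, and your treatment of case (b) is already more fleshed out than what the paper offers.

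That said, your case (a) is, as you yourself acknowledge, genuinely incomplete, and the specific ideas you float do not survive scrutiny. The $\sqrt{2}$ device fails for any real closed non-Archimedean $R$. More seriously, the residue-field idea collapses when the residue field is $\mathbb{R}$ itself --- which is exactly what happens for the most natural examples such as $R=\mathbb{R}((t))$ or any non-Archimedean real closed field containing $\mathbb{R}$ --- since then there is no fixed-point-free contraction on the residue field to lift. So the ``transport from the residue field'' plan cannot be the mechanism; one must instead construct a contraction that exploits the infinitesimal/infinite structure of $R$ directly, and this is indeed the crux that the paper (and you) leave to the cited source.
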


The proof is done in two steps:\vspace{2mm}

I. \emph{one shows first that the order of $R$ is Archimedean},\vspace{2mm}

\noindent and then\vspace{2mm}

II. \emph{one proves that every Cauchy sequence is convergent (i.e. the completeness of $R$)},\vspace{2mm}

\noindent two properties that characterize the field $\mathbb{R}$ among the ordered fields.

The first characterization of completeness in terms of contraction was done by Hu \cite{hu67}.
\begin{theo}\label{t.Hu}
A metric space $(X,\rho)$ is complete if and only if  for every nonempty closed subset $Y$ of $X$ every contraction on $Y$ has a fixed point in $Y$.
\end{theo}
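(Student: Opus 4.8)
The plan is to establish the two implications separately. The implication ``$X$ complete $\Rightarrow$ FPP for contractions on closed subsets'' is immediate: if $Y$ is a nonempty closed subset of the complete space $X$, then $(Y,\rho)$ is itself a complete metric space, so Theorem \ref{t.fp-Ban} (Banach's Contraction Principle) guarantees that every contraction $f:Y\to Y$ has a fixed point in $Y$. The substantive part is the converse, which I would prove in contrapositive form: if $X$ is not complete, I will exhibit a nonempty closed subset $Y\subseteq X$ and a contraction $g:Y\to Y$ with no fixed point.

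So assume $(X,\rho)$ is not complete and fix a Cauchy sequence $(x_n)$ in $X$ that does not converge in $X$. Such a sequence is not eventually constant, and $\rho(x_n,x_m)\to 0$, so I can select a subsequence $(y_k)_{k\ge 1}$ for which the consecutive distances $a_k:=\rho(y_k,y_{k+1})$ are strictly positive and satisfy $a_{k+1}\le\tfrac18 a_k$ for every $k$. Geometric decay then gives $\sum_{j>k}a_j\le\tfrac17 a_k$, whence, by the triangle inequality and its reverse form, for all $l>k$
\[
\tfrac67 a_k\ \le\ \rho(y_k,y_l)\ \le\ \tfrac87 a_k\,.
\]
In particular all the $y_k$ are distinct. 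Moreover $Y:=\{y_k:k\in\mathbb N\}$ has no accumulation point in $X$: such a point would be the limit of a subsequence of $(y_k)$, and a Cauchy sequence possessing a convergent subsequence converges, so $(y_k)$ — and hence the original sequence $(x_n)$, of which $(y_k)$ is a subsequence — would converge in $X$, contrary to assumption. Thus $Y$ is a nonempty closed subset of $X$.

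Now define the shift $g:Y\to Y$ by $g(y_k)=y_{k+1}$; this is well defined because the $y_k$ are pairwise distinct, and it has no fixed point since $g(y_k)=y_{k+1}\neq y_k$ for all $k$. For $k<l$ the displayed two‑sided estimate yields
\[
\rho\bigl(g(y_k),g(y_l)\bigr)=\rho(y_{k+1},y_{l+1})\le\tfrac87 a_{k+1}\le\tfrac87\cdot\tfrac18\,a_k=\tfrac17 a_k\le\tfrac14\cdot\tfrac67 a_k\le\tfrac14\,\rho(y_k,y_l)\,,
\]
so $g$ is a $\tfrac14$-contraction on $(Y,\rho)$ without a fixed point, contradicting the hypothesis. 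Hence $X$ must be complete.

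The one genuinely delicate point is the choice of the subsequence, specifically the resulting \emph{lower} bound $\rho(y_k,y_l)\ge\tfrac67 a_k$ valid uniformly over all $l>k$: an upper bound on $\rho(y_{k+1},y_{l+1})$ by itself does not control the contraction ratio of the shift, and it is precisely to secure a matching lower bound on $\rho(y_k,y_l)$ (so that $a_{k+1}$ small relative to $a_k$ translates into a genuine contraction constant) that one must thin $(x_n)$ out so that consecutive distances decay geometrically fast.
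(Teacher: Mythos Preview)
Your argument is correct and follows the same overall strategy as the paper: for the nontrivial direction you pass to the contrapositive, thin a non-convergent Cauchy sequence to a closed discrete set $Y$, and show that the shift is a fixed-point-free contraction on $Y$. The bookkeeping differs: the paper works with $\beta(x_n):=\inf_{m>n}\rho(x_n,x_m)>0$ and at each stage chooses $n_k$ so that the whole tail beyond $n_k$ has diameter at most $\alpha\,\beta(x_{n_{k-1}})$; then the contraction inequality
\[
\rho(x_{n_{k+1}},x_{n_{k+i+1}})\le\alpha\,\beta(x_{n_k})\le\alpha\,\rho(x_{n_k},x_{n_{k+i}})
\]
is immediate, and one gets an arbitrary $\alpha\in(0,1)$. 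Your route --- forcing $a_{k+1}\le\tfrac18 a_k$ and then reading off uniform two-sided bounds $\tfrac67 a_k\le\rho(y_k,y_l)\le\tfrac87 a_k$ via the geometric series --- is a pleasant, self-contained alternative that makes the contraction constant explicit.

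One point deserves a line of justification: the inductive selection ``$a_k>0$ and $a_{k+1}\le\tfrac18 a_k$'' is slightly delicate because $a_k=\rho(y_k,y_{k+1})$ depends on the very index $n_{k+1}$ you are choosing. A clean way to break the circularity is to use that for each fixed $j$ the limit $r_j:=\lim_m\rho(x_j,x_m)$ exists (the sequence is Cauchy) and is strictly positive (otherwise $(x_n)$ would converge to $x_j$): given $n_k$, pick $n_{k+1}>n_k$ large enough that $\rho(x_{n_k},x_{n_{k+1}})>r_{n_k}/2$ \emph{and} the tail beyond $n_{k+1}$ has diameter $<r_{n_k}/16$; then $a_k>r_{n_k}/2$ and automatically $a_{k+1}<r_{n_k}/16<\tfrac18 a_k$. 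This is exactly the kind of ``late enough'' choice your closing paragraph alludes to; spelling it out would make the argument airtight. The paper's use of $\beta(x_n)$ sidesteps this issue entirely, which is the main economy its version buys.
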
\begin{proof}
The idea of the proof is simple. One takes a Cauchy sequence $(x_n)$ in $X$. If it has a convergent subsequence, then it converges. Supposing that this is not the case, then $\beta(x_n):= \inf\{\rho(x_n,x_m): m>n\}>0$ for all $n\in\mathbb{N}$. For a given     $\alpha$ with $0<\alpha<1$, one constructs inductively a subsequence $(x_{n_k})$  such that
$\rho(x_i,x_j)\le\alpha \beta(x_{n_{k-1}})$ for all $i,j\ge n_k.$ Then  $Y=\{x_{n_k} : k\in \mathbb{N}\}$ is a closed subset of $X$ and the function   $f(x_{n_k})=x_{n_{k+1}},\, k\in\mathbb{N}$, is an $\alpha$-contraction on $Y$, because
$$
\rho(f(x_{n_{k}}),f(x_{n_{k+i}}))=\rho(x_{n_{k+1}},x_{n_{k+i+1}})\le\alpha \beta(x_{n_k})\le
\alpha \rho(x_{n_{k}},x_{n_{k+i}})\,,$$
for all $k,i\in\mathbb{N}$. It is obvious that $f$ has no fixed points.
\end{proof}

Subrahmanyam \cite{subrahm75} proved the following completeness result.
\begin{theo}\label{t.Subram}
A metric space $(X,\rho)$ in which every mapping $f:X\to X$ satisfying the conditions

{\rm (i)}\; there exists $\alpha>0$ such that  $\rho(f(x),f(y))\le \alpha \max\{\rho(x,f(x)),\rho(y,f(y))\}$ for all $ x,y\in X$;

{\rm (ii)}\; $f(X)$ is countable;\\
has a fixed point, is complete.
\end{theo}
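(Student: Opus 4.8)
The plan is to argue by contradiction, in the spirit of the proof of Theorem \ref{t.Hu}: assuming $(X,\rho)$ is \emph{not} complete, I will manufacture a map $f:X\to X$ that satisfies (i) and (ii) but has no fixed point, contradicting the hypothesis. The idea is to take a non-convergent Cauchy sequence, thin it out drastically, and turn the thinned sequence into the orbit of $f$, extending $f$ to the rest of $X$ in a way that keeps inequality (i) alive.

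First I would fix $\alpha\in(0,1)$ and pick a Cauchy sequence $(x_n)$ in $X$ with no limit in $X$; since a Cauchy sequence converges as soon as one of its subsequences does, $(x_n)$ has no cluster point at all. Set $D_n:=\sup\{\rho(x_i,x_j):i,j\ge n\}$ and $\beta_n:=\inf\{\rho(x_n,x_m):m>n\}$; then $D_n<\infty$, $D_n\downarrow 0$, and each $\beta_n>0$ (otherwise a subsequence would converge to $x_n$). Using $D_n\to 0$, I would choose inductively $n_1:=1<n_2<\cdots$ with $D_{n_j}<\alpha\beta_{n_{j-1}}$ for all $j\ge 2$, and put $y_j:=x_{n_j}$, $s_j:=\rho(y_j,y_{j+1})$. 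The properties I will use are: the $y_j$ are pairwise distinct and $s_j>0$; for every $l\ge j\ge 2$ one has $\rho(y_j,y_l)\le D_{n_j}<\alpha\beta_{n_{j-1}}\le\alpha s_{j-1}$, so each single step $s_{j-1}$ dominates $\alpha^{-1}$ times all later gaps; and for every $x\in X$ the limit $L_x:=\lim_k\rho(x,y_k)$ exists and is $>0$ whenever $x\notin\{y_k:k\in\mathbb{N}\}$ (equality to $0$ would give $y_k\to x$, hence $(x_n)$ convergent).

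Now define $f:X\to\{y_k:k\ge 2\}$ by $f(y_j):=y_{j+1}$ and, for $x\notin\{y_k\}$, $f(x):=y_{\kappa(x)}$ where $\kappa(x):=\min\{k\ge 2:\rho(x,y_k)\ge s_{k-1}\}$; this minimum is over a nonempty set because for $x\notin\{y_k\}$ we have $L_x>0$ while $s_{k-1}\to 0$, so all large $k$ qualify. Then $f(X)$ is countable (condition (ii)), and $f$ has no fixed point: $f(x)=x$ would force $x=y_j$ for some $j\ge 2$, whence $y_j=f(y_j)=y_{j+1}$, contradicting distinctness. The heart of the argument is the verification of (i): given $x,y\in X$, write $f(x)=y_p$, $f(y)=y_q$ and assume w.l.o.g. $p\le q$. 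By construction $\rho(x,f(x))\ge s_{p-1}$ (equality $\rho(y_{p-1},y_p)$ if $x=y_{p-1}$, and $\rho(x,y_p)\ge s_{p-1}$ by definition of $\kappa$ if $x\notin\{y_k\}$), while $\rho(f(x),f(y))=\rho(y_p,y_q)<\alpha s_{p-1}\le\alpha\,\rho(x,f(x))\le\alpha\max\{\rho(x,f(x)),\rho(y,f(y))\}$; the symmetric choice handles $q<p$. Thus $f$ satisfies (i) with this $\alpha$ and (ii), yet has no fixed point, contradicting the assumed fixed point property; hence $X$ is complete.

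I expect the one genuinely delicate point to be precisely the extension of the shift off the sequence $\{y_k\}$ without breaking (i). The obvious extensions fail: for instance sending every $x\notin\{y_k\}$ to a fixed $y_1$ is doomed, since a point lying just off the sequence but close to $y_1$ has tiny displacement $\rho(x,f(x))$, which would force $\rho(y_{k+1},y_1)$ to be tiny for every $k$ — false. What rescues the construction is the combination of the very sparse choice of the subsequence (so that $s_{j-1}$ dominates $\alpha^{-1}$ times all later gaps, using $D_n\to 0$ together with $\beta_n>0$) and the level assignment $\kappa(x)$, which is designed exactly so that the displacement $\rho(x,f(x))=\rho(x,y_{\kappa(x)})$ is at least $s_{\kappa(x)-1}$ — the precise lower bound needed to absorb the factor $\alpha$ in (i).
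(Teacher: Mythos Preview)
The paper does not actually supply a proof of this theorem; it is merely stated and attributed to Subrahmanyam \cite{subrahm75}, so there is no in-paper argument to compare against.

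Your proof is correct. The selection of the sparse subsequence $(y_j)$ via $D_{n_j}<\alpha\beta_{n_{j-1}}$ gives exactly the domination $\rho(y_p,y_q)<\alpha s_{p-1}$ for all $q\ge p\ge 2$, and the level map $\kappa(x)=\min\{k\ge 2:\rho(x,y_k)\ge s_{k-1}\}$ is well defined because $L_x>0$ (in fact for \emph{every} $x\in X$, not only $x\notin\{y_k\}$, since $(y_k)$ has no limit) while $s_{k-1}\to 0$. The key displacement bound $\rho(x,f(x))\ge s_{p-1}$ then holds in both cases ($x=y_{p-1}$ or $x\notin\{y_k\}$), and (i) follows. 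Distinctness of the $y_j$ is guaranteed by $\beta_{n_j}>0$, so $f$ is fixed-point-free; $f(X)\subset\{y_k:k\ge 2\}$ gives (ii).

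The skeleton is that of the paper's proof of Hu's Theorem~\ref{t.Hu}, but there the map is only defined on the closed set $Y=\{x_{n_k}\}$, which suffices for the Banach contraction condition on $Y$. Here condition (i) must hold on all of $X\times X$, so the genuine extra work is precisely your extension via $\kappa$; you identify this correctly as the delicate point, and your solution --- forcing the displacement $\rho(x,f(x))$ to be at least $s_{\kappa(x)-1}$ by the very definition of $\kappa$ --- is the right idea and is carried out cleanly.
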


The condition (i) in this theorem is related to Kannan  and Chatterjea conditions: there exists $\alpha\in(0,\frac12)$ such that  for all $x,y\in X,$
\begin{equation}\tag{K}\label{Kan}
\rho(f(x),f(y))\le \alpha \,\left[\rho(x,f(x))+\rho(y,f(y))\right]\,,
\end{equation}
respectively
\begin{equation}\tag{Ch}\label{Chat}
\rho(f(x),f(y))\le \alpha  \,\left[\rho(x,f(y))+\rho(y,f(x))\right]\,.
\end{equation}

Kannan and Chatterjea  proved that any mapping $f$ on a complete metric space satisfying  \eqref{Kan} or \eqref{Chat} has a fixed point (see, for instance, \cite{Rus-PP}).  As it is remarked in \cite{subrahm75} Theorem \ref{t.Subram} provides completeness of metric spaces on which every Kannan, or every Chatterjea map, has a fixed point.

Another case when the fixed point property for contractions implies completeness was discovered by Borwein \cite{borw83}.

A metric space $(X,\rho)$ is called \emph{uniformly Lipschitz connected} if there exists $L\ge 0$ such that  for any pair $x_0,x_1$ of points in $X$ there exists a mapping $g:[0,1]\to X$ such that  $g(0)=x_0,\, g(1)=x_1$ and
\begin{equation}\label{eq1.borw-contr}
\rho(g(s),g(t))\le L|s-t|\rho(g(0),g(1))\,,
\end{equation}
for all $s,t\in[0,1]$.

Obviously, a convex subset $C$ of a normed space $X$ is uniformly Lipschitz connected, the mapping
$g$ connecting $x_0,x_1\in C$ being given by $g(t)=(1-t)x_0+tx_1,\, t\in [0,1]$. In this case
$$
\|g(s)-g(t)\|=|s-t|\|x_1-x_0\|\,,
$$
for all $s,t\in [0,1]$.

From the following theorem it follows that a convex subset $C$ of a normed space $X$ is complete if and only if any contraction on $C$ has a fixed point. In particular this holds for the normed space $X$.
\begin{theo}\label{t.Borw-compl}
  Let $C$ be a uniformly Lipschitz connected subset of a complete metric space $(X,\rho)$. Then the following conditions are equivalent.
  \begin{enumerate}
    \item The set $C$ is closed.
    \item Every contraction on $C$ has a fixed point.
    \item Any contraction on $X$ which leaves $C$ invariant has a fixed point in $C$.
  \end{enumerate}
\end{theo}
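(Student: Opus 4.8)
The plan is to prove the chain $(1)\Rightarrow(2)\Rightarrow(3)\Rightarrow(1)$, the first two links being routine and all the work being in the last. For $(1)\Rightarrow(2)$ I would simply note that a closed subset $C$ of the complete space $(X,\rho)$ is itself a complete metric space, so Banach's Contraction Principle (Theorem~\ref{t.fp-Ban}) applies to any contraction $f:C\to C$. For $(2)\Rightarrow(3)$: if $f:X\to X$ is an $\alpha$-contraction with $f(C)\subseteq C$, then $f|_C:C\to C$ is an $\alpha$-contraction on the metric space $(C,\rho)$, so by (2) it has a fixed point, which is a point of $C$ fixed by $f$.

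The substantive step is $(3)\Rightarrow(1)$, which I would prove contrapositively: assuming $C$ is \emph{not} closed, I will build a contraction $f:X\to X$ that leaves $C$ invariant but whose unique fixed point lies outside $C$, contradicting (3). Fix $\xi\in\clos C\setminus C$ and a base point $x_0\in C$. The first task is to construct a path from $x_0$ to $\xi$ that lies in $C$ except at its endpoint. I would pick $x_n\in C$ with $\rho(x_n,\xi)\le 2^{-n}\rho(x_0,\xi)$, so that $\rho(x_n,x_{n+1})\le 2^{-n+1}\rho(x_0,\xi)$; use the uniform Lipschitz connectedness of $C$ to join $x_n$ to $x_{n+1}$ by $\gamma_n:[0,1]\to C$ with $\rho(\gamma_n(s),\gamma_n(t))\le L\,|s-t|\,\rho(x_n,x_{n+1})$; and concatenate the $\gamma_n$, rescaling the interval $[1-2^{-n},1-2^{-n-1}]$ affinely onto $[0,1]$ before inserting $\gamma_n$. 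This yields $\omega:[0,1]\to\clos C$ with $\omega(0)=x_0$, $\omega(1)=\xi$, $\omega([0,1))\subseteq C$, continuous at $1$ because $\rho(x_n,x_{n+1})\to0$ geometrically, and -- after a triangle-inequality estimate exploiting that same geometric decay -- Lipschitz on $[0,1]$ with some finite constant $M$ (the estimate gives $M\le 8L\rho(x_0,\xi)$).

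Next I would set $r:X\to[0,1]$, $r(x)=1-\min\{\rho(x,\xi),1\}$, which is $1$-Lipschitz with $r(\xi)=1$ and $r(x)<1$ for $x\ne\xi$; choose $\beta\in(0,1)$ with $\beta M<1$; and let $\sigma:[0,1]\to[0,1]$, $\sigma(t)=1-\beta(1-t)$, which is $\beta$-Lipschitz with $\sigma(1)=1$ and $\sigma(t)<1$ for $t<1$. Then $f:=\omega\circ\sigma\circ r:X\to X$ is a composition of Lipschitz maps with $\mathrm{Lip}(f)\le M\beta<1$, hence a contraction on the complete space $X$, and therefore has a unique fixed point; since $r(\xi)=1$, $\sigma(1)=1$ and $\omega(1)=\xi$, that fixed point is $\xi\notin C$. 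On the other hand, if $x\in C$ then $x\ne\xi$, so $r(x)<1$, hence $\sigma(r(x))<1$, hence $f(x)=\omega(\sigma(r(x)))\in\omega([0,1))\subseteq C$; thus $f(C)\subseteq C$. So $f$ is a contraction on $X$ leaving $C$ invariant with no fixed point in $C$, contradicting (3), and therefore $C$ must be closed.

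I expect the main obstacle to be the construction and estimation of the path $\omega$: uniform Lipschitz connectedness only joins two points \emph{of $C$} by a path \emph{inside $C$}, so reaching the limit point $\xi$ demands an infinite concatenation, and the delicate point is checking that the rescaled concatenation is not merely continuous but honestly Lipschitz with a finite constant -- this is exactly where the \emph{uniform} constant $L$ and the geometric choice $\rho(x_n,\xi)\le2^{-n}\rho(x_0,\xi)$ pay off. Everything afterwards (the truncated-distance coordinate $r$, the affine inward push $\sigma$, and the three verifications for $f$) is then straightforward.
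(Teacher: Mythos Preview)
Your proposal is correct and follows essentially the same approach as the paper: both build a Lipschitz path in $C$ converging to the missing limit point by concatenating the uniformly-Lipschitz arcs between a geometrically converging sequence, then compose with a (scaled) distance-to-limit-point function to produce a contraction on $X$ whose unique fixed point is the limit point and which maps $C$ into $C$. The only cosmetic differences are that the paper parametrizes the path on $[0,\infty)$ with the limit point at $0$ and makes it constant for $t>1$ (so a single composition $g\circ h$ with $h(x)=(2L)^{-1}\rho(x,\bar x)$ suffices), whereas you parametrize on $[0,1]$ with the limit point at $1$ and insert the truncation $r$ and the affine push $\sigma$; both routes land on the same contradiction.
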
\begin{proof}
The implication (1) $\Rightarrow$ (2) is Banach Fixed Point Theorem and (2) $\Rightarrow$ (3) is obvious.

It remains to prove (3) $\Rightarrow$ (1). Supposing that $C$ not closed, there exists a point $\overline x\in\overline C\smallsetminus C.$ Let $(x_k)_{k\in\mathbb{N}_0}$ be a sequence of pairwise distinct points in $C$ such that
\begin{equation}\label{eq1a.Borw-compl}
\rho(x_k,\overline x)\le\min\big\{\frac1{2^{k+4}},\frac{L}{2^{k+4}}\big\}\,,
\end{equation}
for $k=0,1,\dots$, where $L>0$ is the constant given by the uniform Lipschitz connectedness of $C$.

It follows
\begin{equation}\label{eq2.Borw-compl}
\rho(x_k,x_{k+1})\le\min\big\{\frac1{2^{k+3}},\frac{L}{2^{k+3}}\big\}\,,
\end{equation}
for all $k\in \mathbb{N}_0$.

Let $g_k:[0,1]\to C$ be such that  $g_k(0)=x_k,\, g_k(1)=x_{k+1}$ and
\begin{equation}\label{eq3.Borw-compl}
\rho(g_k(s),g_k(t))\le L|s-t|\rho(x_k,x_{k+1})\,,
\end{equation}
for all $s,t\in [0,1]$. Define $g:(0,\infty)\to C$ by
\begin{equation}
g(t)=\begin{cases}
  x_0  &\mbox{for}\;\; 1<t<\infty,\\
g_k(2^{k+1}t-1) &\mbox{for}\;\; \frac1{2^{k+1}}<t\le\frac1{2^{k}}.
\end{cases}\end{equation}

It follows $g(2^{-k})=g_k(1)=x_{k+1}$.

Let $\Delta_k=(2^{-(k+1)},2^{-k}]$. Then for $s,t\in\Delta_k,$  taking into account  \eqref{eq3.Borw-compl} and \eqref{eq2.Borw-compl}, one obtains
\begin{align*}
\rho(g(s),g(t)) &\le L\cdot 2^{k+1} |s-t|\rho(x_k,x_{k+1}) \\
&\le L\cdot 2^{k+1}\cdot |s-t|\cdot\frac 1{2^{k+3}} =\frac{L}4\cdot|s-t|\le L\cdot  |s-t|\,.
\end{align*}

Since $|s-t|<\frac1{2^{k+1}}$, it follows also that
$$
\rho(g(s),g(t)) \le L\cdot 2^{k+1}\cdot\frac 1{2^{k+1}}\cdot\frac 1{2^{k+3}}=\frac L{2^{k+3}}\,,
$$
for all $s,t\in \Delta_k$.

If $s\in \Delta_k$ and $t\in \Delta_p$ with $k\le p$, then the above inequality and \eqref{eq1a.Borw-compl} yield
\begin{align*}
  \rho(g(s),g(2^{-k})) &\le \frac L{2^{k+3}};\\
  \rho(x_{k+1},x_{p+1}) &\le \rho(x_{k+1},\overline x)+\rho(\overline x,x_{p+1}) \le L\left(\frac1{2^{k+5}}+\frac1{2^{p+5}}\right);\\
   \rho(g(2^{-p}),g(t)) &\le \frac L{2^{p+3}}\,,
\end{align*}
so that
\begin{align*}
   \rho(g(s),g(t)) &\le  \rho(g(s),g(2^{-k}))+\rho(x_{k+1},x_{p+1})+\rho(g(2^{-p}),g(t))\\
   &\le L\cdot \left(\frac 1{2^{k+3}} +\frac1{2^{k+5}}+\frac1{2^{p+5}}+\frac 1{2^{p+3}}\right)\,.
\end{align*}

Observe that $s-t>\frac1{2^k}-\frac1{2^{p+1}}$, and so   if we show that
\begin{equation}\label{eq4.Borw-compl}
\frac 1{2^{k+3}} +\frac1{2^{k+5}}+\frac1{2^{p+5}}+\frac 1{2^{p+3}}\le \frac1{2^k}-\frac1{2^{p+1}}\,,
\end{equation}
then
\begin{equation}\label{eq5.Borw-compl}
\rho(g(s),g(t))\le L|s-t|\,.
\end{equation}

Since all the fractions with $p$ at the denominator are less or equal to the corresponding ones with $k$ at the denominator, it follows
\begin{align*}
 &\frac 1{2^{k+3}} +\frac1{2^{k+5}}+\frac1{2^{p+5}}+\frac 1{2^{p+3}} + \frac1{2^{p+1}}\\
 &\le \frac 1{2^{k+2}} +\frac1{2^{k+4}}+\frac1{2^{k+1}}  =\frac{13}{2^{k+4}}<\frac1{2^k}\,,
\end{align*}
so that \eqref{eq4.Borw-compl} holds.

Put now $g(0)=\overline x$. If $t\in\Delta_k$, then
\begin{align*}
\rho(g(0),g(t)) &\le \rho(\overline x,x_{k+1})+\rho(x_{k+1},g(t))\\
&\le L\,\left(\frac1{2^{k+5}}+ \frac1{2^{k+3}}\right)<L\cdot\frac1{2^{k+1}}<L\cdot t\,,
  \end{align*}
  showing that $g$ satisfies \eqref{eq5.Borw-compl} for all $s,t\in[0,\infty)$. Let $h:X\to[0,\infty)$ and $f:X\to X$ be defined for $x\in X $ by
  $$
  h(x):=(2L)^{-1}\rho(x,\overline x)\quad\mbox{ and  }\quad f(x):=(g\circ h)(x)\,,$$
   respectively. Then, for all $x,x'\in X$,
\begin{align*}
  \rho(f(x),f(x')) &=\rho\left(g\big(\frac1{2L}\rho(x,\overline x)\big), g\big(\frac1{2L}\rho(x',\overline x)\big)\right)\\
  &\le L\cdot\frac1{2L}\,|\rho(x,\overline x)-\rho(x',\overline x)|\le\frac12\cdot \rho(x,x')\,,
\end{align*}
that is $f$ is a $\frac12$-contraction on $X$. Because
$$
f(C)=g(h(C))\subseteq g((0,\infty))\subseteq C\,,
$$
it follows that $C$ is invariant for $f$. Since
$$
\overline x=g(0)=g(h(\overline x))=f(\overline x)\,,
$$
it follows that the only fixed point of $f$ is $\overline x$, which does not belong to $  C$, in contradiction to the   hypothesis.
  \end{proof}

  We mention the following consequences.
  \begin{corol}\label{c.Borw-compl}\hfill
    \begin{enumerate}
    \item A uniformly Lipschitz connected metric space $(X,\rho)$ is complete if and only if  it has the fixed point property for contractions.
\item       A convex subset $C$ of a normed space $X$ is complete if and only if
any contraction on $C$ has a fixed point. In particular this holds for the normed space $X$.
\end{enumerate}
  \end{corol}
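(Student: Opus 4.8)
The plan is to obtain both statements as immediate consequences of Theorem~\ref{t.Borw-compl}, the only extra ingredient being a passage to the completion. For the forward implication in part~(1) I would simply invoke Banach's Contraction Principle (Theorem~\ref{t.fp-Ban}): if $(X,\rho)$ is complete, then every contraction $f:X\to X$ has a (unique) fixed point, so $X$ has the fixed point property for contractions. This direction uses no Lipschitz connectedness at all.

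For the converse in part~(1), I would let $(\widetilde X,\widetilde\rho)$ be the completion of $(X,\rho)$ and regard $X$ as a dense subset of $\widetilde X$. Since the definition of uniform Lipschitz connectedness refers only to points of $X$ and to maps $g:[0,1]\to X$, and the metric is unchanged under the isometric embedding, $X$ is a uniformly Lipschitz connected subset of the complete metric space $\widetilde X$ with the same constant $L$. Applying Theorem~\ref{t.Borw-compl} with $C=X$ and ambient space $\widetilde X$, the hypothesis that every contraction on $X$ has a fixed point (condition~(2) there) is equivalent to $X$ being closed in $\widetilde X$ (condition~(1)); because $X$ is dense in $\widetilde X$, closedness forces $X=\widetilde X$, i.e. $(X,\rho)$ is complete.

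Part~(2) would then follow in a line: as noted in the text preceding Theorem~\ref{t.Borw-compl}, a convex subset $C$ of a normed space is uniformly Lipschitz connected with $L=1$ via $g(t)=(1-t)x_0+tx_1$, so part~(1) applies to $C$ with its induced metric and gives the equivalence of completeness of $C$ with the fixed point property for contractions on $C$; taking $C=X$, which is convex in itself, yields the assertion about the normed space. Equivalently one could embed the normed space in its Banach completion $\widetilde X$, observe that $C$ stays convex, hence uniformly Lipschitz connected, there, and quote Theorem~\ref{t.Borw-compl} directly, reading ``$C$ closed in $\widetilde X$'' as ``$C$ complete''. I do not expect any genuine obstacle here; the single point deserving a word of care is the stability of uniform Lipschitz connectedness under the embedding into the completion, which is clear since the property is internal to $X$.
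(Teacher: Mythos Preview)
Your proposal is correct and follows essentially the same approach as the paper: embed $X$ into its completion $\widetilde X$, note that uniform Lipschitz connectedness persists, and read off the equivalence from Theorem~\ref{t.Borw-compl}; the paper's proof is the one-line version of exactly this argument, and likewise refers back to the pre-theorem discussion for~(2).
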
\begin{proof}
    For (1) consider $X$ as a uniformly Lipschitz connected subset of its completion $\tilde X$.
    The results in (2) were discussed before Theorem \ref{t.Borw-compl}.
  \end{proof}

\begin{example}[Borwein \cite{borw83}]\label{ex.Borw} There is a starshaped non-closed subset of $\mathbb{R}^2$ having the fixed point property for contractions, but not for continuous functions.
\end{example}

One takes
$$L_k=\conv\big(\big\{(0,0),(1,\frac1{2^k})\big\}\big),\; k\in \mathbb{N}\,,$$
and $C=\bigcup\{L_k : k\in \mathbb{N}\}$.  Then $C$ is starshaped with respect to $(0,0)$ and non-closed, because $\conv(\{(0,0),(1,0)\})\subseteq \overline C\smallsetminus C$. One shows that $C$ has the required properties, see \cite{borw83} for details.

Xiang \cite{xiang07} completed and extended  Borwein's  results.  Let $(X,\rho)$ be a metric space. By an arc we mean a continuous function $g:\Delta\to X$, where $\Delta$ is an interval in $ \mathbb{R}$. An arc $g:(0,1]\to X$ is called \emph{semi-closed} if
\begin{equation}\label{def.semi-cl}
\forall \varepsilon>0,\; \exists\delta>0,\; \mbox{ such that }\; \rho(g(s),g(t))<\varepsilon\; \mbox{ for all }\; s,t\in(0,\delta)\,.
\end{equation}

The arc $g$ is called \emph{Lipschitz semi-closed }if  the mapping  $g$ is Lipschitz and satisfies \eqref{def.semi-cl}.

The metric space $(X,\rho)$ is called \emph{arcwise complete} if for every semi-closed arc $g:(0,1]\to X$ there exists the limit $\lim_{t\searrow 0}g(t)$. If this holds for every Lipschitz semi-closed arc $g:(0,1]\to X$, then $X$ is called \emph{Lipschitz complete}.

 Some examples,  \cite[Examples 1.1, 1.2 and 2.3]{xiang07}, show that the arcwise completeness is weaker than the usual completeness even   in an
arcwise connected space, and so is Lipschitz completeness.  It is obvious from the definitions that Lipschitz completeness is weaker than arcwise completeness.

A metric space $(X,\rho)$ is called \emph{locally arcwise connected} (respectively, \emph{locally Lipschitz connected}) if there exists $\delta>0$ such that  any pair $x_0,x_1$ of points in $X$ with $\rho(x_0,x_1)\le\delta$ can be linked by an arc (respectively by a Lipschitz arc).

\begin{theo}[\cite{xiang07}, Theorems 3.1 and 3.2] Let $(X,\rho)$ be a metric space.
\begin{enumerate}
\item If $(X,\rho)$ has the fixed point property for contractions, then $X$ is Lipschitz complete.
\item If $(X,\rho)$ is locally Lipschitz connected, then $X$ has the fixed point property for contractions if and only if  it is Lipschitz complete.
\end{enumerate}\end{theo}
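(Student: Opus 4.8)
The plan is to treat the two parts separately: for part~(1) I would reuse the $g\circ h$ construction from the proof of Theorem~\ref{t.Borw-compl}, and for the nontrivial half of part~(2) a Picard-iteration argument in which the contraction itself manufactures the connecting arc. So, for part~(1), assume $X$ has the fixed point property for contractions and let $g\colon(0,1]\to X$ be a Lipschitz semi-closed arc, say $L$-Lipschitz with $L>0$ (if $L=0$ it is constant and there is nothing to prove). Condition \eqref{def.semi-cl} is exactly the Cauchy condition for $g(t)$ as $t\searrow0$, so in the completion $(\tilde X,\rho)$ of $X$ the point $\overline x:=\lim_{t\searrow0}g(t)$ exists, and the goal is to show $\overline x\in X$. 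Suppose not. Extend $g$ by $g(t)=g(1)$ for $t\ge1$ and $g(0)=\overline x$; one checks $g\colon[0,\infty)\to\tilde X$ is still $L$-Lipschitz with $g\bigl((0,\infty)\bigr)\subseteq X$. Put $h(y)=(2L)^{-1}\rho(y,\overline x)$ on $\tilde X$ and $\tilde f:=g\circ h$. By the same computation as in the proof of Theorem~\ref{t.Borw-compl}, $\tilde f$ is a $\tfrac12$-contraction on $\tilde X$; since $h(x)>0$ for every $x\in X$ we get $\tilde f(X)\subseteq X$, so $f:=\tilde f|_X$ is a $\tfrac12$-contraction on $X$. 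By the Banach Contraction Principle $\tilde f$ has a unique fixed point in the complete space $\tilde X$, and $\tilde f(\overline x)=g(0)=\overline x$; hence that fixed point is $\overline x\notin X$, so $f$ has no fixed point in $X$, contradicting the hypothesis. Thus $\overline x\in X$ and $X$ is Lipschitz complete.

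For part~(2), the implication ``FPP for contractions $\Rightarrow$ Lipschitz complete'' is precisely part~(1) (local connectedness plays no role there). For the converse, assume $X$ is locally Lipschitz connected with constant $\delta>0$ and Lipschitz complete, and let $f\colon X\to X$ be an $\alpha$-contraction; we may take $\alpha\in(0,1)$ (for $\alpha=0$, $f$ is constant). Fix $x_0\in X$, set $x_n=f^n(x_0)$, and, using $\rho(x_n,x_{n+1})\le\alpha^n\rho(x_0,x_1)$, replace $x_0$ by $x_N$ with $N$ so large that $\rho(x_0,x_1)\le\delta$ (harmless, as $\fix f$ is independent of $x_0$). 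Choose a Lipschitz arc $\gamma\colon[0,1]\to X$ with $\gamma(1)=x_0$, $\gamma(0)=x_1$, of Lipschitz constant $M$. The key point: $f^n\circ\gamma$ joins $x_n$ to $x_{n+1}$ and is $\alpha^nM$-Lipschitz, hence these arcs shrink geometrically. Glue them onto $I_n:=(\alpha^{n+1},\alpha^n]$ (of length $\ell_n=(1-\alpha)\alpha^n$) via $g(t)=(f^n\circ\gamma)\bigl((t-\alpha^{n+1})/\ell_n\bigr)$ for $t\in I_n$; then $g(\alpha^n)=x_n$, the pieces agree at the shared endpoints, and $g\colon(0,1]\to X$ is a well-defined arc. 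On each $I_n$, $g$ is $(\alpha^nM/\ell_n)$-Lipschitz, i.e.\ $\bigl(M/(1-\alpha)\bigr)$-Lipschitz, so $g$ is globally $\bigl(M/(1-\alpha)\bigr)$-Lipschitz; and since $\diam\bigl((f^n\circ\gamma)([0,1])\bigr)\le\alpha^nM$ and consecutive images meet at $x_{n+1}$, the set $g\bigl((0,\alpha^N]\bigr)$ has diameter $\le M\sum_{n\ge N}\alpha^n\to0$, so $g$ is semi-closed. By Lipschitz completeness, $\xi:=\lim_{t\searrow0}g(t)$ exists in $X$; since $g(\alpha^{n+1})=x_{n+1}$, $x_n\to\xi$, and continuity of $f$ gives $f(\xi)=\lim f(x_n)=\lim x_{n+1}=\xi$. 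So $\xi$ is a fixed point of $f$, and $X$ has the FPP for contractions.

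The Lipschitz and Cauchy estimates for the glued arcs are routine; the one genuinely delicate point, in both parts, is ensuring that the arc handed to the Lipschitz-completeness hypothesis is \emph{globally} Lipschitz, not merely piecewise so. In part~(1) this is arranged by reparametrizing a single given arc. In part~(2) it is exactly where the contraction is used: composing $\gamma$ with the iterates $f^n$ forces the Lipschitz constants $\alpha^nM$ to decay geometrically, which is what permits the infinitely many pieces to be fitted onto the geometrically shrinking intervals $I_n$ without the global Lipschitz constant of $g$ blowing up. Local Lipschitz connectedness is needed only to produce the initial arc $\gamma$, after the Picard iterates have been brought within distance $\delta$ of one another.
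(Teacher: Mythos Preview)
Your proof is correct. The paper itself does not supply a proof of this theorem --- it is quoted from \cite{xiang07} without argument --- so there is nothing to compare against directly. That said, your approach to part~(1) is exactly the natural one in the context of this survey: it recycles the $g\circ h$ construction that the paper has just worked out in full detail for Borwein's Theorem~\ref{t.Borw-compl}, observing that a single Lipschitz arc already provides all the data that Borwein's uniform-Lipschitz-connectedness hypothesis was used to manufacture. Your part~(2) argument --- iterating the contraction on a short connecting arc and gluing the geometrically shrinking images onto geometrically shrinking subintervals of $(0,1]$ --- is the standard construction and matches Xiang's original proof in spirit.

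One small remark: you note that the glued arc $g$ is globally $M/(1-\alpha)$-Lipschitz, and then separately verify semi-closedness via a diameter estimate. In fact, as you could observe, any globally Lipschitz arc $g:(0,1]\to X$ is automatically semi-closed (take $\delta=\varepsilon/L'$), so the second verification is redundant; this also shows that in the paper's terminology ``Lipschitz semi-closed'' is effectively the same as ``Lipschitz'', which makes the definition of Lipschitz completeness cleaner than it first appears.
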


The above result have the following consequence (compare with Corollary \ref{c.Borw-compl} and Example \ref{ex.Borw}).
\begin{corol}[\cite{xiang07}]
  A starshaped subset of a normed space has the fixed point property for contractions if and only if  it is Lipschitz complete.
\end{corol}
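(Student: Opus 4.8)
The plan is to obtain this Corollary as an immediate consequence of part (2) of the preceding theorem of Xiang \cite{xiang07}, the only thing to be verified being that a starshaped subset of a normed space is locally Lipschitz connected.

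Let $C$ be a subset of a normed space $(X,\|\cdot\|)$ that is starshaped with respect to a point $c\in C$, meaning that for every $x\in C$ the segment $[c,x]:=\{(1-t)c+tx : t\in[0,1]\}$ is contained in $C$; equip $C$ with the metric $\rho$ induced by the norm. The key step is to check that $C$ is Lipschitz connected (hence, in particular, locally Lipschitz connected). Given $x_0,x_1\in C$, define the broken-line arc $g:[0,1]\to C$ by $g(t)=(1-2t)x_0+2tc$ for $t\in[0,\tfrac12]$ and $g(t)=(2-2t)c+(2t-1)x_1$ for $t\in[\tfrac12,1]$. By starshapedness both halves of the path lie in $C$, so $g$ takes its values in $C$, and clearly $g(0)=x_0$, $g(1)=x_1$. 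A direct computation shows that on each of the intervals $[0,\tfrac12]$ and $[\tfrac12,1]$ the map $g$ is affine with $\|g(s)-g(t)\|=2|s-t|\,\|c-x_0\|$, respectively $\|g(s)-g(t)\|=2|s-t|\,\|x_1-c\|$, and splitting at $t=\tfrac12$ handles the mixed case, so that
\[
\|g(s)-g(t)\|\le L\,|s-t|\quad\text{for all }s,t\in[0,1],\qquad L:=2\max\{\|x_0-c\|,\|x_1-c\|\}.
\]
Thus $g$ is a Lipschitz arc joining $x_0$ and $x_1$; in particular any pair of points of $C$ at distance at most $\delta$ (for, say, $\delta=1$) can be linked by a Lipschitz arc, which is exactly the definition of $C$ being locally Lipschitz connected.

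With this established, part (2) of Xiang's theorem applies directly: since $C$ is locally Lipschitz connected, $C$ has the fixed point property for contractions if and only if it is Lipschitz complete. (The forward implication also follows directly from part (1), which needs no structural assumption on $C$.) I do not anticipate any genuine difficulty; the sole computation is the elementary Lipschitz estimate for the broken-line arc $g$, and the one point worth stressing is that ``locally Lipschitz connected'' requires only the existence of a linking Lipschitz arc, with no uniformity demanded of its Lipschitz constant, so the detour through the center $c$ — which may be long when $x_0,x_1$ are far from $c$ — causes no problem. This is precisely what makes Xiang's theorem, rather than Borwein's Theorem \ref{t.Borw-compl} (which assumes uniform Lipschitz connectedness), the right tool for a possibly unbounded starshaped set.
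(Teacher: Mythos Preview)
Your proposal is correct and follows exactly the intended route: the paper states this result as a corollary of Xiang's Theorems 3.1 and 3.2 without giving a proof, and the only thing to supply is precisely the verification that a starshaped set is (locally) Lipschitz connected, which you do via the broken-line path through the star center. Your remark that no uniformity of the Lipschitz constant is required is to the point and explains why Xiang's hypothesis, rather than Borwein's uniform Lipschitz connectedness, is the appropriate one here.
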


This implies  that the starshaped set considered in Example \ref{ex.Borw} is Lipschitz complete, in spite of the fact that it is not closed. This furnishes a further example of a non complete starshaped set that is Lipschitz complete.

One says that the metric space $(X,\rho)$ has the \emph{strong contraction property} if every mapping $f:X\to X$ which is a contraction with respect to some metric $\overline\rho$ on $X$, uniformly equivalent to $\rho$, has a fixed point.
\begin{theo}[\cite{xiang07}, Theorems 4.1 and 4.4] Let $(X,\rho)$ be a metric space.
\begin{enumerate}
\item If $(X,\rho)$ has the strong  contraction property, then $X$ is arcwise  complete.
\item If $(X,\rho)$ is locally arcwise connected, then $X$ has the strong  contraction property if and only if  it is arcwise complete.
\end{enumerate}\end{theo}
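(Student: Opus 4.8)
The plan is to handle the two parts in the spirit of Borwein's Theorem~\ref{t.Borw-compl}, the new feature being that the connecting curves are now only continuous ("semi-closed arcs") rather than Lipschitz, and that the extra freedom built into the \emph{strong} contraction property---the passage to a uniformly equivalent metric---is precisely what compensates for this. For part (1) I would argue by contraposition: assume $(X,\rho)$ is not arcwise complete and produce a metric $\overline\rho$ on $X$, uniformly equivalent to $\rho$, together with a $\overline\rho$-contraction $f\colon X\to X$ with no fixed point. By assumption there is a semi-closed arc $g\colon(0,1]\to X$ for which $\lim_{t\searrow 0}g(t)$ fails to exist in $X$. Condition \eqref{def.semi-cl} is exactly a Cauchy condition as $t\searrow 0$, so in the completion $(\widetilde X,\widetilde\rho)$ of $(X,\rho)$ the limit $\overline x:=\lim_{t\searrow 0}g(t)$ exists and, by hypothesis, $\overline x\notin X$; setting $\gamma(0):=\overline x$ and $\gamma:=g$ on $(0,1]$ yields a continuous $\gamma\colon[0,1]\to\widetilde X$ with $\gamma(0)=\overline x$ and $\gamma\big((0,1]\big)\subseteq X$.

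The core of part (1) is then to construct a metric $\overline\rho$ on $\widetilde X$, uniformly equivalent to $\widetilde\rho$ (so that $\overline\rho|_X$ is uniformly equivalent to $\rho$), for which $\gamma$ becomes essentially a $1$-Lipschitz path, i.e.
\[
\overline\rho\big(\gamma(s),\gamma(t)\big)\le|s-t|\qquad(s,t\in[0,1]).
\]
Granting such a $\overline\rho$, define $f\colon X\to X$ by $f(x):=\gamma\!\big(\tfrac12\min\{\overline\rho(x,\overline x),1\}\big)$. Since $\overline x\notin X$ we have $\overline\rho(x,\overline x)>0$ for $x\in X$, so $f$ maps into $\gamma\big((0,1]\big)\subseteq X$; moreover, for $x,y\in X$,
\[
\overline\rho\big(f(x),f(y)\big)\le\Big|\tfrac12\min\{\overline\rho(x,\overline x),1\}-\tfrac12\min\{\overline\rho(y,\overline x),1\}\Big|\le\tfrac12\,\overline\rho(x,y),
\]
so $f$ is a $\tfrac12$-contraction for $\overline\rho$; and $f(x)=x$ would give $\overline\rho(x,\overline x)=\overline\rho\big(\gamma(\tfrac12\min\{\overline\rho(x,\overline x),1\}),\gamma(0)\big)\le\tfrac12\overline\rho(x,\overline x)$, forcing $x=\overline x\notin X$. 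Hence $f$ has no fixed point, contradicting the strong contraction property.

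The hard part is producing the metric $\overline\rho$: it must simultaneously be a genuine metric, be \emph{uniformly} (not merely topologically) equivalent to $\widetilde\rho$, and turn the given continuous arc $\gamma$ into a (reparametrized) $1$-Lipschitz path. A pointwise change $\overline\rho=\psi\circ\widetilde\rho$ will not do this in general (an infinite-length arc can never be rendered Lipschitz that way), so $\overline\rho$ must be tailored to $\gamma$---for instance as an infimum-of-path-lengths metric in which moving along $\gamma$ costs the change of parameter while moving off $\gamma$ costs a concave rescaling of $\widetilde\rho$---and the delicate point will be to verify that these "shortcuts along $\gamma$" do not collapse $\widetilde\rho$-distances, i.e. that uniform equivalence with $\widetilde\rho$ is preserved. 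In Borwein's Theorem~\ref{t.Borw-compl} this obstacle is absent because uniform Lipschitz connectedness already supplies a Lipschitz connecting curve, so one may take $\overline\rho=\rho$.

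For part (2), the implication "strong contraction property $\Rightarrow$ arcwise complete" is part (1); so assume $(X,\rho)$ is in addition locally arcwise connected and arcwise complete, and let $f\colon X\to X$ be a contraction for some metric $\overline\rho$ uniformly equivalent to $\rho$. First I would reduce to $\overline\rho=\rho$: uniformly equivalent metrics have the same Cauchy sequences, the same semi-closed arcs and the same limits of arcs, and the uniform equivalence yields a constant witnessing local arcwise connectedness also for $\overline\rho$; hence $(X,\overline\rho)$ is again locally arcwise connected and arcwise complete, and it suffices to show: if $(X,\rho)$ is locally arcwise connected and arcwise complete and $f$ is a $\rho$-contraction, then $f$ has a fixed point. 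Run the Picard iteration $x_{n+1}=f(x_n)$; then $\rho(x_n,x_{n+1})\le\alpha^{n}\rho(x_0,x_1)\to0$, so $(x_n)$ is Cauchy and, picking $N$ with $\rho(x_N,x_{N+1})\le\delta$ ($\delta$ the constant of local arcwise connectedness), there is an arc $\gamma\colon[0,1]\to X$ joining $x_N$ and $x_{N+1}$. The key observation is that $f^{k}\circ\gamma$ joins $x_{N+k}$ and $x_{N+k+1}$ with $\diam\big(f^{k}\circ\gamma\big)\le\alpha^{k}\diam(\gamma)\to0$ (and $\diam(\gamma)<\infty$, $\gamma([0,1])$ being compact): local arcwise connectedness only gives \emph{some} arc between consecutive iterates, but pushing that single arc forward by the contraction makes it short. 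Concatenating the $f^{k}\circ\gamma$ over the parameter intervals $(2^{-(k+N+1)},2^{-(k+N)}]$ (with orientations chosen so the endpoints match at the points $x_{N+k}$) and rescaling the parameter to $(0,1]$ produces an arc $g\colon(0,1]\to X$ with $g(2^{-(k+N)})=x_{N+k}$; a routine estimate using $\diam(f^{k}\circ\gamma)\to0$ and the Cauchyness of $(x_n)$ shows $g$ is semi-closed. Arcwise completeness then gives $x^{*}:=\lim_{t\searrow 0}g(t)\in X$, so $x_n\to x^{*}$, and continuity of $f$ (every contraction is continuous) yields $f(x^{*})=\lim_n f(x_n)=\lim_n x_{n+1}=x^{*}$. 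The only non-routine point in part (2) is the reparametrization/concatenation producing a semi-closed arc.
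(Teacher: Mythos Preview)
The paper does not supply its own proof of this result; it merely quotes Xiang's Theorems~4.1 and~4.4 and moves on. So there is nothing in the paper to compare your argument against, and I can only assess the proposal on its own merits.

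Your treatment of part~(2) is essentially complete and correct: the reduction to $\overline\rho=\rho$ via uniform equivalence is legitimate (uniform equivalence preserves continuity of arcs, the Cauchy-type condition~\eqref{def.semi-cl}, limits, and the threshold~$\delta$ in local arcwise connectedness), and the concatenation of the pushed-forward arcs $f^{k}\!\circ\gamma$ into a single semi-closed arc is straightforward once one writes out the estimate $\rho(g(s),g(t))\le\sum_{j\ge k}\diam(f^{j}\!\circ\gamma)\le\diam(\gamma)\,\alpha^{k}/(1-\alpha)$ for $s,t\in(0,2^{-(k+N)}]$.

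Part~(1), however, is not a proof but a programme, and you say so yourself. The entire content of the theorem lies in the construction of the uniformly equivalent metric~$\overline\rho$ that straightens the given continuous arc~$\gamma$ into a Lipschitz one; everything after that is Borwein's argument verbatim. You correctly diagnose why a simple rescaling $\overline\rho=\psi\circ\widetilde\rho$ cannot work and gesture towards an ``infimum-of-path-lengths'' metric with shortcuts along~$\gamma$, but you neither write down such a metric nor verify the two properties that make or break the argument: that it is genuinely a metric (triangle inequality when mixing on-arc and off-arc segments) and, crucially, that it remains \emph{uniformly} equivalent to~$\widetilde\rho$ (the shortcuts must not collapse $\widetilde\rho$-distances between points near different parts of the arc). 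This is the non-trivial step in Xiang's paper, and without it part~(1) is unproved. If you want to complete the argument you will need an explicit formula for~$\overline\rho$ and a careful two-sided estimate against~$\widetilde\rho$; the sketch you give is not enough to see that such a metric exists.
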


Suzuki \cite{suzuki08} found an extension of Banach contraction principle that implies completeness.
He considered the function $\theta:[0,1)\to(1/2,1]$
\begin{equation}\label{def.Suzuki-fcs}
\theta(r)=\begin{cases}
1 &\mbox{if}\quad 0\le r\le(\sqrt 5-1)/2\\
(1-r)r^{-2}\quad &\mbox{if}\quad  (\sqrt 5-1)/2\le r\le 2^{-1/2}\\
(1+r)^{-1}\quad &\mbox{if}\quad    2^{-1/2}\le r<1\,.
\end{cases}\end{equation}
and proved the following fixed point result.
\begin{theo}\label{t1.Suzuki} Let $(X,\rho)$ be a complete metric space and $f:X\to X$.
\begin{enumerate}
\item If there exists $r\in [0,1)$ such that
\begin{equation}\label{eq1.Suzuki}
\theta(r) d(x,f(x))\le d(x,y)\;\Longrightarrow\; d(f(x),f(y))\le r d(x,y)\,,\end{equation}
for all $x,y\in X$, then $f$ has a fixed point $\overline x$ in $X$ and  $\lim_nf^n(x)=\overline x$ for every point $x\in X$.
\item Moreover, $\theta(r)$ is the best constant in \eqref{eq1.Suzuki} for which the result holds, in the sense that for every $r\in [0,1)$ there exist a complete metric space $(X,\rho)$ and a function $f:X\to X$ without fixed points and such that
    \begin{equation}\label{eq2.Suzuki}
\theta(r) d(x,f(x))< d(x,y)\;\Longrightarrow\; d(f(x),f(y))\le r d(x,y)\,,\end{equation}
for all $x,y\in X$.\end{enumerate}
 \end{theo}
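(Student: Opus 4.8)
The plan is to treat the two assertions separately, with essentially all of the work concentrated in the fixed-point existence of part (1).

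For (1), first fix an arbitrary $x_1\in X$ and form the Picard sequence $x_{n+1}=f(x_n)$. Since $\theta(r)\le 1$, the choice $y=f(x)$ in \eqref{eq1.Suzuki} is always admissible (its premise $\theta(r)d(x,f(x))\le d(x,f(x))$ is automatic), so $d(f(x),f^2(x))\le r\,d(x,f(x))$; applied along the sequence this gives $d(x_{n+1},x_{n+2})\le r\,d(x_n,x_{n+1})$, hence $d(x_n,x_{n+1})\le r^{n-1}d(x_1,x_2)$ and $(x_n)$ is Cauchy. By completeness $x_n\to\bar x$ for some $\bar x\in X$ (if some $x_n=x_{n+1}$, that point is already fixed and we are done; so we may assume $d(x_n,x_{n+1})>0$ for all $n$, which also forces $r>0$). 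Once $f(\bar x)=\bar x$ is established, I would deduce the remaining conclusions at once by applying \eqref{eq1.Suzuki} with first argument $\bar x$ and second argument an arbitrary $x\in X$: the premise $\theta(r)d(\bar x,f(\bar x))\le d(\bar x,x)$ reduces to $0\le d(\bar x,x)$, so $d(\bar x,f(x))=d(f(\bar x),f(x))\le r\,d(\bar x,x)$ for all $x$; iterating gives $d(\bar x,f^n(x))\le r^n d(\bar x,x)\to 0$, and $f(y)=y$ forces $d(\bar x,y)\le r\,d(\bar x,y)$, i.e. $y=\bar x$.

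The heart of (1) is therefore $f(\bar x)=\bar x$. The approach I would take is to show that for each $n$ at least one of the two eligibility relations
$$\theta(r)\,d(x_n,x_{n+1})\le d(x_n,\bar x)\qquad\text{or}\qquad \theta(r)\,d(x_{n+1},x_{n+2})\le d(x_{n+1},\bar x)$$
holds: in the first case \eqref{eq1.Suzuki} yields $d(x_{n+1},f(\bar x))\le r\,d(x_n,\bar x)$, in the second $d(x_{n+2},f(\bar x))\le r\,d(x_{n+1},\bar x)$, so whichever alternative recurs for infinitely many $n$ forces $d(\bar x,f(\bar x))=0$ on letting $n\to\infty$. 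To get the dichotomy, suppose both relations fail at some $n$; then $d(x_n,\bar x)<\theta(r)d(x_n,x_{n+1})$ and $d(x_{n+1},\bar x)<\theta(r)d(x_{n+1},x_{n+2})\le\theta(r)r\,d(x_n,x_{n+1})$, and the triangle inequality forces $\theta(r)(1+r)>1$. Since $\theta(r)(1+r)\le 1$ holds exactly for $2^{-1/2}\le r<1$, this settles that range immediately. \textbf{The main obstacle is the complementary range $0\le r<2^{-1/2}$}, where $\theta(r)(1+r)>1$ and this simple dichotomy is no longer available; there one must run a finer case analysis keyed to the two remaining pieces $\theta(r)=1$ and $\theta(r)=(1-r)r^{-2}$ of \eqref{def.Suzuki-fcs}, bringing in further iterates $x_{n+2},x_{n+3},\dots$ together with the sharp values $r=(\sqrt5-1)/2$ and $r=2^{-1/2}$ at which $\theta$ changes form, so as still to force $d(x_n,f(\bar x))\to 0$. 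This is precisely where the exact shape of $\theta$ is needed.

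For (2), the plan is to exhibit, for each $r\in[0,1)$, an explicit complete metric space $(X,\rho)$ and a fixed-point-free $f:X\to X$ for which \eqref{eq2.Suzuki} holds — the point being that the only pairs $(x,y)$ that could obstruct the conclusion are exactly those knocked out when $\le$ is replaced by the strict $<$. So I would search for small (finite or countable) metric spaces in which, for every $x$, the quantity $\theta(r)d(x,f(x))$ sits right at the threshold $d(x,y)$ for the would-be dangerous partner $y$; the constructions depend on $r$ and naturally split according to the three pieces of \eqref{def.Suzuki-fcs}, each checked by a direct finite computation. I expect no conceptual difficulty here beyond bookkeeping, so I would simply present the examples case by case.
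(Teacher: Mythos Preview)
The paper does not prove this theorem; it is merely stated, with attribution to Suzuki \cite{suzuki08}, and no argument is given in the text. There is therefore no proof in the paper against which to compare your proposal.

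Assessing the proposal on its own: the Cauchy argument, the two-term dichotomy for the range $2^{-1/2}\le r<1$, and the deduction of uniqueness and global attraction once $f(\bar x)=\bar x$ is established are all correct and are essentially how Suzuki proceeds. The gap is exactly where you flag it. For $0\le r<2^{-1/2}$ you write only that ``one must run a finer case analysis \dots\ bringing in further iterates $x_{n+2},x_{n+3},\dots$'', without saying what that analysis is; the phrasing suggests you intend to extend the two-term dichotomy to longer blocks of consecutive iterates, but that is not how the argument closes in Suzuki's paper. There the two remaining sub-ranges are handled by separate auxiliary claims --- roughly, inequalities linking $d(x,\bar x)$, $d(x,f(x))$ and $d(f(x),\bar x)$ obtained by contradiction from the triangle inequality together with $d(f(x),f^2(x))\le r\,d(x,f(x))$ --- and the specific breakpoints $(\sqrt5-1)/2$ and $2^{-1/2}$ come out of making those inequalities sharp rather than from any longer dichotomy. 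Until such steps are actually supplied, the proposal is a correct outline for one third of the range and a promissory note for the other two thirds. For part (2) your plan is sound and matches Suzuki's: explicit small (finite) examples, one per piece of $\theta$.
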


 Extensions of Suzuki fixed point theorem to partial metric spaces and to partially ordered metric spaces were given by Paesano and Vetro \cite{vetro12}.

 The converse result is the following one.
 \begin{theo}[\cite{suzuki08}, Corollary 1]\label{t2.Suzuki} For a metric space $(X,\rho)$ the following are equivalent.
 \begin{enumerate}
 \item The space $(X,\rho)$ is complete.
 \item There exists $r\in(0,1)$ such that  every mapping $f:X\to X$ satisfying
 \begin{equation}\label{eq3.Suzuki}
\frac1{10000}\, d(x,f(x))\le d(x,y)\;\Longrightarrow\; d(f(x),f(y))\le r d(x,y)\,,\end{equation}
for all $x,y\in X$, has a fixed point.\end{enumerate}
    \end{theo}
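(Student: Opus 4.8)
\medskip

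\noindent\emph{Proof strategy.}\quad
For the implication $(1)\Rightarrow(2)$ nothing needs to be constructed. Since $\theta(r)\in(1/2,1]$ for every $r\in[0,1)$, we have $\theta(r)>\tfrac1{10000}$, so the premise $\theta(r)\rho(x,f(x))\le\rho(x,y)$ of the implication \eqref{eq1.Suzuki} entails the premise $\tfrac1{10000}\rho(x,f(x))\le\rho(x,y)$ of \eqref{eq3.Suzuki}; hence any $f\colon X\to X$ that satisfies \eqref{eq3.Suzuki} for all $x,y$ also satisfies \eqref{eq1.Suzuki} for all $x,y$, and Theorem \ref{t1.Suzuki}(1) provides a fixed point. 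Thus $(2)$ holds with, say, $r=\tfrac12$.

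The substance is in $(2)\Rightarrow(1)$, which I would prove by contraposition: assuming $(X,\rho)$ is not complete, I must exhibit, for \emph{each} $r\in(0,1)$, a fixed-point-free $f\colon X\to X$ satisfying \eqref{eq3.Suzuki}. First I would pass to the completion $\widetilde X$, fix $\bar x\in\widetilde X\setminus X$, and set $\varphi(x):=\rho(x,\bar x)$, which is $1$-Lipschitz and strictly positive on $X$. Then I would pick pairwise distinct points $v_n\in X$ with $v_n\to\bar x$ and, passing to a subsequence, arrange a geometric decay $\beta_{n+1}\le c\,\beta_n$ of $\beta_n:=\varphi(v_n)$, where $c=c(r)\in(0,\tfrac12)$ is to be chosen small. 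From $|\varphi(v_n)-\varphi(v_m)|\le\rho(v_n,v_m)\le\varphi(v_n)+\varphi(v_m)$ this yields $(1-c)\beta_n\le\rho(v_n,v_m)\le2\beta_n$ whenever $m>n$; in particular, for $c$ small enough the shift $v_k\mapsto v_{k+1}$ is a genuine $r$-contraction of the set $\{v_n:n\in\mathbb{N}\}$ --- the same computation as in the proof of Theorem \ref{t.Hu}.

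Next I would extend this shift to a map defined on all of $X$ by a depth-increasing retraction onto $\{v_n\}$: set $f(x)=v_{\nu(x)+1}$, where $\nu(x)$ is the largest $n$ with $\rho(x,v_n)\le K\beta_n$ (for a fixed constant $K\ge2$; the relevant set is finite since $\rho(x,v_n)\to\varphi(x)>0$ while $\beta_n\to0$), with the convention $f(x)=v_1$ if no such $n$ exists. One verifies $\nu(v_k)=k$, hence $f(v_k)=v_{k+1}$; since $f(X)\subset\{v_n\}$ and $f$ displaces every $v_k$, the map $f$ has no fixed point. Then I would check \eqref{eq3.Suzuki} by distinguishing whether $\nu(x)$ and $\nu(y)$ are far apart or close: in the first case $\rho(f(x),f(y))\le r\,\rho(x,y)$ follows from the geometric decay of $(\beta_n)$; in the second --- the delicate case --- the defining inequality for $\nu$ forces $\rho(x,f(x))$ to be of the order of $\beta_{\nu(x)}$, hence \emph{not} small on the relevant scale, so the antecedent $\tfrac1{10000}\rho(x,f(x))\le\rho(x,y)$ can hold only when $\rho(x,y)$ is itself of that order, and then the desired bound drops out. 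Choosing $c=c(r)$ (and $K$) small enough closes every inequality with room to spare.

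The main obstacle is precisely this verification of \eqref{eq3.Suzuki} for pairs that do \emph{not} lie on the subsequence, and the crux is the behaviour near the points $v_k$. A naive retraction that partitions $X$ according to the level sets of $\varphi$ fails: two points on opposite sides of a level boundary can be arbitrarily close, yet they would be sent to $v_k$ and $v_{k+1}$, which remain at distance $\asymp\beta_k$ while $\rho(x,f(x))$ is small there, so that \eqref{eq3.Suzuki} breaks. The purpose of the ``$\rho(x,v_n)\le K\beta_n$'' criterion --- and of the generously large constant $\tfrac1{10000}$ (any sufficiently large constant, i.e.\ sufficiently small threshold, works) --- is to ensure that whenever $f$ expands by more than the factor $r$ the jump $\rho(x,f(x))$ is large compared with $\rho(x,y)$, so that the implication in \eqref{eq3.Suzuki} remains vacuously true. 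Making the bookkeeping of these estimates precise is the part I expect to require the most care.
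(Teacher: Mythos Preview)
The paper does not contain a proof of this theorem; it is merely stated with a citation to Suzuki \cite{suzuki08}, so there is no argument in the text against which to compare your proposal.

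Your $(1)\Rightarrow(2)$ is correct and is the natural reduction: since $\theta(r)>\tfrac12>\tfrac1{10000}$ for every $r\in[0,1)$, the antecedent of \eqref{eq1.Suzuki} implies that of \eqref{eq3.Suzuki}, so any $f$ satisfying \eqref{eq3.Suzuki} also satisfies \eqref{eq1.Suzuki}, and Theorem~\ref{t1.Suzuki}(1) applies.

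Your plan for $(2)\Rightarrow(1)$ is in the same spirit as the completeness characterizations the paper \emph{does} prove in detail (Theorem~\ref{t.Hu}, Proposition~\ref{p.EkVP-compl}, the implication $(3)\Rightarrow(1)$ of Theorem~\ref{t1.Karapin-Romag}): start from a non-convergent Cauchy sequence, pass to a geometrically decaying subsequence $(v_n)$, and build a fixed-point-free self-map with range in $\{v_n\}$. The outline is sound, and you have correctly located the real work --- verifying \eqref{eq3.Suzuki} for pairs $x,y$ off the subsequence, where $f$ jumps. One point to watch in your specific construction: the fallback $f(x)=v_1$ when no $n$ satisfies $\rho(x,v_n)\le K\beta_n$ creates a region where $\rho(x,f(x))=\rho(x,v_1)$ may be arbitrarily large compared with $\beta_1$, yet $\rho(f(x),f(y))=\rho(v_1,f(y))$ is only bounded by $2\beta_1$; you must check that the antecedent $\tfrac1{10000}\rho(x,v_1)\le\rho(x,y)$ then already forces $r\,\rho(x,y)\ge 2\beta_1$. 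This is manageable but adds a case. A common alternative (closer to how such constructions are usually closed) is to assign $f(x)$ via the level sets of $\varphi(x)=\lim_n\rho(x,v_n)$ itself --- e.g.\ $f(x)=v_n$ for the least $n$ with $\beta_n$ below a fixed fraction of $\varphi(x)$ --- which guarantees a uniform drop $\varphi(f(x))\le c\,\varphi(x)$ and typically shortens the case analysis.
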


It is clear that the function $\theta(r)$ given by \eqref{def.Suzuki-fcs} satisfies the equality
$\lim_{r\nearrow 1}\theta(r)=1/2$.  The critical case of functions acting on a subset $X$ of a Banach space $E$ satisfying the condition
\begin{equation}\label{def.Suzuki-nexp}
\frac12   \|x-f(x)\|\le \|x-y\|\;\Longrightarrow\;   \|f(x)-f(y)\|\le \|x-y\|\,,
\end{equation}
for all $x,y\in X$ was examined by Suzuki \cite{suzuki08b}. Condition \eqref{def.Suzuki-nexp} was called condition (C) and the functions satisfying this condition are called \emph{generalized nonexpansive}. It is clear that every nonexpansive mapping satisfies \eqref{def.Suzuki-nexp}, but there are discontinuous functions satisfying \eqref{def.Suzuki-nexp}, so that the class of generalized nonexpansive mappings is strictly larger than that of nonexpansive ones. The term generalized nonexpansive is justified by the fact that the generalized nonexpansive mappings share with nonexpansive mappings several properties concerning fixed points - in some   Banach spaces $E$ they have fixed points on every weakly compact convex subset of $E$, and for every closed bounded convex subset $X$ of $E$ and every generalized nonexpansive mapping $f$ on $X$ there exists   an almost fixed point sequence, i.e. a sequence $(x_n)$ in $X$ such that  $\|x_n-f(x_n)\|\to 0$ as $n\to \infty$, see \cite{suzuki08b}. Also a generalized nonexpansive mapping $f$ is quasi-nonexpansive, in the sense that $\|f(x)-y\|\le \|x-y\|$ for all  $x\in X$ and $y\in $\,Fix$(f)$ (the set of fixed points of $f$). It is known that every nonexpansive mapping having a fixed point is quasi-nonexpansive
(for fixed points of nonexpansive mappings and other fixed point  results see   \cite{Goebel-Kirk} and \cite{Hdb-MFP}).

For further results and extensions, see  \cite{dhomp09c}, \cite{dhomp09b}, \cite{dhomp09}, \cite{liu02}, \cite{liu03}  and \cite{suzuki11}.

 Amato \cite{amato84,amato86,amato93} proposed another approach to study the connections between fixed points and completeness in metric spaces. For a metric space $(E,d)$ he considers a pair
$(Y,\Psi)$, where $Y$ is a subset of $X$ and $\Psi$ is a class of mappings on $Y$. The pair $(Y,\Psi)$ is said to be a completion class for $E$ if $\Psi/\rho$ is a completion of $(E,d)$, where $\rho$ is a semimetric on $\Psi$ (defined in a concrete manner) and $\Psi/\rho$ is the quotient space with respect to  the equivalence relation $f\equiv g\iff \rho(f,g)=0$. Among other results, he proves that if $E$ is an infinite dimensional normed space and $K$ is a compact subset of $E$, then it is possible to take $Y=E\smallsetminus K$ and $\Psi$ the class of all compact contractions of $Y$.

We mention also the following characterization of completeness in terms of fixed points of set-valued mappings.  For a metric space  $(X,\rho)$ denote by $\mathcal P_{cl}(X)$ the family of all nonempty closed subsets of $X$.

For a mapping $F:X\to \mathcal P_{cl}(X)$ consider the following two properties:
\begin{itemize}
  \item[{\rm (J1)}]\;\; $F(F(x))\subseteq F(x)$ for every $x\in X$;
 \item[{\rm (J2)}]\;\;$\forall x\in X,\, \forall \varepsilon >0,\exists y\in F(x)$\;\;\mbox{with}\;\;
 $\diam F(y)<\varepsilon$.
\end{itemize}

For $F:X\to 2^X$ a point $\overline x\in X$ is called

\textbullet \; a \emph{fixed point} of $F$ if $\overline x\in F(\overline x)$;

\textbullet \; a \emph{stationary}  point of $F$ if $ F(\overline x)=\{\overline x\}$.

\begin{theo}[\cite{jachym11b}, Corollary  1] For any metric space  $(X,\rho)$    the following conditions are equivalent.
\begin{enumerate}
\item The space $(X,\rho)$ is complete.
\item Every  set-valued mapping $F:X\to \mathcal P_{cl}(X)$  satisfying  (J1) and (J2) has a fixed point.
\item Every  set-valued mapping $F:X\to \mathcal P_{cl}(X)$  satisfying  (J1) and (J2) has a stationary point.
    \end{enumerate}
\end{theo}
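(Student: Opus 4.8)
The plan is to establish the cycle $(1)\Rightarrow(3)\Rightarrow(2)\Rightarrow(1)$; the implication $(3)\Rightarrow(2)$ is immediate, since a stationary point $\overline x$, for which $F(\overline x)=\{\overline x\}$, is in particular a fixed point. For $(1)\Rightarrow(3)$ I would, starting from an arbitrary $x_0\in X$, build a sequence $(x_n)$ by repeatedly invoking (J2): having chosen $x_n$, pick $x_{n+1}\in F(x_n)$ with $\diam F(x_{n+1})<2^{-(n+1)}$. Since $x_{n+1}\in F(x_n)$, condition (J1) yields $F(x_{n+1})\subset F(F(x_n))\subset F(x_n)$, so $(F(x_n))_{n\ge1}$ is a decreasing chain of nonempty closed sets whose diameters tend to $0$; by Cantor's intersection theorem — this is where completeness enters — $\bigcap_{n\ge1}F(x_n)=\{\overline x\}$ for some $\overline x\in X$. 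Then $\overline x\in F(x_n)$ for every $n$, so (J1) again gives $F(\overline x)\subset F(F(x_n))\subset F(x_n)$ for all $n$, whence $F(\overline x)\subset\bigcap_n F(x_n)=\{\overline x\}$; as $F(\overline x)\ne\emptyset$, we conclude $F(\overline x)=\{\overline x\}$, i.e. $\overline x$ is stationary.

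For $(2)\Rightarrow(1)$ I would argue by contraposition, in the spirit of the proof of Theorem \ref{t.Hu}. If $X$ is not complete, take a Cauchy sequence $(x_n)$ with no limit in $X$; no value is repeated infinitely often (that would force convergence), so after passing to a subsequence the terms are pairwise distinct. Set $A_n=\{x_k:k\ge n\}$; each $A_n$ is closed, since a limit point of $A_n$ would produce (using distinctness) a convergent subsequence of the Cauchy sequence $(x_k)$ and hence its convergence, a contradiction. Define $F:X\to\mathcal P_{cl}(X)$ by $F(x_n)=A_{n+1}$ for each $n$ and $F(x)=A_1$ for $x\notin\{x_k:k\in\mathbb N\}$ (well defined by distinctness). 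One checks (J1) from $\bigcup_{k\ge m}F(x_k)=\bigcup_{k\ge m}A_{k+1}=A_{m+1}\subset A_m$, and (J2) from the fact that $\diam A_j\to0$: given $x$ with $F(x)=A_m$ and $\varepsilon>0$, choose $j\ge m$ with $\diam A_{j+1}<\varepsilon$ and take $y=x_j\in A_m$, so $\diam F(y)<\varepsilon$. This $F$ has no fixed point, because distinctness prevents $x_n\in A_{n+1}$ and a point outside $\{x_k\}$ cannot lie in $A_1\subset\{x_k\}$; this contradicts (2), so $X$ must be complete.

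I expect the bulk of both nontrivial implications to be routine once these skeletons are in place — the diameter estimates and the set-theoretic identities for $F\circ F$ are mechanical. The one step requiring genuine care is the passage, in $(2)\Rightarrow(1)$, to a Cauchy sequence with pairwise distinct terms together with the ensuing closedness of the tails $A_n$: without distinctness the map $F$ is not well defined by the case split, without closedness it does not take values in $\mathcal P_{cl}(X)$, and it is precisely distinctness that kills the would-be fixed points $x_n\in F(x_n)$. In $(1)\Rightarrow(3)$ the only thing to watch is that the diameters $\diam F(x_n)$ genuinely tend to $0$ so that Cantor's theorem applies, which is exactly the content of (J2).
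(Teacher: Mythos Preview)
Your argument is correct and complete. Note, however, that the paper does not give its own proof of this statement: the theorem is quoted from \cite{jachym11b} without proof, so there is nothing in the paper to compare against. Your approach is the natural one and is essentially the argument in Jachymski's original paper --- Cantor's intersection theorem for $(1)\Rightarrow(3)$, and a tail-set construction on a non-convergent Cauchy sequence for $(2)\Rightarrow(1)$.
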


Characterizations of the completeness of a metric space in terms of the existence of fixed points for various classes of set-valued mappings acting on them were done by Jiang \cite{jiang00} and Liu \cite{liu96}.

We present the results from Jiang \cite{jiang00}. Let $(X,\rho)$ be a metric space.  For a bounded subset $Y$ of $X$ denote by $\alpha(Y)$ the Kuratowski measure of noncompactness of the set $Y$ defined by
\begin{equation}\label{def.K-meas-nonc}\begin{aligned}
 \alpha(Y):=\inf\{\varepsilon>0 : &\, Y \;\mbox{can be covered by the union of a finite family}\\ &\mbox{of subsets of  } X, \mbox{ each of diameter } \le \varepsilon\}\,.
\end{aligned}\end{equation}

For a set-valued mapping $F:X\to\mathcal P_{cl}(X)$ one considers the following conditions:
\begin{itemize}
 \item[{\rm (a)}]\;\; $F(F(x))\subseteq F(x)$ for every $x\in X$;
 \item[{\rm (b)}]  there exists a sequence $(x_n)$ in $X$ such that  $x_{n+1}\in F(x_n),\, \forall n\in\mathbb{N},$ and \\ $\lim_n\diam\left(F(x_n)\right)=0$;
\item[{\rm (c)}]  there exists a sequence $(x_n)$ in $X$ such that  $x_{n+1}\in F(x_n),\, \forall n\in\mathbb{N},$ and \\ $\lim_n\alpha\left(F(x_n)\right)=0$;
\item[{\rm (d)}]     $\lim\rho(x_n,x_{n+1})=0$ for each sequence $(x_n)$ in $X$ such that  $x_{n+1}\in F(x_n),\, \forall n\in\mathbb{N}$.
   \end{itemize}

\begin{remark}
  Condition (a) is identic to (J1) and it is easy to check that (J2) implies (b). Condition (d) is condition (iv) from Theorem \ref{t1.DHM}.
\end{remark}

   One considers also the following classes of set-valued mappings
   $F:X\to\mathcal P_{cl}(X)$:
   \begin{align*}
AB(X):=&\{ F : F \mbox{ satisfies (a) and (b)}\};\\
AC(X):=&\{ F : F \mbox{ satisfies (a) and (c)}\};\\
AD(X):=&\{ F : F \mbox{ satisfies (a) and (d)}\}\,.
   \end{align*}

   \begin{theo}[Jiang \cite{jiang00}, Theorems 3.1 and 3.2]
    For any metric space $(X,\rho)$ the following conditions are equivalent.
    \begin{enumerate}
    \item  The metric space $(X,\rho)$ is complete.
    \item Every $F$ in  $AB(X)$ has a fixed point.
    \item Every $F$ in  $AC(X)$ has a fixed point.
    \item Every $F$ in  $AD(X)$ has a fixed point.
    \item Every $F$ in  $AB(X)$ has a stationary point.
    \item Every $F$ in  $AD(X)$ has a stationary point.
 \end{enumerate}  \end{theo}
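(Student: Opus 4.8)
The plan is to establish $(1)\Rightarrow(3)$, $(1)\Rightarrow(5)$ and $(1)\Rightarrow(6)$ together with $(2)\Rightarrow(1)$ and $(4)\Rightarrow(1)$; the remaining links are trivial, since $\diam Y\ge\alpha(Y)$ gives $AB(X)\subseteq AC(X)$ (whence $(3)\Rightarrow(2)$) and a stationary point is in particular a fixed point (whence $(5)\Rightarrow(2)$ and $(6)\Rightarrow(4)$). So everything reduces to the \emph{existence} implications, where completeness produces a point, and the two \emph{converse} implications, where a fixed-point-free multifunction is manufactured from a non-convergent Cauchy sequence. The elementary fact used throughout is that (a) makes the chains decrease: if $y\in F(x)$ then $F(y)\subseteq F(F(x))\subseteq F(x)$, so for any sequence $(x_n)$ with $x_{n+1}\in F(x_n)$ the sets $F(x_n)$ form a decreasing family of nonempty closed sets and $x_m\in F(x_n)$ whenever $m>n$.

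For $(1)\Rightarrow(5)$, let $F\in AB(X)$ and take the sequence $(x_n)$ given by (b), so $\diam F(x_n)\to0$. Since $x_m,x_{m'}\in F(x_n)$ for $m,m'>n$, the sequence $(x_n)$ is Cauchy, hence convergent, $x_n\to\overline x$; closedness of each $F(x_n)$ gives $\overline x\in\bigcap_nF(x_n)$, and this intersection, being nonempty of vanishing diameter, equals $\{\overline x\}$. By (a), $F(\overline x)\subseteq\bigcap_nF(x_n)=\{\overline x\}$, and nonemptiness forces $F(\overline x)=\{\overline x\}$: a stationary point. For $(1)\Rightarrow(6)$ the only new point is to produce from (a) and (d) alone a chain with vanishing diameters: starting from an arbitrary $x_0$, pick $x_{n+1}\in F(x_n)$ with $\rho(x_n,x_{n+1})\ge\tfrac12\min\{1,r_n\}$, where $r_n:=\sup\{\rho(x_n,y):y\in F(x_n)\}$ (if $r_n=0$ then $F(x_n)=\{x_n\}$ and $x_n$ is already stationary). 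Condition (d) forces $\rho(x_n,x_{n+1})\to0$, hence $r_n\to0$, and from $F(x_{n+1})\subseteq F(x_n)$ we get $\diam F(x_{n+1})\le 2r_n\to0$; the $AB$ argument then applies unchanged and yields a stationary point.

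For $(1)\Rightarrow(3)$, let $F\in AC(X)$ and take $(x_n)$ from (c). The decreasing closed sets $F(x_n)$ satisfy $\alpha(F(x_n))\to0$, so by Kuratowski's generalized Cantor intersection theorem $K:=\bigcap_nF(x_n)$ is nonempty and compact. By (a), $F(x)\subseteq K$ for every $x\in K$, so $F$ restricts to a multifunction $K\to\mathcal P_{cl}(K)$ still satisfying (a). On $K$ I would apply Zorn's lemma to the family of all nonempty closed $C\subseteq K$ with $F(C)\subseteq C$, ordered by reverse inclusion: the intersection of a chain is a lower bound (a decreasing chain of nonempty closed subsets of a compact set has nonempty intersection), so there is a minimal such $M$. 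For any $\overline x\in M$, the set $F(\overline x)$ is nonempty, closed, contained in $M$, and satisfies $F(F(\overline x))\subseteq F(\overline x)$; minimality gives $F(\overline x)=M\ni\overline x$, a fixed point. (This yields no stationary point, which is why the statement has no $AC$-analogue of (5) and (6).)

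Finally, for $(2)\Rightarrow(1)$ and $(4)\Rightarrow(1)$, assume $X$ is not complete and fix a Cauchy sequence without a limit; as it is not eventually constant it has a subsequence $(y_n)_{n\ge0}$ of pairwise distinct terms, still Cauchy and still without a limit. Each tail $\{y_n:n>j\}$ has no accumulation point in $X$ --- a convergent subsequence would force the whole sequence to converge --- hence is closed, and likewise $\{y_n:n\ge0\}$ is closed. Put $F(y_j)=\{y_n:n>j\}$ and $F(x)=\{y_n:n\ge0\}$ for $x\notin\{y_j:j\ge0\}$. One checks directly that $F$ is $\mathcal P_{cl}(X)$-valued and $F(F(x))\subseteq F(x)$ for all $x$; the sequence $x_n=y_n$ verifies (b), and since every sequence $(z_n)$ with $z_{n+1}\in F(z_n)$ runs, from its first index on, along a strictly increasing sub-indexing of $(y_n)$, condition (d) holds too, so $F$ lies in both $AB(X)$ and $AD(X)$. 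But $F$ has no fixed point: $y_j\notin\{y_n:n>j\}$ by distinctness, and $x\notin\{y_n:n\ge0\}$ when $x\notin\{y_j:j\ge0\}$. This contradicts (2) and (4) directly, and --- since $F\in AB(X)\subseteq AC(X)$ and $F$ has a fortiori no stationary point --- contradicts (3), (5), (6) as well. The two points I expect to be delicate are the bookkeeping in this last construction (closed-valuedness, invariance under (a), and absence of fixed points must hold at once, which is exactly why one first passes to pairwise distinct terms) and the Zorn's lemma step in $(1)\Rightarrow(3)$, the one place a genuinely non-sequential compactness argument is needed.
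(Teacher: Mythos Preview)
The paper does not supply its own proof of this theorem; it merely records the result with attribution to Jiang \cite{jiang00}, so there is no in-paper argument to compare against. Your proof is correct and well organised: the chain of implications closes, the Cantor--Kuratowski intersection theorem is exactly the right tool for $(1)\Rightarrow(3)$, the Zorn argument on the compact core $K$ is clean, the greedy choice $\rho(x_n,x_{n+1})\ge\tfrac12\min\{1,r_n\}$ correctly squeezes the diameters in $(1)\Rightarrow(6)$, and the counterexample multifunction built from a non-convergent Cauchy sequence works simultaneously for $AB(X)$ and $AD(X)$. Two cosmetic points: in the $(1)\Rightarrow(3)$ step you may want to remark that condition~(c) forces $F(x_n)$ to be bounded for large $n$ (otherwise $\alpha(F(x_n))$ is undefined or $+\infty$), so the Kuratowski intersection theorem applies to a tail; and in $(1)\Rightarrow(6)$ you tacitly use that the selection $x_{n+1}$ exists with the stated lower bound, which follows from the definition of $r_n$ as a supremum --- both are routine but worth a half-line.
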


\section{Variational principles and completeness}\label{S.Ekeland}

This section is concerned with Ekeland Variational Principle (EkVP) in metric and in quasi-metric spaces and its relations to the completeness of these spaces.
\subsection{Ekeland Variational Principle}
The general form of EkVP  is the following.
\begin{theo}[Ekeland Variational Principle -- EkVP]\label{t.EkVP}
Let $(X,\rho)$ be a complete metric space and $f:X\to
\Real\cup\{+\infty\}$ a lsc bounded below function. Let $\epsic
> 0$ and $ x_0\in \dom f.$

Then given $\lambda > 0$ there exists $\,z=z_{\epsic,\lambda}\in
X\,$ such that
\bequ \label{eq2.EkVP}
\begin{aligned}
{\rm (a)}&\; \quad f(z) +\frac{\epsic}{\lambda} \rho(z,x_0)
\leq f(x_0);\\
{\rm (b)}& \;\quad \;\forall x\in X,\, x\neq z,\; \;  f(z) < f(x) +\frac{\epsic}{\lambda} \rho(z,x).
\end{aligned}
\eequ

If further, $x_0$ satisfies the condition
\bequ \label{eq1.EkVP}
f(x_0) \leq \inf f(X) + \epsic,
\eequ
then
$$
{\rm (c)}\; \quad  \rho(z,x_0)\leq \lambda.\qquad\qquad\qquad\qquad\qquad\qquad$$
\end{theo}

 The Ekeland Variational Principle is sometimes written in the following way (see, for instance, \cite{penot86} or  \cite[Lemma 3.13]{Phe93}).

 \begin{theo}[Ekeland Variational Principle-version b]\label{t.EkVPb}
Let $(X,\rho)$ be a complete metric space and $f:X\to
\Real\cup\{+\infty\}$ a lsc bounded below function. Let $\epsic
> 0$ and $ x_0\in \dom f.$

Then given $\lambda' > 0$ there exists $\,z=z_{\epsic,\lambda'}\in
X\,$ such that
\bequ \label{eq2.EkVPb}
\begin{aligned}
{\rm (a')}&\; \quad f(z) + \lambda' \rho(z,x_0) \leq f(x_0);\\
{\rm (b')}& \;\quad \;\forall x\in X,\, x\neq z,\; \;  f(z) < f(x) + \lambda' \rho(z,x).
\end{aligned}
\eequ
If further, $x_0$ satisfies the condition
\bequ \label{eq1.EkVPb}
f(x_0) \leq \inf f(X) + \epsic,
\eequ
then
$$
{\rm (c')}\; \quad   \rho(z,x_0)\leq \frac{\epsic}{\lambda'}\,.\qquad\qquad\qquad\qquad\qquad\qquad$$
\end{theo}
\begin{proof}
The equivalence between  Theorem \ref{t.EkVP} and  Theorem \ref{t.EkVPb} follows by the substitution
\bequ\label{EkVP-a-b}
\lambda'=\frac\epsic\lambda \iff \lambda=\frac\epsic{\lambda'}\eequ.
  \end{proof}

An important consequence is obtained by taking $\lambda =\sqrt
\varepsilon$ in Theorem \ref{t.EkVP}.
\begin{corol}\label{c.EkVP1}
Under the hypotheses of Theorem \ref{t.EkVP}, for every $\varepsilon
> 0$ and $x_0\in X$ with $ f(x_0) \leq \inf f(X) + \epsic$
there exists $y_\varepsilon\in X$  such that
\begin{equation} \label{eq5.EkVP1}
\begin{aligned}
{\rm (a)}&\; \; f(y_\varepsilon)
+\sqrt{\varepsilon}\rho(y_{\varepsilon},x_0) \leq f(x_0);\\
{\rm (b)}&\; \;\forall x\in X,\, x\neq y_{\varepsilon},\; \;
f(y_{\varepsilon}) < f(x)+\sqrt{\varepsilon} \rho(y_{\varepsilon},x);\\
{\rm (c)}& \;\;  \rho(y_{\varepsilon},x_0)\leq \sqrt \varepsilon.
\end{aligned}
\end{equation}
\end{corol}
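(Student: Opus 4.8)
\emph{Proof proposal.} The plan is simply to specialize Theorem~\ref{t.EkVP} to the parameter value $\lambda=\sqrt{\varepsilon}$. The hypotheses of Corollary~\ref{c.EkVP1} are exactly those of Theorem~\ref{t.EkVP} (complete metric space $(X,\rho)$, lsc bounded below proper $f$, fixed $\varepsilon>0$ and a point $x_\varepsilon$ satisfying \eqref{eq1.EkVP}), so the theorem applies verbatim; there is no extra work to set up.

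Concretely, I would invoke Theorem~\ref{t.EkVP} with $\lambda:=\sqrt{\varepsilon}$ and obtain a point $z=z_{\varepsilon,\sqrt{\varepsilon}}\in X$ satisfying \eqref{eq2.EkVP}(a)--(c). Setting $y_\varepsilon:=z$, I would then read off each conclusion: since $\varepsilon/\lambda=\varepsilon/\sqrt{\varepsilon}=\sqrt{\varepsilon}$, relation \eqref{eq2.EkVP}(a) becomes $f(y_\varepsilon)+\sqrt{\varepsilon}\,\rho(y_\varepsilon,x_\varepsilon)\le f(x_\varepsilon)$, which is \eqref{eq5.EkVP1}(a); relation \eqref{eq2.EkVP}(b) reads $\rho(y_\varepsilon,x_\varepsilon)\le\lambda=\sqrt{\varepsilon}$, which is \eqref{eq5.EkVP1}(b); and relation \eqref{eq2.EkVP}(c) gives, for every $x\neq y_\varepsilon$, the inequality $f(y_\varepsilon)<f(x)+\sqrt{\varepsilon}\,\rho(y_\varepsilon,x)$, which is \eqref{eq5.EkVP1}(c).

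There is no genuine obstacle here: the only thing to check is the arithmetic identity $\varepsilon/\sqrt{\varepsilon}=\sqrt{\varepsilon}$, valid since $\varepsilon>0$, together with the observation that the bound $\rho(z,x_\varepsilon)\le\lambda$ turns into $\rho(y_\varepsilon,x_\varepsilon)\le\sqrt{\varepsilon}$ under this substitution. Thus the corollary is an immediate specialization of the variational principle, and the proof consists of one application of Theorem~\ref{t.EkVP} followed by this substitution.
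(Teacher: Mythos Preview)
Your proposal is correct and matches the paper's approach exactly: the paper states that the corollary is obtained by taking $\lambda=\sqrt{\varepsilon}$ in Theorem~\ref{t.EkVP}, which is precisely the substitution you carry out.
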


Taking $\lambda =1$ in Theorem \ref{t.EkVP}, one obtains the following form of the
Ekeland Variational Principle, known as the {\it weak form of the
Ekeland Variational Principle}.

\begin{corol}[Ekeland's Variational Principle - weak form (wEkVP)]
\label{c.wEkVP}
Let $(X,\rho)$  be a complete metric space and $f:X\to
\mathbb{R}\cup\{+\infty\}$ a lsc  and bounded from below proper function.
Then for every $\varepsilon >0$ there exists an element $y_\varepsilon\in X$
such that
\begin{equation}\label{eq1.wEkVP}
f(y_\varepsilon)\leq \inf f(X) + \varepsilon,
\end{equation}
and
\begin{equation}\label{eq2.wEkVP}
f(y_\varepsilon) < f(y)+ \varepsilon \rho(y,y_\varepsilon),\quad \forall
y\in X\setminus \{y_\varepsilon\}.
\end{equation}
\end{corol}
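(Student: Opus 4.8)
The plan is to obtain this as a direct specialization of Theorem~\ref{t.EkVP}; no independent argument is required, since that theorem is already at our disposal.

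First, because $f$ is proper and bounded below, $\inf f(X)$ is a finite real number, so for the given $\varepsilon>0$ the definition of the infimum furnishes a point $x_\varepsilon\in X$ with $f(x_\varepsilon)\le\inf f(X)+\varepsilon$ --- this is exactly the inequality \eqref{eq1.EkVP} needed to invoke the general principle. Apply Theorem~\ref{t.EkVP} with this $\varepsilon$, this $x_\varepsilon$, and $\lambda=1$. It yields a point $z=z_{\varepsilon,1}\in X$ for which parts (a) and (c) of \eqref{eq2.EkVP} read
\begin{equation*}
f(z)+\varepsilon\,\rho(z,x_\varepsilon)\le f(x_\varepsilon)
\qquad\text{and}\qquad
f(z)<f(x)+\varepsilon\,\rho(z,x)\ \ \text{for all}\ x\in X\setminus\{z\}.
\end{equation*}
Set $y_\varepsilon:=z$. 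The second of these is precisely \eqref{eq2.wEkVP}. For \eqref{eq1.wEkVP}, drop the nonnegative term $\varepsilon\,\rho(z,x_\varepsilon)$ from the first inequality to get $f(y_\varepsilon)\le f(x_\varepsilon)$, and chain this with the choice of $x_\varepsilon$ to conclude $f(y_\varepsilon)\le\inf f(X)+\varepsilon$. Part (b) of \eqref{eq2.EkVP} (the bound $\rho(z,x_\varepsilon)\le1$) plays no role here and is simply discarded.

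Since every step is a substitution into a result we already have, there is no genuine obstacle; the only points to check are that the hypotheses of Theorem~\ref{t.EkVP} hold (they are assumed verbatim in the corollary) and that the auxiliary near-minimizer $x_\varepsilon$ exists (it does, by the elementary property of infima). For completeness I note that one could instead give a self-contained proof --- build a sequence $(u_n)$ with $u_0$ a near-minimizer and $f(u_{n+1})+\varepsilon\,\rho(u_n,u_{n+1})\le f(u_n)$ together with $f(u_{n+1})$ within $2^{-n}$ of the infimum of $f$ over the ``cone'' of admissible successors of $u_n$, show $(u_n)$ is Cauchy by summing the $\varepsilon$-terms, pass to a limit $y_\varepsilon$ using completeness and lower semicontinuity, and verify \eqref{eq2.wEkVP} by contradiction --- but invoking Theorem~\ref{t.EkVP} with $\lambda=1$ is far shorter and is exactly the route the surrounding text indicates.
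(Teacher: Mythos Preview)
Your proposal is correct and follows exactly the route the paper indicates: the corollary is obtained simply by taking $\lambda=1$ in Theorem~\ref{t.EkVP}, and you have filled in the straightforward details (choosing a near-minimizer $x_\varepsilon$ and reading off \eqref{eq1.wEkVP} and \eqref{eq2.wEkVP} from parts (a) and (c) of \eqref{eq2.EkVP}).
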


Note that the validity of Ekeland Variational Principle (in its weak
form) implies the completeness of the metric space $X.$ This was discovered by Weston \cite{weston77} in 1977 and rediscovered by Sullivan \cite{suliv81a} in 1981 (see also the survey \cite{suliv81b}).

More exactly, the following result holds.

\begin{prop}\label{p.EkVP-compl}
Let $(X,\rho)$ be a metric space.
If for every      Lipschitz function $f:X\to[0;\infty)$ there exists a point $z\in X$ such that
    \bequ\label{eq1.EkVP-compl}
    f(z)\le f(x)+\rho(z,x)\quad\mbox{for all}\;\; x\in X\,,
    \eequ
then $X$ is complete.
\end{prop}
\begin{proof}
   To prove the completeness of $X,$ let  $(x_n)$  be a Cauchy sequence in $X.$
The inequalities $\,|\rho(x_n,x)-\rho(x_m,x)|\le\rho(x_n,x_m),\, m,n\in\Nat,\,$ show that $(\rho(x_n,x))$ is a Cauchy sequence in $\Real,$ for every $x\in X$. Consequently, we can
 define a function $f:X\to\Real$ by $f(x)=2\lim_{n\to\infty}\rho(x_n,x),$
  $ x\in X.$  The inequalities
 $|2\rho(x_n,x)-2\rho(x_n,x')|\leq2\rho(x,x'), \, n\in \Nat,$ yield
 for $n\to \infty, \, |f(x)-f(x')|\leq2\rho(x,x'), $ showing that
 $f$ is Lipschitz.  For every $\epsic>0$ there exists $n_0$ such
 that $2\rho(x_n,x_{n+k})< \epsic,$ for all $n\geq n_0$ and all $k\in
 \Nat.\,$ Letting $k\to \infty,$ one obtains $0\le f(x_n)\leq \epsic,
 \forall n\geq n_0,$  implying  $\lim_{n\to \infty}f(x_n)=0.$
  By hypothesis,  there exists $z\in X$ such that
 \bequ\label{eq1.wEkVP-compl}
 f(z)\le f(x)+ \rho(z,x),
 \eequ
 for all $x\in X$. Putting $x=x_n$ in \eqref{eq1.wEkVP-compl}
 and letting $n\to \infty,$ one obtains $f(z)\leq \frac12 f(z), $
 implying $f(z)=0, $ which is equivalent to $\lim_{n\to
 \infty}\rho(x_n,z) =0, $ i.e., $(x_n)$ converges to $z$.
 \end{proof}
 \begin{remark}\label{re.str-EkVP-compl}
   The validity of strong EkVP also implies the completeness of the underlying metric space $X$. Indeed, the fulfilment  of \eqref{eq1.str-EkVP1}.(b) implies that \eqref{eq1.EkVP-compl} holds, so that, by Proposition \ref{p.EkVP-compl}, the space $X$ is complete.

   This also  shows that the strong EkVP is equivalent to  EkVP.
   Indeed, the proof of Theorem \ref{t.str-EkVP1} shows that Theorem  \ref{t.EkVPb} implies Theorem \ref{t.str-EkVP1}. Since the validity of Theorem \ref{t.str-EkVP1} implies the completeness  of $X$ and,  in its turn, this implies the validity of EkVP,  the equivalence of these two principles follows.
 \end{remark}
Ekeland Variational Principle is equivalent to many important fixed point and geometric results
(drop property, Caristi's fixed point theorem, the flower petal theorem, etc, see \cite{penot86}).
We mention here only Caristi's fixed point theorems - for both single-valued and set-valued mappings.

  \begin{theo}[Caristi-Kirk Fixed Point Theorem]\label{t.caristi1}
Let $(X,\rho)$ be a complete metric space and $\varphi:X\to \mathbb{R}$ a
bounded from below lsc  function.  If the  mapping $f:X\to X$
satisfies the condition
\begin{equation} \label{eq1.fp-Car}
\rho(x,f(x))\leq \varphi(x)-\varphi(f(x)),\; x\in X,
\end{equation}
then $f$ has a fixed point in $X.$
\end{theo}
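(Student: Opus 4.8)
The plan is to deduce Theorem \ref{t.caristi1} directly from the weak form of Ekeland's Variational Principle (Corollary \ref{c.wEkVP}), which applies since $(X,\rho)$ is assumed complete. First I would check that $\varphi$ satisfies the hypotheses of that corollary: it is proper (being real-valued), lower semicontinuous by assumption, and bounded from below by assumption. Applying Corollary \ref{c.wEkVP} to the function $\varphi$ with $\varepsilon=1$ then furnishes a point $y\in X$ enjoying the perturbed-minimality property
\[
\varphi(y) < \varphi(x) + \rho(x,y) \qquad \text{for all } x\in X\setminus\{y\}.
\]

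Next I would show by contradiction that this $y$ is the sought fixed point. Suppose $f(y)\neq y$. Putting $x=f(y)$ in the displayed inequality gives $\varphi(y) < \varphi(f(y)) + \rho(f(y),y)$. On the other hand, the Caristi condition \eqref{eq1.fp-Car} evaluated at $y$ reads $\rho(y,f(y))\le \varphi(y)-\varphi(f(y))$, i.e. $\varphi(f(y)) + \rho(f(y),y) \le \varphi(y)$ (using symmetry of $\rho$). Combining the two displayed relations yields $\varphi(y) < \varphi(y)$, a contradiction; hence $f(y)=y$.

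There is no genuinely hard step here — once EkVP is granted, Caristi's theorem is a one-line deduction. The only point requiring a little care is the matching of constants: the coefficient in \eqref{eq2.wEkVP} must not exceed the coefficient $1$ in front of $\rho(x,f(x))$ in \eqref{eq1.fp-Car}. Choosing $\varepsilon=1$ makes this exact; taking any $\varepsilon\in(0,1)$ also works and produces the slightly stronger-looking contradiction $(1-\varepsilon)\,\rho(y,f(y))\le 0$.

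For completeness I would also indicate the alternative self-contained route that does not invoke EkVP (and which underlies the equivalence mentioned in the text): equip $X$ with the Br\o ndsted order $x\preceq z \iff \varphi(z)+\rho(x,z)\le\varphi(x)$, verify it is a partial order every chain of which admits an upper bound — the bound being obtained as the limit of a suitably chosen sequence along the chain, where lower semicontinuity and boundedness below of $\varphi$ guarantee that the limit exists and dominates the chain — and then apply Zorn's Lemma to get a maximal element $y$; since \eqref{eq1.fp-Car} says $y\preceq f(y)$, maximality forces $f(y)=y$. In that approach the construction of upper bounds for chains is the main technical work, whereas in the EkVP-based proof sketched above that work is already packaged inside Theorem \ref{t.EkVP}.
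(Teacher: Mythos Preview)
Your proposal is correct and matches the paper's intended approach: the paper does not spell out a proof of Theorem~\ref{t.caristi1} but presents it explicitly as a consequence of Ekeland's Variational Principle, and in the analogous settings later (e.g.\ Proposition~\ref{p.Caristi-Ek-pm} for partial metric spaces and Proposition~\ref{Caristi-wEk-qm} for quasi-metric spaces) the paper carries out exactly the argument you give --- apply the weak form of EkVP with $\varepsilon=1$ and derive a contradiction from $f(y)\neq y$. Your additional remarks on the choice of $\varepsilon$ and the alternative Br\o ndsted-order/Zorn route are accurate and go a bit beyond what the paper records at this point.
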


Another consequence of EkVP is a set-valued version of Caristi's
 fixed point theorem.
 \begin{theo}\label{t.caristi2}
 Let $(X,\rho)$ be a complete metric space, $\,\varphi:X\to\mathbb{R}
 \cup\{+\infty\}\,$ a lsc and bounded from below proper function, and
 $F:X\rightrightarrows X$  a set-valued mapping.  If the mapping
 $F$ satisfies the condition
 \begin{equation}\label{eq1.caristi2}
 \rho(x,y)\leq \varphi(x)-\varphi(y), \quad \forall x\in X,\;\;
 \forall y\in F(x),
 \end{equation}
 then $F$ has a fixed point, i.e. there exists $x_0\in X$ such that
 $\, x_0\in F(x_0).$
 \end{theo}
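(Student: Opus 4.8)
The plan is to deduce the set-valued Caristi theorem (Theorem~\ref{t.caristi2}) from the Ekeland Variational Principle (Theorem~\ref{t.EkVP}), or rather from its weak form, Corollary~\ref{c.wEkVP}. First I would apply Corollary~\ref{c.wEkVP} to the function $\varphi$, which by hypothesis is lsc, bounded below, and proper, with $\varepsilon=1$ (any fixed $\varepsilon\in(0,1)$ works). This yields a point $z=y_\varepsilon\in X$ such that
\begin{equation*}
\varphi(z)<\varphi(y)+\rho(y,z)\qquad\text{for every }y\in X\setminus\{z\}.
\end{equation*}
The point $z$ should be the desired fixed point of $F$. Indeed, suppose for contradiction that $z\notin F(z)$; we still need $F(z)$ to be nonempty to proceed, so I would note that the hypothesis \eqref{eq1.caristi2} with $x=z$ forces $\varphi(z)\ge\varphi(y)+\rho(z,y)$ for every $y\in F(z)$; if $F(z)=\emptyset$ there is nothing to contradict, so one should assume $F$ has nonempty values (this is implicit in the statement, since otherwise the theorem is vacuous or false — I would flag that $F$ is tacitly assumed to have nonempty values, as is standard).

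Picking any $y\in F(z)$ with $y\neq z$ (possible since $z\notin F(z)$), the Ekeland inequality gives $\varphi(z)<\varphi(y)+\rho(y,z)$, while the Caristi-type condition \eqref{eq1.caristi2} gives $\rho(z,y)\le\varphi(z)-\varphi(y)$, i.e.\ $\varphi(y)+\rho(z,y)\le\varphi(z)$. These two inequalities are contradictory. Hence $z\in F(z)$, which is the conclusion.

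The main subtlety — really the only thing to be careful about — is the handling of the case $y=z$ versus $y\neq z$ and the nonemptiness of $F(z)$: the strict inequality in the weak Ekeland principle is only asserted for $y\neq z$, so the argument genuinely needs a point of $F(z)$ \emph{distinct} from $z$, and that is exactly what the assumption $z\notin F(z)$ provides once $F(z)\neq\emptyset$. An alternative, perhaps cleaner, route is to invoke the single-valued Caristi-Kirk theorem (Theorem~\ref{t.caristi1}): using the axiom of choice, select $f(x)\in F(x)$ for each $x$; then \eqref{eq1.caristi2} shows $\rho(x,f(x))\le\varphi(x)-\varphi(f(x))$, so by Theorem~\ref{t.caristi1} there is $x_0$ with $f(x_0)=x_0$, whence $x_0=f(x_0)\in F(x_0)$. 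I would present the direct Ekeland-based proof as primary since it makes the equivalence with EkVP transparent and avoids invoking choice, mentioning the reduction to Theorem~\ref{t.caristi1} as a remark.
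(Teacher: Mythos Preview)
Your proof is correct and follows exactly the route the paper indicates: the theorem is stated in the paper as ``another consequence of EkVP'' without an explicit proof, and your argument via Corollary~\ref{c.wEkVP} with $\varepsilon=1$ is precisely the intended derivation. Your remark on the tacit nonemptiness of $F(x)$ is well placed --- the paper makes this assumption explicit in the quasi-metric analogue (Theorem~\ref{t.Caristi2-qm}) --- and your alternative reduction to Theorem~\ref{t.caristi1} via a selection is also standard and worth mentioning.
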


 It follows that the validity of Caristi's FPT also implies the completeness of the underlying metric space.

\begin{corol}\label{c.Car-compl}
Let $(X,\rho)$ be a complete metric space. If every function $f:X\to
X,$ satisfying the hypotheses of Caristi Fixed Point Theorem for
some lsc function $\varphi :X\to \mathbb{R}, $ has a fixed
point in $X,$ then the metric space $X$ is complete.
\end{corol}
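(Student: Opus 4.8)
The displayed corollary should be read with ``metric space'' in place of ``complete metric space'' in its hypothesis (compare Corollary~\ref{c.Borw-compl} and Theorem~\ref{t.Hu}); what is really asserted is the converse direction in the Caristi circle of ideas: a metric space on which every self-map possessing a Lyapunov function in the sense of \eqref{eq1.fp-Car} has a fixed point must be complete. I would prove the contrapositive. So suppose $X$ is not complete and fix a Cauchy sequence $(x_n)$ with no limit in $X$. Just as in the proof of Proposition~\ref{p.EkVP-compl}, for each $x\in X$ the real sequence $(\rho(x,x_n))_n$ is Cauchy, so $\psi(x):=\lim_n\rho(x,x_n)$ is well defined; and, as there, $\psi$ is $1$-Lipschitz (hence continuous) and nonnegative, with $\psi(x)>0$ for every $x$ since $\psi(x)=0$ would give $x_n\to x$. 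The Lyapunov function will be $\varphi:=3\psi$, which is real-valued, continuous and bounded below, hence admissible in Caristi's theorem (Theorem~\ref{t.caristi1}).

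The next step is to manufacture a Caristi map $f$ with no fixed point. For $x\in X$ I would use that $(x_n)$ is Cauchy and $\psi(x)/4>0$ to pick an index $n(x)$ (say the least one) with $\rho(x_k,x_\ell)<\psi(x)/4$ for all $k,\ell\ge n(x)$, and then set $f(x):=x_{n(x)}$. Letting $\ell\to\infty$ in that inequality yields $\psi(f(x))\le\psi(x)/4$, while the triangle inequality $\rho(x,x_{n(x)})\le\rho(x,x_m)+\rho(x_m,x_{n(x)})$ with $m\to\infty$ gives $\rho(x,f(x))\le\psi(x)+\psi(x)/4$. Hence
\[
\rho(x,f(x))\ \le\ \tfrac54\psi(x)\ \le\ 3\psi(x)-\tfrac34\psi(x)\ \le\ 3\psi(x)-3\psi(f(x))\ =\ \varphi(x)-\varphi(f(x))\,,
\]
so $(f,\varphi)$ satisfies \eqref{eq1.fp-Car}. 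Finally $f$ has no fixed point: $f(x)=x$ would force $x=x_{n(x)}$, whence $\psi(x)=\psi(x_{n(x)})\le\psi(x)/4$ and thus $\psi(x)=0$, contradicting $\psi(x)>0$. This contradicts the assumed fixed point property, so $X$ is complete.

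I expect the only delicate point to be the constant bookkeeping in the second step. With the naive choice $\varphi=\psi$ the Caristi inequality cannot be closed, because \eqref{eq1.fp-Car} demands a genuine descent gap $\rho(x,f(x))\le\varphi(x)-\varphi(f(x))$; the factor $3$, together with the choice of $n(x)$ forcing the tail of $(x_n)$ to oscillate by at most $\psi(x)/4$, is exactly what produces this gap while keeping $\psi(f(x))$ small. (Alternatively one could invoke the equivalence of Caristi's theorem with the weak form of the Ekeland principle and appeal to Proposition~\ref{p.EkVP-compl}, but the direct construction above is self-contained and avoids re-deriving that equivalence.)
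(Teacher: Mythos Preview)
Your argument is correct, and your reading of the statement (dropping the spurious word ``complete'' from the hypothesis) is the intended one. The constants are handled cleanly: with $\psi(f(x))\le\psi(x)/4$ and $\rho(x,f(x))\le 5\psi(x)/4$, the Caristi inequality for $\varphi=3\psi$ holds with room to spare, and the absence of fixed points follows at once from $\psi>0$.

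Your route differs from the paper's. The paper does not give a direct construction here; rather, it observes that Caristi's theorem is equivalent to the weak Ekeland principle and then invokes Proposition~\ref{p.EkVP-compl}, which already showed that the validity of wEkVP forces completeness. You anticipate this in your closing parenthetical remark. What your direct argument buys is self-containment: one does not need to establish (or cite) the implication ``Caristi $\Rightarrow$ wEkVP'' in order to reach the conclusion, and the Lyapunov function $\varphi=3\psi$ together with the explicit map $f(x)=x_{n(x)}$ makes the mechanism transparent. The paper's route, on the other hand, is shorter once the equivalence is on the table and emphasizes that the completeness characterization is really a single phenomenon shared by the whole Ekeland--Caristi circle of results, as stressed in Remark~\ref{re.EkVP-compl}.
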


\begin{remark}
   Replacing in both Theorems \ref{t.caristi1} and \ref{t.caristi2} and in Corollary \ref{c.Car-compl}  the function $\varphi$ by $\varphi-\inf\varphi(X)$, one can consider, without restricting the generality, that the function $\varphi$ is lsc and takes values in $\mathbb{R}_+\,$.
 \end{remark}

 \subsection{The strong Ekeland principle, compactness and reflexivity }

 Let $X$ be a Banach space and $f:X\to\Real\cup\{\infty\}$ a function. A point $x_0\in \dom(f)$ is called

 \textbullet \;\;a \emph{minimum point} for $f$ if $f(x_0)\le f(x)$ for all $x\in X$;

\textbullet \;\;a \emph{strict minimum point} for $f$ if $f(x_0)<f(x)$ for all $x\in X\setminus\{x_0\}$;

\textbullet\;\;a \emph{strong minimum point} for if $f(x_0)=\inf f(X)$ and every sequence $(x_n)$ in $X$ such that
$\lim_nf(x_n)=\inf_X f$ is norm-convergent to $x_0$.

A sequence $(x_n)$ satisfying $\lim_nf(x_n)=\inf f(X)$ is called a \emph{minimizing sequence} for $f$.

\begin{remark}\label{re.str-min}
  A strong minimum point is a strict minimum point, but the converse is not true.
 \end{remark}

 Indeed, if there exist $z\ne z'$ such that $f(z)=m=f(z')$, where $m=\inf f(X),$ then the sequence $x_{2k-1}=z,\, x_{2k}=z', k\in \Nat,$ satisfies $\lim_nf(x_n)=m,$ but it is not convergent. Also,  the function $f:\Real \to\Real,\, f(x)=x^2e^{-x},$ has a strict minimum at 0, $f(0)=0,\, f(n)\to 0,$ but the sequence $(n)_{n\in\Nat}$ does not converge to 0.

 Condition (b) in Theorem \ref{t.EkVP} (as well as the corresponding condition in other variants) asserts, in fact, that  $z$ is  strict minimum   point for the perturbed function $\tilde f:=f+\f{\epsic}{\lbda}\rho(z,\cdot)$. Georgiev \cite{georgiev88} proved a stronger variant of Ekeland variational principle, guaranteeing the existence of a strong minimum point $z$ for $\tilde f$. Later Turinici \cite{turin05} has shown that this strong form can be deduced from Theorem \ref{t.EkVPb}.

 \begin{theo}[Strong Ekeland Variational Principle-amended form] \label{t.str-EkVP1} Let $(X,\rho)$ be a complete metric space and $f:X\to\Real\cup\{+\infty\}$ a lsc function bounded from below on $X$. Then for every $\gamma,\delta > 0$ and $x_0\in \dom(f)$ there exists $z\in X$ such that
\bequ\label{eq1.str-EkVP1}\begin{aligned}
 & {\rm (a)}\;\; f(z)+\gamma \rho(x_0,z)<f(x_0)+\delta;\\
  &{\rm (b)}\;\; f(z)<f(x)+\lbda \rho(z,x)\quad\mbox{for all}\quad x\in X\setminus\{z\}\,;\\
  &{\rm (c)}\;\;  f(x_n)+\lbda \rho(z,x_n)\to f(z)\;\Ra\; x_n\to z,\;\;\mbox{for every  sequence } (x_n)\mbx{  in }  X.
  \end{aligned} \eequ\end{theo}

  Georgiev \cite{georgiev88} deduced from Theorem \ref{t.str-EkVP1} (called by him the amended strong Ekeland variational principle) the following result.
 \begin{theo}[Strong Ekeland Variational Principle]\label{t.str-EkVP2} Let $(X,\rho)$ and $f$ be as in Theorem \ref{t.str-EkVP1}.For $\epsic >0$ suppose that $x_0\in X$ satisfies
  \bequ\label{eq1.str-EkVP2}
  f(x_0)\le\epsic+\inf f(X).\eequ

  Then for every $\gamma,\delta_1,\delta_2 > 0$ with $\gamma\delta_1\ge\epsic,$ there exists $z\in X$ such that
\bequ\label{eq2.str-EkVP2}\begin{aligned}
 & {\rm (a)}\;\; f(z)+\gamma \rho(x_0,z)<f(x_0)+\delta_2;\\
 & {\rm (b)}\;\; z\;\;\mbox{is a strong minimum point for}\;\; \tilde f:=f+\gamma\rho(z,\cdot);\\
  & {\rm (c)}\;\;\rho(z,x_0)\le\delta_1+\delta_2.\end{aligned}
 \eequ\end{theo}
 \begin{proof}
  Taking $\delta:=\delta_2$ in Theorem \ref{t.str-EkVP1}, \eqref{eq1.str-EkVP1}.(a) yields \eqref{eq2.str-EkVP2}.(a), which, in its turn, implies
  $$
  \rho(z,x_0)\le\frac1\gamma\left[f(x_0)-f(z)\right]+\delta_2\le\frac\epsic\gamma+\delta_2\le\delta_1+\delta_2.$$
 \end{proof}
Observe that there is a discrepancy between the conditions (a$'$) in Theorem \ref{t.EkVPb} and condition (a) in Theorem \ref{t.str-EkVP1}, condition (a$'$) being stronger than (a). As was remarked by Suzuki \cite{suzuki06,suzuki10b}, a strong version of the Ekeland variational principle with condition (a$'$)
instead of (a) can be proved by imposing supplementary conditions on the underlying metric (or normed) space $X$, which are, in some sense, also necessary.

Let  $f\colon X\to(-\infty,+\infty]$ be  a proper function defined on a metric space $(X,\rho)$. For $x_0\in \dom f$ and $\lambda>0$ consider an element  $z=z_{x_0,\lambda} $ satisfying  the following conditions:
\bequ\label{eq1.str-EkVP-comp}\begin{aligned}
  &{\rm (i)}\;\; f(z)+\lbda \rho(z,x_0)\le f(x_0)\,;\\
  &{\rm (ii)}\;\; f(z)<f(x)+\lbda \rho(z,x)\quad\mbox{for all}\quad x\in X\setminus\{z\}\,;\\
  &{\rm (iii)}\;\;  f(x_n)+\lbda \rho(z,x_n)\to f(z)\;\Ra\; x_n\to z,\;\;\mbox{for every  sequence } (x_n)\mbx{  in }  X.
  \end{aligned}\eequ

  If $(X,\norm)$ is a normed space, then $\rho(x,y)$ is replaced by $\|y-x\|.$

  A metric space $(X,\rho)$ is called \emph{boundedly compact} if every bounded closed subset of $X$ is compact. It is obvious that a boundedly compact metric space is complete, and that a normed space is boundedly compact if and only if it is finite dimensional.

  \begin{prop}\label{p1.str-EkVP-comp} Let $(X,\rho)$ be a boundedly compact metric space, $f\colon X\to(-\infty,+\infty]$ a lsc bounded from below function, $x_0\in \dom f$ and $\lbda >0.$

  Then there exists a point $z\in X$ satisfying the conditions  \eqref{eq1.str-EkVP-comp}.
   \end{prop}\begin{proof}
    By Theorem \ref{t.EkVPb} there exists $z\in X$ satisfying the conditions (i) and (ii) from \eqref{eq1.str-EkVP-comp}.

   Let  now  $(x_n)$ be a sequence in $X$ such that $\lim_n [f(x_n)+\lbda \rho(z,x_n)]= f(z)$ and suppose, by contradiction, that $(x_n)$ does not converge to $z$.  Then there exist $\gamma>0$ and a subsequence $y_k=x_{n_k},\, k\in\Nat,$ of $(x_n)$  such that $\rho(y_k,z)\ge\gamma $ for all $k\in\Nat.$

    Let $k_0\in\Nat$ be such that $f(y_k)+\lbda \rho(z,y_k)\le  f(z)+1$ for all $k>k_0$. The inequalities
    \begin{align*}
      \lbda \rho(z,y_k)=& f(y_k)+\lbda \rho(z,y_k) -f(y_k)\\
             \le &  f(z)+1-\inf f(X)\,,
    \end{align*}
    valid for all $k>k_0,$ show that the sequence $(y_k)$ is bounded. Consequently, it contains a subsequence
$    (y_{k_i})$ converging to some $y\in X\setminus\{z\}.$ Taking into account the lsc of the function $f,$ one obtains
$$
f(y)+\lbda\rho(z,y)\le\liminf_i[f(y_{k_i})+\lbda \rho(z,y_{k_i})]=f(z),$$
in contradiction to \eqref{eq1.str-EkVP-comp}.(ii).
  \end{proof}

\begin{remark}\label{re.q-cv} 1.  Let $X$ be a vector space. A function $f:X\to\Real\cup\{\infty\}$ is called \emph{quasi-convex} if
 $$
 f((1-t)x+ty)\le \max\{f(x),f(y)\}\,,$$
 for all $x,y\in X$ and $t\in[0,1].$ This is equivalent to the fact that the sublevel  sets $\{x\in X:f(x)\le\alpha\}$ are convex for all $\alpha\in\Real$  (see \cite{Niculescu}).

 2. One says that a Banach space $X$ is a \emph{dual Banach space} if there exists a Banach space $Y$ such that $Y^*=X.$  Obviously, a reflexive Banach space is a dual Banach space with $X=\left(X^*\right)^*$  and, in this case, the weak topology  and the weak$^*$ topology on $X$  agree.
 \end{remark}

In the Banach space case   the following results can be proved.

\begin{prop}[\cite{suzuki06}]\label{p1.str-EkVP-refl} Let $X$ be a   Banach space,   $f\colon X\to(-\infty,+\infty]$ a bounded from below function, $x_0\in \dom f$ and $\lbda >0.$
\ben
\item[\rm 1.] If $X$ is a dual Banach space and $f$ is  $w^*$-lsc,
  then there exists a point $z\in X$ satisfying   \eqref{eq1.str-EkVP-comp}  with $x_n\xrightarrow{w^*}x$ in the condition (iii).
\item[\rm 2.]
Suppose that  the  Banach space $X$ is  reflexive. If  $f$ is  weakly  lsc,  then there exists a point $z\in X$ satisfying the conditions  \eqref{eq1.str-EkVP-comp}.
The same is true if $f$ is quasi-convex and norm-lsc.
   \een\end{prop}
   \begin{proof}  1.\; Since
   $$
    x_n\xrightarrow{\norm}x\;\Ra\;x_n\xrightarrow{w^*}x\;\Ra\;f(x)\le\liminf_n f(x_n)\,,$$
    it follows that $f$ is also lsc on $(X,\norm)$. By Theorem \ref{t.EkVPb}, there exists $z\in X$ satisfying the conditions (i) and (ii) from  \eqref{eq1.str-EkVP-comp}.

    Let $(x_n)$ be a sequence in $X$ such that
    $$
    \lim_{n\to\infty}[f(x_n)+\lbda\|x_n-z\|]=f(z)\,.$$

    The inequalities
    \begin{align*}
    \|x_n\|&\le \|z\|+\|x_n-z\|\\
    &\le \|z\|+\f{f(x_n)+\lbda\|x_n-z\|}{\lbda}-\f1{\lbda}\inf f(X)
    \end{align*}
    show that the sequence $(x_n)$ is norm-bounded. If $(x_n)$ is not $w^*$-convergent to $z$, then, by the Alaoglu-Bourbaki theorem, there exist
    a subnet $(y_i:i\in I)$ of the sequence $(x_n)$ and $y\ne z$ in $X$ such that $y_i\xrightarrow{w^*}y.$ Since  the norm $\norm$ is also $w^*$-lsc we have
    $$
    f(y)+\lbda \|y-z\|\le \liminf_i[f(y_i)+\lbda\|y_i-z\|]=f(z)\,,$$
    in contradiction to \eqref{eq1.str-EkVP-comp}.(ii).
   \smallskip

  2.\;   Since a reflexive Banach space $X$ is a dual Banach space and the  weak and weak$^*$ topologies agree on $X$,
  Theorem \ref{t.EkVPb} and the first statement of the proposition show that, in both cases, there exists $z\in X$ satisfying the conditions (i) and (ii) from \eqref{eq1.str-EkVP-comp}.

   Let $(x_n)$ be a sequence in $X$ such that
   $$\lim _n[f(x_{n})+\lbda \|z-x_{n}\|]=f(z)\,.
   $$

   I. \; \emph{The sequence $(x_n)$ converges weakly to} $z$\,.

   If $f$ is weakly lsc, then I follows from  statement 1 of the proposition.

   Suppose that $f$ is quasi-convex and norm-lsc. The function $f$ is quasi-convex if and only if the set $\{x\in X : f(x)\le \alpha\}$ is convex for every $\alpha\in \Real,$ and lsc if and only if  the set $\{x\in X : f(x)\le \alpha\}$ is closed for every $\alpha\in \Real.$ Since norm and weak closed convex sets are the same, it follows that  $f$ is also weakly lsc. Hence, by the first part of the proposition the sequence $(x_n)$ converges weakly to $z.$
    \vspace{2mm}

II.\; \emph{The sequence $(x_n)$ converges strongly to} $z$\,.

Suppose again  that the sequence   $(x_n)$ does not converge strongly to $z$. Then there exists $\epsic >0$ and a subsequence $(x_{n_k})$ of $(x_n)$ such that
$$
\|z-x_{n_k}\|>2\epsic\quad\mbox{and}\quad f(x_{n_k})+\lbda \|z-x_{n_k}\|<f(z)+\lbda\epsic\,,$$
   for all $k\in \Nat.$

   The inequalities
   $$
   f(x_{n_k})+2\lbda \epsic <f(x_{n_k})+\lbda \|z-x_{n_k}\|<f(z)+\lbda\epsic,$$
   yield
   $$
   f(x_{n_k})<f(z)-\lbda\epsic\,,$$
   for all $k\in\Nat.$

   Since, by I, $(x_{n_k})$ converges weakly to $z$ and $f$ is weakly lsc, the  above inequalities lead to the contradiction
   $$
   f(z)\le\liminf_ kf(x_{n_k}) \le f(z)-\lbda\epsic\,.$$
 \end{proof}

 As it was shown by Suzuki \cite{suzuki10b}, in some sense, the results from Propositions \ref{p1.str-EkVP-comp} and \ref{p1.str-EkVP-refl}  are the best that can be expected.

 \begin{theo}\label{t.str-EkVP-comp}
 For a   metric space $(X,\rho)$ the following are equivalent.
 \begin{enumerate}
   \item[{\rm 1.}] The metric space $X$ is boundedly compact.
   \item[{\rm 2.}] For every  proper  lsc bounded from below function $f\colon X\to(-\infty,+\infty]$,     $x_0\in \dom f$ and $\lbda >0$
there exists a point $z\in X$ satisfying the conditions   \eqref{eq1.str-EkVP-comp}.
 \item[{\rm 3.}] For every  Lipschitz function $f\colon X\to[0,+\infty)$, \,    $x_0\in \dom f$ and $\lbda >0$
there exists a point $z\in X$ satisfying the conditions  \eqref{eq1.str-EkVP-comp}.
 \end{enumerate}
 \end{theo}\begin{proof} The implication 1 $\Rightarrow$ 2 was proved in Proposition \ref{p1.str-EkVP-comp}, while the implication 2 $\Rightarrow$ 3 is obvious.

 It remains to prove  the implication  3 $\Rightarrow$ 1.

 Observe that, by Proposition \ref{p.EkVP-compl}, the metric space $X$ is complete.
 Suppose  that it is not boundedly compact. Then there exists a closed bounded subset $Y$ of $X$ which is not compact. Since $X$ is complete it follows that $Y$ is not totally bounded. Consequently there exist $\epsic >0$ and a subset $\{y_n : n\in\Nat\}$ of $Y$ such that
 $$
 \rho(y_n,y_m)>3\epsic\quad\mbox{for all}\quad m\ne n.$$

 Let
 $$
 \delta=\sup_m\rho(y_1,y_m)\quad \mbox{and}\quad B_n=B[y_n,\epsic],\; n\in \Nat\,.$$

 It follows
 $$
\delta>3\epsic,\quad 1-\frac{\rho(y_1,y_n)}{\delta}\ge 0\; \qquad\mbox{and}\qquad B_n\cap B_m=\emptyset\,,$$
 for all $n$ and all $ m\ne n$.

 For $n\in \Nat$ put
 $$
 f_n(x):=1-\frac{\rho(y_1,y_n)}{\delta}+\frac1n+\frac{\rho(x,y_n)}{\epsic},\; x\in X\,.$$

 It follows that the function $f_n$ is $\frac1\epsic$-Lipschitz on $X$.

 Defining   $f:X\to[0;\infty)$ by
 $$
 f(x):=\min\left\{1,\inf_nf_n(x)\right\},\; x\in X\,,$$
  the function $f$ will be  $\frac1\epsic$-Lipschitz on $X$ too.

 We intend to show that
 \bequ\label{eq1.t1.str-EkVP-comp}
 f(x)=\begin{cases}
   \min\{1,f_n(x)\}\quad\mbox{if}\quad x\in B_n\;\;\mbox{for some   } n\ge 2,\\
   1\qquad\qquad \qquad\qquad\mbox{otherwise}.
 \end{cases}\eequ

 Let $x\in B_n$ for some $n\ge 2.$ If $m\ne n$ then
 $$
  f_m(x)=1-\frac{\rho(y_1,y_m)}{\delta}+\frac1m+\frac{\rho(x,y_m)}{\epsic}>2+\frac1m\,,$$
  because $\, \rho(x,y_m)\ge \rho(y_n,y_m)-\rho(y_n,x)>2\epsic.$

  Consequently
  $$
  f(x)=\min\left\{1,\inf_nf_n(x)\right\}=\min\big\{1,f_n(x),\inf_{m\ne n}f_m(x)\big\}=\min\left\{1,f_n(x)\right\}\,.$$

  Let $x\in B_1.$ Then
  $$
  f_1(x)=2+\frac{\rho(x,y_1)}\epsic\ge 2\quad\mbox{and}\quad f_m(x)>2+\frac1m\quad\mbox{for}\;\; m\ge 2\,,$$
  implying $f(x)=1.$ In particular $f(y_1)=1.$

  Let now $x\in X\setminus\bigcup_k B_k.$  Then for every $n\in\Nat,$
  $$
  f_n(x)=1-\frac{\rho(y_1,y_n)}{\delta}+\frac1n+\frac{\rho(x,y_n)}{\epsic}>1+\frac 1n\,,$$
  because $\rho(x,y_n)>\epsic.$ Consequently  $f(x) =1$  and \eqref{eq1.t1.str-EkVP-comp} holds.

 Let  $x_0=y_1$ and $\lbda=\delta^{-1}. $ We show that if  $z\in X$ satisfies the conditions (i) and (ii) from \eqref{eq1.str-EkVP-comp}, then the condition (iii) is not satisfied.

 We show first that $z=y_1$.  If $f(z)<1,$ then $z\in B_n$ for some $n\ge 2,$ implying
\begin{align*}
  f_n(z)+\frac{\rho(z,y_1)}\delta=&1+\frac1n-\frac{\rho(y_1,y_n)}{\delta}+\frac{\rho(z,y_1)}\delta
  + \frac{\rho(z,y_n)}{\epsic}
  \\
  &\ge 1+\frac1n-\frac{\rho(z,y_n)}{\delta}+\frac{\rho(z,y_n)}{\epsic}\ge 1+\frac1n>1=f(y_1)\,,
\end{align*}
in contradiction to \eqref{eq1.str-EkVP-comp}.(i).

Hence  $f(z)=1$ and, by \eqref{eq1.str-EkVP-comp}.(i),
$$
1+\frac{\rho(z,y_1)}\delta\le f(y_1)=1\,,$$
which implies $\rho(z,y_1)=0$ and $z=y_1.$

Consider now the  sequence $(y_n)$ in $X$. Then, by \eqref{eq1.t1.str-EkVP-comp},
$$
\alpha_n:=f(y_n)+\frac{\rho(y_1,y_n)}\delta=\min\left\{1,1+\frac 1n-\frac{\rho(y_1,y_n)}\delta\right\} +\frac{\rho(y_1,y_n)}\delta\,,$$
for every $n\ge 2.$

If
$$
1+\frac 1n-\frac{\rho(y_1,y_n)}\delta>1\,,$$
then
$$
\alpha_n=1+\frac{\rho(y_1,y_n)}\delta < 1+\frac 1n\,,
$$
and
$$
\alpha_n= 1+\frac 1n\,$$
if
$$
1+\frac 1n-\frac{\rho(y_1,y_n)}\delta<1\,.$$

 Taking into account  \eqref{eq1.str-EkVP-comp}.(ii) and these relations, one obtains
$$
1=f(y_1)<\alpha_n\le 1+\frac 1n\,,$$
for all $n\ge 2.$ It follows $\lim_{n\to\infty}\alpha_n=1,$ but the sequence $(y_n)$ does not converge.
\end{proof}

A similar result holds in the case of normed spaces.

 \begin{theo}\label{t.str-EkVP-refl}
 For a   normed  space $(X,\norm)$ the following are equivalent.
 \begin{enumerate}
   \item[{\rm 1.}]  $X$ is a reflexive Banach space.
   \item[{\rm 2.}] For every  proper  lsc bounded from below quasi-convex function $f\colon X\to(-\infty,+\infty]$,     $x_0\in \dom f$ and $\lbda >0$
there exists a point $z\in X$ satisfying the conditions   \eqref{eq1.str-EkVP-comp}.
 \item[{\rm 3.}] For every  Lipschitz convex function $f\colon X\to[0,+\infty)$, \,    $x_0\in \dom f$ and $\lbda >0$
there exists a point $z\in X$ satisfying the conditions     \eqref{eq1.str-EkVP-comp}.
 \end{enumerate}
 \end{theo}\begin{proof}
 The implication 1 $\Rightarrow$ 2 was proved in Proposition \ref{p1.str-EkVP-refl}, while the implication 2 $\Rightarrow$ 3 is obvious.

 It remains to prove  the implication  3 $\Rightarrow$ 1.

 Observe that, by Proposition \ref{p.EkVP-compl}, the normed space $X$ is complete. By James' theorem on the characterization of reflexivity, the space $X$ will be reflexive if every $x^*\in S_{X^*}$ (the unit sphere of the dual space $X^*$) attains its norm on $S_X$ (the  unit sphere of the   space $X$).

For $x^*\in S_{X^*}$ consider the convex 1-Lipschitz function $f(x):=|x^*(x)-1|,\, x\in X.$  Then for  $x_0=0$ and $\lbda =1$ there exists $z\in X$ satisfying the conditions     \eqref{eq1.str-EkVP-comp}.

 Suppose that $z=0.$ Let $(x_n)$ be sequence in $S_X$ such that $\lim_{n\to\infty}x^*(x_n)=\|x^*\|=1.$  Then
$$
\lim_{n\to \infty}[f(x_n)+\|0-x_n\|]=\lim_{n\to\infty}[|x^*(x_n)-1|+1]=1=f(0)\,,$$
but the sequence $(x_n)$ does not converge to 0, in contradiction to \eqref{eq1.str-EkVP-comp}.(iii).

Consequently $z\ne 0.$ Taking into account \eqref{eq1.str-EkVP-comp}.(i), one obtains
$$
|x^*(z)-1|+\|z\|\le f(0)=1\;\Lra \; \|z\| \le 1-|x^*(z)-1|\le |x^*(z)|\,.$$
Since $|x^*(z)|\le \|z\|$ it follows  $|x^*(z)|= \|z\|.$ This last equality implies $\|z\|=1,$ so that  $x^*$ attains its norm at $z\in S_X.$ The reflexivity of $X$ is proved.
\end{proof}

It is known that every lsc function attains its minimum on a compact space, a property which is actually equivalent to compactness.
\begin{theo}[\cite{suzuki10b}] For  a metric space    $(X,\rho)$ the following are equivalent.
\ben
\item[\rm 1.] The metric space $X$ is compact.
\item[\rm 2.] Every lsc function $f:X\to \Real\cup\{\infty\} $ attains its minimum on $X$.
\item[\rm 3.] Every Lipschitz  function $f:X\to \Real $ attains its minimum on $X$.
\een\end{theo}\begin{proof}
  The implication 1\;$\Ra$\;2 is well known, but, for convenience, we include the simple proof. Suppose that $f $ is not identically equal to $+\infty$ and let
  $m=\inf f(X)$ and let $(x_n)$ be a sequence in $X$ such that $f(x_n)\to m.$ By the compactness of $X$ there exist a subsequence $(x_{n_k})$ of $(x_n)$ and $x\in X$ such that $x_{n_k}\to x$ as $k\to\infty.$  But then
  $$
  m\le f(x)\le\liminf_kf(x_{n_k})=m\,,$$
  implying $f(x)=m\in\Real.$

  The implication 2\;$\Ra$\;3 is obvious.

  3\;$\Ra$\;1.\;  The compactness of $X$ is equivalent to its completeness and total boundedness.\smallskip

    $X$ \emph{is complete}.

    Let $(x_n)$ be  a Cauchy sequence in $X$. Then, as in the proof of Proposition \ref{p.EkVP-compl}, the function $f(x)=\lim_n\rho(x_n,x),\, x\in X,$  is well defined, 1- Lipschitz and $\inf f(X)=0.$  By hypothesis, there exists $x\in X$ such that $0=f(x)=\lim_n\rho(x_n,x)$, showing that the sequence $(x_n)$ converges to $x.$

\smallskip
    $X$ \emph{is totally bounded}.

    If contrary, then there exist $\eps>0$ and a sequence $(x_n)$ in $X$ such that
    \bequ\label{eq1.lsc-comp}
    \rho(x_n,x_m)\ge\eps\,,
    \eequ
    for all $m\ne n$ in $\Nat.$ Let $Y=\{x_n:n\in\Nat\}\sse X$ and $g:Y\to \Real$ be defined by $g(x_n)=1/n,\, n\in     \Nat.$ Then, by \eqref{eq1.lsc-comp},
    $$
    |g(x_n)-g(x_m)|=\left|\f1n-\f1m\right|\le 1\le\f1{\eps}\, \rho(x_n,x_m)\,,$$
    for all $m,n\in\Nat,$ that is, $g$ is $1/\eps$-Lipschitz. The function $f:X\to \Real$ given by
    $$
    f(x)=\inf_n[g(x_n)+K\rho(x_n,x)],\, x\in X,$$
    where $K=1/\eps,$ is a $K$-Lipschitz extension of $g$ (see, for instance, \cite[Th. 4.1.1]{CMN}). We have
    $$0\le   \inf f(X)\le \inf_n g(x_n)=0\,,$$
    so that $\inf f(X)=0.$ If $f(x)=0$ for some $x\in X$, then, by the definition of $f$, there exists a subsequence $(x_{n_k})$ of $(x_n)$ such that
    $$
    \lim_k\left[\f1{n_k}+K\rho(x_{n_k},x)\right] =0\,,$$
    implying $\lim_kx_{n_k}=x.$ But this is impossible, because,  by \eqref{eq1.lsc-comp}, the sequence $(x_n)$ has no Cauchy subsequences.
\end{proof}

\begin{remark}  The proof given here is different from that in \cite{suzuki10b}.
\end{remark}

\section{Other principles}
In this subsection we shall present some results equivalent to Ekeland Variational Principle.
\medskip

\subsection{Takahashi minimization principle}

The first one  is that of Takahashi \cite{taka91} (see also \cite{suzuki-taka96} and \cite[T. 2.1.1]{Taka}).
\begin{theo}[Takahashi Principle]\label{t1.Taka} Let $(X,\rho)$ be a complete metric space and  $f:X\to\mathbb{R}\cup\{\infty\}$ a  lsc bounded from below proper function. If the following condition
\begin{equation}\label{eq1.Taka}
\forall x\in X,\; \inf f(X)<f(x)\;\Rightarrow\; \exists y_x\in X\sms\{x\},\; f(y_x)+\rho(x,y_x)\le f(x)\,,
\end{equation}
holds, then there exists $x_0\in X$ such that  $f(x_0)=\inf f(X)$.
\end{theo}\begin{proof} In fact, Takahashi's theorem is an immediate consequence of EkVP. Indeed, by Theorem \ref{t.EkVP}, given $\eps=\lbda=1$,  there exists $z\in X$  such that
 \bequ\label{Ek-Taka} f(z) < f(x) +  \rho(x,z)\,,\eequ
  for all $x\in X\sms\{z\}.$ If $f(z)>\inf f(X),$ then, by \eqref{eq1.Taka}, there exists $y\in X\sms\{z\}$ such that
  $$  f(y) +  \rho(z,y)\le f(z)\,,$$
  in contradiction to \eqref{Ek-Taka}, so that $f(z)=\inf f(X).$\end{proof}

The validity of Takahashi's minimum principle   also implies the completeness of the underlying metric space.
 \begin{prop}\label{p.Taka-compl}
 Let $(X,\rho)$ be  a metric space. If every bounded below Lipschitz function $f:X\to\Real$ satisfying the condition \eqref{eq1.Taka} attains its minimum on $X$, then the metric space $X$ is complete.
 \end{prop}\begin{proof} We proceed by contradiction: suppose that  the metric space $X$ is not complete and show that  there exists a Lipschitz function $f:X\to\Real$ satisfying \eqref{eq1.Taka} and which does not attain its minimum on $X$.

If $X$ is not complete, then $ X $ contains  a Cauchy sequence  $(x_n)$ which has no limit.
 We shall use the function
$$f(x)=2\lim_{n\to\infty}\rho(x,x_n),\quad x\in X\,,$$
similar to that considered  in the proof of Proposition \ref{p.EkVP-compl}. The function $f$ is 2-Lipschitz and $f(x)>0$ for all $x\in X$. Indeed, if $f(x)=0$ for some $x\in X$, then $\lim_{n\to\infty}\rho(x,x_n)=0$, i.e.  the sequence $(x_n)$ would converge to $x$, in contradiction to the hypothesis.

As it was shown in the proof of Proposition \ref{p.EkVP-compl},
\bequs
\lim_{n\to\infty}f(x_n)=0\,.\eequs

Consequently

$$
0\le\inf f(X)\le\inf\{f(x_n) : n\in\Nat\}=0\,,$$
so that $\inf f(X)=0$. Since $f(x)>0$ for all $x\in X$, it follows that $f$ does not attains its minimum value  on $X$.

It remains to show that $f$ satisfies \eqref{eq1.Taka}. If  $x\in X$, then $f(x)>0=\inf f(X)$.

As   $\lim_n\rho(x,x_n)=2^{-1}f(x)$, there exists $n_1\in\Nat$ such that
\bequ\label{eq1.Taka-compl}
\rho(x,x_n)\le \frac23\cdot f(x)\,,\eequ
for all $n\ge n_1$.

Since the sequence $(x_n)$ is Cauchy, there exists  $n_2\in\Nat$ such that
$$
\rho(x_n,x_{n+k})\le \frac16\cdot f(x)\,,$$
for all $n\ge n_2$ and all $k\in\Nat$. Keeping $n\ge n_2$ fixed and letting $k\to \infty$, one obtains
\bequ\label{eq2.Taka-compl}
f(x_n)\le \frac13\cdot f(x)\,,\eequ
for all $n\ge n_2$. Then, taking $n_0\ge\max\{n_1,n_2\}$ such that $x_{n_0}\ne x$  (such an $n_0$ exists because the sequence $(x_n)$ is not convergent), the inequalities \eqref{eq1.Taka-compl} and \eqref{eq2.Taka-compl} imply
$$
\rho(x,x_{n_0})+f(x_{n_0})\le \f 56f(x)<f(x)\,,$$
i.e. \eqref{eq1.Taka} is satisfied by $y_x=x_{n_0}$.
 \end{proof}

\subsection{Dancs-Heged{\H{u}}s-Medvegyev principle}

 Another result, also equivalent to Ekeland Variational Principle, was proved by Dancs, Heged{\H{u}}s and Medvegyev \cite{dancs-heged83}.
 \begin{theo}\label{t1.DHM} Let $(X,\rho)$ be a complete metric space and  $F:X\rightrightarrows X$ a
 set-valued function satisfying the conditions:
 \begin{itemize}
  \item[{\rm (i)}] \; $F(x)$ is closed  for every $x\in X$;
  \item[{\rm (ii)}]\; $x\in F(x)$   for every $x\in X$;
  \item[{\rm (iii)}]\; $x_2\in F(x_1)\; \Rightarrow\; F(x_2)\subseteq F(x_1)$\, for all $x_1,x_2\in X$;
  \item[{\rm (iv)}] $\lim_n\rho(x_n,x_{n+1})=0$ for every sequence $(x_n)$ in $X$ such that  $x_{n+1}\in F(x_n),\, \forall n\in\mathbb{N}$.
 \end{itemize}

 Then there exists $x_0\in X$ such that  $F(x_0)=\{x_0\}$. Moreover, for every $\overline x\in X$, there exists such a point in $F(\overline x)$.
    \end{theo}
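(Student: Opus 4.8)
The plan is to prove Theorem~\ref{t1.DHM} by running a single Picard-type iteration that ``chases the radius'' of the nested images $F(x_n)$, then extract a Cauchy sequence whose limit is the desired stationary point. Concretely, I would fix an arbitrary $\bar x\in X$, set $x_1=\bar x$, and for each $n$ introduce $\delta_n:=\sup\{\rho(x_n,y):y\in F(x_n)\}\in[0,+\infty]$. Since $x_n\in F(x_n)\neq\emptyset$ by (ii), I can choose $x_{n+1}\in F(x_n)$ with
\[
\rho(x_n,x_{n+1})\ \ge\ \min\{n,\delta_n\}-\tfrac1n
\]
(taking $x_{n+1}=x_n$ when the right-hand side is non-positive). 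This produces a sequence with $x_{n+1}\in F(x_n)$ for every $n$, so hypothesis (iv) applies and gives $\rho(x_n,x_{n+1})\to 0$.

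The crux is to upgrade this to $\delta_n\to 0$, i.e.\ to show that the \emph{whole} set $F(x_n)$ shrinks, not merely consecutive iterates. If $\delta_n>n$ for infinitely many $n$, then $\min\{n,\delta_n\}=n$ for those indices and the choice forces $\rho(x_n,x_{n+1})\ge n-\tfrac1n$, contradicting $\rho(x_n,x_{n+1})\to 0$; hence $\delta_n\le n$ for all large $n$, so $\min\{n,\delta_n\}=\delta_n$ and $\delta_n\le\rho(x_n,x_{n+1})+\tfrac1n\to 0$. This step -- taming the a priori possibly infinite radii by playing the freedom in the choice of $x_{n+1}$ against condition (iv) -- is the only real obstacle; once it is in place the rest is routine bookkeeping.

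Next I would use (iii) and induction to obtain the nesting $F(x_m)\subset F(x_n)$ for $m\ge n$, whence, together with (ii), $x_m\in F(x_n)$ and therefore $\rho(x_n,x_m)\le\delta_n$ for all $m\ge n$. Since $\delta_n\to 0$, the sequence $(x_n)$ is Cauchy, so by completeness of $X$ it converges to some $x_0\in X$. For each fixed $n$, the tail $\{x_m:m\ge n\}$ lies in the set $F(x_n)$, which is closed by (i), so $x_0\in F(x_n)$; in particular $x_0\in F(x_1)=F(\bar x)$, which already yields the ``moreover'' clause. Finally, $x_0\in F(x_n)$ and (iii) give $F(x_0)\subset F(x_n)$, hence $\diam F(x_0)\le\diam F(x_n)\le 2\delta_n\to 0$; combined with $x_0\in F(x_0)$ from (ii), this forces $F(x_0)=\{x_0\}$, completing the proof.
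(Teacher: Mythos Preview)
Your argument is correct. The paper itself does not supply a proof of Theorem~\ref{t1.DHM}; it merely states the result and attributes it to Dancs, Heged{\H u}s and Medvegyev \cite{dancs-heged83}, noting afterwards that it is equivalent to Ekeland's Variational Principle. So there is no in-paper proof to compare against.

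Your direct approach---choosing $x_{n+1}\in F(x_n)$ nearly realizing the ``radius'' $\delta_n=\sup\{\rho(x_n,y):y\in F(x_n)\}$, invoking (iv) to force $\delta_n\to 0$, and then using the nesting from (iii) together with closedness (i) to pass to the limit---is the standard self-contained route and all the steps check out. The only place one might pause is the treatment of possibly infinite $\delta_n$, but your truncation via $\min\{n,\delta_n\}$ handles this cleanly: infinitely many indices with $\delta_n>n$ would give $\rho(x_n,x_{n+1})\ge n-\tfrac1n$ along a subsequence, contradicting (iv). After that, $\delta_n\le\rho(x_n,x_{n+1})+\tfrac1n\to 0$, the sequence is Cauchy, its limit $x_0$ lies in every $F(x_n)$ (hence in $F(\bar x)$), and $F(x_0)\subset\bigcap_n F(x_n)$ has diameter at most $2\delta_n\to 0$, so $F(x_0)=\{x_0\}$.
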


    This result admits an equivalent formulation in terms of an order on $X$.
    \begin{theo}\label{t2.DHM} Let $(X,\rho)$ be a complete metric space and $\preceq$ a closed partial order on $X$.  If $\lim_n\rho(x_n,x_{n+1})=0$ for every increasing sequence $x_1\preceq x_2\preceq \dots$ in $X$, then there is a maximal element in $X$.  In fact, for every $  x\in X$ the set $\{y\in X :   x\preceq y\}$ contains  a maximal element of $(X,\preceq)$.\end{theo}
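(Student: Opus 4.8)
The plan is to deduce Theorem \ref{t2.DHM} from the set-valued formulation in Theorem \ref{t1.DHM} by passing from the order $\preceq$ to the associated ``principal up-set'' multifunction. Given the partially ordered space $(X,\preceq)$, define $F:X\rightrightarrows X$ by
\[
F(x):=\{y\in X : x\preceq y\},\qquad x\in X.
\]
A sequence $(x_n)$ satisfies $x_{n+1}\in F(x_n)$ for all $n$ if and only if $x_1\preceq x_2\preceq\cdots$, and a point $x_0$ satisfies $F(x_0)=\{x_0\}$ if and only if $x_0$ is maximal for $\preceq$ (nothing lies strictly above it). So it suffices to check that $F$ meets the four hypotheses of Theorem \ref{t1.DHM} and then translate the conclusion back.

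Verifying the hypotheses is routine. Condition (ii), $x\in F(x)$, is reflexivity of $\preceq$; condition (iii), $x_2\in F(x_1)\Rightarrow F(x_2)\subset F(x_1)$, is transitivity (if $x_1\preceq x_2\preceq y$ then $x_1\preceq y$); condition (iv), $\lim_n\rho(x_n,x_{n+1})=0$ whenever $x_{n+1}\in F(x_n)$, is precisely the hypothesis of Theorem \ref{t2.DHM} applied to the increasing sequence $(x_n)$. The only point that uses the topological assumption is condition (i): $F(x)$ must be closed for every $x$. This is where ``$\preceq$ continuous'' enters, interpreted as the graph $\{(u,v):u\preceq v\}$ being closed in $X\times X$: if $y_n\in F(x)$ and $y_n\to y$, then applying closedness of the graph to the pairs $(x,y_n)\to(x,y)$ gives $x\preceq y$, i.e. $y\in F(x)$. (Even the weaker assumption that limits of increasing sequences are upper bounds suffices, since in the proof of Theorem \ref{t1.DHM} closedness of the sets $F(x_n)$ is used only along the constructed increasing Cauchy sequence.)

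With (i)--(iv) in hand, Theorem \ref{t1.DHM} yields $x_0\in X$ with $F(x_0)=\{x_0\}$, i.e. a maximal element of $(X,\preceq)$; and its ``moreover'' clause produces, for each $\overline x\in X$, such a point inside $F(\overline x)=\{x:\overline x\preceq x\}$, which is exactly the refined statement of Theorem \ref{t2.DHM}. For the equivalence announced just before the statement, one argues in the reverse direction: given $F$ as in Theorem \ref{t1.DHM}, the relation $x\preceq y:\Longleftrightarrow y\in F(x)$ is a partial order (reflexivity from (ii), transitivity from (iii), and antisymmetry from (iv) applied to the alternating sequence $x,y,x,y,\dots$ when $y\in F(x)$ and $x\in F(y)$), its increasing chains are exactly the $F$-orbits, and maximal elements correspond to stationary points of $F$.

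I do not expect a genuine obstacle here; the two things to be careful about are fixing the precise meaning of ``continuous partial ordering'' so that condition (i) of Theorem \ref{t1.DHM} is actually available, and making sure antisymmetry is neither needed beyond what the metric supplies nor lost when translating back and forth between the order picture and the multifunction picture.
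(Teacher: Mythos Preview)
Your reduction of Theorem \ref{t2.DHM} to Theorem \ref{t1.DHM} via the up-set multifunction $F(x)=\{y:x\preceq y\}$ is correct and is exactly the natural translation the paper has in mind when it calls the two results ``equivalent formulations''; the paper does not spell out a proof of either, citing them both from Dancs--Heged{\H u}s--Medvegyev. Your verification of (i)--(iv) is clean, and your reading of ``continuous partial ordering'' as closed graph matches the paper's own definition given immediately after the statement.

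One caveat on your reverse-direction sketch: to feed the order $x\preceq y:\Longleftrightarrow y\in F(x)$ back into Theorem \ref{t2.DHM} you would need this order to have closed graph, but hypothesis (i) of Theorem \ref{t1.DHM} only guarantees that each \emph{section} $F(x)$ is closed, which is strictly weaker. Your antisymmetry argument via the alternating sequence is nice, but the continuity hypothesis of Theorem \ref{t2.DHM} is not automatically available, so the implication $\text{\ref{t2.DHM}}\Rightarrow\text{\ref{t1.DHM}}$ does not go through by direct translation. The two results are ``equivalent'' in the sense that each is equivalent to Ekeland's principle in complete metric spaces, not because the hypotheses match up symmetrically.
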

    \begin{proof} The equivalence is proved by considering the sets $\Phi(x)=\{y\in X :   x\preceq y\}$.
    \end{proof}

     An order $\preceq$ on a metric space is said to be \emph{closed} if $x_n\preceq y_n$, for all $n\in\mathbb{N},$
implies $\lim_nx_n\preceq\lim_n y_n,$ provided both limits exist. This is equivalent to the fact that the graph of $\preceq$, $\graph(\preceq):=\{(x,y)\in X\times X : x\preceq y\}$ is closed in $X\times X$ with respect to the product topology.

\begin{remark}
  If $F:X\rightrightarrows X$ is a set-valued mapping, then for every $x_0\in X$, a sequence $(x_n)$
  satisfying $x_n\in F(x_{n-1}),\, n\in\mathbb{N}$, is called a generalized Picard sequence. For the properties of set-valued Picard operators, defined in terms of the convergence of generalized Picard sequences, see the surveys  \cite{petrusel04,petrusel12}.
\end{remark}

\begin{remark}
   As it is remarked in  \cite[Th. 3.3]{dancs-heged83}, the converse completeness property  holds in this case too, i.e. the validity of Theorem \ref{t1.DHM} for every set-valued mapping $F$ on an arbitrary metric space $X$ implies the completeness of $X$.
\end{remark}

\subsection{Arutyunov's principle}

 A   result similar   to  Takahashi's  minimization principle  (Theorem \ref{t1.Taka}), under a slightly relaxed condition on the function $f$, was found by Arutyunov and  Gel'man \cite{arut-gel09} and Arutyunov \cite{arut15}.  For  recent further developments see \cite{arut-zhuk19a} and \cite{arut-zhuk19b}.

Let $(X,\rho)$ be a metric space and $f:X\to[0,+\infty]$ a function.  One considers the following condition on $f$:
\bequ\label{eq1.Arut}
   x_n\to \bar x\;\mbx{ and }\; f(x_n)\to 0\;\Ra\;  f(\bar x)=0\,,
\eequ
for any sequence $(x_n)$  in $ X $ and any $\bar x\in X.$
\begin{remark} It is obvious that any  lsc or with closed graph  bounded below proper function $f:X\to [0,+\infty]$  satisfies \eqref{eq1.Arut}.
  \end{remark}

  The case when $f$ has closed graph is clear. If $f$ is lsc, then for any sequence $(x_n)$ as in \eqref{eq1.Arut},
  $$
 0\le  f(\bar x)\le\liminf_{n\to\infty} f(x_n) =0\,.$$

\begin{theo}[\cite{arut-gel09}]\label{t1.Arut-Gel} Let $(X,\rho)$ be a complete metric space and  $f:X\to[0,+\infty]$ a function satisfying the condition \eqref{eq1.Arut}. Suppose that  there exist $k_1>0$ and $0<k_2<1$ such that for every $x\in X$ with $f(x)>0$ there exists $x'\in X\sms\{x\}$  satisfying
\bequ\label{eq2.Arut}
{\rm (i) }\quad \rho(x,x')\le k_1f(x)\quad\mbx{ and }\quad {\rm (ii) } \quad f(x')\le k_2 f(x)\,.
\eequ

Then for every $x_0\in \domain f$ there exists $\bar x\in X$ such that
\bequ\label{eq3.Arut}
{\rm (a) }\quad  f(\bar x)=0=\inf f(X)\quad\mbx{ and }\quad {\rm (b) } \quad \rho(x_0,\bar x)\le \f{k_1}{1-k_2} f(x_0)\,.
\eequ\end{theo}

We shall give a proof of an extension of this theorem  given by Arutyunov \cite{arut15}.

\begin{theo}[\cite{arut15}]\label{t2.Arut} Let $(X,\rho)$ be a complete metric space and  $f:X\to[0,+\infty]$ a function satisfying   \eqref{eq1.Arut}.  Suppose that there exist a usc function $\vphi:\Real_+\to\Real_+$ such that
 \bequ\label{eq4.Arut} \vphi(t)<t\;\mbx{ for every }\; t>0\,,\eequ
 and $\gamma>0$ such that   for every $x\in X$ with $f(x)>0$ there exists $x'\in X\sms\{x\}$  satisfying
\bequ\label{eq5.Arut}
{\rm (i) }\quad f(x')+\gamma \rho(x,x')\le f(x)\quad\mbx{ and }\quad {\rm (ii) } \quad f(x')\le \vphi(f(x))\,.
\eequ

Then for every $x_0\in \domain f$ there exists $\bar x\in X$ such that
\bequ\label{eq6.Arut}
{\rm (a) }\quad  f(\bar x)=0=\inf f(X)\quad\mbx{ and }\quad {\rm (b) } \quad \rho(x_0,\bar x)\le \f1{\gamma}f(x_0)\,.
\eequ\end{theo}
\begin{remark}
  Replacing the metric $\rho$ with the equivalent one $\tilde \rho=\gamma\rho,$ the condition \eqref{eq5.Arut}.(i) becomes Takahashi's condition \eqref{eq1.Taka}.
  Arutyunov \cite{arut15} calls it  a\emph{ Caristi type condition}.
\end{remark}

Let us show first that Theorem \ref{t2.Arut}  implies Theorem \ref{t1.Arut-Gel}.  Let  $x\in X$. Then  there exists $x'\ne x$ in $X$ satisfying \eqref{eq2.Arut}.  But \eqref{eq2.Arut}.(ii) means that $f(x')\le \vphi(f(x))$, where   $\vphi(t)=k_2t,\;t\in\Real_+.$ Also, multiplying the first inequality in \eqref{eq2.Arut} by $1-k_2$, the second by $k_1$, adding them and dividing the result by $k_1$, one obtains
$$
f(x')+\f{1-k_2}{k_1}\rho(x,x')\le f(x)\,,$$
i.e., \eqref{eq5.Arut}.(i) holds with $\gamma =(1-k_2)/k_1\,.$

\begin{proof}[Proof of Theorem \ref{t2.Arut}] It is obvious that $\inf f(X)=0.$

Indeed, $f(x)\ge 0$ for all $x\in X$ and, by \eqref{eq5.Arut}.(i), for every $x\in X$ with $f(x)>0$ there exists $x'\in X$ with $f(x')<f(x).$

For  $x_0\in\domain f$ consider a sequence $(x_k)_{k=0}^\infty$ satisfying
\bequ\label{eq7.Arut}
{\rm (i) }\quad f(x_{k+1})+\gamma \rho(x_{k+1},x_k)\le f(x_k)\quad\mbx{ and }\quad {\rm (ii) } \quad f(x_{k+1})\le \vphi(f(x_k))\,,
 \eequ
for all $k=0,1,\dots . $ This is possible  by \eqref{eq5.Arut}. Indeed, if $x_0,\dots,x_n$ satisfy \eqref{eq7.Arut} for $k=0,\dots, n-1,$ then, by  \eqref{eq5.Arut}, there exists $x_{n+1}\ne x_n$ such that
\bequs
{\rm (i) }\quad f(x_{n+1})+\gamma \rho(x_{n+1},x_n)\le f(x_n)\quad\mbx{ and }\quad {\rm (ii) } \quad f(x_{n+1})\le \vphi(f(x_n))\,.
 \eequs

  By \eqref{eq7.Arut}.(i), $f(x_{k+1})<f(x_k),\, k\in\Nat_0,$ so there exists $\alpha:=\lim_{k\to\infty}f(x_k)\ge 0.$
  By the same inequality,
  \bequ\label{eq8.Arut}\begin{aligned}
    \gamma\rho(x_n,x_{n+k})&\le \gamma\rho(x_n,x_{n+1})+\dots+\gamma\rho(x_{n+k-1},x_{n+k})\\
    &\le f(x_n)-f(x_{n+k})\,,
  \end{aligned}\eequ
  for all $n\in\Nat_0$ and $k\in\Nat.$

  By the convergence of the sequence $ (f(x_n))$  this implies that the sequence $(x_n)$ is Cauchy, so, by the completeness of the metric space $X$, it converges to some $\bar x\in X.$  By \eqref{eq7.Arut}.(ii) and the usc of the  function $\vphi$,
  $$
  0\le \alpha=\lim_{k\to\infty}f(x_{k+1})\le\limsup_{k\to\infty}\vphi(f(x_k))\le\vphi(\alpha)  \,.$$

  By \eqref{eq4.Arut}, this implies $\alpha =0$, so that, by\eqref{eq1.Arut}, $f(\bar x)=0=\inf f(X).$

  Let us prove now \eqref{eq6.Arut}.(b). By \eqref{eq8.Arut} for $n=0$
  $$
  \gamma\rho(x_0,x_k)\le f(x_0)-f(x_k)\,,$$
  for all $k\in\Nat.$

  Letting $k\to \infty,$ one obtains  $\gamma\rho(x_0,\bar x)\le f(x_0).$
\end{proof}
\begin{remark} If the function $f$ is lsc and satisfies the condition \eqref{eq5.Arut}.(i), then the conclusions of Theorem \ref{t2.Arut} can be obtained directly from Ekeland's Variational Principle (Theorem \ref{t.EkVP}). As it is noticed in \cite{arut15}, this remark is due to B. D. Gel'man.
\end{remark}

 As above, we have $\inf f(X)=0$, so that   there is nothing to prove   if $f(x_0)=0$.

Suppose  $f(x_0)>0$ and put $\eps:=f(x_0)$ and $\lbda :=\eps/\gamma.$  Then $\eps/\lbda=\gamma, $ so that, by Theorem \ref{t.EkVP}, there exists $\bar x\in X$ such that
\bequ\label{eq9.Arut}\begin{aligned}
{\rm (a)}& \;\quad \;\forall x\in X,\, x\neq \bar x,\; \;  f(\bar x) < f(x)+\gamma\rho(\bar x,x);\\
{\rm (b)}&\; \quad  \rho(\bar x,x_0)\leq\lbda=\f{\eps}{\gamma}=\f1{\gamma}f(x_0)\,.
\end{aligned}\eequ
Since \eqref{eq9.Arut}.(b) agrees with \eqref{eq6.Arut}.(b), we have only to show that $f(\bar x)= 0$. If $f(\bar x)>0,$ then, by \eqref{eq5.Arut}.(i), there exists $x'\ne \bar x$ in $X$ such that
$$ f(x')+\gamma \rho(\bar x,x')\le f(\bar x)\,,
$$
in contradiction to \eqref{eq9.Arut}.(a).

\subsection{Weak sharp minima and completeness}  Let $(X,\rho)$ be a metric  space and  $f:X\to\Real\cup\{\infty\}$  bounded below proper function with $$\mu:=\inf f(X)>-\infty.$$

For $\alpha\in\Real$  let
\bequs
S_\alpha(f)=\{x\in X:f(x)\le \alpha\}\,.
\eequs
It follows that
\bequs
S_\mu(f)=\{x\in X:f(x)=\mu\}\,,
\eequs
and
\bequs
S_\alpha(f)\sse S_\beta(f)\quad\mbx{for }\; \alpha\le\beta\,.
\eequs

One says that $f$ has \emph{global weak sharp minima with constant} $\lbda>0$ if $S_\mu\ne\ety$ and
\bequ\label{def.w-sh-min}
f(x)-\mu\ge\lbda d(x,S_\mu)\quad\mbx{for all }\; x\in X,
\eequ
where, for $\ety\ne Y\sse X$ and $x\in X$,
$$
d(x,Y):=\inf\{\rho(x,y): y\in Y\}\,$$
denotes  the \emph{distance} from  $x$ to $Y$.

This is an important notion with  many applications in optimization theory, see, for instance, \cite{burke1,burke2,burke3}.
\begin{remark}
  Adopting the convention $\inf \ety=+\infty,$ the validity of \eqref{def.w-sh-min} implies $S_\mu(f)\ne \ety$, i.e., $f$ attains its minimum on $X$.
\end{remark}

The implication 1\;$\Ra$\;2 from the following theorem was proved by Ng \cite{ng03} supposing that $X$ is a Banach space. The completeness result, i.e. the implication 3\;$\Ra$\;1, was  obtained by Huang \cite{huang05}.

\begin{theo} Let $ (X,\rho)$ be a metric space. The following are equivalent.
\ben
\item[\rm 1.] The metric space $X$ is complete.
\item[\rm 2.]
Every bounded below  lsc proper function $f:X\to \Real\cup\{\infty\}$ with $\mu=\inf f(X)$ for which there exist
a sequence $(\mu_n)$ in $\Real$ and $\lbda>0$ satisfying the conditions
\bequ\label{eq0.w-sh-min}\begin{aligned}
{\rm (i)}\quad &\mu<\mu_{n+1}<\mu_n \; \mbx{for all }\; n\in\Nat;\\
{\rm (ii)}\quad &\lim_{n\to\infty}\mu_n=\mu;\\
{\rm (iii)}\quad& \lim_{n\to\infty}\lbda d(x,S_{\mu_n})\le f(x)-\mu\;\mbx{ for all }x\in X\sms S_\mu,
\end{aligned}\eequ
  has global weak sharp minima with constant $\lbda>0.$
 \item[\rm 3.]  Every Lipschitz function $f:X\to \Real$ satisfying \eqref{eq0.w-sh-min} with $\mu =0$
 attains its minimum on $X$.
\een\end{theo}
\begin{proof}  1\;$\Ra$\; 2.\;
 Let $\lbda>0$ and  $(\mu_n)$  a sequence  satisfying the conditions  \eqref{eq0.w-sh-min}.

  We show that $S_\mu\ne\ety\,$ and that, for every $0<\gamma<\lbda$,
\bequ\label{eq1.w-sh-gamma}
\gamma d(x,S_\mu)\le f(x)-\mu\;\;\mbx{for all }\; x\in X,\eequ
which will imply \eqref{def.w-sh-min}.

  Let $0<\gamma<\lbda$ and  $x\in X$. By EkVP (Theorem \ref{t.EkVPb}) there exists $z\in X$ such that
 \bequ\label{eq1.w-sh-min}\begin{aligned}
   {\rm (i)}\quad &f(z)+\gamma \rho(x,z)\le f(x);\\
   {\rm (ii)}\quad &f(z)<f(x')+\gamma \rho(x',z)\;\:\mbx{ for all }\; x'\in X\sms\{z\}.
 \end{aligned}\eequ

 The set  $S_{\mu_n}$ is obviously nonempty and  closed (because $f$ is lsc). By \eqref{eq0.w-sh-min}.(i),
 \bequ\label{eq2.w-sh-min}\begin{aligned}
   {\rm (i)}\quad& S_{\mu_{n+1}}\sse  S_{\mu_{n}}\quad\mbx{and}\\
   {\rm (ii)}\quad& 0\le d(x',S_{\mu_{n}})\le  d(x',S_{\mu_{n+1}})\quad\mbx{for all }\; x'\in X,
 \end{aligned}\eequ
 for all $n\in\Nat.$

 If $f(z)=\mu,\, $ i.e. $z\in S_\mu$, then, by \eqref{eq1.w-sh-min}.(i),

 \bequ\label{eq2.w=sh-gamma}
 \gamma d(x,S_\mu)\le\gamma\rho(x,z)\le f(x)-f(z)=f(x)-\mu,\eequ
 i.e. \eqref{eq1.w-sh-gamma} holds.

 If $\lim_{n\to\infty}d(z,S_{\mu_{n}})=0,$ then $d(z,S_{\mu_{n}})=0$ for all $n$  (by \eqref{eq2.w-sh-min}.(ii)). Since $S_{\mu_{n}}$ is closed this implies  $z\in  S_{\mu_{n}}$, that is,
 $$
 \mu\le f(z)\le\mu_n \quad\mbx{for all }\; n\in\Nat,$$
and so, by \eqref{eq0.w-sh-min}.(ii), $f(z)=\mu$  and \eqref{eq1.w-sh-gamma} holds.

 If $f(z)>\mu$, then

  $$\lim_{n\to\infty}d(z,S_{\mu_n})>0\,,$$ and so

  $$
  d(z,S_{\mu_n})>0\iff z\notin S_{\mu_n}
  $$
  for all sufficiently large $n$  (i.e., for all $n\ge n_0$ for some  $n_0\in\Nat$).

 By \eqref{eq1.w-sh-min}.(ii),
 \begin{align*}
   f(z)&<f(y)+\gamma \rho(z,y)\\
   &\le\mu_n+\gamma \rho(z,y)\,,
   \end{align*}
  for all $y\in S_{\mu_n},$ which implies
 \bequ\label{equ.w-sh-min} f(z)-\mu_n\le\gamma d(z,S_{\mu_n})\,,\eequ
   for all sufficiently large $n$.

   But then, by \eqref{eq0.w-sh-min}.(iii),
   \begin{align*}
 f(z)-\mu&=\lim_{n\to\infty}(f(z)-\mu_n)
 \le\gamma \lim_{n\to\infty}d(z,S_{\mu_n})\\
 &<\lbda\lim_{n\to\infty}d(z,S_{\mu_n})
 \le f(z)-\mu\,,
    \end{align*}
    a contradiction.

The implication 2\;$\Ra$\;3 is obvious.

  3\;$\Ra$\;1.\;  Let $(x_n)$ be a Cauchy sequence in $X$. As in the proof of Proposition \ref{p.EkVP-compl}, the function $f:X\to \Real$
 given by $f(x)=\lim_{n\to\infty}\rho(x,x_n),\, x\in X,$ is well-defined, 1-Lipschitz, nenegative and $\lim_{n\to\infty}f(x_n)=0,$   so that
 $\inf f(X)=0.$

 Let us show that $f$ satisfies the conditions \eqref{eq0.w-sh-min} with $\mu =0.$  Put $\lbda_n=1/n,\, n\in\Nat,$  so that
 $$
 S_{1/n}(f)=\{x\in X:f(x)\le 1/n\},\quad n\in\Nat\,.$$

 The sets $S_{1/n}(f)$ have the following properties:
 \bequ\label{equ3.w-sh-min}\begin{aligned}
 {\rm (i)}\quad &S_{1/n}(f)\ne\ety ; \\ {\rm (ii)}\quad &S_{1/(n+1)}(f)\sse S_{1/n}(f);\\
  {\rm (iii)}\quad &\diam(S_{1/n}(f))\le 4/n\,,
 \end{aligned}\eequ
 for all $n\in \Nat.$

It is obvious that   $S_{1/n}(f)\ne\ety$ (because $\mu=\inf f(X)=0$)  and that the inclusion from     \eqref{equ3.w-sh-min}.(ii) holds.

  For $x,y\in  S_{1/n}(f)$ let $k\in\Nat$ be such that
\begin{align*}
\rho(x,x_k)&\le f(x)+\f1n\le\f2n\;\mbx{ and}\\
\rho(y,x_k)&\le f(y)+\f1n\le\f2n\,.
\end{align*}

Then,
$$
\rho(x,y)\le \rho(x,x_k)+\rho(x_k,y)\le\f4n\,,$$
showing that \eqref{equ3.w-sh-min}.(iii) holds too.\smallskip

\emph{The condition} \eqref{eq0.w-sh-min}.(iii) \emph{with} $\mu =0.$

For $x\in X$  and  $n\in\Nat$  let $k\in\Nat$ be such that
\begin{align*}
  {\rm (a)}\quad &\rho(x,x_k)\le f(x)+\f1n;\\
  {\rm (b)}\quad &\rho(x_k,x_{k+i})\le  \f1n\;\mbx{ for all }\; i\in\Nat\,.
\end{align*}

This is possible by the definition of the function $f$ and the fact that the sequence $(x_n)$ is Cauchy.
Letting $i\to\infty$ in (b) one obtains $f(x_k)\le 1/n,$ that is, $x_k\in S_{1/n}(f).$ But then, by (a),
$$
d(x, S_{1/n}(f))\le \rho(x,x_k)\le f(x)+\f1n\,,$$
for all $n\in\Nat,$ so that
$$
\lim_{n\to\infty}d(x, S_{1/n}(f))\le f(x)\,.$$

By hypothesis $S_0(f)\ne \ety$. Since    $S_0(f)\sse S_{1/n}(f)$ for all $n\in \Nat,$ the condition \eqref{equ3.w-sh-min}.(iii) implies $\diam (S_0(f))=0,$ and so
$S_0(f)=\{z\}$ for some $z\in X$.

 This implies
$$
0=f(z)=\lim_{n\to\infty}\rho(z,x_n)\,,$$
that is, $(x_n)$ converges to $z$, proving the completeness of $X$.
\end{proof}

\begin{remark}
   The proof of 1\;$\Ra$\;2 given in \cite{ng03} contains a flaw. Namely,  one affirms  that
   $$  f(z)-\mu_n>\gamma d(z,S_{\mu_n})$$
   for sufficiently large  $n$, which,  by \eqref{equ.w-sh-min},  is impossible.  The proof given here follows the ideas from  \cite{ng03}.
\end{remark}

\section{EkVP in quasi-metric spaces}
This section  is concerned with  Ekeland Variational Principle and Caristi's fixed point theorem in the context of quasi-metric spaces.\\

\subsection{Quasi-metric spaces}

We shall briefly present the fundamental properties of quasi-metric spaces. Details and references can be found in the book \cite{Cobz}.
\begin{defi}\label{def.qm}
  A {\it quasi-semimetric} on an arbitrary set $X$ is a mapping $\rho: X\times X\to
[0;\infty)$ satisfying the following conditions:
\begin{align*}
\mbox{(QM1)}&\qquad \rho(x,y)\geq 0, \quad and  \quad \rho(x,x)=0;\\
\mbox{(QM2)}&\qquad \rho(x,z)\leq\rho(x,y)+\rho(y,z),
\end{align*}
for all $x,y,z\in X.$ If, further,
$$
\mbox{(QM3)}\qquad \rho(x,y)=\rho(y,x)=0\Rightarrow x=y,
$$
for all $x,y\in X,$ then $\rho$ is called a {\it quasi-metric}. The pair $(X,\rho)$ is called a {\it
quasi-semimetric space}, respectively  a {\it quasi-metric space}. The conjugate of the quasi-semimetric
$\rho$ is the quasi-semimetric $\bar \rho(x,y)=\rho(y,x),\, x,y\in X.$ The mapping $
\rho^s(x,y)=\max\{\rho(x,y),\bar \rho(x,y)\},\,$ $ x,y\in X,$ is a semimetric on $X$ which is a metric if and
only if $\rho$ is a quasi-metric.\end{defi}

If $(X,\rho)$ is a quasi-semimmetric space, then for $x\in X$ and $r>0$ we define the balls in $X$ by the formulae
\begin{align*}
B_\rho(x,r)=&\{y\in X : \rho(x,y)<r\} \; \mbox{-\; the open ball, and }\\
B_\rho[x,r]=&\{y\in X : \rho(x,y)\leq r\} \; \mbox{-\; the closed ball. }
\end{align*}

 The topology $\tau_\rho$ of a quasi-semimetric  space $(X,\rho)$ can be defined starting from the family
$\mathcal V_\rho(x)$ of neighborhoods  of an arbitrary  point $x\in X$:
\begin{equation}
\begin{aligned}
V\in \mathcal V_\rho(x)\;&\iff \; \exists r>0\;\mbox{such that}\; B_\rho(x,r)\subseteq V\\
                             &\iff \; \exists r'>0\;\mbox{such that}\; B_\rho[x,r']\subseteq V.
\end{aligned}
\end{equation}

The convergence of a sequence $(x_n)$ to $x$ with respect to $\tau_\rho,$ called $\rho$-convergence and
denoted by
$x_n\xrightarrow{\rho}x,$ can be characterized in the following way
\begin{equation}\label{char-rho-conv1}
         x_n\xrightarrow{\rho}x\;\iff\; \rho(x,x_n)\to 0.
\end{equation}

Also
\begin{equation}\label{char-rho-conv2}
         x_n\xrightarrow{\bar\rho}x\;\iff\;\bar\rho(x,x_n)\to 0\; \iff\; \rho(x_n,x)\to 0.
\end{equation}

As a space equipped with two topologies, $\tau_\rho$ and   $\tau_{\bar\rho}$, a quasi-metric space can be viewed as a bitopological space in the sense of Kelly \cite{kelly63}. The problem of quasi-metrizability of topologies is discussed in \cite{kopper93}.

An important example of quasi-metric space is the following.
\begin{example}\label{ex.R-qm} On $X=\mathbb{R}$ let $q(x,y)=(y-x)^+$, where $\alpha^+$ stands for the positive part of a real number $\alpha$. Then $\bar q(x,y)=(x=y)^+$ and $q^s(x,y)=|y-x|$. The balls are given by
$$B_q(x,r)=(-\infty,x+r)\quad\mbox{ and }\quad B_{\bar q}(x,r)=(x-r,\infty)\,.$$
  \end{example}

The following   topological properties are true for  quasi-semimetric spaces.
    \begin{prop}[see \cite{Cobz}]\label{p.top-qsm1}
   If $(X,\rho)$ is a quasi-semimetric space, then
   \begin{enumerate}
   \item\; The ball $B_\rho(x,r)$ is $\tau_\rho$-open and  the ball $B_\rho[x,r]$ is
       $\tau_{\bar\rho}$-closed. The ball    $B_\rho[x,r]$ need not be $\tau_\rho$-closed.
     \item
   If $\rho $ is a quasi-metric, then the topology $\tau_\rho$ is $T_0,$ but not necessarily $T_1$ (and so
   nor $T_2$ as in the case of metric spaces).  \\ The topology $\tau_\rho$ is $T_1$ if and only if
   $\rho(x,y)>0$ whenever $x\neq y.$
      \item \; For every fixed $x\in X,$ the mapping $\rho(x,\cdot):X\to (\mathbb{R},|\cdot|)$ is
   $\tau_\rho$-usc and $\tau_{\bar \rho}$-lsc. \\
   For every fixed $y\in X,$ the mapping $\rho(\cdot,y):X\to (\mathbb{R},|\cdot|)$ is $\tau_\rho$-lsc and
   $\tau_{\bar \rho}$-usc.
   \item   If  the mapping $\rho(x,\cdot):X\to (\mathbb{R},|\cdot|)$ is $\tau_\rho$-continuous for
   every $x\in X,$ then the topology $\tau_\rho$ is regular. \\
       If $\rho(x,\cdot):X\to (\mathbb{R},|\cdot|)$ is $\tau_{\bar\rho}$-continuous for every $x\in X,$ then the
       topology $\tau_{\bar\rho}$ is semi-metrizable.
  \end{enumerate}
     \end{prop}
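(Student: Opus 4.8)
The plan is to dispose of items (1)--(3) by straightforward use of the triangle inequality (QM2), supplemented by the counterexamples in Example \ref{ex.R-qm}, and to concentrate the effort on item (4).

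For item (1): if $y\in B_\rho(x,r)$, set $s=r-\rho(x,y)>0$; then (QM2) gives $B_\rho(y,s)\subset B_\rho(x,r)$, so $B_\rho(x,r)$ is $\tau_\rho$-open. To show $B_\rho[x,r]$ is $\tau_{\bar\rho}$-closed I would check that its complement is $\tau_{\bar\rho}$-open: if $\rho(x,y)>r$, then $\rho(z,y)<\rho(x,y)-r$ forces $\rho(x,z)\ge\rho(x,y)-\rho(z,y)>r$, so $B_{\bar\rho}(y,\rho(x,y)-r)$ lies in the complement. For item (2): given $x\ne y$, (QM3) provides, say, $\rho(x,y)>0$, and then $B_\rho(x,\rho(x,y))$ contains $x$ but not $y$, which gives $T_0$; if, conversely, $\rho(x,y)=0$ for some $x\ne y$, then every $\tau_\rho$-neighbourhood of $x$ contains a ball $B_\rho(x,s)$ and hence $y$, so $\tau_\rho$ fails $T_1$, and this reasoning run backwards yields the stated $T_1$ characterization. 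The remaining negative claims in (1) and (2) (that $B_\rho[x,r]$ need not be $\tau_\rho$-closed, and that $\tau_\rho$ need not be $T_1$) are witnessed by $(\mathbb{R},q)$ of Example \ref{ex.R-qm}: since a union of left rays $(-\infty,x+s)$ is again a left ray, the nonempty proper $\tau_q$-open sets are exactly the rays $(-\infty,a)$, so $B_q[0,r]=(-\infty,r]$ is not $\tau_q$-closed and no $\tau_q$-open set separates $1$ from $0$.

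For item (3): $\tau_\rho$-upper semicontinuity of $\rho(x,\cdot)$ at $y_0$ follows from $\rho(x,y)\le\rho(x,y_0)+\rho(y_0,y)$ with witness neighbourhood $B_\rho(y_0,\varepsilon)$, while $\tau_{\bar\rho}$-lower semicontinuity follows from $\rho(x,y_0)\le\rho(x,y)+\rho(y,y_0)$ with witness $B_{\bar\rho}(y_0,\varepsilon)$; the statements for $\rho(\cdot,y)$ come out of the same inequalities with the roles of the two arguments of $\rho$ interchanged. Item (4) then builds on this. Since $\rho(x,\cdot)$ is automatically $\tau_\rho$-usc by item (3), the hypothesis that it be $\tau_\rho$-continuous is equivalent to its $\tau_\rho$-lower semicontinuity, hence to the $\tau_\rho$-closedness of every closed ball $B_\rho[x,r]=\rho(x,\cdot)^{-1}((-\infty,r])$; for $y_0\in X$ and $r>0$ the set $B_\rho[y_0,r/2]$ is then a $\tau_\rho$-closed neighbourhood of $y_0$ contained in $B_\rho(y_0,r)$, so closed neighbourhoods form a base at each point and $\tau_\rho$ is regular. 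For the semi-metrizability of $\tau_{\bar\rho}$ I would take the symmetric $d=\rho^s$ and show that $\{B_d(y,\varepsilon):\varepsilon>0\}$ is a $\tau_{\bar\rho}$-neighbourhood base at each $y$: the inclusion $B_d(y,\varepsilon)\subset B_{\bar\rho}(y,\varepsilon)$ is immediate, and for the other direction one uses that $\rho(y,\cdot)$ is automatically $\tau_{\bar\rho}$-lsc by item (3), so $\tau_{\bar\rho}$-continuity is exactly $\tau_{\bar\rho}$-upper semicontinuity, which yields $\delta>0$ with $\rho(y,z)<\varepsilon$ whenever $\rho(z,y)<\delta$, whence $B_{\bar\rho}(y,\min\{\delta,\varepsilon\})\subset B_d(y,\varepsilon)$; since the $\bar\rho$-balls form a $\tau_{\bar\rho}$-neighbourhood base, so do the $d$-balls.

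The main obstacle is item (4), and especially the semi-metrizability assertion: one has to choose the right symmetric --- $\rho^s$ rather than $\rho$ or $\bar\rho$ --- and keep careful track of which of the two semicontinuity directions is free (supplied by item (3)) and which is the real content of the continuity hypothesis, since it is precisely the non-automatic direction that controls the $\bar\rho$-ball needed in the comparison. The regularity part is of the same nature but easier, using only the lower-semicontinuity half of the continuity hypothesis to turn closed balls into closed sets.
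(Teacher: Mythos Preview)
The paper does not give a proof of this proposition; it is stated with the citation ``see \cite{Cobz}'' and the text moves on immediately to completeness. Your argument is correct and supplies exactly the details the paper omits: items (1)--(3) are the standard triangle-inequality manipulations, the counterexamples drawn from Example \ref{ex.R-qm} are apt, and in item (4) you correctly identify that the continuity hypothesis adds precisely the \emph{missing} half of semicontinuity (the lower one for $\tau_\rho$, the upper one for $\tau_{\bar\rho}$), which is what makes the closed balls $\tau_\rho$-closed for the regularity argument and allows you to trap a $\bar\rho$-ball inside a $\rho^s$-ball for the semi-metrizability argument.
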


     \subsection{Completeness in quasi-metric spaces}

   The lack of symmetry in the definition of quasi-metric and
   quasi-uniform spaces causes a lot of troubles, mainly concerning
   completeness, compactness and total boundedness in such spaces.
   There are a lot of completeness notions in quasi-metric and in
   quasi-uniform spaces, all agreeing with the usual notion of
   completeness in the case of metric or uniform spaces,  each of
   them having its advantages and weaknesses.

   As in what follows we shall work only with one of these  notions, we shall present only it,
   referring to   \cite{Cobz}) for    other notions of Cauchy  sequence and for   their properties.

   A sequence $(x_n)$ in $(X,\rho)$ is called

   (a) \;  {\it left $\rho$-$K$-Cauchy} if  for every $\varepsilon >0$
   there exists $n_\varepsilon\in \mathbb{N}$ such that
\begin{equation}\label{def.l-Cauchy}\begin{aligned}
 &\forall n,m, \;\;\mbox{with}\;\; n_\varepsilon\leq n < m ,\quad\rho(x_n,x_m)<\varepsilon\\
 \iff &\forall n \geq n_\varepsilon,\; \forall k\in\mathbb{N},\quad \rho(x_n,x_{n+k})<\varepsilon.
 \end{aligned}\end{equation}

Similarly, a sequence $(x_n)$ in $(X,\rho)$ is called

(a$'$) \;  {\it right $\rho$-$K$-Cauchy} if  for every $\varepsilon >0$
   there exists $n_\varepsilon\in \mathbb{N}$ such that
\begin{equation}\label{def.r-Cauchy}\begin{aligned}
 &\forall n,m, \;\;\mbox{with}\;\; n_\varepsilon\leq n < m ,\quad\rho(x_m,x_n)<\varepsilon\\
 \iff &\forall n \geq n_\varepsilon,\; \forall k\in\mathbb{N},\quad \rho(x_{n+k},x_n)<\varepsilon.
 \end{aligned}\end{equation}

\begin{remarks}\label{re.compl-qm}Let $(X,\rho)$ be a quasi-semimetric space.
\begin{itemize}
\item[{\rm 1.}]
  Obviously, a sequence is left $\rho$-$K$-Cauchy
   if and only if it is right $\bar\rho$-$K$-Cauchy.
\item[{\rm 2.}] Let $(x_n)$ be a left    $\rho$-$K$-Cauchy sequence. If $(x_n)$  contains a subsequence which is $\tau(\rho)$ ($\tau(\bar\rho$))-convergent to some $x\in X$, then the sequence $(x_n)$ is $\tau(\rho)$ (resp. $\tau(\bar\rho$))-convergent to $x$ (\cite[P. 1.2.4]{Cobz}).
\item[{\rm 3.}]   If a sequence   $(x_n)$ in $X$ satisfies $\sum_{n=1}^\infty\rho(x_{n},x_{n+1})<\infty\;$ ($\sum_{n=1}^\infty\rho(x_{n+1},x_{n})<\infty$), then it is left (right)-$\rho$-$K$-Cauchy.
\item[{\rm 4.}] There are examples showing that a $\rho$-convergent sequence need not  be
left $\rho$-$K$-Cauchy, showing that in the asymmetric case the situation is far more complicated than in the
symmetric one (see   \cite[Section 1.2]{Cobz}).
\item[{\rm 5.}]
  If each convergent sequence in a regular quasi-metric space $(X,\rho)$
   admits a left $K$-Cauchy subsequence, then $X$ is metrizable
   (\cite[P. 1.2.1]{Cobz}).
   \end{itemize} \end{remarks}

A quasi-metric space $(X,\rho)$ is called
 {\it left $\rho$-$K$-complete} if every left $\rho$-$K$-Cauchy sequence  is
$\rho$-convergent, with the corresponding definition of the {\it right $\rho$-$K$-completeness}. The quasi-metric space $(X,\rho)$ is called \emph{left (right) Smyth complete} if every left (right) $\rho$-$K$-Cauchy sequence  is $\rho^s$-convergent and \emph{bicomplete} if the associated metric space $(X,\rho^s)$ is complete.

\begin{remark}
   In spite of the obvious fact that left $\rho$-$K$-Cauchy is equivalent
   to right $\bar \rho$-$K$-Cauchy, left $\rho$-$K$- and right
   $\bar\rho$-$K$-completeness do not agree, due to the fact that right
   $\bar \rho$-completeness means that every left $\rho$-Cauchy
   sequence converges in $(X,\bar \rho),$ while left
   $\rho$-completeness means the convergence of such sequences in the
   space $(X,\rho).$

   Also, it is easy to check that Smyth completeness (left or right) of a quasi-metric space $(X,\rho)$ implies the completeness of the associated metric space $(X,\rho^s)$ (i.e. the bicompleteness of the quasi-metric space $(X,\rho)$).
\end{remark}
\begin{example}\label{ex.R-compl-qm} The spaces $(\mathbb{R},q)$ and $(\mathbb{R},\bar q)$ from Example \ref{ex.R-qm} are not right $K$-complete. The sequence $x_n=n,\,n\in \mathbb{N},$ is right $q$-$K$-Cauchy and not  convergent in $(\mathbb{R},q)$ and the sequence $y_n=-n,\,n\in\mathbb{N},$ is right $\bar q$-$K$-Cauchy and not  convergent in $(\mathbb{R},\bar q)$.\end{example}

Indeed, $q(x_{n+k},x_n)=(n-n-k)^+=0$ for all $n,k\in\mathbb{N}$. For $x\in \mathbb{R}$ let $n_x\in\mathbb{N}$ be such that $n_x>x$. Then $q(x,x_n)=n-x\ge n_x-x>0$ for all $n\ge n_x$. The case of the space $(\mathbb{R},\bar q)$  and of the sequence  $y_n=-n,\, n\in\mathbb{N},$  can be treated similarly.

\subsection{Variational principles, fixed points and completeness}

   The following version of EkVP in quasi-metric spaces was proved in \cite{cobz11}.

    \begin{theo}[Ekeland Variational Principle]\label{t.EkVP-qm}
Suppose that $(X,\rho)$ is a  $T_1$ quasi-metric space
and $f:X\to\mathbb{R}\cup\{\infty\}$ is a  bounded below  proper function.
 For given $\varepsilon >0$ let $x_\varepsilon\in X$ be such that
  \begin{equation}\label{eq1.EkVP-qm}
  f(x_\varepsilon)\leq \inf f(X)+\varepsilon.\end{equation}
\begin{enumerate}
  \item If $(X,\rho)$ is right $\rho$-$K$-complete and $f$ is $\rho$-lsc,  then for
  every $\lambda >0$ there exists $z=z_{\varepsilon,\lambda}\in X$  such that

  {\rm(a)}\quad $f(z)+\frac{\varepsilon}{\lambda}\rho(z,x_\varepsilon)\leq f(x_\varepsilon);$

  {\rm(b)}\quad $\rho(z,x_\varepsilon)\leq \lambda;$

  {\rm(c)}\quad $\forall x\in X\setminus\{z\},\quad f(z)<f(x)+\frac{\varepsilon}{\lambda}\rho(x,z).$
 \item If $(X,\rho)$ is right $\bar\rho$-$K$-complete and $f$ is $\bar\rho$-lsc,  then for
 every $\lambda >0$ there exists $z=z_{\varepsilon,\lambda}\in X$  such that

   {\rm(a$'$)}\quad $f(z)+\frac{\varepsilon}{\lambda}\rho(x_\varepsilon,z)\leq f(x_\varepsilon);$

   {\rm(b$'$)}\quad $\rho(x_\varepsilon,z)\leq \lambda;$

   {\rm(c$'$)}\quad $\forall x\in X\setminus\{z\},\quad f(z)<f(x)+\frac{\varepsilon}{\lambda}\rho(z,x).$
\end{enumerate}
\end{theo}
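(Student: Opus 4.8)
The plan is to run the classical iterative proof of the Ekeland principle, being scrupulous about which slot of the quasi-metric carries the distance. Part (2) will follow from part (1) applied to the conjugate quasi-metric $\bar\rho$: that space is again $T_1$, it is right $\bar\rho$-$K$-complete by hypothesis, $f$ is $\bar\rho$-lsc by hypothesis, and since $\bar\rho(z,x_\varepsilon)=\rho(x_\varepsilon,z)$ and $\bar\rho(x,z)=\rho(z,x)$, the conclusions (a$'$)--(c$'$) are precisely (a)--(c) read for $\bar\rho$. So I would only write out part (1).

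For part (1), put $c=\varepsilon/\lambda$ and define on $\dom f$ the relation $x\preceq y\iff f(x)+c\,\rho(x,y)\le f(y)$. Using (QM1)--(QM2) one checks reflexivity and transitivity (transitivity is where the triangle inequality is used, in the correct order $\rho(x,w)\le\rho(x,y)+\rho(y,w)$), and antisymmetry follows from $\rho(x,y)=\rho(y,x)=0\Rightarrow x=y$. Set $x_0:=x_\varepsilon$; given $x_n$, let $S_n:=\{x\in X:x\preceq x_n\}$, which is nonempty ($x_n\in S_n$), contained in $\dom f$, and with $\inf_{S_n}f\ge\inf f(X)>-\infty$ since $f$ is bounded below and proper. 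Choose $x_{n+1}\in S_n$ with $f(x_{n+1})\le\inf_{S_n}f+\frac1{n+1}$; if at some stage $x_n$ is $\preceq$-minimal, take $z=x_n$ and stop --- then (c) is minimality, (a) holds because $z\preceq x_\varepsilon$, and (b) follows from (a) as below. Otherwise one obtains an infinite chain $\cdots\preceq x_2\preceq x_1\preceq x_0$ with $S_0\supseteq S_1\supseteq\cdots$ and $x_m\preceq x_n$ for $m\ge n$.

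The heart of the argument: for $m>n\ge1$, from $x_m\preceq x_n$, from $x_m\in S_n$ (so $f(x_m)\ge\inf_{S_n}f$), and from $f(x_n)\le\inf_{S_{n-1}}f+\frac1n\le\inf_{S_n}f+\frac1n$, we get $c\,\rho(x_m,x_n)\le f(x_n)-f(x_m)\le\frac1n$. Hence $(x_n)$ is \emph{right $\rho$-$K$-Cauchy}, so by right $\rho$-$K$-completeness it $\rho$-converges to some $z\in X$, i.e. $\rho(z,x_n)\to0$. Taking $\liminf_{m\to\infty}$ in $f(x_m)+c\,\rho(x_m,x_n)\le f(x_n)$ and invoking that $f$ is $\rho$-lsc and that $\rho(\cdot,x_n)$ is $\tau_\rho$-lsc (Proposition \ref{p.top-qsm1}(3)) yields $f(z)+c\,\rho(z,x_n)\le f(x_n)$, i.e. $z\preceq x_n$, for every $n$; the case $n=0$ is exactly (a). Then (b) is immediate from (a): $c\,\rho(z,x_\varepsilon)\le f(x_\varepsilon)-f(z)\le f(x_\varepsilon)-\inf f(X)\le\varepsilon$ by \eqref{eq1.EkVP-qm}, so $\rho(z,x_\varepsilon)\le\varepsilon/c=\lambda$. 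For (c), if some $w\ne z$ had $w\preceq z$, then $w\preceq z\preceq x_{n+1}$ for all $n$, so $w\in S_n$ and $f(w)\ge\inf_{S_n}f$, while $z\preceq x_{n+1}$ gives $f(z)\le f(x_{n+1})\le\inf_{S_n}f+\frac1{n+1}$; thus $f(z)-f(w)\le\frac1{n+1}\to0$, forcing $f(z)\le f(w)$, and then $w\preceq z$ gives $c\,\rho(w,z)\le f(z)-f(w)\le0$, so $\rho(w,z)=0$, whence $w=z$ since $(X,\rho)$ is $T_1$ --- a contradiction, proving (c).

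The main obstacle, as in all asymmetric analogues of Ekeland's theorem, is the bookkeeping imposed by the lack of symmetry: the construction must be arranged so that the relevant Cauchy condition is the one assumed in the completeness hypothesis (here, keeping the running point in the \emph{first} slot of $\rho$ makes the sequence right $\rho$-$K$-Cauchy), and when passing to the limit one may only use the one-sided lower semicontinuity of $x\mapsto\rho(x,x_n)$ from Proposition \ref{p.top-qsm1}(3), not continuity of $\rho$. The $T_1$ assumption is used once but essentially --- to upgrade $\rho(w,z)=0$ to $w=z$ in the proof of (c); without it, conclusion (c) would have to be weakened.
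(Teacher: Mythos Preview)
The paper does not actually supply a proof of this theorem --- it is stated with a citation to \cite{cobz11} and then used as a tool --- so there is no in-text argument to compare against. Your proof is correct and is essentially the classical Ekeland iteration, with the asymmetric bookkeeping done right: the order $x\preceq y\iff f(x)+c\,\rho(x,y)\le f(y)$ and the choice $x_{n+1}\in S_n$ near the infimum produce $\rho(x_m,x_n)\le\frac{1}{cn}$ for $m>n$, which is precisely a right $\rho$-$K$-Cauchy condition; the passage to the limit uses only the $\tau_\rho$-lsc of $f$ and of $\rho(\cdot,x_n)$ from Proposition~\ref{p.top-qsm1}(3), and the $T_1$ hypothesis is invoked exactly where it must be, to deduce $w=z$ from the one-sided conclusion $\rho(w,z)=0$. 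The reduction of part (2) to part (1) via $\bar\rho$ is also correct: the hypothesis ``$(X,\rho)$ is right $\bar\rho$-$K$-complete'' is literally the completeness hypothesis of part (1) for the space $(X,\bar\rho)$, and the conclusions translate as you indicate.

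If one wants a point of contact with what the paper \emph{does} prove, your sets $S_n=\{x:x\preceq x_n\}$ are the sets $S_\lambda(x_n)$ of Proposition~\ref{p1.Bao-Soub} (with $\varphi=f$ and the roles of the two arguments of $\rho$ swapped), and your iteration is the same ``pick a near-minimizer in the current section'' scheme used in the proof of Theorem~\ref{t1.Bao-Soub}. The difference is only organizational: there the limit point is obtained from an abstract nonempty-intersection condition (C2), whereas you use the completeness hypothesis directly to get $\rho$-convergence and then the one-sided lower semicontinuity to place the limit in every $S_n$.
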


Again, taking $\lambda =1$ in Theorem \ref{t.EkVP-qm}, one obtains the weak form of EkVP in quasi-metric spaces.

\begin{corol}[Ekeland's Variational Principle - weak form]\label{c.wEkVP-qm}
 Suppose that $(X,\rho)$ is a  $T_1$ quasi-metric space
 and $f:X\to\mathbb{R}\cup\{\infty\}$ is a  bounded below proper function.
 \begin{enumerate}
 \item
  If $X$ is right $\rho$-K-complete  and $f$ is  $\rho$-lsc,
 then for every $\varepsilon >0$ there exists an element $y_\varepsilon\in X$  such that

  \begin{equation}\label{eq1.wEkVP-qm}
    \begin{aligned}
    {\rm (i)} &\quad   f(y_\varepsilon)\leq \inf f(X) + \varepsilon, \\
   {\rm (ii)} &\quad\forall  x\in X\setminus \{y_\varepsilon\},\;\quad f(y_\varepsilon) < f(x)+ \varepsilon \rho(x,y_\varepsilon).
    \end{aligned}
    \end{equation}
\item  If $X$ is right $\bar\rho$-K-complete  and $f$ is  $\bar\rho$-lsc,
then for every $\varepsilon >0$ there exists an element $y_\varepsilon\in X$  such that
  \begin{equation}\label{eq2.wEkVP-qm}
  \begin{aligned}
  {\rm (i)} &\quad   f(y_\varepsilon)\leq \inf f(X) + \varepsilon, \\
 {\rm (ii)} &\quad\forall  x\in X\setminus \{y_\varepsilon\},\;\quad f(y_\varepsilon) < f(x)+ \varepsilon \rho(y_\varepsilon,x).
 \end{aligned}
   \end{equation}
   \end{enumerate}
 \end{corol}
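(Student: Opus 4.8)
The plan is to derive both assertions of the corollary directly from Theorem~\ref{t.EkVP-qm} by specializing to $\lambda=1$. Since $f$ is proper and bounded below, $m:=\inf f(X)$ is a finite real number, so for the prescribed $\varepsilon>0$ the definition of the infimum furnishes a point $x_\varepsilon\in X$ with $f(x_\varepsilon)\le m+\varepsilon$; this is precisely the hypothesis \eqref{eq1.EkVP-qm} needed to apply Theorem~\ref{t.EkVP-qm}. Note that $x_\varepsilon$ is not part of the data of the corollary --- it is produced internally by this $\varepsilon$-approximate minimization --- so the final conclusion depends only on $f$ and $\varepsilon$.

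For assertion (1), assume $X$ is right $\rho$-$K$-complete and $f$ is $\rho$-lsc. Apply part (1) of Theorem~\ref{t.EkVP-qm} with $\lambda=1$ to obtain $z=z_{\varepsilon,1}\in X$ satisfying (a) $f(z)+\varepsilon\rho(z,x_\varepsilon)\le f(x_\varepsilon)$, (b) $\rho(z,x_\varepsilon)\le 1$, and (c) $f(z)<f(x)+\varepsilon\rho(x,z)$ for all $x\in X\setminus\{z\}$. Put $y_\varepsilon:=z$. Then (c) is exactly \eqref{eq1.wEkVP-qm}(ii). Moreover, since $\varepsilon\rho(z,x_\varepsilon)\ge 0$, inequality (a) combined with $f(x_\varepsilon)\le m+\varepsilon$ yields $f(y_\varepsilon)\le f(x_\varepsilon)\le\inf f(X)+\varepsilon$, which is \eqref{eq1.wEkVP-qm}(i). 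Assertion (2) follows by the same reasoning applied to part (2) of Theorem~\ref{t.EkVP-qm} (now with $X$ right $\bar\rho$-$K$-complete and $f$ being $\bar\rho$-lsc): the resulting $z$ satisfies (a$'$) and (c$'$), and setting $y_\varepsilon:=z$ turns (a$'$) into \eqref{eq2.wEkVP-qm}(i) and (c$'$) into \eqref{eq2.wEkVP-qm}(ii); one only has to observe that the order of the arguments of $\rho$ in the strict inequality is the one recorded in \eqref{eq2.wEkVP-qm}(ii).

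Because each step is a straightforward substitution into an already established theorem, there is no real obstacle here; the only point that merits a remark is the bookkeeping of which argument of the asymmetric distance $\rho$ occurs in each inequality --- confusing (c) with (c$'$) (equivalently, confusing the role played by $\rho$-completeness and $\rho$-lower semicontinuity with that of $\bar\rho$-completeness and $\bar\rho$-lower semicontinuity) is the one place where an error could slip in. An analogue of Remark~\ref{re.EkVP-compl} would show that it even suffices for the conclusions to hold for Lipschitz $f$, but this refinement is not required for the corollary as stated.
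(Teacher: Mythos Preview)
Your proof is correct and follows exactly the approach indicated in the paper, which simply notes that the corollary is obtained by taking $\lambda=1$ in Theorem~\ref{t.EkVP-qm}; you have merely spelled out the routine details (choosing $x_\varepsilon$ by approximate minimization and deducing (i) from inequality (a)).
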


   Caristi's fixed point theorem version in  quasi-metric spaces is the following.

  \begin{theo}[Caristi-Kirk Fixed Point Theorem, \cite{cobz11}]\label{t.Caristi1-qm}
   Let $(X,\rho)$ be a $T_1$  quasi-metric space, $f:X\to X$  and $\varphi:X\to\mathbb{R}. $
   \begin{enumerate}
     \item If $X$ is right $\rho$-$K$-complete, $\varphi$ is
    bounded below and $\rho$-lsc   and the  mapping $f $      satisfies the
   condition
   \begin{equation} \label{eq1.Caristi1-qm}
   \rho(f(x),x)\leq \varphi(x)-\varphi(f(x)),\; x\in X,
   \end{equation}
   then $f$ has a fixed point in $X.$
    \item If $X$ is right $\bar\rho$-$K$-complete, $\varphi$ is
     bounded below and $\bar\rho$-lsc    and the  mapping $f $  satisfies the condition
    \begin{equation} \label{eq2.Caristi1-qm}
    \rho(x,f(x))\leq \varphi(x)-\varphi(f(x)),\; x\in X,
    \end{equation}
    then $f$ has a fixed point in $X.$
   \end{enumerate}
   \end{theo}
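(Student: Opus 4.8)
The plan is to deduce both assertions from the quasi-metric Ekeland Variational Principle, Theorem~\ref{t.EkVP-qm} (in its weak form, Corollary~\ref{c.wEkVP-qm}), in exactly the way Caristi's fixed point theorem is derived from Ekeland's principle in the metric case. The two cases of the theorem correspond, respectively, to the two cases of Theorem~\ref{t.EkVP-qm}, and the placement of $\rho$ versus $\bar\rho$ and the order of the arguments in \eqref{eq1.Caristi1-qm}--\eqref{eq2.Caristi1-qm} are precisely what makes that correspondence work.

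For (1): the function $\varphi$ is $\rho$-lsc, bounded below, and (being real-valued) proper, and $X$ is right $\rho$-$K$-complete, so Corollary~\ref{c.wEkVP-qm}(1) applied with $\varepsilon=1$ yields a point $z\in X$ with
\[
\varphi(z) < \varphi(x) + \rho(x,z)\qquad\text{for every } x\in X\setminus\{z\}.
\]
Assume, towards a contradiction, that $f(z)\neq z$, and substitute $x=f(z)$ in the displayed inequality: this gives $\varphi(z) < \varphi(f(z)) + \rho(f(z),z)$. On the other hand, \eqref{eq1.Caristi1-qm} with $x=z$ reads $\rho(f(z),z)\le\varphi(z)-\varphi(f(z))$, i.e. $\varphi(f(z))+\rho(f(z),z)\le\varphi(z)$, which contradicts the previous inequality. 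Hence $f(z)=z$. Assertion (2) is obtained in the same manner, now invoking Corollary~\ref{c.wEkVP-qm}(2): since $\varphi$ is $\bar\rho$-lsc and $X$ is right $\bar\rho$-$K$-complete, there is $z\in X$ with $\varphi(z)<\varphi(x)+\rho(z,x)$ for all $x\neq z$; taking $x=f(z)$ and combining with \eqref{eq2.Caristi1-qm}, which says $\rho(z,f(z))\le\varphi(z)-\varphi(f(z))$, again forces $f(z)=z$.

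The argument is short, so there is no genuine obstacle; the only point requiring care is the asymmetry of $\rho$. Since $\rho(f(x),x)$ and $\rho(x,f(x))$ are truly different quantities, one must invoke the variational principle in the $\rho$-lsc / right $\rho$-$K$-complete form exactly when the Caristi condition is written with $\rho(f(x),x)$, and in the conjugate form otherwise — mixing the two leaves a gap. Alternatively, one could give a self-contained proof by adapting the descent scheme of Dancs--Heged\H us--Medvegyev (Theorem~\ref{t1.DHM}) to the quasi-metric setting, but since Theorem~\ref{t.EkVP-qm} is already available, the deduction above is the economical route; the $T_1$ hypothesis enters only as a standing assumption of Theorem~\ref{t.EkVP-qm}.
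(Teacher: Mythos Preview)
Your proof is correct and follows exactly the approach the paper has in mind: the paper does not spell out a proof of Theorem~\ref{t.Caristi1-qm} (it is quoted from \cite{cobz11}), but Proposition~\ref{Caristi-wEk-qm} records precisely the equivalence (wEk) $\iff$ (C), and your argument is the standard implementation of the direction (wEk) $\Rightarrow$ (C) in each of the two cases. The matching of $\rho$-lsc/right $\rho$-$K$-completeness with condition~\eqref{eq1.Caristi1-qm} and of the $\bar\rho$ versions with~\eqref{eq2.Caristi1-qm} is handled correctly.
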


   In this case we have also a set-valued version.

    \begin{theo}[Caristi-Kirk Fixed Point Theorem - set-valued version, \cite{cobz11}] \label{t.Caristi2-qm}
   Let $(X,\rho)$ be a $T_1$  quasi-metric space, $F:X\rightrightarrows X$ a set-valued mapping
   such that $F(x)\neq\emptyset$ for every $x\in X,$  and $\varphi:X\to\mathbb{R}. $
   \begin{enumerate}
   \item If $X$ is right $\rho$-$K$-complete, $\varphi$ is
    bounded below and $\rho$-lsc   and the  mapping $F $      satisfies the
   condition
   \begin{equation} \label{eq1.Caristi2-qm}
   \forall  x\in X,\; \forall y\in F(x),\quad \rho(y,x)\leq \varphi(x)-\varphi(y),
   \end{equation}
   then $F$ has a fixed point in $X.$
   \item If $X$ is right $\bar\rho$-$K$-complete, $\varphi$ is
    bounded below and $\bar\rho$-lsc    and the  mapping $F$ satisfies the condition
   \begin{equation} \label{eq2.Caristi2-qm}
  \forall  x\in X,\; \forall y\in F(x),\quad \rho(x,y)\leq \varphi(x)-\varphi(y),
   \end{equation}
   then $F$ has a fixed point in $X.$
     \end{enumerate}
   \end{theo}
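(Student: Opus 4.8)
The plan is to deduce both statements directly from the weak form of the Ekeland Variational Principle in quasi-metric spaces, i.e.\ from Corollary \ref{c.wEkVP-qm}, which is already at our disposal; no separate fixed-point machinery is needed.

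\emph{Part (1).} Since $\varphi:X\to\mathbb{R}$ is finite-valued it is proper, and by hypothesis it is bounded below and $\rho$-lsc, while $(X,\rho)$ is $T_1$ and right $\rho$-$K$-complete. Hence Corollary \ref{c.wEkVP-qm}(1) applies to $f=\varphi$ with $\varepsilon=1$ and produces a point $z\in X$ such that
\[
\varphi(z) < \varphi(x) + \rho(x,z)\qquad\text{for every } x\in X\setminus\{z\}.
\]
Pick any $y\in F(z)$, which is possible since $F(z)\neq\emptyset$. Condition \eqref{eq1.Caristi2-qm} applied with this pair gives $\rho(y,z)\le\varphi(z)-\varphi(y)$, that is, $\varphi(y)+\rho(y,z)\le\varphi(z)$. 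Were $y\neq z$, the displayed inequality at $x=y$ would yield $\varphi(z)<\varphi(y)+\rho(y,z)$, contradicting the previous line. Therefore $y=z$, so $z\in F(z)$ and $z$ is the desired fixed point.

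\emph{Part (2)} is symmetric. Now $\varphi$ is $\bar\rho$-lsc and $(X,\rho)$ is right $\bar\rho$-$K$-complete, so Corollary \ref{c.wEkVP-qm}(2) with $\varepsilon=1$ furnishes $z\in X$ with $\varphi(z)<\varphi(x)+\rho(z,x)$ for all $x\neq z$. Taking $y\in F(z)$ and using \eqref{eq2.Caristi2-qm} we get $\varphi(y)+\rho(z,y)\le\varphi(z)$, which again forces $y=z$, whence $z\in F(z)$.

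\textbf{Where the care is needed.} Once EkVP is granted, the argument is essentially a one-line contradiction, so there is no real obstacle; the delicate point is purely bookkeeping about the asymmetry. One must pair the hypothesis ``right $\rho$-$K$-complete $+$ $\rho$-lsc'' with the estimate $\rho(y,z)$ (not $\rho(z,y)$) appearing in \eqref{eq1.Caristi2-qm} and with conclusion (ii) of Corollary \ref{c.wEkVP-qm}(1), which carries $\rho(x,y_\varepsilon)$, so that the two inequalities genuinely oppose one another; the conjugate choices must be made consistently throughout part (2). The $T_1$ assumption is used only to make Corollary \ref{c.wEkVP-qm} applicable. Finally, if one is willing to invoke the axiom of choice, there is an even shorter route: any selection $g:X\to X$ with $g(x)\in F(x)$ satisfies condition \eqref{eq1.Caristi1-qm} (resp.\ \eqref{eq2.Caristi1-qm}), so the single-valued Caristi--Kirk theorem (Theorem \ref{t.Caristi1-qm}) gives a fixed point $x_0=g(x_0)\in F(x_0)$ at once; the EkVP argument above avoids this extra appeal to choice.
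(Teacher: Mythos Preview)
Your proof is correct and follows precisely the route the paper intends: the paper does not spell out a proof of Theorem~\ref{t.Caristi2-qm} in the text (it is cited from \cite{cobz11}), but Proposition~\ref{Caristi-wEk-qm} stated immediately afterward makes explicit that Caristi's theorem and the weak Ekeland principle (Corollary~\ref{c.wEkVP-qm}) are equivalent, which is exactly the derivation you carry out. Your bookkeeping of the asymmetric pairings $\rho(x,y_\varepsilon)$ versus $\rho(y_\varepsilon,x)$ against \eqref{eq1.Caristi2-qm} and \eqref{eq2.Caristi2-qm} is accurate.
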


     As in the symmetric case, the weak form of Ekeland Variational Principle is equivalent to  Caristi's fixed point theorem, \cite{cobz11}.
   \begin{prop}\label{Caristi-wEk-qm}
   Let $(X,\rho)$   be a $T_1$ quasi-metric space. Consider the following assertions.
   \begin{itemize}
     \item[{\rm (wEk)}] For any $\rho$-closed subset $Y$ of $X$, for every   bounded below $\rho$-lsc
    proper  function $f:Y\to\mathbb{R}\cup\{\infty\}$ and for every $\varepsilon > 0$  there exists $x_\varepsilon\in Y$ such that
\begin{equation}\label{eq1.Caristi-wEk-qm}
\forall y\in Y\smallsetminus\{x_\varepsilon\},\qquad f(x_\varepsilon)<f(y)+\varepsilon \rho(y,x_\varepsilon).\end{equation}
\item[{\rm (C)}] For every $\rho$-closed subset $Y$ of $X$ and for any $\rho$-lsc function $\varphi:Y\to\mathbb{R},$ any function $g:Y\to Y$ satisfying \eqref{eq1.Caristi1-qm} on $Y$ has a fixed point.
   \end{itemize}

   Then {\rm (wEk)} $\iff$ {\rm (C)}.
    \end{prop}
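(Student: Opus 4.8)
The plan is to prove the two implications separately, following the classical circle of ideas linking Ekeland's principle and Caristi's theorem, while keeping track of the asymmetry of $\rho$: throughout, the decisive quantity is $\rho(g(x),x)$, exactly the one occurring in \eqref{eq1.Caristi1-qm}. (The potential $\varphi$ in {\rm (C)} must be taken bounded below, as in Theorem \ref{t.Caristi1-qm}; otherwise {\rm (C)} is false -- on $X=\mathbb{Z}$ the map $g(n)=n+1$ satisfies \eqref{eq1.Caristi1-qm} with $\varphi(n)=-n$ yet has no fixed point, whereas {\rm (wEk)} holds there -- so the equivalence is understood with this convention.)

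For {\rm (wEk)}$\,\Rightarrow\,${\rm (C)} I would argue directly. Given a $\rho$-closed $Y$, a bounded below $\rho$-lsc function $\varphi:Y\to\mathbb{R}$, and $g:Y\to Y$ satisfying \eqref{eq1.Caristi1-qm} on $Y$, apply {\rm (wEk)} to the proper, bounded below, $\rho$-lsc function $f:=\varphi$ with $\varepsilon:=1$, obtaining $x_1\in Y$ with $\varphi(x_1)<\varphi(y)+\rho(y,x_1)$ for all $y\in Y\smallsetminus\{x_1\}$. If $g(x_1)\ne x_1$, substituting $y:=g(x_1)$ and using \eqref{eq1.Caristi1-qm} at $x=x_1$, namely $\rho(g(x_1),x_1)\le\varphi(x_1)-\varphi(g(x_1))$, yields $\varphi(x_1)<\varphi(x_1)$, a contradiction; hence $g(x_1)=x_1$. (Any $\varepsilon\in(0,1]$ serves equally well, since \eqref{eq1.Caristi1-qm} carries no $\varepsilon$.)

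For {\rm (C)}$\,\Rightarrow\,${\rm (wEk)} I would argue by contradiction. Suppose {\rm (wEk)} fails for some $\rho$-closed $Y$, some proper bounded below $\rho$-lsc $f:Y\to\mathbb{R}\cup\{\infty\}$, and some $\varepsilon>0$: then for every $x\in Y$ there is $y\in Y\smallsetminus\{x\}$ with $f(y)+\varepsilon\rho(y,x)\le f(x)$. Fix $x_0\in\dom f$ and pass to $Z:=\{x\in Y : f(x)\le f(x_0)\}$; since $f$ is $\rho$-lsc and $Y$ is $\rho$-closed, $Z$ is a nonempty $\rho$-closed subset of $X$ on which $f$ is real-valued (bounded below, and $\le f(x_0)$) and $\rho$-lsc. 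For $x\in Z$ the chosen $y$ satisfies $f(y)\le f(x)\le f(x_0)$, so $y\in Z$; picking, by the axiom of choice, one such $y=:g(x)$ for each $x\in Z$ produces a map $g:Z\to Z$ with $g(x)\ne x$ everywhere and, with $\psi:=\varepsilon^{-1}f|_Z$,
\[
\rho(g(x),x)\le\psi(x)-\psi(g(x)),\qquad x\in Z,
\]
where $\psi:Z\to\mathbb{R}$ is bounded below and $\rho$-lsc. Thus $g$ fulfils the hypotheses of {\rm (C)} on the $\rho$-closed set $Z$ and must therefore have a fixed point in $Z$, contradicting $g(x)\ne x$ for all $x\in Z$; hence {\rm (wEk)} holds.

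The parts I expect to be routine are the verifications that $Z$ is $\rho$-closed and $\psi$ is $\rho$-lsc (sublevel sets and restrictions of $\rho$-lsc functions), and that the negation of the strict inequality in \eqref{eq1.Caristi-wEk-qm} yields exactly the term $\rho(g(x),x)$ needed in \eqref{eq1.Caristi1-qm}. The main point requiring care -- the likely obstacle -- is this reduction to a sublevel set: one needs the Caristi potential to be a \emph{finite}, $\rho$-lsc function on a \emph{$\rho$-closed} domain, which forces the passage to $Z=\{f\le f(x_0)\}$ rather than to $\dom f$ (which need not be $\rho$-closed), and one must keep the asymmetric distance in the order $\rho(g(x),x)$ throughout. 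The $T_1$ hypothesis is inherited from the setting of Theorems \ref{t.EkVP-qm} and \ref{t.Caristi1-qm} and plays no essential role in the equivalence itself.
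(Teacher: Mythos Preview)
Your argument is correct and follows the classical route: apply {\rm (wEk)} with $\varepsilon=1$ to produce a point where the Caristi inequality forces a fixed point, and conversely negate {\rm (wEk)} to build a fixed-point-free Caristi map via the axiom of choice. The paper does not actually supply a proof of this proposition in the text---it merely states the result and attributes it to \cite{cobz11}---so there is nothing to compare against directly here; however, the paper does carry out the analogous equivalence in the partial-metric setting (Theorem~\ref{t.wEk-pm} and Proposition~\ref{p.Caristi-Ek-pm}), and your proof matches that template closely. One minor difference: in Theorem~\ref{t.wEk-pm} the paper restricts to the set $\{x:\varphi(x)+\varepsilon p(x_0,x)\le\varphi(x_0)+\varepsilon p(x_0,x_0)\}$ rather than the simpler sublevel set $\{f\le f(x_0)\}$ you use, but both choices serve the same purpose (forcing finiteness of the potential on a closed set) and yours is arguably cleaner. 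Your remark that {\rm (C)} must be read with $\varphi$ bounded below is well taken: the printed statement omits this, yet Theorem~\ref{t.Caristi1-qm} includes it explicitly, and your $\mathbb{Z}$ example confirms the equivalence fails without it.
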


   As we have seen, in the case of a metric space $X,$ the validity of the weak form of Ekeland Variational Principle  implies the completeness of $X$ (Proposition \ref{p.EkVP-compl}).
The following proposition contains some partial converse results   in the quasi-metric case.
\begin{prop}[\cite{cobz11}]\label{p.EkVP-compl1-qm}
Let $(X,\rho)$ be a $T_1$ quasi-metric space.
\begin{enumerate}
\item If for every $\rho$-lsc function $f:X\to \mathbb{R}$ and for every $\varepsilon>0$ there exists
$y_\varepsilon\in X$ such that
\begin{equation}\label{eq1.EkVP-compl-qm}
\forall x\in X,\quad f(y_\varepsilon)\leq f(x)+\varepsilon\rho(y_\varepsilon,x),\end{equation}
then the quasi-metric space $X$ is left $\rho$-$K$-complete.
\item If for every $\bar\rho$-lsc function $f:X\to \mathbb{R}$ and for every $\varepsilon>0$ there exists
$y_\varepsilon\in X$ such that
\begin{equation}\label{eq2.EkVP-compl-qm}
\forall x\in X,\quad f(y_\varepsilon)\leq f(x)+\varepsilon\rho(x,y_\varepsilon),\end{equation}
then the quasi-metric space $X$ is left $\bar\rho$-$K$-complete.
\end{enumerate}
\end{prop}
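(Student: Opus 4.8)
The plan is to adapt the proof of the converse half of Proposition~\ref{p.EkVP-compl} to the quasi-metric setting, keeping track of the asymmetry of $\rho$. I will prove assertion~(1) and obtain (2) by the dual argument, that is, by replacing $\rho$ with its conjugate $\bar\rho$ throughout.

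\textbf{Construction of the test function.} Fix a left $\rho$-$K$-Cauchy sequence $(x_n)$ in $X$ and set
$$f(x):=\lim_{n\to\infty}\rho(x,x_n),\qquad x\in X .$$
First I would verify that $f$ is admissible for the hypothesis, i.e. that it is real-valued and $\rho$-lsc, and that $\inf f(X)=0$. The limit exists and is finite: from $\rho(x,x_n)\le\rho(x,x_m)+\rho(x_m,x_n)$ and the left $\rho$-$K$-Cauchy property, for $n_\varepsilon\le m<n$ one has $\rho(x,x_n)<\rho(x,x_m)+\varepsilon$, so $\limsup_n\rho(x,x_n)\le\liminf_n\rho(x,x_n)+\varepsilon$ for every $\varepsilon>0$ (and, with $m$ the index corresponding to $\varepsilon=1$, $f(x)\le\rho(x,x_m)+1<\infty$). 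It is $\tau_\rho$-lsc: the triangle inequality gives $f(x)\le f(x')+\rho(x,x')$, and since $\tau_\rho$ is first countable this one-sided Lipschitz bound makes each sublevel set $\{f\le a\}$ sequentially $\tau_\rho$-closed, hence $\tau_\rho$-closed; this is consistent with Proposition~\ref{p.top-qsm1}(3), by which every $\rho(\cdot,x_n)$ is $\tau_\rho$-lsc. Finally $f\ge 0$, and for $m\ge n_\varepsilon$ we have $\rho(x_m,x_n)<\varepsilon$ for all $n>m$, whence $f(x_m)\le\varepsilon$; thus $f(x_m)\to 0$ and $\inf f(X)=0$.

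\textbf{Applying the hypothesis and concluding.} Fix $\varepsilon\in(0,1)$. By assumption~(1) there is $y_\varepsilon\in X$ with $f(y_\varepsilon)\le f(x)+\varepsilon\,\rho(y_\varepsilon,x)$ for all $x\in X$. Putting $x=x_n$ and letting $n\to\infty$, the left side stays fixed, $f(x_n)\to 0$, and $\rho(y_\varepsilon,x_n)\to f(y_\varepsilon)$ by the definition of $f$; hence $f(y_\varepsilon)\le\varepsilon\,f(y_\varepsilon)$, and since $\varepsilon<1$ and $f(y_\varepsilon)\ge 0$ we get $f(y_\varepsilon)=0$, that is, $\rho(y_\varepsilon,x_n)\to 0$. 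By \eqref{char-rho-conv1} this means $x_n\xrightarrow{\rho}y_\varepsilon$, so every left $\rho$-$K$-Cauchy sequence is $\rho$-convergent and $X$ is left $\rho$-$K$-complete. For (2) one runs the same scheme with $(x_n)$ left $\bar\rho$-$K$-Cauchy and $f(x):=\lim_n\rho(x_n,x)=\lim_n\bar\rho(x,x_n)$: the same computations show $f$ is finite, $\tau_{\bar\rho}$-lsc and $\inf f(X)=0$; assumption~(2) now yields $y_\varepsilon$ with $f(y_\varepsilon)\le f(x)+\varepsilon\,\rho(x,y_\varepsilon)$, and taking $x=x_n$, $n\to\infty$ (using $\rho(x_n,y_\varepsilon)=\bar\rho(y_\varepsilon,x_n)\to f(y_\varepsilon)$) forces $f(y_\varepsilon)=0$, i.e. $\rho(x_n,y_\varepsilon)\to 0$, which by \eqref{char-rho-conv2} is $x_n\xrightarrow{\bar\rho}y_\varepsilon$.

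\textbf{Main obstacle.} Once the function is chosen correctly, the rest is the same short computation as in the metric case, so the only real difficulty is the bookkeeping of the asymmetry: one must place the running variable in the slot of $\rho$ that makes the function lower semicontinuous for the topology at hand---by Proposition~\ref{p.top-qsm1}(3) the map $\rho(\cdot,y)$ is $\tau_\rho$-lsc while $\rho(x,\cdot)$ is only $\tau_\rho$-usc---and, simultaneously, arrange that the side on which the term $\varepsilon\,\rho(\cdot,\cdot)$ occurs in the hypothesis matches, and that the vanishing of $f(y_\varepsilon)$ translates into exactly the convergence ($\tau_\rho$ in (1), $\tau_{\bar\rho}$ in (2)) we want $(x_n)$ to have. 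The $T_1$ assumption is merely the standing hypothesis of the subsection and is not actually used here.
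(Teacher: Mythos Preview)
Your proof is correct and follows essentially the same route as the paper's: define a test function from the Cauchy sequence, check it is $\rho$-lsc with infimum $0$, apply the hypothesis with some $\varepsilon<1$ at the points $x_n$, and conclude $f(y_\varepsilon)=0$. The only notable difference is that you prove the genuine limit $\lim_n\rho(x,x_n)$ exists (a pleasant observation), whereas the paper works with $f(x)=\limsup_n\rho(x,x_n)$ throughout and passes to $\limsup$ in the final inequality; either choice suffices.
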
\begin{proof} The proof is  similar  to that of Proposition \ref{p.EkVP-compl}, taking  care of the fact  that a quasi-metric has weaker continuity properties than a metric  (see Proposition \ref{p.top-qsm1}).

To prove (1), suppose that  $(x_n)$ is a left $\rho$-$K$-Cauchy sequence in $X$. We show first that, for every $n\in\mathbb{N}$,  the sequence $(\rho(x,x_n))$ is bounded. Indeed, if  $n_1\in\mathbb{N}$ is such that
$\rho(x_{n_1},x_{n_1+k})\le 1$ for all $k\in\mathbb{N},$ then
$$
\rho(x,x_{n_1+k})\le\rho(x,x_{n_1})+\rho(x_{n_1},x_{n_1+k})\le \rho(x,x_{n_1}+1\,,$$
for all $k\in\mathbb{N}$, proving the boundedness  of the sequence $(\rho(x,x_n))$.

Consequently, the function $f:X\to [0,\infty)$ given by
$$
f(x)=\limsup_{n\to\infty}\rho(x,x_{n}),\;\; x\in X\,,
$$
is well defined.

For $x,x'\in X,$
$$
\rho(x,x_{n})\le\rho(x,x')+\rho(x',x_{n}) \,,$$
for all $n\in \mathbb{N}.$  Passing to $\limsup$ in  both sides of this inequality one obtains
$$
f(x')\ge f(x)-\rho(x,x')\,.
$$

Then for every $\varepsilon>0,\, \rho(x,x')<\varepsilon$ implies $f(x')>f(x)-\varepsilon$, proving that $f$ is  $\rho$-lsc at every $x\in X$.

Similarly,

$$
 \rho(x',x_n)\leq \rho(x',x)+\rho(x,x_n),\quad n\in \mathbb{N},$$
 implies
 $$
 f(x')\leq  f(x)+\rho(x',x),$$
 from which follows  that   the function $f$  is $\bar \rho$-usc at every $x.$

 We show now that
\begin{equation}\label{eq3.EkVP-compl-qm}
 \lim_{n\to\infty}f(x_n)=0\,.\end{equation}

 Indeed, for every $\varepsilon >0$ there exists $n_\varepsilon\in\mathbb{N}$ such that
 $$
 \forall n\geq n_\varepsilon,\; \forall k\in \mathbb{N},\quad \rho(x_n,x_{n+k})<\varepsilon,$$
 implying
 $$
 \forall n\geq n_\varepsilon,\quad 0\leq f(x_n)=\limsup_k\rho(x_n,x_{n+k})\leq \varepsilon\,,
 $$
  that is $\lim_nf(x_n)=0.$

  Let now $y\in X$ satisfying \eqref{eq1.EkVP-compl-qm}  for $\varepsilon=1/2. $ Taking $x=x_n$ it follows
  $$
 \forall n\in \mathbb{N},\quad f(y)\leq f(x_n)+\frac12\,\rho(y,x_n)\,.$$

 Passing to $\limsup$ and taking into account \eqref{eq3.EkVP-compl-qm} one obtains
 $$
  f(y)=\frac 12\, f(y)\,,$$
  that implies $f(y)=0$. Since
 $$
   f(y)=0\iff \limsup_n\rho(y,x_n)=0\iff\lim_n\rho(y,x_n)=0\,,
   $$
it follows  that the sequence $(x_n)$ is $\rho$-convergent to $y$, proving
   the left $\rho$-$K$-completeness of the quasi-metric space $X.$

    The proof of (2) is similar, working with the function $g:X\to [0,\infty)$ given by
   $$
   g(x)=\limsup_n\rho(x_n,x),\quad x\in X\,, $$
   which is $\bar\rho$-lsc and $\rho$-usc.
\end{proof}

 \begin{remark}
 Note that    Proposition \ref{p.EkVP-compl1-qm}  does not contain a proper converse (in the sense of completeness) of the weak Ekeland Principle. We have in fact a kind of ``cross" converse, as can be seen from the following explanations.

From  Corollary \ref{c.wEkVP-qm}.2 it follows  that if the quasi-metric space $(X,\rho)$ is right $\bar\rho$-$K$-complete, then  for every $\bar\rho$-lsc function $f:X\to \mathbb{R}$ and every $\varepsilon>0,$ there exists  a point $y_\varepsilon\in X$  satisfying \eqref{eq1.EkVP-compl-qm}.

On the other side,  the fulfillment of \eqref{eq1.EkVP-compl-qm} for any $\rho$-lsc function implies the left $\rho$-$K$-completeness of the quasi-metric space $(X,\rho).$

Of course that, in the metric case, both of these conditions reduce to the
 completeness of $X$.

Taking into account the fact that a sequence $(x_n)$ in $X$ is right $\bar\rho$-$K$-Cauchy if and only if  it is left $\rho$-$K$-Cauchy one obtains the following completeness results:
\begin{align*}
  &(X,\rho) \;\mbox{is right }\bar\rho\mbox{-}K\mbox{-complete}\iff\\
  &\forall  (x_n)\;\mbox{a left }\; \rho\mbox{-}K\mbox{-Cauchy sequence } \;\mbox{in}\; X,\;\;\exists x\in X \;\mbox{such that}\; x_n\xrightarrow{\bar \rho} x\,,
  \end{align*}
  while
  \begin{align*}
  &(X,\rho) \;\mbox{is left } \rho\mbox{-}K\mbox{-complete}\iff\\
  &\forall  (x_n)\;\mbox{a left }\; \rho\mbox{-}K\mbox{-Cauchy sequence } \;\mbox{in}\; X,\;\;\exists x\in X \;\mbox{such that}\; x_n\xrightarrow{\rho} x\,.
  \end{align*}
\end{remark}

  The right converse was given by Karapinar and Romaguera \cite{karap-romag15}. To do this they need to  slightly modify the notion of lsc function.

Let $(X,\rho)$ be a quasi-metric space. A proper function $f:X\to \mathbb{R}\cup\{\infty\}$, is called \emph{nearly} $\rho$-lsc at $x\in X$ if $f(x)\le\liminf_nf(x_n)$ for every sequence $(x_n)$ of distinct points in $X$ which is $\rho$-convergent to $x$.

It is clear that a $\rho$-lsc function is nearly $\rho$-lsc and if the topology $\tau_\rho$  is $T_1$ (equivalent to $\rho(x,y)>0$ for all distinct points $x,y\in X$), then the converse is also true. The following simple example shows that these notions are different in $T_0$ quasi-metric spaces.
\begin{example} Let $X=\{0,1\}$,   $\rho(0,0)=\rho(0,1)=\rho(1,1)=0$ and $\rho(1,0)=1.$  Then every function $f:X\to\mathbb{R}\cup\{\infty\}$ is nearly $\rho$-lsc (there are no sequences formed of distinct points), but the function $f(0)=1,\, f(1)=0$  is not $\rho$-lsc at $x=0$.
  \end{example}

  Indeed, $x_n=1$ satisfies $\rho(0,x_n)=0\to 0,\, f(x_n)=0$ and $f(0)=1>0=\liminf_n f(x_n)$.

  \begin{theo}\label{t1.Karapin-Romag}
  For   a quasi-semimetric space  $(X,\rho)$ the following conditions are equivalent.
\begin{enumerate}
\item $(X,\rho)$ is right $K$-sequentially complete.
\item For every self mapping T of X and every  bounded below and nearly $\rho$-lsc proper function
$\varphi: X \to \mathbb{R}\cup\{\infty\}$  satisfying the inequality
\begin{equation}\label{eq0-qm}
\rho(T(x),x) + \varphi(T(x)) \le\varphi(x)\,,
\end{equation}
 for all $x \in X$, there exists
$z = z_{T,\varphi}\in X$ such that $\varphi(z) = \varphi(T(z))$.
\item For every bounded below and nearly $\rho$-lsc proper function $f : X \to\mathbb{R}\cup\{\infty\}$  and for
every $\varepsilon> 0 $ there exists $y_\varepsilon \in X$ such that
\begin{align*}
{\rm (i)}\;\; &f(y_\varepsilon)\le \inf f(X) + \varepsilon;\\
{\rm (ii)}\; &f(y_\varepsilon) < f(x) + \varepsilon \rho(x,y_\varepsilon) \;\mbox{for all  } x \in X\setminus\overline{\{y_\varepsilon\}}\; \mbox{and  } \\
{\rm (iii)}\; &f(y_\varepsilon) \le f(x)\; \mbox{ for all }
x\in\overline{\{y_\varepsilon\}}.
\end{align*}
  \end{enumerate}
\end{theo}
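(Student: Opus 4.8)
The plan is to establish the cycle of implications $(1)\Rightarrow(3)\Rightarrow(2)\Rightarrow(1)$, so that all three conditions are equivalent.

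\emph{$(1)\Rightarrow(3)$.} I would run the quasi-metric Ekeland iteration, adapted to nearly $\rho$-lsc functions. Fix $\varepsilon>0$, start from $x_1$ with $f(x_1)\le\inf f(X)+\varepsilon$, and put $S(x)=\{y\in X:f(y)+\varepsilon\rho(y,x)\le f(x)\}$. Then $x\in S(x)$; $z\in S(y),\,y\in S(x)$ imply $z\in S(x)$ by the triangle inequality; and each $S(x)$ is closed under $\rho$-limits of sequences of distinct points (near $\rho$-lsc of $f$ together with the $\tau_\rho$-lower semicontinuity of $\rho(\cdot,x)$, Proposition~\ref{p.top-qsm1}). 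Recursively choose $x_{n+1}\in S(x_n)$ with $f(x_{n+1})\le\tfrac12\bigl(f(x_n)+\inf_{S(x_n)}f\bigr)$. If at some stage $\inf_{S(x_n)}f=f(x_n)$, the recursion stops; then $S(x_n)\subseteq\overline{\{x_n\}}$, $f\equiv f(x_n)$ there, and $y_\varepsilon:=x_n$ already satisfies (i)--(iii). Otherwise the $x_n$ are pairwise distinct, and $\varepsilon\rho(x_{n+1},x_n)\le f(x_n)-f(x_{n+1})$ gives $\sum_n\rho(x_{n+1},x_n)<\infty$, so $(x_n)$ is right $\rho$-$K$-Cauchy (see Remarks~\ref{re.compl-qm}) and, by (1), $\rho$-convergent to some $z$. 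With $\ell=\lim_nf(x_n)$, one gets $\inf_{S(x_n)}f\to\ell$, $z\in\bigcap_nS(x_n)$ and $f(z)=\ell$; and for any $x\in S(z)$ the inequalities $f(x)\ge\ell$ and $f(x)+\varepsilon\rho(x,x_n)\le f(x_n)$ force $\rho(x,x_n)\to0$, whence near $\rho$-lsc gives $f(x)=\ell$, then $\rho(x,z)=0$, i.e. $x\in\overline{\{z\}}$; conversely every $x\in\overline{\{z\}}$ lies in $S(z)$. Thus $S(z)=\overline{\{z\}}$ with $f\equiv\ell$ on it, and $y_\varepsilon:=z$ yields (i) (from $z\in S(x_1)$), (ii) (contrapositive of $S(z)\subseteq\overline{\{z\}}$) and (iii) (equality of $f$ on $\overline{\{z\}}$).

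\emph{$(3)\Rightarrow(2)$.} Given $T$ and $\varphi$ as in (2), apply (3) to $f:=\varphi$ with $\varepsilon=1$, obtaining $y$ satisfying (ii)--(iii), and evaluate the hypothesis $\rho(T(x),x)+\varphi(T(x))\le\varphi(x)$ at $x=y$. If $T(y)\notin\overline{\{y\}}$, then (ii) gives $\varphi(y)<\varphi(T(y))+\rho(T(y),y)\le\varphi(y)$, a contradiction; hence $T(y)\in\overline{\{y\}}$, so (iii) gives $\varphi(y)\le\varphi(T(y))$, while the hypothesis gives $\varphi(T(y))\le\varphi(y)$. Therefore $z:=y$ satisfies $\varphi(z)=\varphi(T(z))$.

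\emph{$(2)\Rightarrow(1)$.} Suppose $X$ is not right $K$-complete and pick a right $\rho$-$K$-Cauchy sequence with no $\rho$-limit. A short argument (given $\varepsilon$ and $m$, compare with a subsequence term of index larger than $m$) shows that a right $\rho$-$K$-Cauchy sequence with a $\rho$-convergent subsequence is itself $\rho$-convergent; hence no subsequence converges, the sequence takes infinitely many values, and after passing to a subsequence we have pairwise distinct $x_n$ with $\rho(x_{n+1},x_n)\le2^{-n}$. Put $c_n=\sum_{k\ge n}\rho(x_{k+1},x_k)$, $A=\{x_n:n\in\mathbb N\}$, and define $\varphi(x_n)=c_n+2^{-n}$, $T(x_n)=x_{n+1}$, and $\varphi(x)=\rho(x_1,x)+c_1+1$, $T(x)=x_1$ for $x\notin A$. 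Then $\varphi\ge0$ is bounded below and proper, $\rho(T(x),x)+\varphi(T(x))\le\varphi(x)$ holds for every $x$ (with equality up to the $2^{-n}$ correction on $A$), and $\varphi(T(x))<\varphi(x)$ for every $x$, so no $z$ satisfies $\varphi(z)=\varphi(T(z))$, contradicting (2). The delicate point is that $\varphi$ is \emph{nearly} $\rho$-lsc: a sequence of distinct points $\rho$-converging to some $x$ can contain only finitely many terms of $A$ (otherwise those terms would form a $\rho$-convergent subsequence of $(x_n)$), so eventually $\varphi$ along it equals $\rho(x_1,\cdot)+c_1+1$, and $\tau_\rho$-lower semicontinuity of $\rho(x_1,\cdot)$ together with $c_n+2^{-n}\le c_1+\tfrac12$ yields $\varphi(x)\le\liminf_j\varphi(u_j)$ in both cases $x\in A$ and $x\notin A$.

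The step I expect to be the main obstacle is $(2)\Rightarrow(1)$: because $\rho$ is asymmetric and $\tau_\rho$ need not be $T_1$, the natural distance-type potential $\varphi$ built from the Cauchy sequence fails to be genuinely $\rho$-lsc, and it is exactly the relaxation to near lower semicontinuity — made to work by choosing the subsequence with distinct terms and no $\rho$-convergent subsequence, and by forcing strict decrease of $\varphi$ along the $T$-orbit via the $2^{-n}$ term — that makes (2) applicable. Elsewhere the only additional bookkeeping relative to the metric ($T_1$) case is replacing the singleton $\{y_\varepsilon\}$ by its closure $\overline{\{y_\varepsilon\}}$.
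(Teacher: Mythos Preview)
Your cycle $(1)\Rightarrow(3)\Rightarrow(2)$ is fine and follows the natural Ekeland iteration, with the correct bookkeeping for the closure $\overline{\{y_\varepsilon\}}$ replacing the singleton. The genuine gap is in $(2)\Rightarrow(1)$, in the verification that your potential $\varphi$ is nearly $\rho$-lsc.

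You claim ``$\tau_\rho$-lower semicontinuity of $\rho(x_1,\cdot)$'', but this is false: by Proposition~\ref{p.top-qsm1}(3), the map $\rho(x_1,\cdot)$ is $\tau_\rho$-\emph{upper} semicontinuous (only $\rho(\cdot,y)$ is $\tau_\rho$-lsc). Concretely, if $(u_j)$ is a sequence of distinct points in $X\setminus A$ with $u_j\xrightarrow{\rho}x\in X\setminus A$, you need $\rho(x_1,x)\le\liminf_j\rho(x_1,u_j)$; but the triangle inequality only gives $\rho(x_1,u_j)\le\rho(x_1,x)+\rho(x,u_j)$, i.e.\ the reverse estimate. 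So $\varphi$ need not be nearly $\rho$-lsc at points of $X\setminus A$, and $(2)$ cannot be invoked. Nor can you sidestep this by making $\varphi$ constant (or bounded) on $X\setminus A$: the Caristi inequality with $T(x)=x_1$ forces $\varphi(x)\ge\rho(x_1,x)+\varphi(x_1)$, which is unbounded; and taking $T=\mathrm{id}$ on $X\setminus A$ kills the contradiction since then every such $x$ satisfies $\varphi(Tx)=\varphi(x)$.

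The paper (following Karap{\i}nar--Romaguera) avoids exactly this trap by closing the cycle via $(3)\Rightarrow(1)$ instead: there one only needs a \emph{test function} $f$, not a pair $(T,\varphi)$ satisfying a Caristi inequality, so $f$ can be taken \emph{constant} on $X\setminus A$ (namely $f\equiv 2$ there, $f(x_n)=2^{-(n-1)}$), and near $\rho$-lsc is then immediate. One then checks that (ii) fails at $x_{m+1}$ when $\rho(x_{n+1},x_n)>0$, and that (iii) fails when $\rho(x_{n+1},x_n)=0$ eventually (since then $x_n\in\overline{\{x_m\}}$ for $n\ge m$ but $f(x_n)\to 0<f(x_m)$). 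If you want to keep your decomposition, you must reroute through $(2)\Rightarrow(3)$ (trivial) and then do $(3)\Rightarrow(1)$ as above.
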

\begin{proof} We shall present only the proof of the implication (3)\;\;$\Rightarrow$\; (1).

  We   proceed   by contradiction. Suppose that the space $(X,\rho)$ is not right $K$-complete. Then there exists a right $K$-Cauchy sequence $(x_n)$ in $X$ which has no limit. This implies that $(x_n)$  has no convergent subsequences, see Remarks \ref{re.compl-qm}.

    We shall distinct two situations.

   Suppose that
 \bequ\label{eq0a.KR-Ek}
  \exists m,\; \forall k\ge m,\;\exists n_k>k,\;\; \rho(x_{n_k},x_k)>0\,.
\eequ

Then, for $n_1=m$ there exists $n_2>n_1$ such that  $\rho(x_{n_2},x_{n_1})>0$. Taking $k=n_2$ it follows the existence of $n_3>n_2$ such that  $\rho(x_{n_3},x_{n_2})>0$. Continuing in this manner we obtain a
sequence $n_1<n_2<\dots$ such that $\rho(x_{n_{k+1}},x_{n_k})>0$ for all $k\in\mathbb{N}$.

Passing to a further subsequence, if necessary, and relabeling, we can suppose   that

 \begin{equation}\label{eq1a.KR}
 0<\rho(x_{n+1},x_n)< \frac1{2^{n+1}}\,,
 \end{equation}
 for all $n\in\mathbb{N}.$

 If \eqref{eq0a.KR-Ek} does not hold, then

  \bequ\label{eq0b.KR-Ek}
  \forall m,\; \exists k\ge m,\;\mbox{ such that }\;\forall n>k,\;\; \rho(x_{n},x_k)=0\,.
\eequ

For $m=1$ let $k=n_1\ge 1$ be such that  $\rho(x_{n},x_{n_1})=0$ for all $n>n_1$. Now, for $m=1+n_1$ let $n_2>n_1$ be such that  $\rho(x_{n},x_{n_2})=0$ for all $n>n_2$. It follows $\rho(x_{n_2},x_{n_1})=0$.

Continuing in this manner we obtain a
sequence $n_1<n_2<\dots$ such that $\rho(x_{n_{k+1}},x_{n_k})=0$ for all $k\in\mathbb{N}$.

Relabeling, if necessary, we can suppose   that the sequence $(x_n)$ satisfies
 \begin{equation}\label{eq1b.KR}
 \rho(x_{n+1},x_n)=0 \,,
 \end{equation}
 for all $n\in\mathbb{N}.$

Put
 $$
 B:=\{x_n : n\in\mathbb{N}\}\,,$$
 and define $f:X\to \mathbb{R}$ by
\bequ\label{def.f-KR}
 f(x)=\begin{cases}
   \frac1{2^{n-1}}\quad&\mbox{\;\,if } x=x_n \;\;\mbox{for some}\quad n\in\mathbb{N},  \\
2 \qquad\quad&\mbox{  for}\quad x\in X\setminus B.
 \end{cases}\eequ

 The function $f$ is nearly $\rho$-lsc. Indeed, let $x\in X$ and $(y_n)$  a sequence of   distinct points in $X$ converging to $x.$ If the set $\{n\in\mathbb{N} : y_n\in B\}$ would be infinite, then there would  exist the natural numbers   $m_1<m_2<\dots$ and  $n_1<n_2<\dots$ such that $y_{m_k}=x_{n_k},\, k\in\mathbb{N}$. But, this would imply that $(x_n)$ has a subsequence $(x_{n_k})$ convergent to $x$, in contradiction to the hypothesis.
Consequently, $(y_n)$ must be eventually in $X\setminus B$, and so $f(x)\le 2=\lim_nf(y_n).$

For  $\varepsilon =1$ let $y\in X$  satisfying the conditions (i)-(iii). Since
$$
 \{x\in X : f(x)\le\inf f(X)+1\}=\{x\in X : f(x)\le 1\}=B\,,
 $$
it follows  $y=x_m\in B$ for some $m\in\mathbb{N}$.

If \eqref{eq1a.KR} holds,  then
\begin{equation}\label{eq.KR-case1}
f(x_{m+1})+\rho(x_{m+1},x_m)<\frac1{2^m}+\frac1{2^{m+1}}=\frac3{2^{m+1}}<\frac1{2^{m-1}}=f(x_m)\,,
\end{equation}
showing that condition (ii) from (3) is not satisfied, that is (3) does not hold.

If \eqref{eq1b.KR} holds,  then, by the triangle inequality, $$\rho(x_{m+k},x_{m})\le\sum_{i=1}^{k}\rho(x_{m+i},x_{m+i-1})=0\,,
$$
i.e. $x_n\in\overline{\{x_m\}}$ for all $n\ge m$. By (iii),
$$
0< f(x_m)\le f(x_n)=\frac1{2^{n-1}}\,,$$
for all $n\ge m$. Since $2^{n-1}<f(x_m)$ for sufficiently large $n$, this leads to a contradiction.
\end{proof}
\begin{remark} In the proof of Theorem 2 in \cite{karap-romag15} the possibility that $\rho(x_{n+1},x_n)=0$ for all $n\in\mathbb{N}$ (when one can not use  $x_{m+1}$ to obtain the contradiction from \eqref{eq.KR-case1}) is not discussed. So the proof given above fills in this gap.
  \end{remark}

\subsection{Smyth completeness}

We present  now   some results on Caristi FPT and Smyth completeness in quasi-metric spaces obtained by Romaguera and Tirado \cite{romag-tirado15}.

Let $(X,\rho)$ be a quasi-metric space, $\varphi:X\to [0,\infty)$ and $T:X\to X$ such that
\begin{equation}
 \rho(x,Tx)\le\varphi(x)-\varphi(Tx)\,,
\end{equation}
for all $x\in X$.

The mapping $T$ is called $\bar\rho$-\emph{Caristi} if $\varphi $ is $\bar\rho$-lsc and  $\rho^s$-\emph{Caristi} if $\varphi $ is $\rho^s$-lsc.

\begin{theo}[Romaguera and Tirado \cite{romag-tirado15}] Let $(X,\rho)$ be a quasi-metric space.
 \begin{enumerate}
 \item  If  $(X,\rho)$ is right $\bar\rho$-K-complete, then every $\bar \rho$-Caristi map on $X$ has a fixed point.
      \item  If  $(X,\rho)$ is right $\rho$-K-complete, then every $\rho$-Caristi map on $X$ has a fixed point.
  \item        The quasi-metric space   $(X,\rho)$ is right $\bar \rho$-Smyth complete if and only if every $\rho^s$-Caristi map has a fixed point.
 \end{enumerate}
\end{theo}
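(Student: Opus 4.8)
The three assertions split into two groups: parts (1), (2) and the ``if'' half of (3) say that a suitable asymmetric completeness forces the Caristi fixed point property, while the ``only if'' half of (3) is the converse -- the genuinely new statement -- and is where I would concentrate the effort. The forward implications follow the classical Caristi scheme (Picard iteration / Brøndsted's order), transposed to the quasi-metric setting; the converse follows the pattern of the proofs of Proposition~\ref{p.EkVP-compl} and Theorem~\ref{t1.Karapin-Romag}.

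\emph{The forward implications.} Given a Caristi pair $(T,\varphi)$, with $\varphi$ bounded below and lsc in the relevant topology and $\rho(x,Tx)\le\varphi(x)-\varphi(Tx)$ for all $x$, fix $x_0$ and set $x_{n+1}=Tx_n$. Telescoping the Caristi inequality yields $\sum_n\rho(x_n,x_{n+1})\le\varphi(x_0)-\inf\varphi<\infty$ and $\rho(x_n,x_m)\le\varphi(x_n)-\varphi(x_m)$ for $n\le m$, so by Remarks~\ref{re.compl-qm} the sequence $(x_n)$ is left $\rho$-$K$-Cauchy, equivalently right $\bar\rho$-$K$-Cauchy; hence the completeness hypothesis in force produces a point $z$ which is a $\bar\rho$-limit of $(x_n)$ in cases (1)--(2) and a $\rho^s$-limit in case~(3). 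Put $y\preceq x\iff\rho(x,y)\le\varphi(x)-\varphi(y)$: by (QM2) this is transitive and by (QM3) antisymmetric, so it is a partial order with $Tx\preceq x$ for all $x$. Using the lower semicontinuity of $\varphi$ together with the $\tau_{\bar\rho}$-lower semicontinuity of each $\rho(x_n,\cdot)$ (Proposition~\ref{p.top-qsm1}) one checks $z\preceq x_n$ for every $n$ and $\varphi(z)=\lim_n\varphi(x_n)=\inf\{\varphi(x):x\preceq x_0\}=:\beta$. Since $Tz\preceq z\preceq x_n$ forces $\varphi(Tz)\ge\beta$ while Caristi forces $\varphi(Tz)\le\varphi(z)=\beta$, we get $\varphi(Tz)=\beta$ and $\rho(z,Tz)\le\beta-\beta=0$; the same computation makes $\varphi$ constant $\beta$ along the whole $T$-orbit of $z$, whence $\rho(T^kz,T^mz)=0$ for $k\le m$. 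To upgrade $\rho(z,Tz)=0$ to $Tz=z$ in the absence of a $T_1$ axiom: in case~(3) the orbit $(T^nz)$ is left $\rho$-$K$-Cauchy, hence $\rho^s$-convergent, and uniqueness of $\rho^s$-limits (Smyth completeness) identifies its limit with $z$, so $\rho(T^nz,z)\to0$ and $\rho(Tz,z)\le\rho(Tz,T^nz)+\rho(T^nz,z)=\rho(T^nz,z)\to0$, giving $Tz=z$ by (QM3); in cases (1)--(2) one instead pushes the construction to a $\preceq$-minimal element (a $\varphi$-minimizing $\preceq$-decreasing chain is left $\rho$-$K$-Cauchy, its limit is a lower bound, and $Tm\preceq m$ forces $Tm=m$ at a minimal $m$), which is precisely the mechanism of Theorem~\ref{t2.DHM}.

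\emph{The converse, i.e.\ ``only if'' of (3).} Argue by contraposition: if $(X,\rho)$ is not right $\bar\rho$-Smyth complete, there is a left $\rho$-$K$-Cauchy sequence with no $\rho^s$-limit, hence (Remarks~\ref{re.compl-qm}) with no $\rho^s$-convergent subsequence, so every value is attained only finitely often and one may thin it to a sequence $(x_n)$ of pairwise distinct points with $\rho(x_n,x_{n+1})<2^{-(n+1)}$ for all $n$ -- this uses that left $\rho$-$K$-Cauchyness controls exactly the ``forward'' distances $\rho(x_n,x_{n+k})$. Then $B=\{x_n:n\in\mathbb{N}\}$ has no $\rho^s$-accumulation point, so $B$ and each of its subsets are $\rho^s$-closed. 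Define $\varphi(x)=2^{-n}$ if $x=x_n$ and $\varphi(x)=2+\rho^s(x,x_1)$ if $x\notin B$; the sublevel sets $\{\varphi\le a\}$ are closed $\rho^s$-balls about $x_1$ together with a subset of $B$, hence $\rho^s$-closed, so $\varphi$ is $\rho^s$-lsc and bounded below. Define $T$ by $Tx_n=x_{n+1}$ and $Tx=x_1$ for $x\notin B$; then $\rho(x_n,Tx_n)=\rho(x_n,x_{n+1})<2^{-(n+1)}=\varphi(x_n)-\varphi(x_{n+1})$ and $\rho(x,Tx)=\rho(x,x_1)\le\rho^s(x,x_1)\le(2+\rho^s(x,x_1))-\tfrac12=\varphi(x)-\varphi(x_1)$, so $T$ is a $\rho^s$-Caristi map, yet it moves every point and has no fixed point -- the desired contradiction.

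The main obstacle is the converse, and within it the bookkeeping of the thinning step: one must make sure the extracted subsequence still has no $\rho^s$-limit and has pairwise distinct terms (the remark following Theorem~\ref{t1.Karapin-Romag} signals exactly such an omission in the analogous Karapinar--Romaguera argument), and one must check that the ``off-$B$'' definition of $\varphi$ genuinely absorbs the possibly large distances $\rho(x,x_1)$ while keeping $\varphi$ $\rho^s$-lsc and $T$ fixed-point-free. A secondary but real delicacy, already in the forward direction, is the passage from $\rho(z,Tz)=0$ to the honest equality $Tz=z$: this is where the uniqueness of $\rho^s$-limits in case~(3), or the maximality argument in cases (1)--(2), is used essentially and not merely for convenience.
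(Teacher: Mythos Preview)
The paper does not give its own proof of this theorem; it is merely quoted from Romaguera--Tirado, so there is nothing to compare your argument against. That said, your converse (the ``only if'' half of (3)) is correct and is exactly the construction one expects, in the spirit of Theorem~\ref{t1.Karapin-Romag} and Corollary~\ref{c.Car-compl}: the thinning step, the $\rho^s$-closedness of $B$, the definition of $\varphi$ and $T$, and the verification of the Caristi inequality are all fine.

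The forward direction, however, has a genuine gap at the very point you yourself flag as delicate. In case~(3) you obtain $z$ with $\rho(z,Tz)=0$ and $\varphi$ constant along the $T$-orbit of $z$, hence $\rho(T^kz,T^mz)=0$ for $k\le m$; Smyth completeness then gives a $\rho^s$-limit $w$ of $(T^nz)$. Your claim that ``uniqueness of $\rho^s$-limits identifies its limit with $z$'' is not justified: uniqueness only says that $w$ is the \emph{sole} $\rho^s$-limit of $(T^nz)$, not that $w=z$. All you know is $\rho(z,T^nz)=0$, which makes the constant sequence $z$ a $\tau_\rho$-limit of the orbit, but $\tau_\rho$-limits are not unique in a non-$T_1$ space, and nothing forces $\rho(T^nz,z)\to0$. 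Concretely, from $\rho^s(T^nz,w)\to0$ and $\rho(z,T^nz)=0$ you get $\rho(z,w)=0$, but to conclude $\rho(w,z)=0$ you would need $\rho(T^nz,z)\to0$, which is exactly the missing piece. The same issue lurks in your sketch for (1)--(2): the appeal to ``a $\preceq$-minimal element \dots\ which is precisely the mechanism of Theorem~\ref{t2.DHM}'' is not available as stated, since Theorem~\ref{t2.DHM} is formulated for metric spaces and the $\varphi$-minimizing construction in the quasi-metric setting only yields a $z$ with $S(z)\subset\{y:\rho(z,y)=0\}$, not a $\preceq$-minimal element.

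To repair (3) one has to exploit Smyth completeness more substantially than a single passage to a limit; one route is to work inside the $\rho^s$-closed, $T$-invariant set $S(z)$ (on which $\varphi$ is constant and $\rho(y,Ty)=0$), observe that $(S(z),\le_\rho)$ with the specialization order has suprema of increasing sequences (the Smyth $\rho^s$-limit is the least upper bound, as one checks from $\rho(y_k,w)\le\rho(y_k,y_m)+\rho(y_m,w)\to0$ and $\rho(w,v)\le\rho(w,y_m)+\rho(y_m,v)\to0$), and then run a Bourbaki--Witt / transfinite argument for the progressive map $T$. Alternatively, consult the Romaguera--Tirado paper for their device; the point is that your one-line reduction does not close the argument.
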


\begin{remark}
 Some versions of Ekeland Variational Principle in asymmetric locally convex spaces were proved in
 \cite{cobz12}. The equivalence of completeness of a quasi-semimetric space with  Takahashi, Ekeland and Caristi principles is discussed in \cite{cobz19}. Other characterizations of completeness of quasi-metric spaces are given by Romaguera and Valero \cite{romag10b}.
\end{remark}

\subsection{Some results of Bao, Cobza\c s, Mordukhovich  and Soubeyran}

Inspired by the results of  Dancs,  Heged{\H{u}}s and Medvegyev  \cite{dancs-heged83} (see Theorem \ref{t1.DHM}), Bao, Cobza\c s and Soubeyran \cite{bao-cs-soub16}, Bao and Soubeyran  \cite{bao-soubey15a}, and Bao and Th\'era \cite{bao-thera15} proved  versions of Ekeland principle in quasi-semimetric spaces and obtained characterizations of completeness.

They consider a set-valued mapping attached to a function $\varphi$ and to a number $\lambda>0$, as in the following proposition.

\begin{prop}\label{p1.Bao-Soub}
  Let $(X,\rho)$ be a quasi-semimetric space, $\varphi:X\to\mathbb{R}\cup\{+\infty\}$ a proper function and $S_\lambda:X\rightrightarrows X$ the set-valued mapping defined by
  \begin{equation}
  \label{def.Slambda}
  S_\lambda(x)=\{y\in X : \lambda \rho(x,y)\le \varphi(x)-\varphi(y)\}\,.
    \end{equation}

  Then $S_\lambda$ enjoys the following properties:
  \begin{itemize}
    \item[{\rm (i)}] (nonemptiness)\quad $x\in S_\lambda(x)$ for all $x\in\dom(\varphi)$;
     \item[{\rm (ii)}]  (monotonicity) \quad $y\in S_\lambda(x)\;\Longrightarrow\; \varphi(y)\le\varphi(x)$ \; and\; $S_\lambda(y)\subseteq S_\lambda(x)$.
  \end{itemize}
  \end{prop}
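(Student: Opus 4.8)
The statement to be proved is entirely elementary; it asserts two structural properties of the sublevel-type set-valued map $S_\lambda$, and the plan is simply to verify them directly from the defining inequality $\lambda\rho(x,y)\le\varphi(x)-\varphi(y)$. First I would dispose of (i): if $x\in\domain(\varphi)$, then $\varphi(x)\in\mathbb{R}$, so the inequality $\lambda\rho(x,x)\le\varphi(x)-\varphi(x)=0$ reads $0\le 0$ by (QM1), whence $x\in S_\lambda(x)$. (One should note in passing that the definition forces $S_\lambda(x)\subseteq\domain(\varphi)$, since for $y\notin\domain(\varphi)$ the right-hand side $\varphi(x)-\varphi(y)$ is either $-\infty$ or undefined, so the defining inequality fails; this keeps all subsequent arithmetic with $\varphi$-values meaningful.)

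For (ii), suppose $y\in S_\lambda(x)$. Then $\lambda\rho(x,y)\le\varphi(x)-\varphi(y)$, and since $\lambda>0$ and $\rho(x,y)\ge 0$ the left-hand side is nonnegative, so $\varphi(y)\le\varphi(x)$; this already gives the monotonicity-of-values claim, and in particular $y\in\domain(\varphi)$ so that $S_\lambda(y)$ is governed by the same finite arithmetic. To get $S_\lambda(y)\subseteq S_\lambda(x)$, take any $z\in S_\lambda(y)$, so $\lambda\rho(y,z)\le\varphi(y)-\varphi(z)$. Adding this to $\lambda\rho(x,y)\le\varphi(x)-\varphi(y)$ yields
\begin{equation*}
\lambda\bigl(\rho(x,y)+\rho(y,z)\bigr)\le\varphi(x)-\varphi(z)\,,
\end{equation*}
and then the triangle inequality (QM2), $\rho(x,z)\le\rho(x,y)+\rho(y,z)$, combined with $\lambda>0$, gives $\lambda\rho(x,z)\le\varphi(x)-\varphi(z)$, i.e. $z\in S_\lambda(x)$. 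Hence $S_\lambda(y)\subseteq S_\lambda(x)$.

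There is no real obstacle here: the only points requiring care are bookkeeping ones, namely ensuring that $\varphi$ takes finite values on the elements involved (handled by the observation that $S_\lambda(x)\subseteq\domain(\varphi)$, so that sums and differences of $\varphi$-values are legitimate and the additivity step is valid), and checking that the argument uses only (QM1), (QM2) and positivity of $\lambda$, so that it holds in the quasi-semimetric setting without any symmetry or $T_1$ hypothesis. The asymmetry of $\rho$ is harmless because the triangle inequality is applied in the single orientation $\rho(x,z)\le\rho(x,y)+\rho(y,z)$ that matches the orientation appearing in the definition of $S_\lambda$.
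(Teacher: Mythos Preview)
Your proof is correct and is exactly the routine verification one expects; the paper itself states this proposition without proof, treating it as immediate from the definitions, so your argument is precisely what fills that gap.
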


 Recall that  a generalized Picard
sequence corresponding for a set-valued mapping $F:X\rightrightarrows X$
  is  a sequence $(x_n)$  in $X$  such that $x_{n+1}\in F(x_n)$ for all $n$.

  \begin{theo} \label{t1.Bao-Soub}
  Let $(X,\rho)$ be a  quasi-semimetric space, and let $\varphi:X\to\mathbb{R}\cup\{+\infty\}$  be   proper.
  Given $x_0\in\dom(f)$    and $\lambda> 0$, consider the set-valued mapping $S_\lambda:X\rightrightarrows X$ defined by  \eqref{def.Slambda}. Assume that
\begin{itemize}
\item[{\rm (C1)}] (boundedness from below)\;  $\varphi$ is bounded from below on $S_\lambda(x_0)$;
\item[{\rm (C2)}](nonempty intersection) \; for every    generalized Picard sequence $(x_n)_{n\in\mathbb{N}_0}$ of $S_\lambda$  (starting with $x_0$),   such that $\varphi(x_n)>\varphi(x_{n+1}),\,\forall n\in\mathbb{N}_0,\,$ and $\sum_{n=0}^\infty \rho(x_n,x_{n+1})<\infty$, there exists $y\in X$ such that $S_\lambda(y)\subseteq S_\lambda(x_n)$ for all $n\in\mathbb{N}_0$, where $\mathbb{N}_0=\mathbb{N}\cup\{0\}$.
    \end{itemize}

Then, there is a  generalized Picard sequence $(x_n)_{n\in\mathbb{N}_0}\,$ (i.e. $x_{n+1}\in S_\lambda(x_n),\, \forall n\in\mathbb{N}_0$) satisfying $\sum_{n=0}^\infty\rho(x_n,x_{n+1})<\infty$,\, $\bar \rho$-convergent to  some   $\bar x\in X$ such that for every $\bar y\in S_\lambda(\bar x)$  the following conditions hold:
\begin{equation}\label{eq1.t1.Bao}\begin{aligned}
{\rm (i)}&\quad \lambda \rho(x_0,\bar y)\le \vphi(x_0)-\vphi(\bar y)\,;\\
{\rm (ii)}&\quad \vphi(\bar y)< \vphi(x)+\lambda \rho(\bar y,x)\;\;\mbox{ for every }\; x\in X\smallsetminus S_\lambda(\bar y)\,;\\
{\rm (iii)}&\quad \rho(\bar x,\bar y)=0,\; \vphi(\bar y)=\vphi(\bar x)\;\;\mbox{ and }\; S_\lambda(\bar y)\subseteq\overline{\{\bar y\}}^{\bar \rho}\,.
\end{aligned}\end{equation} \end{theo}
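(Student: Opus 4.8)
The plan is to follow the Dancs--Heged{\H{u}}s--Medvegyev iteration scheme (as in Theorem \ref{t1.DHM}) adapted to the mapping $S_\lambda$, with hypothesis (C2) playing the role that lower semicontinuity plays in the classical Ekeland principle. First I would build recursively, starting from the given $x_0$, a generalized Picard sequence $(x_n)_{n\in\mathbb{N}_0}$ of $S_\lambda$. Having produced $x_n$ with $\varphi(x_n)<\infty$, set $m_n:=\inf\varphi(S_\lambda(x_n))$; since $S_\lambda(x_n)\subset\dots\subset S_\lambda(x_0)$ by Proposition \ref{p1.Bao-Soub}(ii), (C1) gives $m_n\ge\inf\varphi(S_\lambda(x_0))>-\infty$. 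If $\varphi(x_n)=m_n$, I would stop and take $\bar x:=x_n$ (extending the sequence by the constant tail); otherwise $\varphi(x_n)>m_n$, and I can choose $x_{n+1}\in S_\lambda(x_n)$ with $\varphi(x_{n+1})\le m_n+\min\{2^{-n},\tfrac12(\varphi(x_n)-m_n)\}$, which forces both $\varphi(x_{n+1})<\varphi(x_n)$ and $\varphi(x_{n+1})\le m_n+2^{-n}$. Because $x_{n+1}\in S_\lambda(x_n)$ yields $\lambda\rho(x_n,x_{n+1})\le\varphi(x_n)-\varphi(x_{n+1})$ and, by Proposition \ref{p1.Bao-Soub}(ii), $(\varphi(x_n))_n$ is nonincreasing and bounded below, a telescoping sum gives $\sum_n\rho(x_n,x_{n+1})\le\lambda^{-1}(\varphi(x_0)-L)<\infty$, where $L:=\lim_n\varphi(x_n)$; and the chain $m_n\le m_{n+1}\le\varphi(x_{n+1})\le m_n+2^{-n}$ gives $\lim_n m_n=L$ as well.

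Next I would apply (C2): the constructed sequence is a generalized Picard sequence of $S_\lambda$ with $\varphi(x_n)>\varphi(x_{n+1})$ and $\sum_n\rho(x_n,x_{n+1})<\infty$, so there is $\bar x\in X$ with $S_\lambda(\bar x)\subset S_\lambda(x_n)$ for every $n$. The crucial observation is that $\bar x\in S_\lambda(\bar x)$ by Proposition \ref{p1.Bao-Soub}(i), hence $\bar x\in\bigcap_nS_\lambda(x_n)$; more generally any $z\in S_\lambda(\bar x)$ lies in all $S_\lambda(x_n)$, so $m_n\le\varphi(z)\le\varphi(x_n)$ for every $n$, whence $\varphi(z)=L$. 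In particular $\varphi(\bar x)=L$, and $\bar x\in S_\lambda(x_n)$ then gives $\lambda\rho(x_n,\bar x)\le\varphi(x_n)-L\to 0$, so $\rho(x_n,\bar x)\to 0$, i.e. $(x_n)$ is $\bar\rho$-convergent to $\bar x$. (In the terminating case $\bar x=x_n$, the constant tail trivially has $\sum\rho<\infty$ and is $\bar\rho$-convergent to $\bar x$, and $\varphi$ attains its infimum over $S_\lambda(\bar x)$ at $\bar x$, which is all that is needed below.)

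It then remains to verify (i)--(iii) for an arbitrary $\bar y\in S_\lambda(\bar x)$. For (i): $\bar y\in S_\lambda(\bar x)\subset S_\lambda(x_0)$, which by definition of $S_\lambda$ reads $\lambda\rho(x_0,\bar y)\le\varphi(x_0)-\varphi(\bar y)$. For (iii): by the previous paragraph $\varphi(\bar y)=L=\varphi(\bar x)$, so $\bar y\in S_\lambda(\bar x)$ forces $\lambda\rho(\bar x,\bar y)\le\varphi(\bar x)-\varphi(\bar y)=0$ and hence $\rho(\bar x,\bar y)=0$; moreover, for any $z\in S_\lambda(\bar y)$ monotonicity gives $z\in S_\lambda(\bar y)\subset S_\lambda(\bar x)$, so $\varphi(z)=L=\varphi(\bar y)$ and $\lambda\rho(\bar y,z)\le\varphi(\bar y)-\varphi(z)=0$, i.e. $\rho(\bar y,z)=0$, which is the statement $z\in\overline{\{\bar y\}}^{\bar\rho}$; thus $S_\lambda(\bar y)\subset\overline{\{\bar y\}}^{\bar\rho}$. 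For (ii): if $x\in X\smallsetminus S_\lambda(\bar y)$ then the defining inequality $\lambda\rho(\bar y,x)\le\varphi(\bar y)-\varphi(x)$ of $S_\lambda(\bar y)$ fails, which (using that $\varphi(\bar y)=L$ is finite, the case $\varphi(x)=+\infty$ being trivial) is exactly $\varphi(\bar y)<\varphi(x)+\lambda\rho(\bar y,x)$.

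The argument is in the end a systematic unwinding of the definition of $S_\lambda$ together with Proposition \ref{p1.Bao-Soub}, so I expect no serious technical obstacle; the one point requiring care is the bookkeeping that pins $\varphi(\bar x)$ (and $\varphi(\bar y)$, and $\varphi(z)$ for $z\in S_\lambda(\bar x)$) to the common limit $L=\lim_n\varphi(x_n)=\lim_n m_n$. It is precisely this identification that converts the purely order-theoretic conclusion of (C2) into the metric convergence $x_n\xrightarrow{\bar\rho}\bar x$ and into the stationarity relations in (iii).
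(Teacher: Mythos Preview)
Your proof is correct and follows essentially the same approach as the paper's: the Dancs--Heged{\H{u}}s--Medvegyev iteration using $m_n=\inf\varphi(S_\lambda(x_n))$, telescoping to get $\sum\rho(x_n,x_{n+1})<\infty$, invoking (C2), and then pinning $\varphi(\bar x)=\varphi(\bar y)=L$ via the squeeze $m_n\le\varphi(z)\le\varphi(x_n)$. The only cosmetic difference is your choice of margin $\min\{2^{-n},\tfrac12(\varphi(x_n)-m_n)\}$ versus the paper's $\tfrac12(\varphi(x_n)-\alpha_n)$; both yield $\lim m_n=\lim\varphi(x_n)$, and the handling of the terminating case is the same.
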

\begin{proof} We give a proof following the ideas from \cite{bao-cs-soub16}.
Replacing, if necessary, $\rho$ by $\lambda \rho$,  we can suppose $\lambda =1$. Put also $S(x)=S_1(x),\, x\in X$.

We shall define inductively a  left $\rho$-$K$-Cauchy generalized Picard sequence which will satisfy all the requirements of the theorem.\vspace{2mm}

\emph{Case} I. \; Start with $x_0$ and suppose that $\alpha_0:=\inf\vphi(S(x_0))<\vphi(x_0)$. Choose $x_1\in S(x_0)$ such that
$$
\alpha_0\le\vphi(x_1)<\alpha_0+\frac12(\vphi(x_0)-\alpha_0)=\frac12(\alpha_0+\vphi(x_0))<\vphi(x_0)\,.$$

Suppose that we have found $x_0,x_1,\dots,x_n$ satisfying $x_{k+1}\in S(x_k),\,k=0,1,\dots,n-1,\,  \vphi(x_k)>\alpha_k:=\inf\vphi(S(x_k)),\, k=0,1,\dots,n,$ and
\begin{equation*}
\alpha_k\le\vphi(x_{k+1})<\alpha_k+\frac12(\vphi(x_k)-\alpha_k)=\frac12(\alpha_k+\vphi(x_k))<\vphi(x_k)\,,
\end{equation*}
for $k=0,1,\dots,n-1.$

Pick then $x_{n+1}\in S(x_n)$ such that
\begin{equation*}
\alpha_n\le\vphi(x_{n+1})<\alpha_n+\frac12(\vphi(x_n)-\alpha_n)=\frac12(\alpha_n+\vphi(x_n))<\vphi(x_n)\,.
\end{equation*}

 Supposing that we can do indefinitely this procedure we find a generalized Picard sequence $x_{n+1}\in S(x_n),\, n\in\mathbb{N}_0$.  Let us show that the  conditions from \eqref{eq1.t1.Bao} are satisfied by this sequence.

 Since the sequence $(\vphi(x_n))$ is strictly decreasing and bounded from below   (by condition (C1)), there exists $\bar \alpha:=\lim_n\vphi(x_n)=\inf_n\vphi(x_n)$.

 By Proposition \ref{p1.Bao-Soub}, $x_{n+1}\in S(x_n)$ implies $S(x_{n+1})\subseteq S(x_{n})\subseteq S(x_{0})$, so that $\alpha_{n+1}\ge\alpha_n$, implying the existence of $\beta=\lim_n\alpha_n$.
 The inequalities
 $$
 \vphi(x_{n+1})< \frac12(\alpha_n+\vphi(x_n))<\vphi(x_n)$$
 yield for $n\to \infty,\; \bar\alpha\le\frac12(\beta+\bar\alpha)\le\bar \alpha$, implying $\beta=\bar\alpha$. Consequently
 $$
 \lim_n\alpha_n=\lim_n\vphi(x_n)=\bar \alpha\,.$$

The inequalities

$$
  q(x_k,x_{k+1})\le\vphi(x_k)-\vphi(x_{k+1})  \quad(\iff x_{k+1}\in S(x_k)) $$
   yield by summation and taking into account condition (C1),
\bequ\label{eq2b.SC2-Ek-forward}\begin{aligned}
 &\sum_{k=0}^n \rho(x_k,x_{k+1})\le\vphi(x_0)-\vphi(x_{n+1})\le\\&\le \vphi(x_0)-\inf \vphi(S(x_{n+1}))\le \vphi(x_0)-\inf \vphi(S(x_{0}))\,,
\end{aligned}\eequ
showing that  $\sum_{k=0}^\infty\rho(x_n,x_{n+1})<\infty$

 Condition  (C2) implies the existence of $\bar x\in X$ such that
$$
S(\bar x)\subseteq\bigcap_{n=0}^\infty S(x_n)\,.
$$

Since $\bar x\in S(\bar x)\subseteq S(x_n)$, it follows that
\begin{equation}\label{eq3.t1.Bao}
 0\le \rho(x_n,\bar x)\le \vphi(x_n)-\vphi(\bar x)\,,\end{equation}
  for all $n\in\mathbb{N}_0:=\mathbb{N}\cup\{0\}$.

Consequently,
 $$
 \alpha_n\le\vphi(\bar x)\le \vphi(x_n)\,,$$
 for all $n\in\mathbb{N}$, yielding for $n\to \infty,\, \vphi(\bar x)=\bar\alpha$. But then, the inequalities
 \eqref{eq3.t1.Bao} imply $\lim_n\rho(x_n,\bar x)=0$, that is the sequence $(x_n)$ is  $\bar\rho$-convergent to $\bar x$.

We show now that the conditions (iii) from \eqref{eq1.t1.Bao} are satisfied.

The relations $\bar y\in S(\bar x)\subseteq S(x_n)$ imply $\alpha_n\le \vphi(\bar y)\le\vphi(\bar x)=\bar\alpha,\, \forall n\in\mathbb{N}_0$. Letting $n\to \infty$, one obtains $\vphi(\bar y)=\bar\alpha=\vphi(\bar x)$.

Also
$$\bar y\in S(\bar x)\iff 0\le \rho(\bar x,\bar y)\le \vphi(\bar x)-\vphi(\bar y)=0\,,$$
so that $\rho(\bar x,\bar y)=0$.

Finally, if $z\in S(\bar y)\subseteq S(\bar x)$, then, as above, it follows $\vphi(z)=\vphi(\bar x)=\vphi(\bar y)$ and  $0\le \rho(\bar y,z)\le\vphi(\bar y)-\vphi(z)=0,\,$ so that $\,  \rho(\bar y,z)=0,$ that is $z\in \overline{\{\bar y\}}^{\bar \rho}.$

The condition (i) is equivalent to $\bar y\in S(x_0)$ which is true because $\bar y\in S(\bar x) \subseteq S(x_0)$.

Condition (ii) follows from the definition of the set $S(\bar y)$.\vspace{2mm}

\emph{Case} II. Suppose that, for some $n_0\in \mathbb{N},\, \vphi(x_{n_0})=\alpha_{n_0}=\inf\vphi (S(x_{n_0}))$.
Then $x_{n_0+1}=x_{n_0}$ and, by induction, $x_{n_0+k}=x_{n_0}$ for all $k\in\mathbb{N}$.

Then the sequence $(x_n)$ is left $\rho$-$K$-Cauchy and  $\bar\rho$-convergent to $x_{n_0}$.  Also, for $x\in S(x_{n_0}),\, \vphi(x)\ge\alpha_{n_0}=\vphi(x_{n_0})$, so that the inequalities
$$
0\le \rho(x_{n_0},x)\le\vphi(x_{n_0})-\vphi(x)\le 0\,,$$
imply $\rho(x_{n_0},x)=0$ and $\vphi(x_{n_0})=\vphi(x)$. It follows also that $x\in \overline{\{x_{n_0}\}}^{\bar \rho}$, that is $S(x_{n_0})\subseteq \overline{\{x_{n_0}\}}^{\bar \rho}$.

These show that the condition (iii) is satisfied. The validity of the conditions (i) and (ii) follows as in Case I.
   \end{proof}

\begin{remark}\label{re1.Bao-Soub}
 Under the hypotheses of  Theorem \ref{t1.Bao-Soub} it follows
\begin{itemize}
\item[{\rm (ii$'$)}]\quad $\varphi(\bar y)<\varphi(x) +\lambda\rho(\bar y,x)$ \;\;  for all \;\;
 $ x\in X\smallsetminus \overline{\{\bar y\}}^{\bar\rho}$\,.
    \item[{\rm (iii$'$)}]\quad $\varphi(\bar y)\le\vphi(x)\;\;$ for all\;\;
    $x\in \overline{\{\bar y\}}^{\bar \rho}$.
\end{itemize}

   Similar conditions appear in Theorem \ref{t1.Karapin-Romag} too.
\end{remark}

The inequality from (ii$'$) follows from the inclusion $S_{\lambda,\rho}(\bar y)\subseteq\overline{\{\bar y\}}^{\bar \rho}$.

To prove (iii$'$), observe that $\,x\in\overline{\{\bar y\}}^{\bar \rho}\iff \rho(\bar y,x)=0$. If $x\in S_{\lambda,\rho}(\bar y)$, then as it is shown in the proof of Theorem~\ref{t1.Bao-Soub}, $\vphi(x)=\vphi(y_*)=\vphi(x_*)$.  If $x\in\overline{\{\bar y\}}^{\bar \rho}\setminus S_{\lambda,\rho}(\bar y)$, then, by (ii),
$$
\vphi(y_*)<\vphi(x)+\lambda \rho(y_*,x)=\vphi(x)\,.$$

To obtain a characterization of completeness, one needs a weaker notion of lower semicontinuity.
\begin{defi}\label{def.decr-lsc}
 A function $\varphi:X\to\mathbb{R}\cup\{\infty\}$, where $(X,\rho)$ is a quasi-semimetric space, is called  \emph{strictly-decreasing-$\rho$-lsc} if for every  $\rho$-convergent sequence $(x_n)$ in $X$ such that the sequence $(\varphi(x_n))$ is strictly decreasing one has
$$
\varphi(y)\le \lim_n\varphi(x_n)\,$$
 for every   $\rho$-limit $y$ of the sequence  $(x_n)$.   A sequence $(x_n)$ such that the sequence $(\vphi(x_n))$ is strictly decreasing is called \emph{strictly $\vphi$-decreasing}.
\end{defi}

\begin{remark}
 In \cite{bao-cs-soub16} it is shown, by an example, that this notion is strictly  weaker than that of $\rho$-lsc, i.e. there exists a function that is strictly-decreasing $\rho$-lsc  but not $\rho$-lsc. In fact the everywhere discontinuous function $f(x)=0$ for $x\in\mathbb{Q}$ and $f(x)=1$ for $x\in\mathbb{R}\setminus\mathbb{Q}$, defined on $(\mathbb{R},|\cdot|)$,  is strictly-decreasing lsc (because there do not exist a   sequence $(x_n)$ in $\mathbb{R}$ such that the sequence $(\varphi(x_n))$ be strictly decreasing). The function $f$ is not lsc because $f(x)=1>0=\liminf_{x'\to x}\varphi(x)$ for every $x\in \mathbb{R}\setminus\mathbb{Q}$. Also, it is not usc  at every $x\in\mathbb{Q}$.
\end{remark}

\begin{remark}
  The notion of  a function $\varphi:X\to\mathbb{R}\cup\{\infty\}$,  such that  $\varphi(x)\le \lim_n\varphi(x_n)$ for every sequence $(x_n)$ in $X$ converging to $x$  such that
  $\varphi(x_{n+1})\le\varphi(x_{n})$ for all $n\in\mathbb{N}$, appears also in the paper \cite{kirk01b} of Kirk and Saliga, called \emph{lower semicontinuity from above}, in connection with Ekeland Variational Principle and Caristi Fixed Point Theorem .
  \end{remark}

   \begin{remark}\label{re.ser-compl-qm} In \cite{bao-cs-soub16} the following sufficient condition for the fulfillment of  condition (C2) from Theorem \ref{t1.Bao-Soub} was given.

  \begin{enumerate}
  \item
 Let $(X,\rho)$ be a  quasi-semimetric space and $\vphi:X\to\mathbb{R}\cup\{+\infty\}$ a proper function.\\
   If every sequence $(x_n)$ in the  space $(X,\rho)$ such that $\vphi(x_{n+1})<\vphi(x_{n+1}),\, n\in\mathbb{N}$, and $\sum_{n+1}^\infty\rho(x_n,x_{n+1})<\infty$ is $\bar\rho$-convergent to some $x\in X$, and the function $\varphi$ is strictly-decreasing-$\bar\rho$-lsc on $\dom \varphi$, then   condition (C2) is satisfied.
 \end{enumerate}
  \end{remark}
  We mention also the following results relating series completeness and completeness in quasi-metric spaces.
  \begin{prop}[\cite{bao-cs-soub16}, Proposition 3.10]\label{p1.ser-compl} Let $(X,\rho)$ be a quasi-semimetric space.
  \begin{enumerate}
\item If a sequence $(x_n) $ in $X$ satisfies  $\sum_{n=1}^\infty\rho(x_n,x_{n+1})<\infty$, then it is left $\rho$-$K$-Cauchy (or, equivalently, right  $\bar\rho$-$K$-Cauchy).
    \item The space $X$ is left $\rho$-$K$-complete if and only if  every sequence $(x_n)$ in $X$ satisfying $\sum_{n=1}^\infty\rho(x_n,x_{n+1})<\infty$ is $\rho$-convergent to some $x\in X$.
 \item The space $X$ is right $\bar\rho$-$K$-complete if and only if  every sequence $(x_n)$ in $X$ satisfying $\sum_{n=1}^\infty\rho(x_n,x_{n+1})<\infty$ is $\bar\rho$-convergent to some $x\in X$.
  \end{enumerate}\end{prop}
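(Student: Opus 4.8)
The plan is to prove (1) directly from the triangle inequality and then to derive (2) and (3) from (1) together with the standard extraction of a rapidly decreasing subsequence and with the automatic upgrading of subsequential limits provided by Remarks~\ref{re.compl-qm}. For (1), fix $\varepsilon>0$; since the series $\sum_n\rho(x_n,x_{n+1})$ converges, there is $n_\varepsilon\in\mathbb{N}$ with $\sum_{k\ge n_\varepsilon}\rho(x_k,x_{k+1})<\varepsilon$, and then for any $n_\varepsilon\le n<m$ repeated use of (QM2) gives $\rho(x_n,x_m)\le\sum_{k=n}^{m-1}\rho(x_k,x_{k+1})\le\sum_{k\ge n_\varepsilon}\rho(x_k,x_{k+1})<\varepsilon$, which is exactly condition \eqref{def.l-Cauchy}; the parenthetical equivalence with right $\bar\rho$-$K$-Cauchy is point~1 of Remarks~\ref{re.compl-qm}. (This also recovers point~3 of Remarks~\ref{re.compl-qm}.)

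For the forward implications in (2) and (3) there is nothing to do beyond combining (1) with the definitions: if $X$ is left $\rho$-$K$-complete (resp.\ right $\bar\rho$-$K$-complete), then any sequence with $\sum_n\rho(x_n,x_{n+1})<\infty$ is left $\rho$-$K$-Cauchy by (1), hence $\rho$-convergent (resp.\ $\bar\rho$-convergent), recalling that right $\bar\rho$-$K$-completeness is the convergence of left $\rho$-$K$-Cauchy sequences in $(X,\bar\rho)$. For the converse implications, let $(x_n)$ be a left $\rho$-$K$-Cauchy sequence. For each $k$ choose $N_k$ so that $\rho(x_n,x_m)<2^{-k}$ whenever $N_k\le n<m$, and then pick indices $n_1<n_2<\cdots$ with $n_k\ge N_k$; this yields $\rho(x_{n_k},x_{n_{k+1}})<2^{-k}$, so $\sum_k\rho(x_{n_k},x_{n_{k+1}})<\infty$. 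Under the hypothesis of (2) (resp.\ (3)) the subsequence $(x_{n_k})$ then $\rho$-converges (resp.\ $\bar\rho$-converges) to some $x\in X$; since $(x_n)$ is left $\rho$-$K$-Cauchy and has a $\tau_\rho$- (resp.\ $\tau_{\bar\rho}$-) convergent subsequence, point~2 of Remarks~\ref{re.compl-qm} forces the whole sequence $(x_n)$ to $\rho$-converge (resp.\ $\bar\rho$-converge) to $x$. Hence $X$ is left $\rho$-$K$-complete (resp.\ right $\bar\rho$-$K$-complete).

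The argument is elementary; the only point that needs care is the bookkeeping of the two convergence notions in the asymmetric setting, i.e.\ remembering that in (3) one must land in $(X,\bar\rho)$ and that $x_n\xrightarrow{\bar\rho}x$ means $\rho(x_n,x)\to 0$. Once the subsequence extraction and point~2 of Remarks~\ref{re.compl-qm} are in place there is no genuine obstacle; indeed the proposition is essentially the observation that, for both completeness notions at hand, it suffices to test on the ``series-Cauchy'' sequences, since every left $\rho$-$K$-Cauchy sequence contains such a subsequence with the same (sub)limit.
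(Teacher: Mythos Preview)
Your proof is correct and follows essentially the same approach as the paper's (sketch) proof: part~(1) via the triangle inequality and the Cauchy criterion for the series, and parts~(2)--(3) by extracting a subsequence with $\rho(x_{n_k},x_{n_{k+1}})<2^{-k}$ and upgrading the subsequential limit to the whole sequence via Remarks~\ref{re.compl-qm}.(2). Your write-up is in fact more explicit about the forward implications and the $\rho$/$\bar\rho$ bookkeeping than the paper's sketch.
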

  \begin{proof} (Sketch) The assertion (1) follows from the triangle inequality and the Cauchy criterion of convergence applied to the series $\sum_{n=1}^\infty\rho(x_n,x_{n+1})$:
  $$
  \rho(x_n,x_{n+k})\le\sum_{i=0}^{k-1}\rho(x_{n+i},x_{n+i+1})<\varepsilon\,.$$

(2) and (3).\;     If $(x_n)$ is left $\rho$-$K$-Cauchy, then there exist the numbers $n_1<n_2<\dots$ such that $\rho(x_{n_k},x_{n_{k+1}})<1/2^k,\, k\in\mathbb{N}$. Then $\sum_{k=1}^\infty\rho(x_{n_k},x_{n_{k+1}})<\infty$ so that, by hypothesis, there exists $x\in X$ with $\lim_k\rho(x,x_{n_k})=0$  ($\lim_k\bar\rho(x,x_{n_k})=0$). By Remarks \ref{re.compl-qm}.(2)  $\lim_n\rho(x,x_{n})=0\, $ (resp. $\lim_n\bar\rho(x,x_{n})=0$).
  \end{proof}

  We will present now    a characterization of  completeness of   quasi-semimetric spaces in terms of the Ekeland Variational Principle (Theorem \ref{t1.Bao-Soub}).

\begin{theo}[A characterization of  completeness]\label{t2.Bao-Soub}
For  any quasi-semimetric space $(X,\rho)$    the following conditions are  equivalent.
\begin{enumerate}
\item The space $X$ is right $\bar\rho$-$K$-complete.
  \item For every proper, bounded from below, and  strictly-decreasing-$\bar\rho$-lsc
function $\varphi:X\to\mathbb{R}\cup\{+\infty\}$  and for any $x_0\in\dom(\varphi)$ there is $\bar x\in X$ such that for every
$\bar y\in  S_1(\bar x) $ one has
\begin{itemize}
\item[{\rm (i)}]\;\; $  \rho(x_0,\bar y)\le \varphi(x_0)-\varphi(\bar y)$;
\item[{\rm (ii)}]\;\; $\varphi(\bar y)<\varphi(x)+ \rho(\bar y,x),\;\;\mbox{for all   } \; x\in X\smallsetminus\overline{\{\bar y\}}^{\bar\rho}$;
\item[{\rm (iii)}]\; $ \vphi(\bar y)\le\vphi(x) $ for all  $x\in\overline{\{\bar y\}}^{\bar \rho}\,.$
 \end{itemize}\end{enumerate}\end{theo}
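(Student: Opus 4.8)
The plan is to prove the two implications separately. \textbf{(1)$\Rightarrow$(2)} will follow by specializing the general principle of Theorem~\ref{t1.Bao-Soub} to $\lambda=1$: given a proper, bounded below, strictly-decreasing-$\bar\rho$-lsc function $\varphi$ and $x_0\in\domain(\varphi)$, hypothesis (C1) of that theorem is immediate since $\varphi$ is bounded below on $X$, while (C2) holds by Remark~\ref{re.ser-compl-qm} --- $\varphi$ is strictly-decreasing-$\bar\rho$-lsc by assumption, and every sequence $(x_n)$ with $\varphi(x_{n+1})<\varphi(x_n)$ for all $n$ and $\sum_n\rho(x_n,x_{n+1})<\infty$ is $\bar\rho$-convergent by Proposition~\ref{p1.ser-compl}(3). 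Theorem~\ref{t1.Bao-Soub} then produces $\bar x$ satisfying (i), (ii) of \eqref{eq1.t1.Bao}, and Remark~\ref{re1.Bao-Soub} upgrades (ii), (iii) to the sharper (ii$'$), (iii$'$); for $\lambda=1$ these are precisely conditions (i)--(iii) of the statement.

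For \textbf{(2)$\Rightarrow$(1)} I would argue by contradiction, in the spirit of the proof of Theorem~\ref{t1.Karapin-Romag}. If $X$ is not right $\bar\rho$-$K$-complete, Proposition~\ref{p1.ser-compl}(3) provides a sequence $(x_n)_{n\ge 0}$ with $\sum_n\rho(x_n,x_{n+1})<\infty$ that is not $\bar\rho$-convergent; being left $\rho$-$K$-Cauchy by Proposition~\ref{p1.ser-compl}(1), it has no $\bar\rho$-convergent subsequence by Remarks~\ref{re.compl-qm}(2), hence no constant one, so after passing to a subsequence I may assume its terms pairwise distinct (still summable, still without $\bar\rho$-limit). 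Using an analogue, with $\rho$ and $\bar\rho$ interchanged, of the dichotomy in the proof of Theorem~\ref{t1.Karapin-Romag}, after a further subsequence and relabelling either \textbf{(A)} $\rho(x_n,x_{n+1})>0$ for every $n$, or \textbf{(B)} $\rho(x_n,x_m)=0$ whenever $n<m$. Put $B=\{x_n:n\ge 0\}$ and define $\varphi$ thus: in case (A), with $a_n:=\sum_{k\ge n}\rho(x_k,x_{k+1})\in(0,\infty)$, let $\varphi(x_n):=2a_n$ and $\varphi(x):=2a_0+1$ for $x\in X\setminus B$; in case (B), let $\varphi(x_n):=2^{-n}$ and $\varphi(x):=2$ for $x\in X\setminus B$. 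In either case $\varphi$ is real-valued, proper, bounded below, strictly decreasing and injective on $B$, and $\rho(x_n,x_{n+1})\le\varphi(x_n)-\varphi(x_{n+1})$, i.e.\ $x_{n+1}\in S_1(x_n)$ for all $n$; moreover $\varphi$ is strictly-decreasing-$\bar\rho$-lsc \emph{vacuously}, since a $\bar\rho$-convergent sequence along which $\varphi$ strictly decreases would eventually lie in $B$ and be a subsequence of $(x_n)$ with indices tending to infinity, forcing $(x_n)$ itself to $\bar\rho$-converge by Remarks~\ref{re.compl-qm}(2).

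Applying hypothesis (2) to this $\varphi$ and $x_0$ produces $\bar x\in X$. As $\varphi$ is finite everywhere, $\bar x\in S_1(\bar x)$, so (i) with $\bar y=\bar x$ gives $\bar x\in S_1(x_0)$; and one checks that $S_1(x_0)=B$ (a point off $B$ has $\varphi$-value $>\varphi(x_0)$, so lies outside $S_1(x_0)$, while $\rho(x_0,x_n)\le\varphi(x_0)-\varphi(x_n)$ holds by the triangle inequality in case (A) and trivially in case (B)). Hence $\bar x=x_N$ for some $N$ and I take $\bar y=\bar x=x_N\in S_1(\bar x)$. In case (A), $\rho(x_N,x_{N+1})>0$, so $x_{N+1}\notin\overline{\{x_N\}}^{\bar\rho}$ and condition (ii) forces $\varphi(x_N)<\varphi(x_{N+1})+\rho(x_N,x_{N+1})$, contradicting $\varphi(x_N)-\varphi(x_{N+1})=2\rho(x_N,x_{N+1})$. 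In case (B), every $m>N$ has $\rho(x_N,x_m)=0$, i.e.\ $x_m\in\overline{\{x_N\}}^{\bar\rho}$, so condition (iii) gives $\varphi(x_N)\le\varphi(x_m)=2^{-m}$, contradicting $\varphi(x_N)=2^{-N}$. Either way a contradiction is reached, which establishes (1).

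\textbf{Main obstacle.} The delicate part is the converse implication: one must choose the right dichotomy and keep careful track of the asymmetry of $\rho$ concealed in the $\bar\rho$-closures (recall $x\in\overline{\{y\}}^{\bar\rho}$ iff $\rho(y,x)=0$), so that the contradiction is delivered by condition (ii) in case (A) and by condition (iii) in case (B); and one must verify that the (necessarily discontinuous) test function is strictly-decreasing-$\bar\rho$-lsc, which reduces to the fact that the bad sequence has no nontrivial $\bar\rho$-convergent subsequence.
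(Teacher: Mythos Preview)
Your proof is correct and follows essentially the same route as the paper, which for (1)$\Rightarrow$(2) invokes Theorem~\ref{t1.Bao-Soub} together with Proposition~\ref{p1.ser-compl}(3) and Remarks~\ref{re.ser-compl-qm}, \ref{re1.Bao-Soub}, and for (2)$\Rightarrow$(1) adapts the argument of Theorem~\ref{t1.Karapin-Romag} with $\rho$ and $\bar\rho$ interchanged. The only cosmetic difference is your choice of test function in case~(A): you take $\varphi(x_n)=2\sum_{k\ge n}\rho(x_k,x_{k+1})$, whereas the Karapinar--Romaguera template would first pass to a subsequence with $\rho(x_n,x_{n+1})<2^{-(n+1)}$ and then set $\varphi(x_n)=2^{1-n}$; both choices yield $\varphi(x_N)-\varphi(x_{N+1})>\rho(x_N,x_{N+1})$ and lead to the same contradiction with condition~(ii).
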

 \begin{proof}
 (1) $\Longrightarrow$ (2) This implication follows by  Theorem \ref{t1.Bao-Soub} (see also Proposition \ref{p1.ser-compl} and the Remarks \ref{re1.Bao-Soub},\;     \ref{re.ser-compl-qm}).

 The proof of the implication (2) $\Longrightarrow$ (1) is similar to that of the implication (3) $\Longrightarrow$ (1) in Theorem \ref{t1.Karapin-Romag}, working with the conjugate metric $\bar \rho$ instead of $\rho$.\end{proof}

The following example shows that the completeness could not hold if we suppose that only the conditions (i) and (ii) from Theorem \ref{t2.Bao-Soub} hold.

\begin{example}
Let $x_n=-n,\, n\in\mathbb{N}_0,$ and $X=\{x_n : n\in\mathbb{N}_0\}$ with the metric
$q(x_n,x_m)=(x_m-x_n)^+=(-m+n)^+ =n-m$ if $n>m$ and $=0$ if  $n\le m$ (see Example \ref{ex.R-qm}).  Then $q(x_n,x_{n+1})=0$ for all $n\in\mathbb{N}_0$ and the space $X$ is not right $\bar q$-$K$-complete (see Example \ref{ex.R-compl-qm}).  Let $\vphi:X\to[0,\infty)$ be an arbitrary function. For  $\bar x=x_0$,
$$
 \vphi(\bar x)=\vphi(x_0)\le \vphi(x_0)= \vphi(x_0)+q(\bar x,x_0)\,.$$

  Since $q(x_0,x_n) =0$ for all $n\in\mathbb{N}_0$ the condition
$$
\vphi(x_0)<\vphi(x_n)+q(x_0,x_n)\quad \mbox{for all}\quad n\in \mathbb{N}_0\quad\mbox{ with}\quad  q(x_0,x_n)>0\,,$$
is trivially satisfied.
\end{example}

\begin{remark}  Bao and Soubeyran use the notions of forward and backward in  a quasi-semimetric space $(X,\rho)$,
where forward means with respect to $\bar\rho$, while backward  means with respect to $\rho$.

  For instance, a sequence $(x_n)$ in $X$ is:
\begin{itemize}
\item
forward convergent to $x$  if \quad $\bar\rho(x,x_n)=\rho(x_n,x)\to 0$, i.e. it is $\bar\rho$-convergent to $x$;
\item
forward Cauchy  if   for every $\varepsilon >0$ there exists $n_\varepsilon\in\mathbb{N}$ such that $\rho(x_n,x_{n+k})<\varepsilon$, for all $n\ge n_\varepsilon$ and all $k\in\mathbb{N}$,  i.e. if it is left $\rho$-$K$-Cauchy, or equivalently, right $\bar\rho$-$K$-Cauchy.
\item
backward convergent to $x\,$  if $\,\rho(x,x_n) \to 0$, i.e. it is $\rho$-convergent to $x$;
\item
backward Cauchy  if   for every $\varepsilon >0$ there exists $n_\varepsilon\in\mathbb{N}$ such that $\rho(x_{n+k},x_{n})<\varepsilon$, for all $n\ge n_\varepsilon$ and all $k\in\mathbb{N}$,  i.e. if it is right $\rho$-$K$-Cauchy, or equivalently, left $\bar\rho$-$K$-Cauchy.
\end{itemize}

The space $X$ is called
forward-forward complete if  every forward Cauchy sequence is forward convergent, i.e. if it is   right $\bar\rho$-$K$-complete.

 The backward notion of completeness  is defined on an analogous way: the quasi-metric space $(X,\rho)$  is backward-backward complete if every backward Cauchy sequence is backward convergent, i.e. if it is right $\rho$-$K$-complete. One can define also combined notions of completeness: backward-forward (meaning left $\bar\rho$-$K$-completeness)  and  forward-backward (meaning left $\rho$-$K$-completeness).
\end{remark}

Bao, Soubeyran and Mordukhovich applied systematically variational principles in quasi-metric spaces to various domains of human knowledge as    psychology in \cite{bao-mord15a}, to capability of wellbeing and rationality in \cite{bao-mord15b}, and to group dynamics in \cite{bao-soubey15a}.

\section{Topology and order}\label{S.top-ord}

In this section we shall discuss some results relating topology and order.
\subsection{Partially ordered sets}
 Let $X$ be a nonempty set. A \emph{preorder} on $X$ is a reflexive and transitive relation $\le\subseteq X^2$. The pair $(X,\le)$ is called a \emph{partially preordered set}. An antisymmetric preorder is called an \emph{order} on $X$. The pair $(X,\le)$ is called a \emph{partially ordered set}, or a \emph{poset} in short.

 Let $(X,\le)$ be a partially preordered set. For a nonempty subset $A$ of a partially preordered set $(X,\le)$ one puts
 \begin{equation}\label{def.up-set}
 \uparrow\!\! A:=\{y\in X : \exists x\in A,\, x\le y\}\;\;\mbox{and}\;\; \downarrow\!\! A:=\{y\in X : \exists x\in A,\, y\le x\}\,.
 \end{equation}

In particular, for $x\in X,$
$$
\uparrow\!\! x:=\uparrow\!\!\{x\}\;\;\mbox{and}\;\; \downarrow\!\! x=\downarrow\!\! \{x\}\,.
$$

The set $A$ is called \emph{upward closed} if $A=\uparrow\!\! A$ and \emph{downward closed} if $A=\downarrow\!\! A$.

A nonempty subset $A$ of $X$ is called \emph{totally ordered} (or a \emph{chain}) if any two elements of $A$ are comparable.

A nonempty subset $D$ of $X$ is called \emph{directed}   if for any two elements of $x,y\in A$ there exists $z\in A$ such that  $x\le z$ and $y\le z$.

Let $A\subseteq X$. Then

\textbullet\; an \emph{upper bound} for $A$ is an element $z\in X$ such that  $\forall x\in A,\, x\le z$;

\textbullet\; if $z$ is an upper bound for $A$ and $z\in A$ then $z$ is the \emph{greatest element} of $A$;

\textbullet\; the set $A$ is called \emph{upper-bounded} if it has at least one upper bound;

\textbullet\; the least upper bound of $A$ is called the \emph{supremum} of $A$, denoted by $\sup A$ (or by  $\bigvee\! A$);

\textbullet\; in the case of two elements $x,y\in X$ one uses the notation $x\vee y:=\bigvee\{x,y\}$;

\textbullet\; the greatest element of $X$ is called  \emph{unity} and is denoted by $\top$ (or by 1); the least element of $X$ is called \emph{zero} and is denoted by $\bot$ (or by 0);

Dually, one defines  lower bounds, infima, etc. (we have  used the notion of least element   yet). The infimum is denoted by $\inf A$ (or by $\bigwedge\! A)$. Also $x\wedge y:=\bigwedge\{x,y\}$.

\begin{defi}\label{def.posets}
 A poset $(X,\le)$ is called

\textbullet\; an \emph{upper semi-lattice} if every two elements $x,y\in X$ have a $\sup,\, x\vee y$;

\textbullet\; an \emph{lower semi-lattice} if every two elements $x,y\in X$ have an $\inf,\, x\wedge y$;

\textbullet\; if $(X,\le)$  is an upper semi-lattice and a lower semi-lattice, then it is called a \emph{lattice}, that is for every   $x,y\in X$ there exist $   x\wedge y$ and $   x\vee y$;

\textbullet\; a lattice $(X,\le)$ is called \emph{complete} if for every subset $A$ of $X$ there exist
$\sup A$ and $\inf A$.

 \textbullet\; a poset $(X,\le)$ is called \emph{directed} (\emph{chain, boundedly}) \emph{complete} if every directed (totally ordered, upper bounded) subset of $X$ has  supremum. A directed  complete partially ordered set is denoted in short by \textbf{dcpo}.
\end{defi}

\begin{remarks}\label{re1.posets} Let $(X,\le)$ be a poset.

1.\, If $X$ has a greatest element $\top$, then $\top=\sup X$. If $X$ has a least element $\bot$, then $\bot=\inf X$.

2. In the definition of a complete lattice $(X,\le) $ it suffices to request that every subset of $X$ has a supremum, because $X$ has a least element $\bot=\sup\emptyset$ and the infimum of a subset $A$ of $X$ is the supremum of the set $L_A$ of all lower bounds of $A$ (this set is nonempty because $\bot\in A$).
\end{remarks}

\textbf{Mappings between partially preordered sets}

Let $(X,\le),\, (Y,\preceq)$ be two partially preordered sets. A mapping $f:X\to Y$ is called

\textbullet\; \emph{increasing} if $\forall x,y\in X,\; x\le y\;\Rightarrow\; f(x)\preceq f(y)$;

\textbullet\; \emph{decreasing} if $\forall x,y\in X,\; x\le y\;\Rightarrow\; f(y)\preceq f(x)$;

\textbullet\; \emph{monotonic} if it is increasing or decreasing.

One says that $f$ \emph{preserves}

\textbullet\; \emph{suprema} if and only if for every $A\subseteq X$, the existence of $\sup A$ implies the existence of $\sup f(A)$ and the equality $\sup f(A)=f(\sup A)$.

\textbullet\; \emph{finite} (\emph{directed, chain}) \emph{suprema} if and only if the above condition holds for every finite (respectively directed, totally ordered) subset $A$ of $X$.

Similar definitions can be given for infima.

\begin{remark}\label{re1.suprema-pres}
  A mapping preserving finite suprema is increasing.
\end{remark}

Indeed, if $x\le y$ in $X$, then $y=\sup\{x,y\}$, and so $f(y)=\sup\{f(x),f(y)\}$, implying $f(x)\preceq f(y)$.

\subsection{Order relations in topological spaces}
The \emph{specialization order} of a topological space $(X,\tau)$ is the partial order defined by
\begin{equation}\label{def.top-ord}
x\le_\tau y\iff x\in\overline{\{y\}}\,,
\end{equation}
that is $y$ belongs to every open set containing $x$.
\begin{prop}\label{p1.top-ord}
  Let $(X,\tau)$ be a topological space. The relation defined by \eqref{def.top-ord} is a preorder.

  It is an order if and only if $X$ is $T_0$.

  If $X$ is $T_1$, then $\le_\tau$ is the equality relation in $X$.
\end{prop}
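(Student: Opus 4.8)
The plan is to read off all three assertions directly from the definition of the closure operator, using at the outset the elementary characterization: for $x,y\in X$ one has $x\in\overline{\{y\}}$ if and only if every open set containing $x$ also contains $y$; equivalently, $x\le_\tau y$ if and only if $\overline{\{x\}}\subset\overline{\{y\}}$ (if $x\in\overline{\{y\}}$ then $\overline{\{x\}}\subset\overline{\overline{\{y\}}}=\overline{\{y\}}$, and the converse is trivial since $x\in\overline{\{x\}}$). I would record this reformulation first, since it makes each step essentially immediate.

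For the preorder claim: reflexivity is $x\in\{x\}\subset\overline{\{x\}}$. For transitivity, if $x\le_\tau y$ and $y\le_\tau z$ then $\overline{\{x\}}\subset\overline{\{y\}}\subset\overline{\{z\}}$, so $x\le_\tau z$. For the equivalence "order $\iff$ $T_0$" one argues that $\le_\tau$ is an order iff it is antisymmetric. If $X$ is $T_0$ and $x\le_\tau y$, $y\le_\tau x$ with $x\neq y$, pick an open $U$ containing exactly one of them, say $x\in U\not\ni y$; this contradicts $x\in\overline{\{y\}}$, which forces $y\in U$. Hence $x=y$. Conversely, if $X$ is not $T_0$, take distinct topologically indistinguishable points $x,y$; then $x\in\overline{\{y\}}$ and $y\in\overline{\{x\}}$, so $x\le_\tau y$ and $y\le_\tau x$ although $x\neq y$, and antisymmetry fails.

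Finally, if $X$ is $T_1$ then every singleton is closed, so $\overline{\{y\}}=\{y\}$ for each $y\in X$, and therefore $x\le_\tau y\iff x\in\{y\}\iff x=y$, i.e. $\le_\tau$ is the equality relation.

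I do not anticipate any real obstacle: the entire argument is a direct unwinding of definitions. The only point that calls for a little care is stating the neighbourhood description of membership in $\overline{\{y\}}$ correctly and keeping straight, in the $T_0$ step, which point is assumed to lie in the closure of which — the asymmetry of $\le_\tau$ faithfully reflects the asymmetry of $x\in\overline{\{y\}}$. Phrasing everything through the order-embedding $y\mapsto\overline{\{y\}}$ into the closed sets ordered by inclusion, as above, is perhaps the cleanest route and makes transitivity and the $T_0$ characterization transparent.
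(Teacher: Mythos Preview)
Your proof is correct and follows essentially the same route as the paper's: reflexivity from $x\in\overline{\{x\}}$, transitivity via the idempotence of closure, the $T_0$ equivalence by taking an open set separating distinct points (and, for the converse, topologically indistinguishable points), and the $T_1$ case from $\overline{\{y\}}=\{y\}$. Your preliminary reformulation $x\le_\tau y\iff \overline{\{x\}}\subset\overline{\{y\}}$ is a slight cosmetic streamlining of the transitivity step, but the underlying argument is identical to the paper's.
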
\begin{proof} Since $x\in\overline{\{x\}}$ it follows $x\le_\tau x$.

The transitivity follows from the following implication
$$
x\in  \overline{\{y\}}\;\mbox{and}\; \{y\}\subseteq\overline{\{z\}}\;\Rightarrow\; x\in\overline{\{y\}}\subseteq\overline{\overline{\{z\}}}=\overline{\{z\}}\,,
$$
that is
$$
x\le_\tau y \;\mbox{and}\;y\le_\tau z \;\Rightarrow\; x\le_\tau z\,.$$

Suppose that $X$ is $T_0$ and $x,y$ are two  distinct  points in $X$. Then there exists an open set $V$ that contains exactly one of this points. If $x\in V$ and $y\notin V$, then $x\notin\overline{\{y\}}$, that is the relation $x\le_\tau y$ does not hold.
If $y\in V$ and $x\notin V$, then $y\notin\overline{\{x\}}$, that is the relation $y\le_\tau x$ does not hold.
This means that we can not have simultaneously $x\le_\tau y$  and $y\le_\tau x$ for a  pair of distinct elements in $X$.

Similar reasonings show that $X$ is $T_0$ if $\le_\tau$ is a partial order (i.e. it is antisymmetric).

The topological space  $X$ is $T_1$ if and only if  $\overline{\{x\}}=\{x\}$ for every $x\in X$. Consequently,
$$
x\le_\tau y\iff x\in \overline{\{y\}}=\{y\}\iff x=y\,.
$$
 One shows also that if the order relation $\,\le_\tau$ is equality, then $X$ is $T_1$.
  \end{proof}

In the following results the  order notions are considered with respect to  the order $\le_\tau$.
\begin{prop}\label{p2.top-ord}
  Let $(X,\tau)$ be a topological space and $A\subseteq X$.
  \begin{enumerate}
  \item If the set $A$ is  open, then it is upward closed, i.e. $\uparrow\!\! A=A$.
  \item If the set $A$ is  closed, then it is downward closed, i.e. $\downarrow\!\! A=A$.
  \end{enumerate}
\end{prop}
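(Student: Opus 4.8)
The plan is to prove both assertions by directly unwinding the definitions of $\uparrow\!\! A$ and $\downarrow\!\! A$ from \eqref{def.up-set}, using the reformulation of the specialization order recorded right after \eqref{def.top-ord}: $x\le_\tau y$ holds precisely when $y$ belongs to every open set containing $x$. Both one-sided inclusions $A\subseteq\,\uparrow\!\! A$ and $A\subseteq\,\downarrow\!\! A$ are immediate from the reflexivity of $\le_\tau$ established in Proposition \ref{p1.top-ord}, so the content is in the reverse inclusions.

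For part (1), assume $A$ is open and take $y\in\,\uparrow\!\! A$. By definition there is $x\in A$ with $x\le_\tau y$; since $A$ is an open set containing $x$ and $y$ lies in every open set containing $x$, it follows that $y\in A$. This gives $\uparrow\!\! A\subseteq A$, hence $\uparrow\!\! A=A$. For part (2), assume $A$ is closed and take $y\in\,\downarrow\!\! A$. Then there is $x\in A$ with $y\le_\tau x$, i.e. $y\in\overline{\{x\}}$; since $x\in A$ and $A$ is closed, $\overline{\{x\}}\subseteq\overline{A}=A$, so $y\in A$, giving $\downarrow\!\! A=A$.

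I would also note that the two statements are logically equivalent: a set is upward closed if and only if its complement is downward closed, and a set is open if and only if its complement is closed, so each part follows from the other by passing to complements — one could present only one proof and deduce the other. There is no genuine obstacle here; the only point worth being attentive to is to phrase part (1) via ``$y$ belongs to every open neighbourhood of $x$'' rather than manipulating closures directly, since that is exactly the formulation under which the openness of $A$ is used, and it is what makes the argument a one-line observation.
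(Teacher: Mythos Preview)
Your proof is correct and essentially identical to the paper's: for (1) you use that $x\le_\tau y$ means $y$ lies in every open set containing $x$, and for (2) you use $y\in\overline{\{x\}}\subset\overline{A}=A$. The only differences are cosmetic---you spell out the trivial inclusions $A\subset\,\uparrow\!\!A$ and $A\subset\,\downarrow\!\!A$ and add the duality remark, neither of which appears in the paper's terser version.
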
  \begin{proof}
  (1)\; It is a direct consequence of definitions. Let  $x\in A$ and $y\in X,\, x\le_\tau y.$ Since $A$ is open, this inequality   implies $y\in A.$

  (2) \;  Let  $x\in A$ and $y\in X,\, y\le_\tau x.$ Then $y\in\overline{\{x\}}\subseteq \overline A=A$.
\end{proof}

Let us define the \emph{saturation} of a subset $A$ of $X$ as the intersection of all open subsets of $X$ containing $A$. The set $A$ is called \emph{saturated} if  equals its saturation.
\begin{prop}\label{p3.top-ord}
  Let $(X,\tau)$ be a topological space.
  \begin{enumerate}
  \item For every $x\in X$, $\downarrow\!\! x=\overline{\{x\}}$.
  \item  For any subset $A$ of $X$ the saturation of $A$ coincides with $\uparrow\!\! A$.
  \end{enumerate}
\end{prop}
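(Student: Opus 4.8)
The plan is to read off both statements directly from the definition of the specialization order $\le_\tau$ in \eqref{def.top-ord}, so almost everything reduces to unwinding notation. For (1), I would note that by the definition of $\downarrow$ we have $\downarrow\!\! x=\{y\in X : y\le_\tau x\}$, and that $y\le_\tau x$ means exactly $y\in\overline{\{x\}}$; hence $\downarrow\!\! x=\overline{\{x\}}$ with no computation at all. (This is also consistent with Proposition \ref{p2.top-ord}(2), since $\overline{\{x\}}$, being closed, is downward closed.)

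For (2), write $\sigma(A)$ for the saturation of $A$, that is $\sigma(A)=\bigcap\{U\in\tau : A\subset U\}$, and prove the two inclusions separately. The inclusion $\uparrow\!\! A\subset\sigma(A)$ is the easy one: by Proposition \ref{p2.top-ord}(1) every open set is upward closed, so whenever $U\in\tau$ and $A\subset U$ we get $\uparrow\!\! A\subset\uparrow\!\! U=U$; since this holds for every open $U$ containing $A$, the set $\uparrow\!\! A$ is contained in their intersection $\sigma(A)$.

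The only step that takes a moment's thought is the reverse inclusion $\sigma(A)\subset\uparrow\!\! A$, where one must produce, from purely pointwise information, a single open set containing $A$ but missing a given point. I would argue by contraposition: if $y\notin\uparrow\!\! A$, then for every $x\in A$ the relation $x\le_\tau y$ fails, i.e. $x\notin\overline{\{y\}}$, so there is an open set $V_x$ with $x\in V_x$ and $y\notin V_x$. Setting $U=\bigcup_{x\in A}V_x$ gives an open set with $A\subset U$ and $y\notin U$, whence $y\notin\sigma(A)$. Combining the two inclusions yields $\sigma(A)=\uparrow\!\! A$. The union $\bigcup_{x\in A}V_x$ is the crux of the argument; everything else is immediate from the definitions and from Proposition \ref{p2.top-ord}.
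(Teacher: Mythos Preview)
Your proof is correct and follows essentially the same route as the paper: part (1) is read off directly from the definition of $\le_\tau$, and part (2) is proved via the same two inclusions, the nontrivial one handled by the same contrapositive construction of the open set $\bigcup_{x\in A}V_x$ that contains $A$ but misses $y$.
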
\begin{proof}
 (1)\; This follows from the equivalence
 $$
 y\le_\tau x\iff y\in\overline{\{x\}}\,.$$

 (2)\; Since every open set is upward closed, $U\in \tau$ and $U\supset A$ implies $U\supset\uparrow\!\! A,$
 that is
 $$
 \uparrow\!\! A\subseteq \bigcap\left\{U\in\tau : A\subseteq U\right\}\,.
 $$

 If $y\notin \uparrow\!\! A$, then for every $x\in A$ there exists $U_x\in \tau$ such that  $x\in U_x$ and  $y\notin U_x.$ It follows $y\notin V:=\bigcup\{U_x : x\in A\}\in\tau$ and $ A\subseteq V$, hence $y\notin \bigcap\{U \in \tau : A\subseteq U\}$, showing that
$$
\complement\left(\uparrow\!\!A\right)\subseteq \complement\left(\bigcap\left\{U\in\tau : A\subseteq U\right\}\right)\iff \bigcap\left\{U\in\tau : A\subseteq U\right\}\subseteq  \uparrow\!\!A\,.$$
\end{proof}

\textbf{Compactness}

We present following \cite[p. 69]{Larrecq} a result on compactness.
\begin{prop}\label{p.top-ord-comp}
Let $(X,\tau)$  be a topological space. A subset $K$ of $X$ is compact if and only if  its saturation $\uparrow\!\!K$ is   compact.
\end{prop}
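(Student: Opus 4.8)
The plan is to exploit a single fact already available, namely that every open subset of $X$ is upward closed for the specialization order $\le_\tau$ (Proposition \ref{p2.top-ord}(1)), together with the identification of the saturation of $K$ with $\uparrow\!\! K$ (Proposition \ref{p3.top-ord}(2)). From the first fact any union of open sets is upward closed, so an open set — or a union of open sets — that contains $K$ automatically contains $\uparrow\!\! K$. This makes open covers of $K$ and open covers of $\uparrow\!\! K$ essentially interchangeable, and both implications then follow at once.

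For the implication ``$K$ compact $\Rightarrow\uparrow\!\! K$ compact'': let $(U_i)_{i\in I}$ be a family of open sets with $\uparrow\!\! K\subset\bigcup_{i\in I}U_i$. Since $K\subset\uparrow\!\! K$, this is in particular an open cover of $K$, so by compactness of $K$ there is a finite $J\subset I$ with $K\subset\bigcup_{i\in J}U_i$. The set $\bigcup_{i\in J}U_i$ is open, hence upward closed by Proposition \ref{p2.top-ord}(1), and it contains $K$, so it contains $\uparrow\!\! K$. Thus $(U_i)_{i\in J}$ is a finite subcover of $\uparrow\!\! K$, and $\uparrow\!\! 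K$ is compact.

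For the converse ``$\uparrow\!\! K$ compact $\Rightarrow K$ compact'': let $(U_i)_{i\in I}$ be an open cover of $K$. The union $\bigcup_{i\in I}U_i$ is upward closed and contains $K$, hence contains $\uparrow\!\! K$; so $(U_i)_{i\in I}$ covers $\uparrow\!\! K$ as well. By compactness of $\uparrow\!\! K$ there is a finite $J\subset I$ with $\uparrow\!\! K\subset\bigcup_{i\in J}U_i$, and a fortiori $K\subset\bigcup_{i\in J}U_i$. Hence $K$ is compact.

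I do not expect a genuine obstacle here: once one records that open sets are upward closed and that the saturation equals $\uparrow\!\! K$, the argument is purely formal and requires no separation axiom. The only thing to keep straight is the direction of the two containments $K\subset\uparrow\!\! K$ and ``an open set containing $K$ contains $\uparrow\!\! K$'' — the latter being exactly what upward closedness provides.
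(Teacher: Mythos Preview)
Your proof is correct and follows essentially the same approach as the paper: both arguments rest on the observation that open sets are upward closed, so that $K\subset\bigcup_{i\in I}U_i$ if and only if $\uparrow\!\!K\subset\bigcup_{i\in I}U_i$ for any family of open sets, making open covers of $K$ and of $\uparrow\!\!K$ interchangeable. The paper states this equivalence in one line while you spell out the two implications separately, but the content is identical.
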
\begin{proof} The equivalence follows from the following remark: since every open subset of $X$ is upward closed with respect to the specialization order, the following equivalence is true
$$
K\subseteq\bigcup\{U_i : i\in I\}\iff \uparrow\!\!K\subseteq\bigcup\{U_i : i\in I\}\,,
$$
for every family $\{U_i : i\in I\}\subseteq \tau$.
\end{proof}

An order $\preceq$ on a topological space $(X,\tau)$ is said to be \emph{closed} if and only if its graph $\graph(\preceq):=\{(x,y)\in X\times X : x\preceq y\}$ is closed in $X\times X$ with respect to the product topology.
The existence of a closed order on a topological space forces the topology to be Hausdorff.
\begin{prop}[\cite{Larrecq}, P.3.9.12]\label{p4.top-ord}
If on a topological space $(X,\tau)$  there exists a closed order $\preceq$, then the topology $\tau$ is Hausdorff.
\end{prop}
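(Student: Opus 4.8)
The plan is to separate two arbitrary distinct points of $X$ by disjoint open neighborhoods, using only reflexivity and antisymmetry of $\preceq$ together with the closedness of $G:=\graph(\preceq)$ in $X\times X$ equipped with the product topology.

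First I would fix $x,y\in X$ with $x\ne y$. Since $\preceq$ is antisymmetric, we cannot have both $x\preceq y$ and $y\preceq x$, so at least one of the pairs $(x,y),(y,x)$ fails to lie in $G$; by symmetry of the situation we may assume $(x,y)\notin G$. Because $G$ is closed, its complement is open, and since sets of the form $U\times V$ with $U,V\in\tau$ form a base of the product topology, there exist $U,V\in\tau$ with $x\in U$, $y\in V$, and $(U\times V)\cap G=\emptyset$; equivalently, $u\not\preceq v$ for all $u\in U$ and $v\in V$.

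The key point — and really the only step that requires any thought — is that $U$ and $V$ are automatically disjoint: if some $z$ belonged to $U\cap V$, then $(z,z)\in U\times V$, while reflexivity of $\preceq$ gives $z\preceq z$, i.e.\ $(z,z)\in G$, contradicting $(U\times V)\cap G=\emptyset$. Hence $U\cap V=\emptyset$, and $U,V$ are the desired separating neighborhoods of $x$ and $y$. As $x,y$ were arbitrary distinct points, $(X,\tau)$ is Hausdorff.

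I do not expect a genuine obstacle: antisymmetry supplies the non-membership $(x,y)\notin G$ to exploit, closedness of the graph produces a product-open box avoiding $G$, and reflexivity forces the two factors of that box to be disjoint. The only things to be careful about are to invoke reflexivity precisely at the final step (it is exactly what rules out overlap of the two neighborhoods) and to note that the roles of $x$ and $y$ are interchangeable, so the reduction to the case $(x,y)\notin G$ is harmless.
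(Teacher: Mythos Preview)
Your proof is correct and essentially identical to the paper's own argument: both use antisymmetry to find a pair $(x,y)\notin\graph(\preceq)$, closedness of the graph to produce a basic open box $U\times V$ missing the graph, and reflexivity to conclude $U\cap V=\emptyset$. There is nothing to add.
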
\begin{proof}
  Let $x,y$ be distinct points in $X$. Then the relations $x\preceq y$ and $y\preceq x$ can not both hold. Suppose, without the loose of generality,  that  $x\preceq y$ does not hold, that is $(x,y)\notin \graph(\preceq)$. Then    there exists the open neighborhoods $U,V$ of $x$ and $y$, respectively, such that   $(U\times V)\cap\graph(\preceq)=\emptyset$. The proof will be done if we show that $U\cap V=\emptyset$. Indeed supposing that  there exists $z\in W:=U\cap V$, one obtains the contradiction
  $$
   (z,z)\in (W\times W)\cap \graph(\preceq)\subseteq (U\times V)\cap \graph(\preceq)=\emptyset\,.
   $$
   \end{proof}

   For other properties of topological spaces endowed with a closed order (e.g. compactness), see \cite[Section 9.1.1]{Larrecq}

\subsection{Topologies on ordered sets - Alexandrov's, the  upper topology, Scott's, the interval topology}

Consider a partially preordered set $(X,\le)$. We are interested to define a topology $\tau$ on $X$ such that  the specialization preordering $\le_\tau$  coincides with $\le$. The answer is, in general no. For instance, on $\mathbb{R},$ with the usual ordering, we can consider several topologies (the usual, the discrete, etc), all having as specialization preordering the equality.

Let $(X,\le)$ be a partially preordered set. We shall consider three topologies on $X$ such that the corresponding specialization preorderings coincide with $\le$, as well as the interval topology and the Moore-Smith order topology. \\

\textbf{The Alexandrov topology - the finest}

This is the finest of these topologies.
\begin{prop}[Alexandov topology]\label{p.Alex-top}  Let $(X,\le)$ be a partially preordered set. Then there exists a finest  topology $\tau_a$ on $X$, called the Alexandrov topology,  such that  the specialization preordering $\,\le_{\tau_a}$ coincides with $\,\le$. This topology  is characterized by   the condition

{\rm (i)}\;\; the open sets are exactly the upward closed sets,

 or, equivalently,

  {\rm (ii)}\;\; the closed  sets are exactly the downward closed sets.

  In this case, $\uar x\in\tau_a$ for every $x\in X.$
\end{prop}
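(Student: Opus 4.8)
The plan is to construct the Alexandrov topology explicitly as the collection of all upward closed subsets of $(X,\le)$ and then verify three things: that this collection is indeed a topology, that its specialization preorder recovers $\le$, and that it is the finest topology with this property. First I would set $\tau_a := \{U\subset X : U = \uparrow\!\! U\}$. To see this is a topology, note that $\emptyset$ and $X$ are trivially upward closed; for an arbitrary union $\bigcup_i U_i$ of upward closed sets, if $x\in\bigcup_i U_i$ and $x\le y$ then $x\in U_i$ for some $i$, hence $y\in U_i\subset\bigcup_i U_i$; and for a \emph{finite} intersection $U_1\cap\dots\cap U_n$ (in fact any intersection works, which is the characteristic feature of Alexandrov topologies, though only finite ones are needed for the topology axioms), if $x$ lies in all $U_k$ and $x\le y$ then $y$ lies in all $U_k$. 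So $\tau_a$ is closed under arbitrary unions and arbitrary intersections, in particular it is a topology, and the complements of upward closed sets are precisely the downward closed sets, giving the equivalence of (i) and (ii).

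Next I would check that $\le_{\tau_a}$ coincides with $\le$. By definition $x\le_{\tau_a} y$ means $x\in\overline{\{y\}}$, equivalently $y$ belongs to every $\tau_a$-open set containing $x$. Now $\uparrow\!\! x$ is itself upward closed (by transitivity of $\le$), hence it is the smallest open set containing $x$. Therefore $y$ belongs to every open set containing $x$ if and only if $y\in\uparrow\!\! x$, i.e. $x\le y$. This shows $\le_{\tau_a}\;=\;\le$.

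Finally, for the maximality: let $\tau$ be any topology on $X$ with $\le_\tau\;=\;\le$. I must show $\tau\subset\tau_a$, i.e. every $\tau$-open set is upward closed. This is exactly Proposition~\ref{p2.top-ord}(1) applied to the topology $\tau$: a $\tau$-open set $U$ satisfies $\uparrow_{\le_\tau}\!\! U = U$, and since $\le_\tau\;=\;\le$ this says $U$ is upward closed with respect to $\le$, hence $U\in\tau_a$. Thus $\tau_a$ contains every such topology, so it is the finest one, and it is itself one of them by the previous paragraph.

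The only genuinely delicate point is making sure the logical structure is not circular: one should establish that $\tau_a$ is a topology and that it induces $\le$ \emph{before} invoking the finest-property argument, and one should be careful that Proposition~\ref{p2.top-ord}(1) was stated for the specialization order of an arbitrary topology, which is precisely what is used in the maximality step. Everything else is a routine unwinding of definitions; there is no real analytic obstacle, the content being entirely order-theoretic and set-theoretic.
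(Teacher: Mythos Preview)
Your proposal is correct and follows essentially the same route as the paper: define $\tau_a$ as the collection of upward closed sets, verify it is a topology, use $\uparrow\!\! x$ (as an open set containing $x$) to identify $\le_{\tau_a}$ with $\le$, and invoke Proposition~\ref{p2.top-ord}(1) for the maximality. The only difference is that you supply more detail on the topology axioms and note the stability under arbitrary intersections, which the paper leaves implicit.
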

\begin{proof}
  It is clear that the upward closed subsets of $X$ form a topology $\tau_a$  on $X$. Since $\uar x$ is upward closed, it follows  $\uar x\in\tau_a.$

  We have
  \begin{align*}
    x\le_a y&\iff x\in\clos_{\tau_a}\{y\}\\
    &\iff\forall Z\in\tau_a,\; x\in Z\;\Ra\; y\in Z\,.
  \end{align*}

  If $x\le_a y,$ then $x\in\uar x$ and $\uar x\in\tau_a,$ imply   $y\in\uar x,$   that is, $x\le y.$

  If $x\le y$ and $Z\in\tau_a$ contains $x$, then $y\in Z,$ as $Z$ is upward closed, showing that $x\le_a y.$
\end{proof}

  We use the notation $X_a$ for $(X,\tau_a)$.
  \begin{remark}\label{re.Alex-top}
    Since for every upward closed subset $Z$ of $X$, $\, Z=\bigcup\{\uparrow\!\!z : z\in Z\}$ it follows that the Alexandrov topology $\tau_a$ is generated by the family of sets $\{\uparrow\!\!x : x\in X\}$.
  \end{remark}

\textbf{  The upper topology - the coarsest}

\begin{prop}\label{p.upper-top} Let $(X,\le)$ be a partially preordered set. Then there exists a coarsest topology $\tau_u$ on $X$ such that  the specialization preordering $\,\le_{\tau_u}$ coincides with $\,\le$.

A subbase of $\tau_u$ is formed by the complements of the sets $\downarrow\!\!x,\, x\in X$.

A basis  of $\tau_u$ is formed by the complements of the sets $\downarrow\!\!E$ for $E\subseteq X,\, E$ finite.
\end{prop}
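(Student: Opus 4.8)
The plan is to construct $\tau_u$ explicitly and then verify it has all the required properties. Let $\tau_u$ be the topology on $X$ generated by taking $\mathcal{S}:=\{X\setminus\downarrow\!\!x : x\in X\}$ as a subbase. I would then check three things: (1) the family of finite intersections of members of $\mathcal{S}$ is precisely $\{X\setminus\downarrow\!\!E : E\subset X \text{ finite}\}$, which is the asserted basis; (2) the specialization preorder $\le_{\tau_u}$ coincides with $\le$; and (3) $\tau_u\subseteq\tau$ for every topology $\tau$ on $X$ with $\le_\tau\,=\,\le$. Item (2) shows $\tau_u$ is one of the topologies under consideration, and item (3) shows it is contained in all of them, so together they establish both the existence of a coarsest such topology and its identification with $\tau_u$.

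For (1), I would use that $\downarrow\!\!E=\bigcup_{x\in E}\downarrow\!\!x$, so that $X\setminus\downarrow\!\!E=\bigcap_{x\in E}(X\setminus\downarrow\!\!x)$; thus the nonempty finite intersections of subbasic sets are exactly the sets $X\setminus\downarrow\!\!E$ with $E$ nonempty finite, while the empty intersection is $X=X\setminus\downarrow\!\!\emptyset$ since $\downarrow\!\!\emptyset=\emptyset$. For (2), recall the convention $x\le_\tau y\iff x\in\overline{\{y\}}$, i.e. every open set containing $x$ contains $y$. If $x\le y$ and $U$ is $\tau_u$-open with $x\in U$, choose a basic set $X\setminus\downarrow\!\!E\subseteq U$ with $x\in X\setminus\downarrow\!\!E$; then $x\notin\downarrow\!\!E$, and if $y\in\downarrow\!\!E$, say $y\le e$ for some $e\in E$, transitivity would give $x\le e$ and hence $x\in\downarrow\!\!E$, a contradiction; so $y\in X\setminus\downarrow\!\!E\subseteq U$, proving $x\le_{\tau_u}y$. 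Conversely, if $x\not\le y$, then $X\setminus\downarrow\!\!y$ is a $\tau_u$-open set which contains $x$ (as $x\notin\downarrow\!\!y$) but not $y$ (as $y\le y$ by reflexivity, so $y\in\downarrow\!\!y$), whence $x\not\le_{\tau_u}y$. Thus $\le_{\tau_u}\,=\,\le$.

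For (3), let $\tau$ be any topology on $X$ with $\le_\tau\,=\,\le$. By Proposition \ref{p3.top-ord}(1) applied to $(X,\tau)$ — or directly from the definition of the specialization preorder — one has $\downarrow\!\!x=\overline{\{x\}}$ (closure in $\tau$) for every $x\in X$, where the down-set is taken with respect to $\le_\tau\,=\,\le$; hence each $\downarrow\!\!x$ is $\tau$-closed and each $X\setminus\downarrow\!\!x$ belongs to $\tau$. Since $\tau_u$ is generated by the sets $X\setminus\downarrow\!\!x$, it follows that $\tau_u\subseteq\tau$. I do not expect any genuine obstacle here: the argument is a routine verification, and the only points requiring care are keeping the direction of the specialization order straight (the paper's convention is $x\le_\tau y\iff x\in\overline{\{y\}}$) and treating the empty set correctly when passing from the subbase to the basis.
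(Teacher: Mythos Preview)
Your proof is correct and, for the verification that $\le_{\tau_u}\,=\,\le$, follows essentially the same approach as the paper's proof (both arguments reduce the question to the subbasic sets $X\setminus\downarrow\!\!z$). Your proof is in fact more complete: the paper's proof checks only that the specialization order of $\tau_u$ agrees with $\le$, whereas you also verify explicitly (your part~(3)) that $\tau_u\subseteq\tau$ for every topology $\tau$ with $\le_\tau\,=\,\le$, which is the ``coarsest'' claim in the statement; you also spell out the passage from the subbase to the asserted basis (your part~(1)), which the paper leaves as ``easy to check.''
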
\begin{proof}
  It is easy to check that the sets $\downarrow\!\!E$ for $E\subseteq X,\, E$ finite, form a basis  of a topology $\tau_u$ on $X$. Denote by  $\le_u$ the specialization order determined by this topology.

 Let $x\le_u y$. By definition this is equivalent to $x\in\overline{\{y\}}.$ But
  \begin{align*}
 x\in\overline{\{y\}}\iff&  \forall z,\; [x\in X\smallsetminus \downarrow\!\!z\;\Rightarrow\;y\in X\smallsetminus \downarrow\!\!z]  \\
 \iff&  \forall z,\; [y\in \downarrow\!\!z\;\Rightarrow\;x\in\downarrow\!\!z]  \\
 \underset{(z=y)}{\Longrightarrow}\;& x\in\downarrow\!\!y\; \iff\; x\le y\,.
 \end{align*}

 Conversely, suppose $x\le y$.

If for some $z\in X$, $ y\in  \downarrow\!\!z$, then $x\le y$ and $y\le z$ would imply $x\le  z$, that is $x\in \downarrow\!\!z$. Consequently
 $$
 x\in X\smallsetminus \downarrow\!\!z\;\Rightarrow\;y\in X\smallsetminus \downarrow\!\!z\,,$$
 for all $z\in X$, which is equivalent to $x\le_u y$.
  \end{proof}

\textbf{The Scott topology}

 This is a topology between $\tau_u$ and $\tau_a$. It is defined in the following way.

Let $(X,\le)$ be a partially ordered set. A subset $U$ of $X$ is \emph{Scott open} if and only if the following two conditions hold:

(i)\; $U$ is upward closed, and

(ii)\; for every nonempty directed subset $D$ of $X$ such that  $\sup D$ exists (in $X$) and belongs to $U$, there exists $d\in D$ such that  $ d\in U$.

\begin{prop}\label{p1.Scott-top} Let $(X,\le)$ be a partially ordered set.
 \begin{enumerate}\item
 The family of Scott open subsets of $X$ forms a topology denoted by $\tau_\sigma$.
 \item A subset $F$ of $X$ is Scott closed if and only if  the following two conditions hold:

{\rm (i)}\; $F$ is downward closed, and

{\rm (ii)}\; for every nonempty directed subset $D$ of $F$ if $\sup D$ exists (in $X$), then $\sup D\in F$.

In particular, the set $\downarrow\!\!y$ is Scott closed for every $y\in Y$.
\item
 The specialization order corresponding to $\tau_\sigma$ agrees with $\le$ and

 $$ \tau_u\le \tau_\sigma\le\tau\,.$$
\item Let $(X,\tau)$ be a topological space,   $ \le_\tau$ the specialization order corresponding to $\tau$ and $\sigma=\sigma(\le_\tau)$ the Scott topology corresponding to $\le_\tau$. Then the set
    $\overline{\{x\}}^\tau$ is Scott  closed (i.e $\sigma$-closed) for every $x\in X$.
  \end{enumerate}
  \end{prop}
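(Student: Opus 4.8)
The plan is to dispatch the four assertions one after another; each is a short verification from the definitions of Scott-open and Scott-closed set, leaning on Propositions~\ref{p.Alex-top}, \ref{p.upper-top} and \ref{p3.top-ord}. For (1) I would check the three axioms for $\tau_\sigma$. The sets $\emptyset$ and $X$ are Scott open trivially. An arbitrary union $\bigcup_iU_i$ of Scott-open sets is upward closed, and if $D$ is directed, $\sup D$ exists and lies in $\bigcup_iU_i$, then $\sup D\in U_{i_0}$ for some $i_0$, so some $d\in D$ already lies in $U_{i_0}\subseteq\bigcup_iU_i$. The one step that needs an idea is closure under finite intersection: given $D$ directed with $\sup D\in U_1\cap U_2$, pick $d_1,d_2\in D$ with $d_j\in U_j$; directedness of $D$ furnishes $d\in D$ with $d_1\le d$ and $d_2\le d$, and upward-closedness of $U_1$ and $U_2$ then gives $d\in U_1\cap U_2$.

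Part (2) is pure complementation: $F$ is Scott closed iff $X\setminus F$ is Scott open, ``$X\setminus F$ upward closed'' is the same as ``$F$ downward closed'', and the contrapositive of condition (ii) for $X\setminus F$ reads precisely ``for every directed $D\subseteq F$ with $\sup D$ existing, $\sup D\in F$''. For the ``in particular'' clause, $\downarrow\!\!y$ is downward closed, and any directed $D\subseteq\,\downarrow\!\!y$ has $y$ as an upper bound, so whenever $\sup D$ exists one gets $\sup D\le y$, i.e.\ $\sup D\in\,\downarrow\!\!y$; thus $\downarrow\!\!y$ is Scott closed. Part (3): to identify the specialization order of $\tau_\sigma$, note that if $x\le y$ then any Scott-open set containing $x$, being upward closed, contains $y$, so $x\in\overline{\{y\}}^{\tau_\sigma}$; conversely, if $x\not\le y$ then $x\in X\setminus\,\downarrow\!\!y$, which is Scott open by (2) and omits $y$, so $x\notin\overline{\{y\}}^{\tau_\sigma}$. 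Hence $\le_{\tau_\sigma}=\le$, and then $\tau_u\subseteq\tau_\sigma\subseteq\tau_a$ is immediate from Propositions~\ref{p.Alex-top} and \ref{p.upper-top} (alternatively, each subbasic $\tau_u$-open set $X\setminus\,\downarrow\!\!x$ is Scott open by (2), so $\tau_u\subseteq\tau_\sigma$ directly).

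Finally, for (4) the key observation is that by Proposition~\ref{p3.top-ord}(1), applied with the specialization order $\le_\tau$, the $\tau$-closure $\overline{\{x\}}^\tau$ equals the down-set $\downarrow\!\!x$ of $\le_\tau$; and the ``in particular'' part of (2), used now for the poset $(X,\le_\tau)$ and its Scott topology $\sigma$, says every such down-set is Scott closed. Hence $\overline{\{x\}}^\tau$ is $\sigma$-closed. I expect no serious obstacle anywhere: the finite-intersection argument in (1) is the only place that uses anything beyond unwinding definitions, and in (4) the only thing to watch is that all order notions must refer to $\le_\tau$ rather than to any a priori order carried by $X$.
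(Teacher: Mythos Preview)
Your proposal is correct and follows essentially the same route as the paper: the finite-intersection step in (1) via directedness, the complementation argument in (2), and the use of the Scott-closedness of $\downarrow\!\!y$ to identify the specialization order in (3) all match. The only cosmetic difference is in (4): the paper verifies conditions (i) and (ii) of (2) directly for $\overline{\{x\}}^\tau$, whereas you first identify $\overline{\{x\}}^\tau=\,\downarrow\!\!x$ via Proposition~\ref{p3.top-ord}(1) and then invoke the ``in particular'' clause of (2)---a slightly more economical packaging of the same argument.
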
\begin{proof}
  (1)\;  Let $U_i,\, i=1,\dots,n,$ be Scott open sets. Then $U:=\bigcap\{U_i : 1\le i\le n\}$ is upward closed. Suppose that $D$ is a directed set in $X$ such that  $\sup D$ exists and belongs to $U$. Then $\sup D\in U_i$ implies the existence of $x_i\in D\bigcap U_i,\, i=1,\dots,n.$  Since $D$ is directed there exists $x\in D$ with $ x_i\le x,\, i=1,\dots,n$. Since each $U_i$ is upward closed, $x\in U_i,\, i=1,\dots,n$, that is $x\in U,$ showing that $U$ is Scott open too. It easy to show  that the union of an arbitrary family of Scott open sets is again Scott open.

  The proof of (2) follows from the  equality $F=X\smallsetminus U$ relating open sets $U$ and closed sets $F$.

  To prove that the set $\downarrow\!\!y$ is Scott closed, let $D\subseteq \downarrow\!\!y$  be a directed set and $d_0=\sup D$. Since, $d\le y$ for all $d\in D$, it follows $d_0\le y$, i.e. $d_0\in \downarrow\!\!y$, showing that $\downarrow\!\!y$ is Scott closed.

 (3)\;  Denote by $\le_\sigma$ the specialization order corresponding to $\tau_\sigma$ and let $x\le y$.
   If $U$ is Scott open and $x\in U$, then $y\in U$, as $U$ is upward closed. Consequently $x\le_\sigma y$.

   Suppose now $x\le_\sigma y$.  The set $\downarrow\!\!y$ is Scott closed. If $x$ would belong to
    $X\smallsetminus \downarrow\!\!y$, then $y\in X\smallsetminus \downarrow\!\!y$, a contradiction, so $x$ must belong to $\downarrow\!\!y$, that is $x\le y$.

(4)\; The proof is based on (2).  Denoting by
$\le_\tau, \le_\sigma$  the specialization orders corresponding to $\tau$ and $\sigma$, respectively, then  $\le_\tau=\le_\sigma$.

Show first that the set $\overline{\{x\}}^\tau$ is $\le_\tau$-downward closed.

Indeed, let $y\in \overline{\{x\}}^\tau$ and $y'\le_\tau y$. Then $y\le_\tau x$ and $y'\le_\tau y$ imply $y'\le_\tau x$, that is $y'\in \overline{\{x\}}^\tau$.

Let us verify condition (ii) from (2).

If $\{x_i: i \in I\}$ is a directed set contained in $\overline{\{x\}}^\tau$, then $x_i\le_\tau x$ for all $i\in I,$ and so $\sup_ix_i\le_\tau x$, or equivalently, $\sup_ix_i\in \overline{\{x\}}^\tau$.
     \end{proof}

In the following proposition we characterize the continuity with respect to  the Scott topology.  We use the notation   $X_\sigma$ for $(X,\tau_\sigma)$.
\begin{prop}\label{p2.Scott-top} Let $X,Y$ be a partially ordered sets and $f:X\to Y$. The following are equivalent.
\begin{enumerate}
\item The function $f$ is continuous with respect to  the Scott topologies on $X$ and $Y$, respectively.
\item  The function $f$ satisfies the following conditions:\\
{\rm (i)}\; $f$ is increasing;\\
{\rm (ii)}\; $f$ preserves the suprema of directed sets.\end{enumerate}
\end{prop}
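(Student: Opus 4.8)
The plan is to prove the two implications separately, using throughout the structural facts about the Scott topology collected in Proposition \ref{p1.Scott-top}: the specialization order of $\tau_\sigma$ coincides with $\le$, each set $\downarrow\!\!y$ is Scott closed, and hence each complement $Y\smallsetminus\downarrow\!\!y$ is Scott open. I will also use, without further comment, two routine observations: a continuous map between topological spaces is automatically increasing for the respective specialization orders (if $x\in\overline{\{y\}}$ then $f(x)\in\overline{f(\{y\})}=\overline{\{f(y)\}}$), and the image of a directed set under an increasing map is again directed.

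For the implication $(1)\Rightarrow(2)$ I would first apply the observation above together with Proposition \ref{p1.Scott-top}(3) to conclude that $f$ is increasing for $\le$. Then, given a directed subset $D\subset X$ for which $d_0:=\sup D$ exists, monotonicity makes $f(D)$ directed and $f(d_0)$ an upper bound of $f(D)$; it remains to see that $f(d_0)$ is the \emph{least} upper bound. For this I take an arbitrary upper bound $y$ of $f(D)$ in $Y$ and argue by contradiction: if $f(d_0)\not\le y$, then $f(d_0)\in Y\smallsetminus\downarrow\!\!y$, so $d_0$ lies in the set $f^{-1}(Y\smallsetminus\downarrow\!\!y)$, which is Scott open by continuity since $\downarrow\!\!y$ is Scott closed. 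By condition (ii) in the definition of Scott-openness there is then some $d\in D$ with $f(d)\in Y\smallsetminus\downarrow\!\!y$, i.e.\ $f(d)\not\le y$, contradicting that $y$ bounds $f(D)$. Hence $f(d_0)\le y$, so $f(d_0)=\sup f(D)$; in particular $\sup f(D)$ exists and equals $f(\sup D)$, which is exactly the assertion that $f$ preserves directed suprema.

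For $(2)\Rightarrow(1)$ I would verify directly that $f^{-1}(U)$ is Scott open in $X$ for every Scott-open $U\subset Y$, by checking the two defining conditions. Upward closedness of $f^{-1}(U)$ is immediate: if $x\in f^{-1}(U)$ and $x\le x'$, then $f(x)\le f(x')$ since $f$ is increasing, and $U$ upward closed gives $f(x')\in U$. For the directed-supremum condition, let $D\subset X$ be directed with $\sup D=d_0\in f^{-1}(U)$; then $f(D)$ is directed and, since $f$ preserves directed suprema, $\sup f(D)=f(d_0)\in U$, so Scott-openness of $U$ yields $d\in D$ with $f(d)\in U$, i.e.\ $d\in f^{-1}(U)$. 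Thus $f^{-1}(U)$ is Scott open and $f$ is continuous for the Scott topologies.

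I expect the only mildly delicate point to be the "least upper bound" half of $(1)\Rightarrow(2)$: the right move is to pass to complements, working with the Scott-open set $Y\smallsetminus\downarrow\!\!y$ and the fact that $f^{-1}$ of a Scott-open set is Scott open, rather than attempting to argue with Scott-closed sets and preimages directly. Everything else is a direct unwinding of the definitions.
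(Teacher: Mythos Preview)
Your proof is correct and follows essentially the same strategy as the paper, with one cosmetic difference: you work throughout with Scott-\emph{open} sets (checking that $f^{-1}(U)$ is Scott open, and in $(1)\Rightarrow(2)$ passing to the open complement $Y\smallsetminus\downarrow\!\!y$ and arguing by contradiction), whereas the paper works with Scott-\emph{closed} sets (checking that $f^{-1}(Z)$ is Scott closed, and in $(1)\Rightarrow(2)$ using directly that $f^{-1}(\overline{\{y\}})=f^{-1}(\downarrow\!\!y)$ is Scott closed, so $\sup_i x_i\in f^{-1}(\downarrow\!\!y)$). These are dual formulations of the same argument; your remark that the ``right move'' is to pass to complements is therefore too strong---the paper's direct closed-set argument is just as clean and avoids the contradiction step.
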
\begin{proof}
All  closures that appear in  this proof are considered with respect to the Scott topology.

  (2)\; $\Rightarrow$\; (1). The continuity of $f$ is equivalent to each of the following conditions
  \begin{equation}\label{eq1.Scott-cont}
  \begin{aligned}
    &{\rm (i)}\;\;  &f^{-1}(Z)\; \mbox{is closed for every closed subset } Z\mbox{ of } Y;  \\
    &{\rm (ii)}\;\;  &f\big(\overline A\big)\subseteq \overline{f(A)}\;\;\mbox{for every subset } A \;\mbox{of}\; X\,.
      \end{aligned}\end{equation}

  Let $Z\subseteq Y$ be Scott closed. We shall use Proposition \ref{p1.Scott-top}.(2) to show that $f^{-1}(Z)$ is Scott closed.

If $x\in f^{-1}(Z)$ and $x'\le x$, then $ f(x)\in Z$ and $f(x')\le f(x)$. Since $Z$ is downward closed it follows  $f(x')\in Z$, which is equivalent to $x'\in f^{-1}(Z)$. Consequently $f^{-1}(Z)$ is downward closed.

  Let now $(x_i)_{i\in I}$ be a directed set contained in $f^{-1}(Z)$ such that  $x=\sup_ix_i$ exists.
  Then  $(f(x_i))_{i\in I}$   is a directed set in $Y$ contained in $Z$. By hypothesis $f(x)=\sup_i f(x_i)$ and, since $Z$ is Scott closed, $f(x)\in Z$ which is equivalent to $ x\in f^{-1}(Z)$.

   (1)\; $\Rightarrow$\; (2). Suppose that $f$ is continuous with respect to  the Scott topologies on $X$ and $Y$, respectively.

   Let $x'\le x$ in $X$. Taking into account the continuity of $f$ we have
   $$
   x'\le x\iff x'\in\overline{\{x\}} \Rightarrow f(x')\in f\big(\overline{\{x\}}\big)\subseteq \overline{f(x)}\,,$$
  which shows that  $f(x')\le f(x)$ in $Y$.

 Let now $(x_i)_{i\in I}$ be a directed set in $X$ such that  $x=\sup_ix_i$ exists. Since $f$ is increasing, it follows $f(x_i)\le f(x)$ for all $i\in I$. Let $y\in Y$ be such that  $f(x_i)\le y$ for all $i\in I$. Then
   \begin{align*}
     \forall i,\;\; f(x_i)\le y\iff  \forall i,\;\; f(x_i)\in\overline{\{y\}}\iff\forall i,\;\; x_i\in f^{-1}\big(\overline{\{y\}}\big)\,.
   \end{align*}

   Since $f^{-1}\big(\overline{\{y\}}\big)$ is Scott closed, it follows $x=\sup_ix_i\in f^{-1}\big(\overline{\{y\}}\big)$, which implies
   $f(x)\in\overline{\{y\}},$ that is $f(x)\le y$. Consequently, $f(x)$ is the least upper bound of $(f(x_i))_{i\in I}$.
  \end{proof}

  \begin{remark}\label{re.Scott-cont} A mapping satisfying  condition (i) and (ii)   from Proposition \ref{p2.Scott-top}  is called \emph{Scott continuous}.
  In fact, by Remark \ref{re1.suprema-pres}, it suffices to suppose that $f$ satisfies only the condition (ii).\end{remark}

\begin{example}[\cite{Larrecq}] A subset of $\mathbb{R}$ is compact and saturated with respect to  the Scott topology if and only if  it is the empty set or of the form $[\alpha,\infty)$ for some $\alpha \in\mathbb{R}$.
\end{example}

\textbf{The interval topology and the Moore-Smith order topology}

These topologies were  defined by Frink \cite{frink42}. By a closed interval in a  poset $(X,\le)$ one understands a set of the form
\begin{equation}
 \label{def.intervals}\begin{aligned}
&\uparrow\!\! a=\{x\in X : a\le x\},\quad  \downarrow\!\! b=\{y\in X : y\le b\},\quad\mbox{or}\\
&[a,b]=\{x\in X : a\le x\le b\}=\uparrow\!\! a\,\cap \downarrow\!\! b\,,
\end{aligned}
\end{equation}
for $a,b\in X$.  By definition, a subset $Y$ of $X$ is   closed with respect to the interval topology if it can be written as    the intersection of   finite unions of sets of the form \eqref{def.intervals}. It is shown in \cite{frink42} that the family $\mathcal F_\le$ of closed sets defined above satisfies the axioms of closed sets:
\begin{align*}
 &{\rm  (i)}\quad \emptyset,X\in \mathcal F_\le\,;\quad
 {\rm  (ii)}\quad F_i\in \mathcal F_\le,\, i\in I,\;\Rightarrow \bigcap_{i\in I}F_i\in \mathcal F_\le\, ;\\
 &{\rm  (iii)}\quad F_1,F_2\in \mathcal F_\le\;\Rightarrow\;\;  F_1\cup F_2\in \mathcal F_\le\,.
 \end{align*}

If the set $X$ is totally ordered (i.e. it is a chain), then the interval topology defined above coincides with the \emph{intrinsic topology}, which is the topology having as basis  of open sets the intervals
$$
(a,b):=\{x\in X : a< x <b\}\,,
$$
for $a,b\in X$ (see \cite[Th. 3]{frink42}). (Recall that we write $x<y$ for ``$x\le y$ and $x\ne y$").
\begin{remark}
By analogy with the upper topology one can define the \emph{lower topology} $\tau_{\,l}$ as that generated by the basis formed of the complements of the sets $\uparrow\!\! E$ for $E\subseteq X,\, E$ finite. The interval topology $\tau_\le$ is the supremum of these two topologies: $\, \tau_\le=\tau_u\vee\tau_{\,l}$.
 \end{remark}

We mention also the following result.
\begin{theo}[\cite{frink42}, Th. 9]\label{t.Frink} Every complete lattice is compact in its interval topology.
  \end{theo}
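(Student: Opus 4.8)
The plan is to invoke Alexander's subbase lemma in its dual (closed‑set) form: a topological space is compact if and only if every family of \emph{subbasic} closed sets having the finite intersection property has nonempty intersection. For the interval topology $\tau_\le$ on a poset, a subbase of closed sets consists of the principal filters $\uparrow\!\! a$ and the principal ideals $\downarrow\!\! b$, $a,b\in X$ (each interval $[a,b]=\uparrow\!\! a\cap\downarrow\!\! b$ is already an intersection of two of them, and by definition every $\tau_\le$-closed set is an intersection of finite unions of such intervals). Hence it suffices to show that, when $(X,\le)$ is a complete lattice and $\{\uparrow\!\! a_i:i\in I\}\cup\{\downarrow\!\! b_j:j\in J\}$ is a family of principal filters and ideals with the finite intersection property, the intersection $\bigcap_{i\in I}\uparrow\!\! a_i\cap\bigcap_{j\in J}\downarrow\!\! b_j$ is nonempty.

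First I would record two elementary facts. (a) In a complete lattice $\bigcap_{i\in I}\uparrow\!\! a_i=\uparrow\!\!\big(\bigvee_{i\in I}a_i\big)$ and $\bigcap_{j\in J}\downarrow\!\! b_j=\downarrow\!\!\big(\bigwedge_{j\in J}b_j\big)$, since $x\ge a_i$ for all $i\in I$ is equivalent to $x\ge\bigvee_{i}a_i$, and dually; here I use the conventions $\bigvee\emptyset=\bot$ and $\bigwedge\emptyset=\top$, so that an empty intersection is the whole space $X$. (b) For $c,d\in X$ one has $\uparrow\!\! c\cap\downarrow\!\! d=[c,d]\ne\emptyset$ if and only if $c\le d$: if $c\le d$ then $c\in[c,d]$, while any $x\in[c,d]$ gives $c\le x\le d$, whence $c\le d$.

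Now apply the finite intersection property to the two‑element subfamilies $\{\uparrow\!\! a_i,\downarrow\!\! b_j\}$: fact (b) gives $a_i\le b_j$ for all $i\in I$, $j\in J$. If $I=\emptyset$ or $J=\emptyset$ the desired intersection is $X$, or $\downarrow\!\!\big(\bigwedge_j b_j\big)\ni\bigwedge_j b_j$, or $\uparrow\!\!\big(\bigvee_i a_i\big)\ni\bigvee_i a_i$, hence nonempty. Otherwise, for each fixed $j$ the element $b_j$ is an upper bound of $\{a_i:i\in I\}$, so $\bigvee_{i\in I}a_i\le b_j$; letting $j$ range, $\bigvee_{i\in I}a_i$ is a lower bound of $\{b_j:j\in J\}$, so $\bigvee_{i\in I}a_i\le\bigwedge_{j\in J}b_j$. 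By fact (a) the full intersection equals $\uparrow\!\!\big(\bigvee_i a_i\big)\cap\downarrow\!\!\big(\bigwedge_j b_j\big)=\big[\bigvee_i a_i,\,\bigwedge_j b_j\big]$, which is nonempty by fact (b). Thus the subbase has the property required by Alexander's lemma, and compactness of $(X,\tau_\le)$ follows.

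I do not expect a genuine obstacle here; the point that deserves attention is the reduction, via Alexander's lemma, to \emph{pairs} $\{\uparrow\!\! a_i,\downarrow\!\! b_j\}$ of subbasic closed sets, after which completeness of the lattice is used in precisely two places: to form the suprema and infima and to guarantee that the order interval $\big[\bigvee_i a_i,\bigwedge_j b_j\big]$ is inhabited. One should also keep track of the degenerate cases in which one of the index sets is empty, where the claim is immediate.
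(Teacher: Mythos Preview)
Your proof is correct and follows essentially the same route as the paper: both invoke Alexander's subbase theorem in its closed-set form, deduce $a_i\le b_j$ from the finite intersection property applied to pairs, and conclude that $\big[\bigvee_i a_i,\bigwedge_j b_j\big]$ is nonempty by completeness of the lattice. The only cosmetic difference is that the paper takes the closed intervals $[a,b]$ themselves as the subbase (using $\uparrow\!\! a=[a,\top]$ and $\downarrow\!\! b=[\bot,b]$), whereas you work with the slightly smaller subbase $\{\uparrow\!\! a\}\cup\{\downarrow\!\! b\}$; your choice makes the pairwise check marginally cleaner, but the argument is the same.
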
 \begin{proof}
    We include the simple proof of this result. Let $(X,\le)$ be a complete lattice with 0 the least and 1 the greatest element. Then $\uparrow\!\!a =[a,1]$ and $\downarrow\!\!b =[0,b]$, so that the intervals $[a,b],\, a,b\in X$, form a subbase of the interval topology. By  Alexander subbase theorem  (\cite[p. 139]{Kelley} it is sufficient to show that every family $[a_i,b_i],\, i\in I$, of intervals in $X$ having the finite intersection property has nonempty intersection. Since
    $\,[a_i,b_i]\cap [a_j,b_j]\ne\emptyset\iff a_i\vee a_j\le b_i\wedge b_j,\,$ it follows $a_i\le b_j$ for all $i,j\in I$. Hence
    $$
    a:=\sup_{i\in I}a_i\le\inf_{j\in I} b_j=:b\,,$$
    and $\emptyset \ne [a,b]\subseteq\bigcap_{i\in I} [a_i,b_i]$.
  \end{proof}

Frink \cite{frink42} considered also the Moore-Smith order topology defined in the following way. A net $(x_i:i\in I)$ in a poset $(X,\le)$ is said to converge to $x\in X$ if there exist an increasing net $(u_i:i\in I)$ and a decreasing one $(v_i:i\in I)$ such that
\bequ\label{def.o-converg-net}\begin{aligned}
{\rm (i)}\;\; &u_i\le x_i\le v_i\;\mbx{ for all }\;i\in I\;\mbx{ and}\\
{\rm(ii)}\;\; &\sup_iu_i=x=\inf_iv_i\,.\end{aligned}\eequ

 By definition, an element $x\in X$ belongs to the closure $\overline Y$ of  a subset $Y$ of $X$ if and only if there exists a net   in $Y$ that converges to $x$. This closure operation satisfies the conditions
\begin{align*}
  &{\rm (a)}\;\; \overline \emptyset=\emptyset;\quad {\rm (b)}\;\; Y\subseteq \overline Y;\\
   &{\rm (c)}\;\; \overline{Y_1\cup Y_2}=\overline{Y_1}\cup \overline{Y_2}\,,
\end{align*}
for all $Y,Y_1,Y_2\subseteq X$, but not the condition $\overline{\overline{Y}}=\overline{Y}$, so it does not generate a topology, see Kelley \cite[p. 43]{Kelley}. In spite of this fact we call it the \emph{Moore-Smith order topology}. If $(X,\le)$ is totally ordered, then it agrees with the interval topology \cite[Th. 3]{frink42}. If $(X,\le)$ is a distributive lattice, then the lattice operations $\vee$ and $\wedge$ are continuous with respect to the Moore-Smith order topology \cite[Th. 2]{frink42}.

\begin{remark}
Motivated   by  applications to computer science, mainly to   denotational semantics of functional programming languages, topological and categorical methods applied to partially ordered sets were developed. A branch of this is known under the name of continuous lattices, whose study was initiated by Dana Scott \cite{scott72} in 1971. Roughly speaking these are  complete lattices $(X,\le)$ with Scott continuous meet and join operations, which means that
$$
x\wedge \sup D=\sup\{x\wedge d : d\in D\}\quad\mbox{and}\quad x\vee \inf D=\inf\{x\vee d : d\in D\}\,,
$$
for every nonempty directed subset $D$ of $X$.

Another one is the so called domain theory. Essentially it is    concerned with the    study of lattices or of directed complete partially ordered sets (known as \textbf{dcpo}, see Definition \ref{def.posets}) equipped with a $T_0$ topology compatible with the order. A good introduction to this area is given in the book \cite{Larrecq} (which we have partially followed in our presentation), and in the paper \cite{abramsky}. For a comprehensive presentation we recommend the  monograph \cite{Keimel}, see also \cite{Stolt-Hansen}.  Notice also that a functional analysis within the context of $T_0$ topology was recently developed, see for instance    \cite{keimel08,keimel15}. It turned out that a lot of results from Hausdorff functional analysis (Hausdorff topological vector, Hausdorff  locally convex  spaces and Banach spaces) have their analogues  in some algebraic structures  -- vector spaces, cones, universal algebras, etc -- equipped with a compatible $T_0$ topology.
\end{remark}

\section{Fixed points in partially ordered sets}\label{S.fixp-ord}
In this section we shall present some fixed point results in partially ordered sets and their impact on the completeness of the underlying ordered set.
\subsection{Fixed point theorems}
These fixed point theorems bear different names in different publications. The explication is that many mathematicians  contributes to their final shape, and the authors choose one, or several of them, to name a theorem.

Recall that ``poset" is a short-hand for ``partially ordered set".
\begin{theo}[Zermelo]\label{t.fixp-ord-Zermelo} Let $(X,\le)$ be a chain-complete poset and $f:X\to X$ a mapping such that  $x\le f(x)$ for all $x\in X$.

Then $f$ has a fixed point. More precisely, for every $x\in X$ $f$ has a fixed point $y$ above $x$ (i.e. $f(y)=y$ and $x\le y$).

If, further, $f$ is  increasing, then, for every $x\in X$, $f$ has a least fixed point above $x$.
  \end{theo}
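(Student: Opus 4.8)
The plan is to use the classical ``tower'' argument (as in the Bourbaki--Witt theorem). First I would reduce to the case where $X$ has a least element: fixing $x\in X$, the up-set $\uparrow\!\!x$ is again chain-complete (a nonempty chain in it has a supremum in $X$, which lies above $x$ and hence in $\uparrow\!\!x$, and the empty chain has supremum $x$), it has least element $x$, and it is $f$-invariant because $u\le f(u)$. So I may assume $X$ has a least element, still denoted $x$. Call a subset $A\subseteq X$ \emph{admissible} if $x\in A$, $f(A)\subseteq A$, and $\sup C\in A$ for every chain $C\subseteq A$ (these suprema exist by chain-completeness). Since $X$ itself is admissible and intersections of admissible sets are admissible, there is a smallest admissible set $M$.

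The core of the argument -- and the main obstacle -- is to show that $M$ is totally ordered. Following the standard scheme, I would call $c\in M$ \emph{extremal} if $f(a)\le c$ whenever $a\in M$ and $a<c$, and establish two facts using the minimality of $M$. First, for a fixed extremal $c$, the set $M_c:=\{a\in M : a\le c\ \text{or}\ f(c)\le a\}$ is admissible, hence $M_c=M$; the check is routine (for $f$-invariance one distinguishes the cases $a<c$, $a=c$, $f(c)\le a$; for chains, either all members are $\le c$, so is the sup, or one member is $\ge f(c)$, so is the sup). In particular $f(c)$ is comparable with every element of $M$. Second, the set $E$ of extremal elements of $M$ is admissible: $x$ is extremal vacuously; if $c$ is extremal then $f(c)$ is extremal (using $M_c=M$: from $a<f(c)$ one gets $a\le c$, and then $f(a)\le c\le f(c)$ or $f(a)=f(c)$); and the supremum $s$ of a chain $C$ of extremal elements is extremal, because an $a\in M$ with $a<s$ cannot be an upper bound of $C$, so (using comparability of $a$ with some $c\in C$ via $M_c=M$) one has $a<c$ for some $c\in C$, whence $f(a)\le c\le s$. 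Therefore $E=M$, so every element of $M$ is extremal, and then for any $a,c\in M$ the relation $M_c=M$ gives $a\le c$ or $f(c)\le a$, and in the latter case $c\le f(c)\le a$; thus $M$ is a chain.

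Once $M$ is a chain, put $y:=\sup M$, which lies in $M$ by admissibility. Then $f(y)\in M$ forces $f(y)\le y$, while $y\le f(y)$ by hypothesis, so $f(y)=y$; and $x\in M$ gives $x\le y$. This proves the first two assertions. (An alternative route avoiding towers is transfinite recursion: set $x_0=x$, $x_{\alpha+1}=f(x_\alpha)$, and $x_\lambda=\sup_{\alpha<\lambda}x_\alpha$ at limits; the sequence is increasing and, since it cannot be strictly increasing through all ordinals inside the set $X$, it stabilises at some $x_\alpha$ with $f(x_\alpha)=x_\alpha\ge x$.)

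For the last assertion, assume $f$ is increasing and let $z$ be any fixed point with $x\le z$. The interval $\uparrow\!\!x\cap\downarrow\!\!z=\{u : x\le u\le z\}$ is admissible: it contains $x$; if $x\le u\le z$ then $u\le f(u)\le f(z)=z$, so $f(u)$ is in it; and the supremum of a chain in it stays $\le z$. By minimality $M\subseteq\downarrow\!\!z$, so $y=\sup M\le z$. Hence the fixed point $y$ constructed above is the least fixed point of $f$ in $\uparrow\!\!x$, which is exactly the claim.
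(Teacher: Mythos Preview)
Your proof is correct; it is the standard Bourbaki--Witt tower argument, carried out cleanly, and the final paragraph correctly uses the monotonicity of $f$ to show that the interval $[x,z]$ is admissible, giving the least fixed point above $x$.

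The paper, however, does not actually prove this theorem: it only states it, discusses its attribution (Bourbaki--Witt, Bourbaki--Kneser, Zermelo), and remarks that there are proofs both depending on and independent of the Axiom of Choice, referring to the literature for details. So there is nothing to compare your argument against except the literature the paper cites. Your tower construction is precisely the AC-free proof alluded to (and is the one found in the sources the paper points to, e.g.\ the Bourbaki and Witt papers and the expositions in \cite{jachym01} and \cite{Larrecq}); the alternative transfinite-recursion route you sketch is the AC-flavoured one the paper also mentions. One small remark: the paper defines a chain to be a \emph{nonempty} totally ordered subset, so your parenthetical about ``the empty chain has supremum $x$'' is not needed---your admissibility conditions already include $x\in A$ explicitly, which is what does the work.
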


  A mapping $f:X\to X$ satisfying $x\le f(x)$ for all $x\in X$ is called \emph{progressive} in  \cite{jachym01}, \emph{inflationary} in \cite{Larrecq}, \emph{extensive} in \cite{klimes84a}.

  This theorem is attributed to Bourbaki-Witt in \cite{Larrecq} (with reference to Bourbaki \cite{bourbaki50} and  Witt \cite{witt50}), to Bourbaki-Kneser in \cite{Zeidler1}.  As it follows  from  the    discussion about this matter in the survey paper by Jachymski \cite{jachym01}, who proposed the name Zermelo FPT, this fixed point theorem appears only implicitly in  Zermelo's papers on well-ordering (from 1904 and 1908), and it was put in evidence later.  Accepting this principle (equivalent to the Axiom of Choice (AC)), the proof is immediate, but there are proofs independent of (AC), see \cite{jachym01}. A brief historical survey is given also in Blanqui  \cite{blanqui14}.
  We shall not enter into this delicate question of whether a specific result depends or not of the (AC). An exhaustive treatment is given in the monographs \cite{Howard-Rub} and \cite{Rubin}. Concerning its relevance for fixed points we recommend the papers by Taskovi\'c \cite{taskovic92,taskovic04,taskovic12} and Ma\'nka \cite{manka87,manka88a,manka88b}. Among other things, Ma\'nka has found a proof of Caristi's fixed point theorem, independent of the (AC).

  \begin{remark}\label{re1.Zermelo}
  In Bourbaki \cite{bourbaki50} Zermelo FPT is formulated for a poset in which every well-ordered subset has a supremum, an apparently stronger form. But as it was shown by Markowski \cite{markow76} these conditions are equivalent: a poset $X$ is chain complete if and only if  every well-ordered subset of $X$ has a supremum. In fact, according to  the comments before  Lemma 1.4 in \cite{smithson71}, this result can be considered as a part of the folklore,
the essential part of the proof -- that every chain contains a well-ordered
cofinal subset -- appears as  exercises in   Halmos'
\emph{Naive set theory}, and in Birkhoff's \emph{Lattice theory}.
  \end{remark}

  Another important result is the following one.
  \begin{theo}[Knaster-Tarski]\label{t.fixp-ord-Kn-Tarski} Let $(X,\le)$ be a poset and $f:X\to X$ an increasing function. If

  {\rm (i)}\; there exists $z\in X$ such that  $z\le f(z)$, and

   {\rm (ii)}\; every chain in $\uparrow\!\! z$ has a supremum,\\
then $f$ has a fixed point above $z$. Furthermore, there exists a maximal fixed point of $f$.
    \end{theo}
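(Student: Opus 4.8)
The plan is to reduce everything to Zermelo's fixed point theorem (Theorem \ref{t.fixp-ord-Zermelo}) by restricting $f$ to a suitable chain-complete subposet on which it becomes progressive. First I would set $Y:=\uparrow\!\! z$, so that $z$ is the least element of $Y$ and, by hypothesis (ii), every chain in $Y$ (including the empty chain, whose supremum is $z$) has a supremum in $Y$. Since $f$ is increasing, $x\ge z$ implies $f(x)\ge f(z)\ge z$, so $f(Y)\subset Y$.

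Next I would introduce the set
\[
P:=\{x\in Y : x\le f(x)\}\,.
\]
It is nonempty, because $z\le f(z)$ shows $z\in P$, and it is invariant under $f$: if $x\le f(x)$ then applying the increasing map $f$ gives $f(x)\le f(f(x))$, while $f(x)\in Y$; hence $f(x)\in P$. The crucial step — and the one I expect to need the most care — is to check that $P$, with the order induced from $X$, is chain-complete. Let $C$ be a chain in $P$ and let $s$ be its supremum in $Y$ (which exists by (ii)). For each $c\in C$ one has $c\le f(c)$ by the definition of $P$ and $f(c)\le f(s)$ because $f$ is increasing and $c\le s$; hence $c\le f(s)$ for every $c\in C$, so $f(s)$ is an upper bound of $C$ and therefore $s\le f(s)$. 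Thus $s\in P$, and since $s$ is already the least upper bound of $C$ in the larger set $Y\supset P$, it is the supremum of $C$ in $P$ as well.

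Now $f$ maps $P$ into $P$ and satisfies $x\le f(x)$ for every $x\in P$, i.e. it is progressive (inflationary) on the chain-complete poset $P$. Applying Theorem \ref{t.fixp-ord-Zermelo} to $(P,\le)$ and $f|_P$ yields, for every $x\in P$, a fixed point of $f$ above $x$; taking $x=z$ gives a fixed point $y$ of $f$ with $z\le y$, which is the first assertion. For the last assertion I would use that a chain-complete poset is nonempty and has every chain bounded above (by its supremum), so by Zorn's lemma it possesses a maximal element $m$. Since $f(m)\in P$ and $m\le f(m)$, maximality of $m$ in $P$ forces $f(m)=m$, so $m$ is a fixed point. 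Finally, if $m'$ is any fixed point of $f$ with $m\le m'$, then $z\le m\le m'$ and $m'\le f(m')$ holds trivially, so $m'\in P$; maximality of $m$ in $P$ then gives $m'=m$. Hence $m$ is a maximal fixed point of $f$, which completes the argument.
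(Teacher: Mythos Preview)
Your argument is correct, but note that the paper merely states this theorem without proof, so there is nothing to compare against. Your reduction to Zermelo's theorem via the subposet $P=\{x\in\,\uparrow\!\! z : x\le f(x)\}$ is the standard device: the key observations --- that $f(P)\subset P$ because $f$ is increasing, and that $P$ is chain-complete because the supremum $s$ of a chain in $P$ (taken in $\uparrow\!\! z$ by hypothesis (ii)) satisfies $s\le f(s)$ --- are all handled cleanly. The final step, using Zorn's lemma to extract a maximal element $m$ of $P$ and then checking that any fixed point $m'\ge m$ lies in $P$ (so that maximality of $m$ in $P$ upgrades to maximality in $\fix(f)$), is also sound.

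One minor remark: you could dispense with the separate appeal to Zorn's lemma, since the version of Zermelo's theorem stated in the paper already guarantees, for the progressive map $f|_P$ on the chain-complete poset $P$, a fixed point above any prescribed element of $P$. Combining this with the existence of a maximal element of $P$ (which does need Zorn, or an equivalent) is exactly what you do; alternatively, one sometimes sees the maximal fixed point obtained directly by applying Zermelo's theorem to the poset $\fix(f)\cap\uparrow\!\! z$ after checking it is chain-complete. Either route works.
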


    In complete  lattices the above theorem takes the following form.

\begin{theo}[Birkhoff-Tarski]\label{t.Birkh-Tarski}
Let $(X,\leq)$ be a complete lattice and $f:X\to X$ an increasing
mapping. Then there exist a smallest fixed point $\underline x$ and a greatest fixed point $\overline x$
for $f$, given by $\underline x=\inf\{f^n(\top): n\in\mathbb{N}\}$ and by $\overline x=\sup\{f^n(\bot): n\in\mathbb{N}\}$, where $\bot$ denotes the least element of $X$ and $\top$ the greatest one.

Furthermore, the set of fixed points of  the mapping $f$ is a complete lattice.
\end{theo}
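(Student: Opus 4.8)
The plan is to derive everything from the elementary Knaster--Tarski argument, applied first to $f$ on $X$ itself and afterwards to the restrictions of $f$ to suitably chosen closed intervals of $X$.

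The core fact I would establish is the following: \emph{if $L$ is a complete lattice and $g\colon L\to L$ is increasing, then $P:=\{x\in L:g(x)\le x\}$ is nonempty and $\inf_L P$ is the least fixed point of $g$}. Indeed $P$ contains the greatest element of $L$; for every $x\in P$ one has $\inf_L P\le x$, hence $g(\inf_L P)\le g(x)\le x$, and taking the infimum over $x\in P$ gives $g(\inf_L P)\le\inf_L P$, so $\inf_L P\in P$; applying $g$ once more shows $g(\inf_L P)\in P$, whence $\inf_L P\le g(\inf_L P)$, and the two inequalities force equality; finally every fixed point of $g$ lies in $P$, so it dominates $\inf_L P$. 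Dually, $\sup_L\{x:x\le g(x)\}$ is the greatest fixed point of $g$. Taking $L=X$ and $g=f$ immediately produces $\underline x$ and $\overline x$. For the iterative descriptions I would only add that $(f^n(\bot))_n$ is an increasing chain lying below every fixed point and $(f^n(\top))_n$ a decreasing chain lying above every fixed point, and that their supremum, resp.\ infimum, coincides with the extremal fixed point as soon as $f$ is assumed to be (order-)continuous; for a merely increasing $f$ the finite iterates have to be replaced by transfinite ones, which stabilise for cardinality reasons at the least fixed point when one iterates upward from $\bot$ and at the greatest fixed point when one iterates downward from $\top$.

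It remains to show that $F:=\fix(f)$, with the order inherited from $X$, is a complete lattice. Given $S\subseteq F$, put $s:=\sup_X S$ (which exists because $X$ is a complete lattice) and consider the closed interval $[s,\top]=\{x\in X:s\le x\}$, which is itself a complete lattice. The key observation is that $f$ maps $[s,\top]$ into itself: if $x\ge s$ then for every $y\in S$ we have $y=f(y)\le f(x)$, so $f(x)\ge\sup_X S=s$. Hence $f|_{[s,\top]}$ is an increasing self-map of a complete lattice and, by the core fact, has a least fixed point $s^{*}\in[s,\top]$. Then $s^{*}\in F$ and $s^{*}\ge s\ge y$ for all $y\in S$, while if $z\in F$ is any upper bound of $S$ then $z\ge s$, so $z$ is a fixed point of $f|_{[s,\top]}$ and therefore $z\ge s^{*}$; thus $s^{*}=\sup_F S$. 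The dual argument, applied to the interval $[\bot,\inf_X S]$, yields $\inf_F S$. Consequently every subset of $F$ has both a supremum and an infimum in $F$, so $(F,\le)$ is a complete lattice (the empty set giving $\sup_F\emptyset=\underline x$ and $\inf_F\emptyset=\overline x$).

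The step I expect to be the real obstacle is the last one, precisely because $F$ is in general \emph{not} a sublattice of $X$: the supremum in $X$ of a family of fixed points of $f$ need not be a fixed point, so one cannot simply restrict the lattice operations of $X$ and must instead re-invoke Knaster--Tarski inside the cleverly chosen interval $[s,\top]$ (resp.\ $[\bot,\inf_X S]$). A secondary point requiring care is that the displayed iterative formulas are not available for a merely increasing $f$ — they presuppose continuity, and even then they hold with $\bot$ and $\top$ in the roles opposite to the naive reading of the statement.
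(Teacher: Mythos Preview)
Your approach is correct and is the standard Knaster--Tarski route; it is also considerably more complete than what the paper offers. The paper's proof is only a few lines long: it attempts to verify the displayed iterative formula for $\underline x$ directly, showing that $\underline x:=\inf\{f^n(\top):n\in\mathbb N\}$ satisfies $f(\underline x)\le\underline x$ (via $\underline x\le f^n(\top)\Rightarrow f(\underline x)\le f^{n+1}(\top)$), then asserts equality with a one-line remark, handles $\overline x$ ``similarly'', and says nothing at all about the complete-lattice structure of $\fix(f)$.

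Your misgivings about the iterative formulas are entirely on point, and in fact the paper's argument inherits the problem: from $f(\underline x)\le\underline x$ alone one cannot conclude $\underline x\le f(\underline x)$ without some form of order-continuity, and indeed the statement as written (smallest fixed point $=\inf\{f^n(\top)\}$, greatest $=\sup\{f^n(\bot)\}$) fails already on easy ordinal examples --- the roles of $\bot$ and $\top$ are reversed from the usual Kleene iteration, and even with the correct assignment the $\omega$-iterates need not reach a fixed point. Your proposal handles this honestly (transfinite iteration or, more simply, the $\inf\{x:f(x)\le x\}$ description), and your proof that $\fix(f)$ is a complete lattice via the restriction of $f$ to $[\sup_X S,\top]$ is exactly the right idea and fills a part the paper leaves untouched. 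So: different route, and yours is the sound and complete one.
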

\begin{proof} Since $\underline x\le \top$ it follows $f(\underline x)\le f(\top)$. Also $\underline x\le f^n(\top)$ implies $f(\underline x)\le f^{n+1}(\top)$ for all $n\in\mathbb{N}.$ Consequently, $f(\underline x)\le \underline x$. By the definition of $\underline x$, $\,\underline x\le f(\top)$, so that $f(\underline x)=\underline x$.

The case of $\overline x$ can be  treated similarly.
\end{proof}

    In the following theorem one asks a kind of Scott continuity for  the mapping $f$.
\begin{theo}[Tarski-Kantorovich]\label{t.fixp-ord-Kantor-Tarski}
Let $(X,\le)$ be a poset such that  every countable chain in $X$ has a supremum and $f:X\to X$ a mapping  that preserves the suprema of countable chains. If there exists $z\in X$ such that  $z\le f(z)$, then $f$ has a fixed point. Moreover, $z_0:=\sup\{f^n(z) : n\in\mathbb{N}\}$ is the least fixed point of $f$ in $\uparrow\!\! z$.
  \end{theo}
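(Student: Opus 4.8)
The plan is to produce the fixed point explicitly as the supremum of the orbit $\{f^n(z):n\in\mathbb{N}\}$, exactly as the statement announces, so that the whole argument is a short iteration with no surprises.

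First I would record that $f$ is increasing. If $x\le y$ in $X$, then $\{x,y\}$ is a finite, hence countable, chain with supremum $y$; since $f$ preserves suprema of countable chains, $\sup\{f(x),f(y)\}$ exists and equals $f(y)$, which forces $f(x)\le f(y)$. (This is precisely the reasoning of Remark \ref{re1.suprema-pres}.) Next, from the hypothesis $z\le f(z)$ an immediate induction, using that $f$ is increasing, gives $f^n(z)\le f^{n+1}(z)$ for all $n$, so $C:=\{f^n(z):n\in\mathbb{N}\}$ is a countable chain. By the assumption on $X$, the supremum $z_0:=\sup C$ exists, and since $z\le f(z)\le z_0$ we get $z_0\in\,\uparrow\!\! z$. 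Moreover $f(C)$ is again a countable chain (it consists of the increasing elements $f^{n+1}(z)$).

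To see that $z_0$ is a fixed point I would apply the preservation property to the chain $C$ itself:
$$
f(z_0)=f(\sup C)=\sup f(C)=\sup\{f^{n+1}(z):n\in\mathbb{N}\}=\sup C=z_0\,,
$$
where the penultimate equality holds because the chain $\{f^{n+1}(z):n\in\mathbb{N}\}$ differs from $C$ only by a bottom element and therefore has the same supremum. Thus $f(z_0)=z_0$.

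Finally, for minimality, let $y$ be any fixed point of $f$ with $z\le y$. An induction shows $f^n(z)\le y$ for every $n$: at the smallest index this is the assumption $z\le y$, and if $f^n(z)\le y$ then $f^{n+1}(z)=f(f^n(z))\le f(y)=y$ since $f$ is increasing and $y$ is fixed. Hence $y$ is an upper bound of $C$, so $z_0=\sup C\le y$, which shows that $z_0$ is the least fixed point of $f$ in $\uparrow\!\! z$. The argument is entirely routine; the only point deserving a word of care is the index bookkeeping in the display for $f(z_0)$, namely that dropping the bottom element of the chain $C$ leaves its supremum unchanged. There is no real obstacle.
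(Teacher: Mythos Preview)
Your proof is correct and follows essentially the same approach as the paper: deduce that $f$ is increasing from preservation of (finite) suprema, form the countable chain of iterates $\{f^n(z)\}$, take its supremum, apply the preservation property to show it is fixed, and verify minimality by the obvious induction. You give slightly more detail (e.g., the index bookkeeping in the shifted chain and the explicit reference to Remark~\ref{re1.suprema-pres}), but there is no substantive difference.
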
\begin{proof}
    We include the simple proof of this result following \cite{Dug-Gran}. Since $f$ preserves suprema of countable chains it follows that it is increasing. From $z\le f(z)$ follows $f(z)\le f^2(z)$ and, by induction, $f^{n-1}(z)\le f^n(z)$ for all $n\in\mathbb{N}$, showing that $\{f^n(z): n\in \mathbb{N}\}$ is a chain in $\uparrow\!\! z$. If $x_0:=\sup\{f^n(z): n\in \mathbb{N}\}$, then, by hypothesis, $f(x_0)=\sup\{f^{n+1}(z) : n\in\mathbb{N}\}=x_0$.

    Let $x_1\ge z$ be a fixed point of $f$. Then $f(z)\le f(x_1)=x_1$ and, by induction $f^n(z)\le x_1$ for all $n\in \mathbb{N},$ that is $x_1$ is an upper bound for $\{f^n(z): n\in \mathbb{N}\}$ and so $x_0\le x_1$.
  \end{proof}

  \begin{remark}
  In Theorem \ref{t.fixp-ord-Kantor-Tarski} it is sufficient to suppose  that every countable chain in $\uparrow\!\! z$  has a supremum and that $f$ preserves these suprema.
  \end{remark}

\subsection{Converse results}
Apparently, the first converse result in this area was obtained by Davis \cite{davis55}.
\begin{theo}\label{t.Davis}
  A lattice $(X,\le)$ is complete if and only if every increasing mapping $f:X\to X$ has a fixed point.
\end{theo}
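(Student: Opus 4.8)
The forward implication costs nothing: if $(X,\le)$ is a complete lattice, then every increasing $f:X\to X$ has a fixed point by the Birkhoff--Tarski theorem (Theorem~\ref{t.Birkh-Tarski}); for instance $\sup\{f^n(\bot):n\in\mathbb N\}$ is one, $\bot$ being the least element of $X$. So the whole content of the statement lies in the converse, and the plan is to prove it by contraposition: assuming the lattice $X$ is not complete, I will exhibit an increasing self-map of $X$ with no fixed point.

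The first step is a pair of reductions. Since $X$ is a lattice, completeness is equivalent to the existence of all suprema, and a standard argument shows this is in turn equivalent to the assertion that \emph{every chain in $X$ has a supremum and every chain has an infimum}: given $A\subseteq X$, the set of finite joins of nonempty finite subsets of $A$ is directed with the same supremum as $A$, while a poset in which every chain has a supremum is directed-complete (Markowsky's theorem, in the spirit of Remark~\ref{re1.Zermelo}), and infima --- together with the bottom element $\sup\emptyset$ --- are obtained dually. Next, the hypothesis ``every increasing self-map of $X$ has a fixed point'' is \emph{self-dual}, because an increasing map $f:X\to X$ is literally the same thing as an increasing map of the opposite poset $X^{\mathrm{op}}$, with the same fixed points. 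Combining the two, it suffices to treat the case where some chain $C\subseteq X$ fails to have a supremum and to construct from it an increasing fixed-point-free map; the case of a chain without infimum then follows by passing to $X^{\mathrm{op}}$.

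So I would fix a chain $C$ with no least upper bound and record the elementary structural facts that $C$ has no greatest element, that the set $U$ of upper bounds of $C$ is an up-set with no least element, and that $W:=X\setminus U$ is a down-set containing $C$. The construction should move each point of $W$ strictly upward --- using lattice joins with suitably chosen members of $C$, which is legitimate precisely because no point of $W$ bounds $C$ above --- and move each point of $U$ strictly downward, stitching the two prescriptions together across the ``gap'' left by the missing $\sup C$; a fixed point of such an $f$ would then have to be at once an upper bound of $C$ and a least one, i.e. $\sup C$, which does not exist. The genuinely delicate point --- and this is precisely Davis's contribution --- is to make these local prescriptions into a single globally \emph{order-preserving} map $f$: one has to choose, by a monotone rule, the element of $C$ (respectively of $U$) to which a given $x$ is sent --- for example via a cofinal well-ordered subchain of $C$, using that $C$ has a supremum if and only if that subchain does --- and then verify monotonicity of $f$ by a case analysis according to whether the two arguments both lie in $W$, both lie in $U$, or straddle the gap. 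Once $f$ has been shown order-preserving and fixed-point-free it contradicts the hypothesis, finishing the proof.

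I expect the construction of $f$ and the verification of its monotonicity in the last paragraph to be the main obstacle; by contrast the structural reductions (completeness $\iff$ existence of all chain-suprema and chain-infima; self-duality of the fixed-point property) are routine, and the forward direction is immediate from Birkhoff--Tarski.
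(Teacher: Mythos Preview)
The paper does not actually prove Theorem~\ref{t.Davis}: it merely states the result and attributes it to Davis~\cite{davis55}, so there is no ``paper's own proof'' to compare against. Your forward implication via Theorem~\ref{t.Birkh-Tarski} is exactly what the paper would have you use, and your contrapositive strategy for the converse --- reduce completeness to the existence of all chain suprema and infima, invoke self-duality, then build a fixed-point-free increasing map from a chain $C$ lacking a supremum --- is the standard Davis approach.

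That said, your proposal is explicitly a \emph{plan} rather than a proof: you yourself flag the construction of the globally monotone $f$ as ``the main obstacle'' and do not carry it out. The sketch you give (push points of $W=X\setminus U$ up into $C$ via joins, push points of $U$ down, using a cofinal well-ordered subchain to make the choices monotone) is in the right spirit, but the details are genuinely nontrivial --- for instance, one must ensure that the ``upward'' map on $W$ and the ``downward'' map on $U$ are not only individually increasing but also compatible across the cut, and that the downward map on $U$ actually lands in $X$ and stays increasing (there is no reason $U$ should contain meets, so ``moving strictly downward within $U$'' needs care). Davis's original argument handles this with a specific construction that is not quite as symmetric as your description suggests. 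So: the reductions are fine and routine, the strategy is correct, but what you have is an outline with the key lemma still to be proved, not a proof.
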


By a result of Frink \cite{frink42} (see Theorem \ref{t.Frink}), a lattice $(X,\le)$ is complete if and only if it is compact with respect to the interval topology. Consequently Theorem \ref{t.Davis} admits the following reformulation.
\begin{theo}\label{t2.Davis}
  A lattice $(X,\le)$ is compact in its interval topology if and only if every increasing mapping $f:X\to X$ has a fixed point.
\end{theo}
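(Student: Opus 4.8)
The plan is to obtain this statement as a direct reformulation of Davis's theorem (Theorem~\ref{t.Davis}), using the order-topological description of completeness recalled just above: a lattice is complete if and only if it is compact in its interval topology. One half of that equivalence is Theorem~\ref{t.Frink} (every complete lattice is compact in the interval topology), so the only genuinely new point to supply is the converse implication, namely that compactness in the interval topology forces the lattice to be complete.

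To prove this converse, suppose $(X,\le)$ is a lattice that is compact in its interval topology, and I would show that an arbitrary $A\subseteq X$ has a supremum. The tool is the finite-intersection-property formulation of compactness applied to the subbasic closed sets of the interval topology. Assume first $A\neq\emptyset$, let $U$ denote the set of upper bounds of $A$, and consider the family of closed sets $\{\uparrow\! a : a\in A\}\cup\{\downarrow\! u : u\in U\}$. Every finite subfamily has nonempty intersection: in a lattice $\uparrow\! a_1\cap\cdots\cap\uparrow\! a_k=\uparrow\!(a_1\vee\cdots\vee a_k)$ and $\downarrow\! u_1\cap\cdots\cap\downarrow\! u_m=\downarrow\!(u_1\wedge\cdots\wedge u_m)$, and since each $u_i$ is an upper bound of $A$ one has $a_1\vee\cdots\vee a_k\le u_1\wedge\cdots\wedge u_m$, so this element lies in the intersection (in particular $U\neq\emptyset$). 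By compactness $\bigcap_{a\in A}\uparrow\! a\cap\bigcap_{u\in U}\downarrow\! u\neq\emptyset$, and any $c$ in this set is an upper bound of $A$ lying below every upper bound of $A$, i.e. $c=\sup A$. The dual argument (interchanging up-sets with down-sets and joins with meets) gives an infimum for every nonempty subset, and taking the subset to be $X$ itself produces a least element $\bot$; hence $\sup\emptyset=\bot$ exists as well, and $X$ is a complete lattice.

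Combining this with Theorem~\ref{t.Frink} yields the equivalence ``$X$ is complete $\iff$ $X$ is compact in its interval topology''. Substituting it into Theorem~\ref{t.Davis} gives
\[
X\ \text{is compact in its interval topology}\iff X\ \text{is complete}\iff \text{every increasing }f:X\to X\text{ has a fixed point},
\]
which is precisely the assertion.

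The step needing the most care is the converse of Frink's theorem, and within it the verification of the finite intersection property. This rests on two elementary but essential facts about lattices: a finite intersection of principal up-sets is again a principal up-set (and dually for down-sets), and the join of finitely many elements of $A$ lies below every common upper bound of them. Once these are in place the rest is routine.
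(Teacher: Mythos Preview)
Your proposal is correct and follows the same route the paper takes: the paper derives Theorem~\ref{t2.Davis} simply by combining Davis's Theorem~\ref{t.Davis} with Frink's equivalence ``complete $\iff$ compact in the interval topology'', without proving the latter. You go a step further and actually supply a proof of the implication ``compact $\Rightarrow$ complete'' that the paper merely cites to \cite{frink42}; your finite-intersection-property argument is the standard one and is sound. One small expository point: the parenthetical ``(in particular $U\neq\emptyset$)'' is placed before compactness has been invoked, whereas nonemptiness of $U$ really follows \emph{from} compactness applied to the subfamily $\{\uparrow\! a:a\in A\}$ alone; the logic is fine, but the remark reads as slightly premature.
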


Extensions  to lower semi-lattices of this result   as well as of Birkhoff-Tarski fixed point theorem, Theorem \ref{t.Birkh-Tarski},  were given by Ward \cite{ward57}. Recall that a lower semi-lattice (semi-lattice in short) is a poset $(X,\le)$ such that  $x\wedge y$ exists for every $x,y\in X$. It is called complete if every nonempty subset of $X$ has an infimum.
\begin{theo}[\cite{ward57}]\label{t.Ward} \hfill
\begin{enumerate}
\item A semi-lattice $(X,\le)$ is complete if and only if for every $x\in X$, $\downarrow\!\!x$ is compact with respect to the interval topology.
\item A semi-lattice $(X,\le)$ is compact with respect to the interval topology if and only if every increasing mapping  $f:X\to X$ has a fixed point.
 \end{enumerate} \end{theo}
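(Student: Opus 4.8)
The plan is to treat the two parts separately, using Frink's theorem (Theorem~\ref{t.Frink}) for everything concerning the sets $\downarrow\!\! x$, the Alexander subbasis lemma to convert compactness into finite‑intersection statements, and the Knaster--Tarski theorem (Theorem~\ref{t.fixp-ord-Kn-Tarski}) to produce the fixed point. The only feature of the semilattice structure that will be used is that any finite family $\downarrow\!\! x_1,\dots,\downarrow\!\! x_n$ has nonempty intersection $\downarrow\!\!(x_1\wedge\cdots\wedge x_n)$; this is precisely what replaces the full lattice hypothesis in Davis's and Birkhoff--Tarski's theorems.

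For part (1), the implication $\Rightarrow$ is immediate: if $X$ is complete, then each $\downarrow\!\! x$ has greatest element $x$ and inherits all infima from $X$ (an infimum of a subset of $\downarrow\!\! x$ lies below $x$), hence $\downarrow\!\! x$ is a complete lattice and so, by Theorem~\ref{t.Frink}, compact in its interval topology. For $\Leftarrow$, fix a nonempty $S\subseteq X$ and $s_0\in S$, and work inside the compact space $\downarrow\!\! s_0$: the sets $\downarrow\!\!(s\wedge s_0)$, $s\in S$, are closed there and have the finite intersection property, so by compactness the set $L$ of lower bounds of $S$ (all of which lie in $\downarrow\!\! s_0$) is nonempty, downward closed, and closed in the interval topology of $\downarrow\!\! s_0$. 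A second finite‑intersection argument --- using that $\ell\le s\wedge s_0$ for every $\ell\in L$ and every $s$, so that the meet of finitely many of the $s\wedge s_0$ lies above any prescribed finite subset of $L$ and below those $s\wedge s_0$ --- shows that every finite subset of $L$ has an upper bound inside $L$; one further appeal to compactness of $\downarrow\!\! s_0$, applied to the family $\{L\cap\uparrow\!\!\ell:\ell\in L\}$, then yields an element of $L$ above all of $L$, i.e.\ $\max L=\inf S$.

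For part (2), direction $\Rightarrow$: the closed sets $\downarrow\!\! x$, $x\in X$, have the finite intersection property, so compactness of $X$ produces a point of $\bigcap_{x\in X}\downarrow\!\! x$, that is, a least element $\bot$. Next, each $\downarrow\!\! x$, being a closed subset of the compact space $X$, is compact in the subspace topology, and since the intrinsic interval topology of $\downarrow\!\! x$ is coarser than that subspace topology, $\downarrow\!\! x$ is compact in its own interval topology; by part (1), $X$ is therefore complete. Finally, for any chain $C$ the closed sets $\uparrow\!\! c$, $c\in C$, have the finite intersection property (a finite subfamily intersects in $\uparrow\!\!$ of the largest of its members), so $C$ has an upper bound, and the infimum of its nonempty set of upper bounds exists by completeness and is itself an upper bound of $C$, hence equals $\sup C$. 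Thus $X$ is chain complete, and Theorem~\ref{t.fixp-ord-Kn-Tarski} applied with $z=\bot$ furnishes a fixed point for every increasing $f:X\to X$.

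The converse in part (2), that the fixed point property forces compactness, is the \emph{hard} direction, and I would follow the pattern of Davis's argument behind Theorems~\ref{t.Davis} and \ref{t2.Davis}. By the Alexander subbasis lemma it suffices to show that every family $\{\uparrow\!\! a_i:i\in I\}\cup\{\downarrow\!\! b_j:j\in J\}$ of subbasic closed sets with the finite intersection property has nonempty intersection; equivalently, a failure of compactness yields such a family with empty intersection --- a downward‑directed set of ``lower obstructions'' $b_j$, closed under finite meets, together with ``upper obstructions'' $a_i$ every finite bunch of which admits a common upper bound below all the $b_j$. From such a configuration one has to manufacture an increasing self‑map of $X$ with no fixed point, using the meet operation (as in part~(1)) to keep the construction inside a downward‑directed or chain‑like part of $X$ on which monotonicity is automatic. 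The \emph{main obstacle} is exactly this construction: in the lattice case one may form suprema of the relevant families freely, whereas here only meets are available, so the fixed‑point‑free map must be engineered to live where the missing suprema can be simulated. Once this step is carried out, the contrapositive gives compactness, completing both equivalences.
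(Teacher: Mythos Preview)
The paper does not prove this theorem; it merely cites it from Ward's paper, so there is no proof in the text to compare your proposal against and it must stand on its own.

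Your arguments for part~(1) and for the implication $\Rightarrow$ in part~(2) are essentially correct. Two remarks. First, in part~(1), $\Leftarrow$, your three-layered compactness argument can be collapsed into a single one: the family
\[
\{\uparrow\!\!\ell\cap\downarrow\!\! s_0:\ell\in L\}\cup\{\downarrow\!\!(s\wedge s_0):s\in S\}
\]
of subbasic closed sets in $\downarrow\!\! s_0$ has the finite intersection property directly (the witness for any finite subfamily is $s_{1}\wedge\cdots\wedge s_{k}\wedge s_0$, which dominates every element of $L$ and lies below each $s_i\wedge s_0$), and one appeal to Alexander's lemma produces $\max L=\inf S$. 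As you have written it, the intermediate claim ``every finite subset of $L$ has an upper bound inside $L$'' is not justified by the sentence preceding it: the meet $w$ you exhibit is above the $\ell_j$'s and below the chosen $s_i\wedge s_0$'s, but it need not lie in $L$, since $L$ requires being below \emph{all} $s\in S$. Second, your observation in part~(2), $\Rightarrow$, that the intrinsic interval topology on $\downarrow\!\! x$ is coarser than the subspace topology inherited from $X$ (so compactness passes down) is correct and is exactly what makes part~(1) applicable.

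The genuine gap is part~(2), $\Leftarrow$. You flag it yourself as the hard direction and give only a plan: reduce via Alexander's lemma to a subbasic family with the finite intersection property but empty intersection, and from that configuration build an increasing fixed-point-free self-map. You do not carry out the construction, and you correctly identify the obstacle --- in a semilattice only meets are available, so Davis's lattice construction (which freely uses suprema) cannot be transplanted verbatim. This is the substantive content of Ward's contribution, and without it you have not proved the converse. Until that construction is supplied, part~(2) is only half done.
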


Smithson \cite{smithson73} extended Davis' results to the case of set-valued mappings.
Wolk \cite{wolk57}  obtained also  characterizations of directed completeness of posets (called by him Dedekind completeness) in terms of fixed points of monotonic maps acting on them.

We mention also the following result of Jachymski \cite{jachym03}, connecting several properties equivalent to FPP. A \emph{periodic point} for a mapping $f:X\to X$ is an element $x_0\in X$ such that $f^k(x_0)=x_0$, for some $k\in \mathbb{N}.$ The set of periodic points is denoted by Per$(f)$ while the set of fixed points is denoted by Fix$(f)$. It is obvious that a fixed point is a periodic point with $k=1$.
 \begin{theo}\label{t.Jachym03}
 Let $X$ be a nonempty abstract set and $f$ be a self-map of $X$. The following
statements are equivalent.
\begin{enumerate}
\item {\rm Per}$(f) =$ {\rm Fix}$(f)\ne \emptyset$.
\item {\rm (Zermelo)} There exists a partial ordering $\preceq$ such that every chain in $(X,\preceq)$
has a supremum and $f$ is progressive with respect to $\preceq$ $($i.e. $x\preceq f(x),\, x\in X)$ .
\item {\rm (Caristi)} There exists a complete metric $d$ and a lower semicontinuous function
$\varphi: X \to \mathbb{R}$ such that $f$ satisfies condition \eqref{eq1.fp-Car}.
\item There exists a complete metric $d$ and a $d$-Lipschitz function $\varphi: X \to \mathbb{R}$  such
that $ f$ satisfies condition  \eqref{eq1.fp-Car} and $f$ is nonexpansive with respect to $d$; i.e.;
$$d(f(x),f(y))\le d(x,y)\mbox{ for all } x, y \in X$$.
\item {\rm  (Hicks-Rhoades)} For each $\alpha\in(0,1)$ there exists a complete metric $d $ such that
$f$ is nonexpansive with respect to $d$ and
$$d(f(x),f^2(x))\le \alpha\, d(x,f(x))\mbox{ for all } x \in X$$.
\item There exists a complete metric $d$ such that $f$ is continuous with respect to $d$
and for each $x\in X$ the sequence $(f^n(x))_{n=1}^\infty$ is convergent (the limit may depend\\
on $x$).\end{enumerate}\end{theo}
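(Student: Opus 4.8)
The plan is to establish a web of implications centred at statement (1): I would prove the short links $(2)\Rightarrow(1)$, $(3)\Rightarrow(1)$, $(6)\Rightarrow(1)$ and $(5)\Rightarrow(4)\Rightarrow(3)$ directly, and then recover the order of (2) and the complete metrics of (5) and (6) from the bare hypothesis (1). The only structural consequence of $\operatorname{Per}(f)=\fix(f)$ I would use is this: every orbit $(f^n(x))_{n\ge0}$ is either injective or eventually constant (and then it reaches a point of $\fix(f)$), since $f^i(x)=f^j(x)$ with $i<j$ forces $f^i(x)\in\operatorname{Per}(f)=\fix(f)$.

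First the easy links. For $(5)\Rightarrow(4)$ I would take $\varphi(x):=(1-\alpha)^{-1}d(x,f(x))$: it is $2(1-\alpha)^{-1}$-Lipschitz (by nonexpansiveness of $f$) and satisfies \eqref{eq1.fp-Car} because $\varphi(x)-\varphi(f(x))=(1-\alpha)^{-1}\big(d(x,f(x))-d(f(x),f^2(x))\big)\ge d(x,f(x))$. Then $(4)\Rightarrow(3)$ is immediate (a Lipschitz function is lsc). For $(3)\Rightarrow(1)$: Caristi's fixed point theorem (Theorem~\ref{t.caristi1}, with $\varphi$ bounded below as is tacitly assumed) gives $\fix(f)\ne\emptyset$, while summing $d(f^i(x_0),f^{i+1}(x_0))\le\varphi(f^i(x_0))-\varphi(f^{i+1}(x_0))$ over $i=0,\dots,k-1$ when $f^k(x_0)=x_0$ gives $\sum_{i=0}^{k-1}d(f^i(x_0),f^{i+1}(x_0))\le\varphi(x_0)-\varphi(x_0)=0$, so $x_0\in\fix(f)$. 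For $(2)\Rightarrow(1)$: Zermelo's theorem (Theorem~\ref{t.fixp-ord-Zermelo}) gives $\fix(f)\ne\emptyset$, and $f^k(x_0)=x_0$ gives $x_0\preceq f(x_0)\preceq\cdots\preceq f^k(x_0)=x_0$, so antisymmetry forces $f(x_0)=x_0$. For $(6)\Rightarrow(1)$: continuity of $f$ gives $f(\xi_x)=\xi_x$ for $\xi_x:=\lim_n f^n(x)$, so $\fix(f)\ne\emptyset$, and if $f^k(x_0)=x_0$ the constant subsequences $(f^{nk}(x_0))_n\equiv x_0$ and $(f^{nk+1}(x_0))_n\equiv f(x_0)$ have a common $d$-limit, whence $f(x_0)=x_0$.

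For the core I would decompose $X$ by the equivalence relation $x\sim y\iff f^m(x)=f^n(y)$ for some $m,n\in\mathbb{N}$. Each class either contains a (necessarily unique) fixed point $p$ — and then equals the basin $B_p=\{x:f^k(x)=p\text{ for some }k\}$ — or contains none; using the axiom of choice I would assign to each fixless class $C$ a point $p_C\in\fix(f)$ (nonempty by (1)) and set $\widehat B_p:=B_p\cup\bigcup\{C:p_C=p\}$, so that $X=\bigsqcup_{p\in\fix(f)}\widehat B_p$ with each $\widehat B_p$ $f$-invariant and such that, for every $n$, the only fixed point of $f^n$ in $\widehat B_p$ is $p$ (a periodic point of $f$ in $\widehat B_p$ lies in $\operatorname{Per}(f)\cap\widehat B_p=\fix(f)\cap\widehat B_p=\{p\}$). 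To prove $(1)\Rightarrow(5)$, given $\alpha\in(0,1)$, Bessaga's theorem (Theorem~\ref{t.Bessaga}, (2)) applied to $f|_{\widehat B_p}$ supplies a complete metric $d_p$ on $\widehat B_p$ making $f|_{\widehat B_p}$ an $\alpha$-contraction with fixed point $p$; I would equip $\fix(f)$ with the discrete metric and glue the spaces $(\widehat B_p,d_p)$ along their points $p$, i.e.\ let $d=d_p$ on $\widehat B_p\times\widehat B_p$ and $d(x,y)=d_p(x,p)+1+d_q(q,y)$ for $x\in\widehat B_p$, $y\in\widehat B_q$, $p\ne q$. Since $f$ preserves the pieces and contracts toward each $p$, $f$ is $d$-nonexpansive and $d(f(x),f^2(x))\le\alpha\,d(x,f(x))$; and $d$ is complete since a $d$-Cauchy sequence is eventually inside one $\widehat B_p$ (cross-piece distances are $\ge1$), where $d_p$ is complete. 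For $(1)\Rightarrow(2)$ I would set $x\preceq y\iff y=f^n(x)$ for some $n\ge0$ or $y=p$ (where $x\in\widehat B_p$); this is a partial order (antisymmetry uses $\operatorname{Per}(f)=\fix(f)$), $f$ is progressive, and every nonempty chain has a supremum — a chain meeting $\fix(f)$ has that point as maximum, and otherwise it lies within one injective-or-terminating orbit, whose supremum is its last term or the ambient $p$. Finally $(1)\Rightarrow(6)$ follows from (5): $f$ is $d$-continuous and $d(f^n(x),f^{n+1}(x))\le\alpha^n d(x,f(x))$, so every orbit is $d$-Cauchy and hence convergent.

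The hard part is the construction underlying $(1)\Rightarrow(5)$: one must invoke Bessaga's theorem piecewise and then glue the (possibly many, possibly unbounded-diameter) pieces $\widehat B_p$ into one complete metric space without destroying global nonexpansiveness — the subtle point being that a $d$-Cauchy sequence may wander indefinitely along an infinite injective orbit inside a fixless class, converging only to the fixed point to which that class was routed, so completeness really hinges on the piecewise-complete metrics together with the rigid separation of the pieces. The case analysis establishing chain-completeness of the order in (2) is of the same flavour. Once these two ingredients are in place, all the other implications close immediately, as indicated above.
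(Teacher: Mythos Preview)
The paper does not give a proof of this theorem; it merely states it as a result of Jachymski \cite{jachym03} and moves on. So there is no paper proof to compare against, and your sketch must stand on its own.

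Your scheme of implications is sound and essentially reconstructs the standard argument. The easy links $(5)\Rightarrow(4)\Rightarrow(3)\Rightarrow(1)$, $(2)\Rightarrow(1)$, $(6)\Rightarrow(1)$ are all correct; the telescoping-sum and subsequence tricks forcing $\operatorname{Per}(f)\subset\fix(f)$ are exactly right. For $(1)\Rightarrow(5)$, decomposing $X$ into $f$-invariant pieces $\widehat B_p$, each of which contains a unique fixed point of every iterate $f^n$, is precisely what Bessaga's Theorem~\ref{t.Bessaga} needs; your glued metric $d(x,y)=d_p(x,p)+1+d_q(q,y)$ really is a complete metric (the $1$-separation forces a Cauchy sequence into a single piece), and nonexpansiveness across pieces reduces to $\alpha<1$ via $d_p(f(x),p)=d_p(f(x),f(p))\le\alpha\,d_p(x,p)$. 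The order in $(1)\Rightarrow(2)$ works, though your description of chains is slightly loose: a chain not containing $p$ need not lie inside a single \emph{forward} orbit (there may be infinitely many backward predecessors $\ldots\mapsto x_{-1}\mapsto x_0$), but in every case the only upper bounds are tail-iterates or $p$, so the supremum is still either a maximum or $p$.

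One caveat you already flag: statement~(3) as printed omits ``bounded below'' for $\varphi$, without which Caristi's Theorem~\ref{t.caristi1} does not apply and $(3)\Rightarrow\fix(f)\ne\emptyset$ can actually fail (e.g.\ $X=\mathbb{R}$, $f(x)=x-1$, $\varphi(x)=x$). This is a looseness in the theorem's wording rather than in your argument, since the $\varphi$ you build in $(5)\Rightarrow(4)$ is nonnegative and hence the equivalence survives with ``bounded below'' added throughout.
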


For two nonempty sets $A,B$ denote by $B^A$ the  family of  all mappings from $A$ to $B,\,$
$
B^A:=\{f: f:A\to B\}\,.
$

Let $(X,(\rho_i)_{i\in I})$ be a uniform space where    $\{\rho_i : i\in I\}$  is  a family of semi-metrics generating the uniformity of $X$.  Define a partial order $\preceq$ on   $X\times \mathbb{R}_+^I$ by
\begin{equation}\label{def.Jachym-ord}
(x,\varphi)\le (y,\psi) \iff \forall i\in I,\;\; \rho_i(x,y)\le \varphi(i)-\psi(i)\,,
\end{equation}
for $x,y\in X$ and $\varphi,\psi\in \mathbb{R}_+^I$.

If $(X,\rho)$ is a metric space (i.e. $I$ is a singleton and $\rho_1=\rho$ is a metric), then the  relation order \eqref{def.Jachym-ord} becomes
\begin{equation}\label{def2.Jachym-ord}
(x,\alpha)\le (y,\beta) \iff   \rho(x,y)\le \alpha-\beta\,,
\end{equation}
for $x,y\in X$ and $\alpha,\beta\in \mathbb{R}_+$, an order considered by Ekeland in connection with his variational principle.

An ordered set $(X,\le)$ is called \emph{Dedekind $\sigma$-complete} if every bounded increasing sequence has a supremum and  every bounded decreasing sequence has an infimum.  Also one says that a sequence $(x_n)$ in $X$ is \emph{order convergent}  (\emph{o-convergent}) to $x\in X$ if there exists the sequences $(y_n)$ and $(z_n)$ in $X$ such that
\bequ\label{def.o-converg}\begin{aligned}
 {\rm (i)}\;\;
&y_n\le y_{n+1}\le x_n\le z_{n+1}\le z_n\;\mbx{ for all $n\in\Nat$ and }\\{\rm(ii)}\;\; &\sup_ny_n=x=\inf_nz_n\,,
\end{aligned}\eequ
 (i.e., the sequential version of \eqref{def.o-converg-net}).

DeMarr \cite{demarr65} proved the following results concerning the order  \eqref{def2.Jachym-ord}.
\begin{theo}
   Let $(X,\rho)$  be a metric space and $\preceq$ the order on $X\times  \mathbb{R}_+ $ defined by \eqref{def2.Jachym-ord}. Then the following hold.
\ben
\item A  sequence $(x_n)$ in $X$ is convergent to $x\in X$ if and only if the sequence $\big((x_n,0)\big)_{n\in\Nat}$ is o-convergent to $(x,0)$ in $(X\times\Real,\preceq).$
\item The metric space $(X,\rho)$ is complete if and only if the ordered  set $(X\times\Real,\preceq)$  is Dedekind $\sigma$-complete.
\een\end{theo}

He used these results to prove the following fixed point theorem.
\begin{theo}[\cite{demarr65}]   Let $(X,\rho)$  be a complete metric space and $\preceq$ the order on $X\times  \mathbb{R}_+ $ defined by \eqref{def2.Jachym-ord}. If
$f:X\times\Real\to X\times\Real$ is an increasing map for which there exist $\xi_0,\xi_1\in X\times\Real$ such that $\xi_0\le f(\xi_0)\le f(\xi_1)\le \xi_1,$
then $f$ has a fixed point.\end{theo}

Jachymski \cite{jachym98a} extended these  results to uniform spaces and the order  \eqref{def.Jachym-ord}.
\begin{theo}\label{t1.Jachym}
Let $(X,(\rho_i)_{i\in I})$  be a uniform space and $\preceq$ the order on $X\times  \mathbb{R}_+^I$ defined by \eqref{def.Jachym-ord}. Then the following are equivalent.
\begin{enumerate}
\item Every sequence $(x_n)$ in $X$ such that   $\sum_{n=1}^\infty\rho_i(x_n,x_{n+1})<\infty$,   for all $i\in I$, is convergent.
\item Every countable chain in $(X \times\mathbb{R}_+^I,\preceq)$ has a supremum.
\item Every increasing sequence in $(X \times\mathbb{R}_+^I,\preceq)$ has a supremum.
\end{enumerate}

In particular, if the space $X$ is sequentially complete, then each of the above
 conditions holds.
 \end{theo}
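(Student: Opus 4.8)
The plan is to prove the three statements equivalent via the cycle (1)$\Rightarrow$(2)$\Rightarrow$(3)$\Rightarrow$(1) and then read off the final assertion. The implication (2)$\Rightarrow$(3) is immediate since an increasing sequence is a countable chain. For everything else I would work from the unpacked form of the order: by \eqref{def.Jachym-ord}, $(x_n,\varphi_n)\preceq(x_{n+1},\varphi_{n+1})$ means exactly that $\rho_i(x_n,x_{n+1})\le\varphi_n(i)-\varphi_{n+1}(i)$ for every $i\in I$. In particular each sequence $(\varphi_n(i))_n$ is nonincreasing and bounded below by $0$, so $\varphi(i):=\inf_n\varphi_n(i)=\lim_n\varphi_n(i)\ge 0$ exists, and telescoping these inequalities gives $\sum_{n=1}^{\infty}\rho_i(x_n,x_{n+1})\le\varphi_1(i)<\infty$ for every $i\in I$.

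For (1)$\Rightarrow$(2) I would first reduce a countable chain $C=\{c_n:n\in\mathbb N\}$ to an increasing sequence: since $C$ is totally ordered by $\preceq$, the partial maxima $d_n:=\max\{c_1,\dots,c_n\}$ exist, form an increasing sequence, and are cofinal in $C$ (as $c_m\preceq d_m$), so $C$ and $\{d_n:n\in\mathbb N\}$ have the same upper bounds and it suffices to produce $\sup\{d_n\}$. So let $(x_n,\varphi_n)$ be increasing; by the preceding remark $\sum_n\rho_i(x_n,x_{n+1})<\infty$ for all $i$, and hypothesis (1) supplies $x\in X$ with $x_n\to x$. I claim $(x,\varphi)=\sup_n(x_n,\varphi_n)$: letting $k\to\infty$ in $\rho_i(x_n,x_{n+k})\le\varphi_n(i)-\varphi_{n+k}(i)$, and using that each map $\rho_i(x_n,\cdot)$ is continuous (from $|\rho_i(u,y)-\rho_i(u,z)|\le\rho_i(y,z)$) together with $\varphi_{n+k}(i)\to\varphi(i)$, yields $\rho_i(x_n,x)\le\varphi_n(i)-\varphi(i)$, i.e. $(x_n,\varphi_n)\preceq(x,\varphi)$; and if $(y,\psi)$ is any upper bound, then $\rho_i(x_n,y)\le\varphi_n(i)-\psi(i)$ for all $n$, and passing to the limit $n\to\infty$ (again by continuity of $\rho_i$) gives $\rho_i(x,y)\le\varphi(i)-\psi(i)$, i.e. $(x,\varphi)\preceq(y,\psi)$.

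For (3)$\Rightarrow$(1) I would run the same bookkeeping in reverse: given $(x_n)$ with $\sum_n\rho_i(x_n,x_{n+1})<\infty$ for all $i$, set $\varphi_n(i):=\sum_{k\ge n}\rho_i(x_k,x_{k+1})$; then $\varphi_n(i)-\varphi_{n+1}(i)=\rho_i(x_n,x_{n+1})$ shows $(x_n,\varphi_n)$ is increasing, so by (3) it has a supremum $(x,\varphi)$, whence $0\le\rho_i(x_n,x)\le\varphi_n(i)-\varphi(i)\le\varphi_n(i)\to 0$ as the tail of a convergent series, so $x_n\to x$. Finally, if $X$ is sequentially complete, any $(x_n)$ with $\sum_n\rho_i(x_n,x_{n+1})<\infty$ for all $i$ is Cauchy for the uniformity — a basic entourage involves only finitely many indices, and the tails of the corresponding series, hence $\rho_i(x_n,x_m)$ for $m>n$, are eventually $<\varepsilon$ — so it converges, giving (1).

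I do not expect a serious obstacle; the argument is essentially telescoping plus the continuity of the semimetrics. The two points that require care are the cofinality reduction in (1)$\Rightarrow$(2) (one must use that a chain is totally ordered so that finite maxima exist) and the standing hypothesis, implicit in calling $\preceq$ an \emph{order}, that the $\rho_i$ separate the points of $X$: this separation is exactly what makes $\preceq$ antisymmetric and the suprema produced above unique. Without it the same proof goes through verbatim with ``a supremum'' interpreted modulo the equivalence $x\sim y\iff\rho_i(x,y)=0$ for all $i\in I$.
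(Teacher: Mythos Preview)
Your argument is correct. The paper states this theorem without proof (it is attributed to Jachymski \cite{jachym98a}), so there is no in-paper proof to compare against; your cycle (1)$\Rightarrow$(2)$\Rightarrow$(3)$\Rightarrow$(1) via telescoping and continuity of the semimetrics is exactly the natural route, and your remarks on the cofinality reduction and on the need for the family $(\rho_i)$ to separate points (so that $\preceq$ is genuinely antisymmetric and suprema are unique) are apt.
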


 In the case of a metric space $(X,\rho)$ one obtains a characterization of completeness.

 \begin{theo}\label{t2.Jachym}
Let $(X,\rho)$  be a metric space and $\preceq$ the order on $X\times  \mathbb{R}_+ $ defined by \eqref{def2.Jachym-ord}. Then the following are equivalent.
\begin{enumerate}
\item The metric space $X$ is complete.
\item Every   chain in $(X \times\mathbb{R}_+,\preceq)$ has a supremum.
\item Every countable chain in $(X \times\mathbb{R}_+,\preceq)$ has a supremum.
\item Every increasing sequence in $(X \times\mathbb{R}_+,\preceq)$ has a supremum.
\end{enumerate} \end{theo}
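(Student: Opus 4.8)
My plan is to establish the cycle $(1)\Rightarrow(2)\Rightarrow(3)\Rightarrow(4)\Rightarrow(1)$. The implications $(2)\Rightarrow(3)\Rightarrow(4)$ are immediate, since an increasing sequence is a countable chain and a countable chain is a chain; the real content lies in $(4)\Rightarrow(1)$ and $(1)\Rightarrow(2)$. The fastest way to obtain $(4)\Rightarrow(1)$, and at the same time $(2)\Leftrightarrow(3)\Leftrightarrow(4)$, is to invoke Theorem~\ref{t1.Jachym} with $I$ a singleton and $\rho_1=\rho$: it says that $(2)$, $(3)$, $(4)$ are mutually equivalent and each equivalent to the property $(*)$ that every sequence $(x_n)$ in $X$ with $\sum_{n=1}^\infty\rho(x_n,x_{n+1})<\infty$ is convergent. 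It will then remain only to check $(1)\Leftrightarrow(*)$, which is the classical characterization of metric completeness: if $\sum_n\rho(x_n,x_{n+1})<\infty$ then $(x_n)$ is Cauchy by the triangle inequality, hence convergent when $X$ is complete; conversely, from a Cauchy sequence one extracts $n_1<n_2<\cdots$ with $\rho(x_m,x_{n_k})<2^{-k}$ for all $m\ge n_k$, so that $(x_{n_k})$ has summable consecutive distances, converges by $(*)$, and forces the whole Cauchy sequence to converge.

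If a self-contained argument for $(4)\Rightarrow(1)$ is preferred, I would argue directly. Starting from a Cauchy sequence, pass to a subsequence, again denoted $(x_n)$, with $\sum_n\rho(x_n,x_{n+1})<\infty$, and set $\alpha_n=\sum_{k\ge n}\rho(x_k,x_{k+1})$. Since $\rho(x_n,x_{n+1})=\alpha_n-\alpha_{n+1}$, the pairs $(x_n,\alpha_n)$ form a $\preceq$-increasing sequence, so by $(4)$ they have a supremum $(x,\alpha)$. From $(x_n,\alpha_n)\preceq(x,\alpha)$ one reads off $\alpha\le\alpha_n$ for all $n$, whence $\alpha=0$ because $\alpha_n\to0$, and then $\rho(x_n,x)\le\alpha_n\to0$, i.e. $x_n\to x$.

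The delicate direction, which I expect to be the main obstacle, is $(1)\Rightarrow(2)$. Let $C$ be a chain in $X\times\mathbb{R}_+$. The structural fact to exploit is that $\preceq$ reverses the second coordinate: $(u,s)\preceq(v,t)$ forces $t\le s$ together with $\rho(u,v)\le s-t$. Hence $\{t:(v,t)\in C\}$ is bounded below by $0$; let $\mu$ be its infimum. Since $C$ is a chain and $(v,t)\preceq(v',t')$ with $t'<t$ is impossible, I would extract from $C$ a $\preceq$-increasing sequence $((x_n,\alpha_n))_n$ with $\alpha_n$ decreasing to $\mu$ (if two chosen terms share their second coordinate, they must coincide). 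For $m>n$ this gives $\rho(x_n,x_m)\le\alpha_n-\alpha_m\to0$, so $(x_n)$ is Cauchy and converges, by $(1)$, to some $x\in X$. The claim is that $(x,\mu)=\sup C$.

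To see that $(x,\mu)$ is an upper bound, take $(y,\beta)\in C$. Comparability of $(y,\beta)$ with each $(x_n,\alpha_n)$ holds, so either $(x_{n_k},\alpha_{n_k})\preceq(y,\beta)$ along a subsequence, giving $\rho(x_{n_k},y)\le\alpha_{n_k}-\beta$ and, in the limit, $\rho(x,y)\le\mu-\beta\le0$, which forces $(y,\beta)=(x,\mu)$; or $(y,\beta)\preceq(x_n,\alpha_n)$ for all large $n$, giving $\rho(y,x_n)\le\beta-\alpha_n$ and, in the limit, $\rho(y,x)\le\beta-\mu$, i.e. $(y,\beta)\preceq(x,\mu)$. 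To see it is least, any upper bound $(z,\gamma)$ of $C$ satisfies $\gamma\le\mu$ and $\rho(x_n,z)\le\alpha_n-\gamma$, hence $\rho(x,z)\le\mu-\gamma$, i.e. $(x,\mu)\preceq(z,\gamma)$. The one place that needs care is the degenerate case $\beta=\mu$, where $\rho(x,y)\le\mu-\beta$ collapses to $\rho(x,y)\le0$ and must be read as an identification of points rather than a strict comparison.
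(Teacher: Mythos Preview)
The paper states Theorem~\ref{t2.Jachym} as a result of Jachymski \cite{jachym98a} and does not supply its own proof, so there is nothing to compare your argument against. Your cycle $(1)\Rightarrow(2)\Rightarrow(3)\Rightarrow(4)\Rightarrow(1)$ is correct and self-contained.

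One small inaccuracy worth flagging: you write that Theorem~\ref{t1.Jachym} already yields $(2)\Leftrightarrow(3)\Leftrightarrow(4)$, but in fact that theorem only treats \emph{countable} chains and increasing sequences, so it gives $(3)\Leftrightarrow(4)$ (together with their equivalence to the summable-consecutive-distances criterion) and says nothing about arbitrary chains. This does not damage your proof, since you supply an independent argument for $(1)\Rightarrow(2)$ and the trivial $(2)\Rightarrow(3)$ closes the loop anyway.

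Your direct $(4)\Rightarrow(1)$ via $\alpha_n=\sum_{k\ge n}\rho(x_k,x_{k+1})$ is the standard and clean argument. In $(1)\Rightarrow(2)$, the key observation that the second-coordinate map $C\ni(y,\beta)\mapsto\beta$ is injective (two chain elements with the same $\beta$ are forced equal) lets you extract a $\preceq$-cofinal increasing sequence with $\alpha_n\searrow\mu$, and the rest is routine. In the ``subsequence'' case of your upper-bound verification, note that $\beta\le\alpha_{n_k}\to\mu\le\beta$ first forces $\beta=\mu$, and only then does the limit $\rho(x,y)\le\mu-\beta=0$ make sense; you have this, just stated a bit tersely.
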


Jachymski applied these results to obtain proofs of fixed point results for mappings on partially ordered sets.  In their turn, these order fixed point results  were applied to obtain simpler proofs and extensions to various fixed point results in metric and in uniform spaces, see, for instance, the papers by Jachymski \cite{jachym97,jachym98b,jachym98a,jachym01}, and the references cited therein.

Klime\v s \cite{klimes84a} has found a  common extension to Theorems \ref{t.fixp-ord-Zermelo} and \ref{t.fixp-ord-Kn-Tarski}. Let $(X,\le)$ be partially ordered. A mapping $f:X\to X$  is called \emph{partially isotone} if for all $x,y\in X$
\begin{equation}
 \label{def.part-isot}
\big(x\le y \wedge x\le f(y)\wedge f(x)\le y\big)\;\Rightarrow\; f(x)\le f(y)\,.
 \end{equation}

 It is obvious that increasing mappings, ``progressive" mappings (satisfying $x\le f(x)$) and  ``regressive" mappings (satisfying $f(x)\le x$) are partially isotone.

 The mapping $f$ is called \emph{comparable} if $x$ is comparable with $f(x)$ for every $x\in X$. The partially ordered set $X$ is called inductive  if  every chain   in $X$ has an upper bound, and \emph{semiuniform} if for every chain $C$ in $X$ the set of upper bounds of $C$ is downward directed.

 Klime\v s \cite{klimes84a} proved that:
\begin{itemize}
  \item every relatively isotone self-mapping on a complete lattice has a fixed point (Theorem 1.2);
  \item if the partially ordered set $X$ is chain complete (i.e. every chain in $X$, including the empty chain, has a supremum) then every relatively isotone self-mapping on $X$ has a fixed point (Theorem 1.6);
  \item  a lattice $X$ is complete if and only if every comparable  self-mapping on $X$ has a fixed point (Theorem 2.2);
 \item     a semiuniformly   partially ordered set $X$ is chain complete if and only if every relatively isotone self-mapping on $X$ has a fixed point (Theorem 2.3).
\end{itemize}

In \cite{klimes85} he considered \emph{ascending} maps $f:X\to X$, meaning that $f(x)\le y$ implies
$f(x)\le f(y)$ for all $x,y\in X$, and proved that the partially ordered set $X$ is inductive if and only if every ascending  self-mapping on $X$ has a fixed point. For other related results, see \cite{klimes82} and   \cite{klimes84b}.  For instance, in  \cite{klimes84b} one considers mappings $f:X\to X,\, X$ a partially ordered set, such that $x\le y$ and $x\le f(x)$ implies $f(x)\le f(y)$, called by the author extensively isotone.

\subsection{Fixed points in ordered metric spaces}
The title of this subsection could  be a little confusing - in contrast to ordered Banach spaces, or Banach lattices,    it concerns a metric space $(X,\rho)$  equipped with  an order relation $\le$ that does not have any connection with the metric structure. Fixed points are proved for  mappings $f:X\to X$ which   are monotonic (increasing or decreasing) with respect to the order    and contractive  with respect to the metric, but in a restricted manner in the following sense: there exists $0\le\alpha<1$ such that
\bequ\label{def.ord-contr}
\rho(f(x),f(y))\le\alpha\rho(x,y)\;\;\mbox{if   } x,y\in X\;\;\mbox{are comparable (i.e. }\;
 x\le  y\;\,\mbox{or}\;\, y\le x\mbox{)}\,.
\eequ

\begin{theo}\label{t.fixp-ord-m-sp}
Let $(X,\rho)$ be a complete metric space equipped with a  partial order $\le$ and    $f:X\to X$   a mapping  satisfying \eqref{def.ord-contr}. Then the following results hold.
\begin{enumerate}
  \item {\rm (\cite{nieto05})}  If  the mapping  $f$ is increasing and continuous and there exists
  $x_0\in X$        such that $x_0\le f(x_0)$, then $f$ has a fixed point.
\item {\rm (\cite{nieto05})}    Suppose that  for every increasing sequence $(x_n)$ in $X$ converging to some   $x\in X$ it holds $x_n\le x\,$ for all $n\in\mathbb{N}.$ If   $f$ is increasing and   there
 exists $x_0\in X$   such that $x_0\le f(x_0)$, then $f$ has a fixed point.
    \item {\rm (\cite{ran-reur04})} Suppose that  every pair $x,y$ of elements in $X$ has an upper bound or  a lower bound.
   If $f$ is continuous and monotone (i.e. either increasing or decreasing) and   there exists
        $x_0\in X$ such that $x_0\le f(x_0)$ or $f(x_0)\le x_0$,
           then $f$ has a unique fixed point $\overline{x}$  and for every $x\in X$ the sequence $(f^n(x))_{n\in\mathbb{N}}$ converges to $\overline{x}$.
\item {\rm (\cite{khamsi15})} Assume that the ordered set $(X,\le)$ admits a smallest  element
 $ x_0$. Then the conclusions from {\rm (3)} hold for every continuous increasing
 function $f:X\to X$ satisfying \eqref{def.ord-contr}.
\end{enumerate}
\end{theo}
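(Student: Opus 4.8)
The plan is to use the smallest element $x_0$ to force the Picard orbit of \emph{every} point $x$ to remain comparable, term by term, with the orbit of $x_0$, so that the restricted contractivity \eqref{def.ord-contr} becomes usable along whole orbits even though $f$ need not be a contraction on all of $X$.

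First I would note that $x_0\le f(x_0)$ because $x_0$ is least, and then, $f$ being increasing, an easy induction gives $f^n(x_0)\le f^{n+1}(x_0)$ for all $n$; in particular $f^n(x_0)$ and $f^{n+1}(x_0)$ are comparable. Applying \eqref{def.ord-contr} to these consecutive pairs and iterating produces the classical estimate
\[
\rho\bigl(f^n(x_0),f^{n+1}(x_0)\bigr)\le\alpha^n\,\rho\bigl(x_0,f(x_0)\bigr),\qquad n\in\mathbb{N}.
\]
Hence the consecutive distances of $(f^n(x_0))$ are dominated by a convergent geometric series, so $(f^n(x_0))$ is Cauchy and, by completeness of $(X,\rho)$, converges to some $\overline x\in X$. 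Continuity of $f$ then yields $f(\overline x)=\lim_n f\bigl(f^n(x_0)\bigr)=\lim_n f^{n+1}(x_0)=\overline x$, so $\overline x$ is a fixed point of $f$.

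Next, for an arbitrary $x\in X$ I would use $x_0\le x$ together with monotonicity to get $f^n(x_0)\le f^n(x)$ for every $n$; thus $f^{n-1}(x_0)$ and $f^{n-1}(x)$ are comparable, and repeated use of \eqref{def.ord-contr} gives
\[
\rho\bigl(f^n(x),f^n(x_0)\bigr)\le\alpha^n\,\rho(x,x_0)\longrightarrow 0\quad(n\to\infty).
\]
Combining this with $f^n(x_0)\to\overline x$ and the triangle inequality shows $f^n(x)\to\overline x$, which is exactly the convergence asserted in (3). Uniqueness is then immediate: if $y=f(y)$, then $f^n(y)=y$ for all $n$, while the displayed estimate (with $x=y$) forces $f^n(y)\to\overline x$, whence $y=\overline x$.

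There is no genuine obstacle here; the only point that needs care is that \eqref{def.ord-contr} holds \emph{only} for comparable pairs, so at each application one must check that the iterates in question are comparable — and this is precisely what the smallest element delivers, since $x_0\le x$ propagates to $f^n(x_0)\le f^n(x)$ under the increasing map $f$. (Note that continuity of $f$ is needed only in the first part, to pass to the limit in $f(f^n(x_0))=f^{n+1}(x_0)$; once $\overline x$ is known to be fixed, everything else is purely metric and uses only the geometric decay above.)
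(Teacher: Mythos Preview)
Your proof is correct and follows essentially the same route as the paper: the paper's proof of (4) simply observes that $x_0\le f(x_0)$ and $x_0\le x$ for all $x$, then refers to the argument of (3), which in turn relies on the computation from (1) --- precisely the geometric-decay estimates for $\rho(f^n(x_0),f^{n+1}(x_0))$ and $\rho(f^n(x),f^n(x_0))$ that you carry out explicitly. Your write-up is in fact a bit more self-contained than the paper's, and your uniqueness argument (via $f^n(y)=y\to\overline x$) makes explicit what the paper leaves implicit in ``for every $x\in X$ the sequence $(f^n(x))$ converges to $\overline x$''.
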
    \begin{proof}
  (1)\; The proof is simple. Since $f$ is increasing
  $$
  x_0\le f(x_0)\;\Rightarrow\;  f(x_0)\le f^2(x_0)   \;\Rightarrow\;  f^2(x_0)\le f^3(x_0) \;\Rightarrow\dots\,$$
  showing that the sequence $(f^n(x_0))$ is increasing. By \eqref{def.ord-contr}
$$
\rho(f^{n}(x_0),f^{n+1}(x_0))\le \alpha  \rho(f^{n-1}(x_0),f^{n}(x_0))\le\dots\le\alpha^n\rho(x_0,f(x_0))\,,
$$
for all $n\in\mathbb{N}$. But then, by the triangle inequality,
$$
\rho(f^{n}(x_0),f^{n+k}(x_0))\le (\alpha^n+\alpha^{n+1}+\dots+\alpha^{n+k-1})d(x_0,f(x_0))
\longrightarrow 0$$
as $n\to \infty$, uniformly with respect to $k\in\mathbb{N}$, which shows that   $(f^n(x_0))$
is a Cauchy sequence and so, by the completeness of the metric space $X$, it converges
to some $\overline x\in X$. By the continuity of $f$,
$$ f(\overline x)=f(\lim_nf^n(x_0))=\lim_nf^{n+1}(x_0) =\overline x\,.$$

(2)\; As in the proof of (1), the sequence $(f^n(x_0))$ is increasing and convergent to some
$\overline x\in X$. By hypothesis, it follows $f^n(x_0)\le \overline x$ for all $n\in\mathbb{N}$, so that,
$$
\rho(f^{n+1}(x_0),f(\overline x))\le\alpha\rho(f^{n}(x_0),\overline x)\to 0\;\mbox{ as }\; n\to\infty\,.$$

It follows $\rho(\overline x,f(\overline x))=0,$ that is
$f(\overline x)=\overline x$.

(3)\; Suppose that $f$ is increasing and that there   exists $x_0\in X$   such that $x_0\le f(x_0)$.
Then $(f^n(x_0))$ is an increasing sequence, convergent to some $\overline x\in X$
which is a fixed point for $f$.
The proof will be done if we show that, for every $x\in X$, the sequence $(f^n(x))$ is
convergent to $ \overline x$.

Let $x\in X$. If $x\le x_0$, then $f^n(x)\le f^n(x_0) $ so that,  by \eqref{def.ord-contr},
$$
\rho(f^n(x),f^n(x_0))\le \alpha   \rho(f^{n-1}(x),f^{n-1}(x_0))\le\dots\le \alpha^n\rho(x,x_0) \to 0\,.
$$

It follows $\lim_nf^n(x)=\lim_nf^n(x_0)=\overline x$. The situation is the same if $x\ge x_0$.

If $x\in X$ is not comparable to $x_0$, then, by hypothesis, $x$ and $x_0$ have a
lower bound  or an upper bound in $(X,\le)$.

 If they have a lower bound $x_1$, then $x_1\le x_0$ and $x_1\le x$, so that by the first part of the proof
 $$
 \overline x =\lim_nf^n(x_0)= \lim_nf^n(x_1)=\lim_nf^n(x) \,.$$

 The situation is the same if $x$ and $x_0$ have an upper bound $x_2$  in $X$.

 (4)\; In this case $x_0\le f(x_0)$ and, for every  $x\in X,\, x_0\le x,$   so we can
 proceed as in the proof of (3).
 \end{proof}

\begin{remark}
  Usually results as those from Theorem \ref{t.fixp-ord-m-sp} are called fixed point of Ran-Reurings type \cite{ran-reur04}. \end{remark}

Refinements of the above results were given in  \cite{jachym11c},  \cite{nieto07b}, \cite{nieto07a} and \cite{petrusel-rus06}.

\section{Partial metric spaces}\label{S.pm-sp}

These spaces were introduced by Matthews \cite{matths92c,matths92a,matths92b,matths94} in connection with his research on computer science. They are only $T_0$ topological spaces, a feature that fits the needs of denotational semantics of dataflow networks. In this section we shall  first present the basic notions and results following \cite{kopper-matths09}, \cite{matths92a, matths92b} and \cite{matths94} (see also the books \cite{Kirk-Shah} and \cite{Rus-PP}). Although all the  included  results on partial metric spaces can be found in the papers of Matthews or in other ones dealing with fixed point results in such spaces, we include  full proofs of the results, for reader's convenience. At  the same time, different  approaches concerning convergence of sequences and completeness notions in partial metric spaces, used by various authors, are put in a  proper light.

\subsection{Definition and topological properties}\label{Ss.pm-sp}

Let $X$ be a nonempty set.
\begin{defi}\label{def.pm}   A mapping $p:X\times X\to \mathbb{R}_+$  satisfying   the following conditions
\begin{align*}
  &{\rm (PM1)}\;\; x=y\iff p(x,x)=p(y,y)=p(x,y)\;\; \mbox{(indistancy implies equality)}; \\
&{\rm (PM2)}\;\;  0\le p(x,x)\le p(x,y)\;\; \mbox{(nonnegativity and small self-distances)};\\
&{\rm (PM3)}\;\; p(x,y)=p(y,x)\;\; \mbox{(symmetry)};\\
&{\rm (PM4)}\;\; p(x,z)\le p(x,y)+p(y,z)-p(y,y)\;\;\mbox{(triangularity)}\,,
\end{align*}
for all $x,y,z\in Z$,
is called a \emph{partial metric} on $X$. The pair $(X,p)$ is called a \emph{partial metric space}.
\end{defi}

This means that, in contrast to the metric   case, one admits the possibility that $d(x,x)>0$ for some points $x\in X,$ a property called ``self-distancy".

A point $x\in X$ is called

\textbullet\; \emph{complete} if $p(x,x)=0$,\; and

\textbullet\; \emph{partial} if $p(x,x)>0$,

\noindent giving an explanation for the term ``partial" coined by Matthews.

The following property follows from (PM2) and (PM1).

\begin{equation}\label{eq1.pm-sp}
p(x,y)=0\;\Longrightarrow\; x=y\;\; \mbox{(indistancy implies equality)}\,.
\end{equation}

The following characterization of partial metric spaces is given by  M. \& V. Anisiu \cite{anisiu16}.
\begin{theo}
  A function $p \colon X \times  X \to [0,\infty)$ is a partial metric on $X$ if and only
if there exist a metric $d$ and a nonexpansive  with respect to
$d$  function $\,\varphi\colon X \to [0,\infty)$, such that
$$p(x,y) = d(x,y) +\varphi(x) + \varphi(y) \quad\mbox{ for all }\quad x,y \in X\,.$$
Furthermore, $d$ and $\varphi$ are uniquely determined by $p$.
\end{theo}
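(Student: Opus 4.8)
The plan is to prove both implications via the explicit formulas
\[
\varphi(x)=\tfrac12\,p(x,x)\qquad\text{and}\qquad d(x,y)=p(x,y)-\tfrac12\,p(x,x)-\tfrac12\,p(y,y).
\]
I would dispose of uniqueness first, since it also motivates these formulas: if $p(x,y)=d(x,y)+\varphi(x)+\varphi(y)$ holds for all $x,y$, then putting $y=x$ forces $\varphi(x)=\tfrac12 p(x,x)$, and then $d(x,y)=p(x,y)-\varphi(x)-\varphi(y)$ is determined as well. So any representing pair $(d,\varphi)$ must be the one above, which gives the ``furthermore'' part at once.

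For the ``only if'' direction, start from a partial metric $p$, define $\varphi$ and $d$ as above, and check the required properties. First, $\varphi\ge 0$ is immediate from (PM2). For $d$: nonnegativity $d(x,y)\ge 0$ follows because $\tfrac12 p(x,x)+\tfrac12 p(y,y)\le p(x,y)$ by two uses of (PM2); $d(x,x)=0$ is trivial; and $d(x,y)=0$ means $p(x,y)=\tfrac12\big(p(x,x)+p(y,y)\big)$, which together with $p(x,x)\le p(x,y)$ and $p(y,y)\le p(x,y)$ squeezes to $p(x,x)=p(y,y)=p(x,y)$, whence $x=y$ by (PM1). Symmetry of $d$ is (PM3). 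The triangle inequality is exactly (PM4): after cancelling the $\varphi$-terms, $d(x,y)+d(y,z)-d(x,z)=p(x,y)+p(y,z)-p(y,y)-p(x,z)\ge 0$. Finally, $\varphi$ is nonexpansive with respect to $d$: assuming without loss of generality $p(x,x)\ge p(y,y)$, the inequality $|\varphi(x)-\varphi(y)|\le d(x,y)$ reduces to $p(x,x)\le p(x,y)$, which is (PM2) again.

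For the ``if'' direction, take a metric $d$ and a nonexpansive $\varphi\colon X\to[0,\infty)$, set $p(x,y)=d(x,y)+\varphi(x)+\varphi(y)$, and verify (PM1)--(PM4). Symmetry (PM3) is clear. For (PM4) one computes $p(x,y)+p(y,z)-p(y,y)=d(x,y)+d(y,z)+\varphi(x)+\varphi(z)\ge d(x,z)+\varphi(x)+\varphi(z)=p(x,z)$ by the triangle inequality for $d$. Property (PM2), i.e.\ $p(x,x)\le p(x,y)$, is just $2\varphi(x)\le d(x,y)+\varphi(x)+\varphi(y)$, equivalently $\varphi(x)-\varphi(y)\le d(x,y)$, which is the nonexpansiveness of $\varphi$. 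For (PM1): if $p(x,x)=p(y,y)=p(x,y)$, then $2\varphi(x)=2\varphi(y)$ gives $\varphi(x)=\varphi(y)$, and substituting back yields $d(x,y)=0$, hence $x=y$; the converse is obvious.

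There is no real obstacle here; the proof is a sequence of short verifications. The only places demanding a little care are the ``indistancy'' arguments — the squeezing from (PM2) used to derive $x=y$ from $d(x,y)=0$, and the analogous reduction in checking (PM1) — where one must exploit both of the inequalities $p(x,x)\le p(x,y)$ and $p(y,y)\le p(x,y)$ simultaneously; and one should keep in mind that the hypothesis ``$\varphi$ nonexpansive with respect to $d$'' is precisely the Lipschitz bound $|\varphi(x)-\varphi(y)|\le d(x,y)$, which is the hinge of both directions.
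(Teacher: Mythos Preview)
Your proof is correct and complete; each verification goes through exactly as you describe. Note that the paper does not prove this theorem itself but merely cites it from Anisiu~\cite{anisiu16}, so there is no ``paper's own proof'' to compare against --- your argument via the explicit formulas $\varphi(x)=\tfrac12 p(x,x)$ and $d(x,y)=p(x,y)-\tfrac12 p(x,x)-\tfrac12 p(y,y)$ (which is essentially $\tfrac12 q^s$ in the paper's notation) is the natural one and presumably the one in the cited source.
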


The following two examples of partial metric spaces are related to some questions in theoretical computer science.
\begin{example}\label{ex1.pm-N}
  The function  $p:2^\mathbb{N}\times 2^\mathbb{N}\to[0,\infty)$ defined by
$$ p(x,y)=1-\sum_{n\in x\cap y}2^{-n}\,,$$
with the convention that the sum over the empty set is 0, is a partial metric on $2^\mathbb{N}$.
\end{example}

\begin{example}\label{ex2.pm-N}
  For  a nonempty set  $S$ let   $S^\infty=S^*\cup S^{\mathbb{N}} $ be the set of all finite (belonging to $S^*$) or infinite sequences (belonging to $ S^{\mathbb{N}}$). The length $\ell(x)$ of a finite sequence $x=(x_1,x_2,\dots,x_n)$ is $n$ and the length of an infinite sequence $x:\mathbb{N}\to S$ is $\infty$.
Put  $i(x,y)=\sup\{n\in\mathbb{N} : n\le\ell(x)\wedge\ell(y),\, x_j=y_j,\, \forall j< n\}$, and define
  $$p(x,y)=2^{-i(x,y)},\,\quad x,y\in S^\infty\,,$$
with the convention $2^{-\infty}=0$,   is a partial metric on $S^\infty$.

The function $p$ is a metric on $S^\mathbb{N}$, called the Baire metric, and a partial metric on $S^*\cup S^{\mathbb{N}} $, because    $p(x,x)=2^{-n}>0$ for $x=(x_1,\dots,x_n)\in S^*$.
\end{example}

We define the open balls as in the metric case:
\begin{equation}\label{def.ball-pm}
B_p(x,\varepsilon):=\{y\in X : p(x,y)<\varepsilon\}\,,
\end{equation}
for $x\in S^\infty$ and $\varepsilon >0$.

\begin{remark}\label{re2.pm-sp} In this case the possibility that $B_p(x,\varepsilon)=\emptyset$ is not excluded.

 If $p(x,x)>0$, then
$B_p(x,\varepsilon)=\emptyset$ for every $0<\varepsilon\le p(x,x)$.

If $B_p(x,\varepsilon)\ne\emptyset$, then $x\in B_p(x,\varepsilon)$.
\end{remark}

Indeed, by (PM2), $p(x,y)\ge p(x,x)\ge \varepsilon$ for every $y\in X$ implies $B_p(x,\varepsilon)=\emptyset$.
Also, if $y\in B_p(x,\varepsilon)$, then, again  by (PM2), $p(x,x)\le p(x,y)< \varepsilon,$ i.e.  $x\in B_p(x,\varepsilon)$.

Consider also the balls
 \begin{equation}\label{def.ball2-pm}
 B'_p(x,\varepsilon):=\{y\in X : p(x,y)<\varepsilon +p(x,x)\}\,,\end{equation}
 for $ x\in X$ and $ \varepsilon>0$.  It is obvious that $x\in B'_p(x,\varepsilon).$

The following proposition  contains some  properties of  these two kinds of  balls.
\begin{prop}\label{p1.pm-sp} Let $(X,p)$ be a partial metric space.
\begin{enumerate}
\item
If $y\in B_p(x,\varepsilon)$ then
$$
y\in B_p(y,\delta)\subseteq B_p(x,\varepsilon)\,,$$
where $\delta:=\varepsilon -p(x,y)+p(y,y)>0$.
\item The balls $B_p$ and $B'_p$ are related by the following equalities:
\begin{equation}\label{eq1.ball-pm-sp}
B'_p(x,\varepsilon)=B_p(x,\varepsilon+p(x,x))\,,
\end{equation}
and
$$
B_p(x,\varepsilon)=\begin{cases} B'_p(x,\varepsilon-p(x,x))\quad&\mbox{if}\quad \varepsilon>p(x,x),\\
 \emptyset   &\mbox{if}\quad 0<\varepsilon\le p(x,x).
\end{cases}$$
\item For the balls $B', \, z\in B'_p(z,r)$ for any $z\in X$ and $ r>0.$  Also if $\eps'>0$ and  $y\in B'_p(x,\eps')$, then
$$
B'_p(y,\delta')\sse B'_p(x,\eps')\,,$$
where $\delta':=\eps'+p(x,x)-p(x,y).$
\end{enumerate}
\end{prop}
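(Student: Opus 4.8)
The plan is to derive everything directly from the four axioms (PM1)--(PM4), with no auxiliary constructions needed; each assertion reduces to a one-line manipulation.

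For part (1), I would first check that $\delta$ is positive: the hypothesis $y\in B_p(x,\varepsilon)$ means $p(x,y)<\varepsilon$, and $p(y,y)\ge 0$ by (PM2), so $\delta=(\varepsilon-p(x,y))+p(y,y)>0$. The membership $y\in B_p(y,\delta)$ amounts to $p(y,y)<\delta$, which after cancelling $p(y,y)$ is exactly the inequality $0<\varepsilon-p(x,y)$ just noted. For the inclusion $B_p(y,\delta)\subset B_p(x,\varepsilon)$, take any $z$ with $p(y,z)<\delta$ and apply the triangle inequality (PM4): $p(x,z)\le p(x,y)+p(y,z)-p(y,y)< p(x,y)+\delta-p(y,y)=\varepsilon$, where the last equality is just the definition of $\delta$. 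This single use of (PM4) — rather than of (PM2) alone — is the substantive point of the argument.

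For part (2), the first identity $B'_p(x,\varepsilon)=B_p(x,\varepsilon+p(x,x))$ is immediate, since both sides are by definition $\{y\in X: p(x,y)<\varepsilon+p(x,x)\}$. For the second, if $\varepsilon>p(x,x)$ then $\varepsilon-p(x,x)>0$, so $B'_p(x,\varepsilon-p(x,x))$ is defined and, substituting into the definition of $B'_p$, equals $\{y: p(x,y)<(\varepsilon-p(x,x))+p(x,x)\}=B_p(x,\varepsilon)$. If instead $0<\varepsilon\le p(x,x)$, then (PM2) gives $p(x,y)\ge p(x,x)\ge\varepsilon$ for every $y\in X$, whence $B_p(x,\varepsilon)=\emptyset$; this was already recorded in Remark~\ref{re2.pm-sp} and can simply be cited.

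Since every step is a short algebraic manipulation of the axioms, there is no real obstacle here; the only points requiring a moment's care are verifying the sign of $\delta$ and keeping correct track of the $\pm p(y,y)$ term when invoking (PM4) and of the $\pm p(x,x)$ term when passing between the definitions of the two families of balls.
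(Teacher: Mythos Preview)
Your proof is correct and follows essentially the same approach as the paper's: the single application of (PM4) for the inclusion in part (1), and direct unfolding of the definitions (together with Remark~\ref{re2.pm-sp} for the empty case) in part (2). The only cosmetic difference is that the paper phrases the key step as ``adding'' the two inequalities $p(y,z)<\delta$ and $p(x,z)\le p(x,y)+p(y,z)-p(y,y)$, whereas you substitute one into the other; the content is identical.
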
\begin{proof}
(1)\; Let $\delta:=\varepsilon -p(x,y)+p(y,y).$ Then $\delta >0$ (because $p(x,y)<\varepsilon$) and $p(y,y)<\delta$, so that $y\in  B_p(y,\delta)$.

 If $z\in B_p(y,\delta)$, then the inequalities
 $$
 p(y,z)< \varepsilon -p(x,y)+p(y,y)\;\;\mbox{and}\;\; p(x,z)\le p(x,y)+p(y,z)-p(y,y)\,,
 $$
 yield by addition, $p(x,z)<\varepsilon,$ that is $z\in B_p(x,\varepsilon)$ and so $B_p(y,\delta)\subseteq B_p(x,\varepsilon)$.

 The equalities from (2) are obvious by the definitions of the corresponding balls (see also Remark \ref{re2.pm-sp}).

 The inclusion from (3) follows by (1) and (2), but  it can be also proved directly.
 We have
 $$
 y\in B'_p(x,\eps')\iff p(x,y)<\eps'+p(x,x)$$
 and
 $$
 z\in B'_p(y,\delta')\iff p(y,z)<\delta'+p(y,y)\,,$$
 so that
 \begin{align*}
   p(x,z)&\le p(x,y)+p(y,z)-p(y,y)\\
   &<p(x,y)+\eps'+p(x,x)-p(x,y)+p(y,y)-p(y,y)\\
   &=\eps'+p(x,x)\,,
 \end{align*}
 which shows that $z\in B'_p(x,\eps').$
\end{proof}

Now we introduce the topology of a partial-metric space and present some of its properties.
\begin{theo}\label{t1.pm-sp}
 Let $(X,p)$ be a partial metric space.
 \begin{enumerate}
 \item The family of open balls
 \begin{equation}\label{def.basis-pm-top}
\mathcal B:=\left\{B_p(x,\varepsilon),\, x\in X,\, \varepsilon>0\right\}\,
\end{equation}
is the basis of a topology on $X,$ denoted by $\tau_p$ (sometimes by $\tau(p)$).
\item The family $\mathcal B'$ of the sets
 \begin{equation}\label{def.basis2-pm}
 \mathcal B':=\left\{B'_p(x,\varepsilon),\, x\in X,\, \varepsilon>0\right\}\,
 \end{equation}
 is also a basis for the  topology $\tau_p.$
 \item The  balls $B_p(x,\varepsilon),\, B'_p(x,\varepsilon)$ are open and for every $x\in X$ and $\eps>0.$ The family $\mathcal V_p(x)$ of neighborhoods of $x$ is given by
     \begin{equation}\label{eq.neighb-pm-sp}\begin{aligned}
            \mathcal V_p(x)&=\left\{V\subseteq X : \exists \delta >0,\; x\in B_p(x,\delta) \subseteq V\right\}\\
            &=\left\{V\subseteq X : \exists \eps >0,\; B'_p(x,\eps) \subseteq V\right\}\,.
     \end{aligned}     \end{equation}
\item The topology $\tau_p$ is $T_0$ and satisfies the first axiom of countability, i.e., every point in $X$ has a countable basis of neighborhoods..
 \end{enumerate}
\end{theo}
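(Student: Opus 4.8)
The plan is to establish the four assertions in the order (1), (3), (2), (4), with Proposition~\ref{p1.pm-sp} as the main tool; only the $T_0$ statement requires genuine attention to the partial metric axioms, the rest being a direct unwinding of that proposition.

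For (1), I would verify the two defining properties of a basis. Covering is immediate: given $x\in X$, any $\varepsilon>p(x,x)$ satisfies $p(x,x)<\varepsilon$, so $x\in B_p(x,\varepsilon)$. For the intersection property, suppose $x\in B_p(x_1,\varepsilon_1)\cap B_p(x_2,\varepsilon_2)$. Proposition~\ref{p1.pm-sp}(1), applied to each ball with $x$ as the interior point, gives for $i=1,2$ a radius $\delta_i:=\varepsilon_i-p(x_i,x)+p(x,x)>0$ with $x\in B_p(x,\delta_i)\subset B_p(x_i,\varepsilon_i)$; setting $\delta:=\min\{\delta_1,\delta_2\}$ and using the evident monotonicity $B_p(x,\delta)\subset B_p(x,\delta_i)$, one obtains $x\in B_p(x,\delta)\subset B_p(x_1,\varepsilon_1)\cap B_p(x_2,\varepsilon_2)$. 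Hence $\mathcal B$ is a basis, and it generates the topology we call $\tau_p$.

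Assertion (3) is then a short corollary. That each ball is open follows from Proposition~\ref{p1.pm-sp}(1): every $y\in B_p(x,\varepsilon)$ satisfies $y\in B_p(y,\delta)\subset B_p(x,\varepsilon)$ with $\delta=\varepsilon-p(x,y)+p(y,y)$, exhibiting $B_p(x,\varepsilon)$ as a union of basic balls. The description of $\mathcal V_p(x)$ is the general fact that, for a topology with basis $\mathcal B$, a set $V$ is a neighbourhood of $x$ iff it contains a basic set containing $x$; and if $x\in B_p(z,\varepsilon)\subset V$, Proposition~\ref{p1.pm-sp}(1) again produces $\delta>0$ with $x\in B_p(x,\delta)\subset B_p(z,\varepsilon)\subset V$, which is precisely~\eqref{eq.neighb-pm-sp}. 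For (2), the economical route is that $\mathcal B$ and $\mathcal B'$ refine one another: by Proposition~\ref{p1.pm-sp}(2) each $B'_p(x,\varepsilon)=B_p(x,\varepsilon+p(x,x))$ is $\tau_p$-open, while each basic ball $B_p(x,\varepsilon)$ is either empty (when $\varepsilon\le p(x,x)$) or equals $B'_p(x,\varepsilon-p(x,x))\in\mathcal B'$ (when $\varepsilon>p(x,x)$); hence every $\tau_p$-open set is a union of members of $\mathcal B'$ and conversely, so $\mathcal B'$ is also a basis for $\tau_p$.

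The crux is (4), and the one genuinely subtle point is reading off from (PM1) the dichotomy that $x\ne y$ forces at least one of $p(x,y)>p(x,x)$, $p(x,y)>p(y,y)$: by (PM2) and (PM3) one has $p(x,y)\ge p(x,x)$ and $p(x,y)\ge p(y,y)$, and if both were equalities then $p(x,x)=p(y,y)=p(x,y)$, contradicting (PM1). Assuming, say, $p(x,y)>p(x,x)$, any $\varepsilon$ with $p(x,x)<\varepsilon\le p(x,y)$ yields $x\in B_p(x,\varepsilon)$ but $y\notin B_p(x,\varepsilon)$, so the two points are topologically distinguished; the other case is symmetric, proving $\tau_p$ is $T_0$. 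This step, the correct use of (PM1) to produce a separating ball, is the only real obstacle; note that $T_1$ genuinely fails, since by Example~\ref{ex2.pm-N} a proper prefix $x$ of $y$ has $p(x,y)=p(x,x)$, whence every open set containing $x$ also contains $y$.
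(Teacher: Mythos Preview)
Your proof is correct and follows essentially the same route as the paper: both use Proposition~\ref{p1.pm-sp}(1) for the basis axioms and openness of balls, Proposition~\ref{p1.pm-sp}(2) for the equivalence of $\mathcal B$ and $\mathcal B'$, and the dichotomy from (PM1)--(PM2) for $T_0$. The only cosmetic differences are your choice of any $\varepsilon$ in $(p(x,x),p(x,y)]$ versus the paper's midpoint $\varepsilon=(p(x,x)+p(x,y))/2$, and your additional (correct) remark that $T_1$ fails.
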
\begin{proof} (1)\;
By \cite[Th. 11, p. 47]{Kelley} a family $\rd B\sse 2^X$ is a basis for a topology $\tau$ on a set $X$ if and only if the following two conditions are satisfied:

(B1)\quad $X=\bigcup\rd B$;

(B2)\quad for every $B_1,B_2\in\rd B$ and $x\in B_1\cap B_2$ there exists $B\in \rd B$ such that $$x\in B\sse B_1\cap B_2\,.$$

Since $x\in  B_p(x,1+p(x,x)),$ it follows $X=\bigcup\{ B_p(x,1+p(x,x)): x\in X\}\sse\bigcup\rd B$, so (B1) holds.

Also, by Proposition \ref{p1.pm-sp}.(1),   for any $z\in  B_p(x,\varepsilon_1)\cap  B_p(y,\eps_2)$,
$$\;z\in B_p(z,\delta)\subseteq  B_p(x,\varepsilon_1)\cap  B_p(y,\eps_2)\,,$$
where
 $$
 \delta:=p(z,z)+\min\{\varepsilon_1 -p(x,z),\varepsilon_2 -p(y,z)\}\,,
 $$
 so (B2) is satisfied too.

These two properties show that the family \eqref{def.basis-pm-top} forms a basis of a topology $\tau_p$ on $X$, that is, every set in $\tau_p$ can be written as a union of open balls of the form $B_p(x,\varepsilon)$.

(2)\; Let us show   that $\mathcal B'$ is also a basis. Since $x\in B'_p(x,\eps)$ it follows $X=\bigcup\{ B'_p(x,1): x\in X\}\sse\bigcup\rd B'.$

Also, by Proposition \ref{p1.pm-sp}.(3),
$$
z\in B'_p(z,\delta')\sse B'_p(x_1,\eps_1')\cap  B'_p(x_2,\eps_2')\,,$$
where $\delta'=\min\{\delta'_1,\delta'_2\}$ for $\delta'_i:=\eps'_i+p(x_i,x_i)-p(x_i,z),\,i=1,2.$

The equivalence of two bases $\rd B,\rd B'$ means that
$$
\forall B\in \rd B\;\mbx{ and }\; \forall x\in B,\; \exists B'\in \rd B'\;\mbx{ such that }\; x\in B'\sse B\,,$$
and, conversely,
$$
\forall B'\in \rd B'\;\mbx{ and }\; \forall x\in B',\; \exists B\in \rd B\;\mbx{ such that }\; x\in B\sse B'\,.$$

If  $y\in B_p(x,\eps)$,     then it is easy to check that
$$
y\in B'_p(y,\gamma)\sse B_p(x,\eps)\,,$$
where $\gamma:=\eps-p(x,y).$

Conversely, if $y\in B'_p(x,\eps)$, then $B'_p(x,\eps)=B_p(y,\eps+p(y,y)),$  so that $$y\in B_p(y,\eps+p(y,y))= B'_p(x,\eps).$$

 (3)\; By Proposition \ref{p1.pm-sp}.(1)  for every  $ y\in  B_p(x,\varepsilon)$ we have $y\in B_p(y,\delta_y)\sse B_p(x,\varepsilon),$  where
   where $\delta_y=\varepsilon -p(x,y)+p(y,y)$, showing that $B_p(x,\varepsilon)$ is open. The openness of $B'_p(x,\varepsilon)$ can be obtained in the same  way from
   Proposition \ref{p1.pm-sp}.(3).

  Since the open balls form a basis of the topology $\tau_p,$   $V\in\mathcal V_p(x) $ if and only if there exists
 $y\in X$ and $\varepsilon >0$ such that  $x\in B_p(y,\varepsilon)\subseteq V.$ Appealing again to Proposition \ref{p1.pm-sp}, it follows that  $x\in B_p(x,\delta)\subseteq B_p(y,\varepsilon)\subseteq V$, where
 $\delta =\varepsilon -p(x,y)+p(x,x)$.

  (4)\; We have to show that for any pair $x,y$ of distinct points in $X$ there exists a $\tau_p$-open set containing exactly one of them.

Let $x\ne y$ be two points in $X$. Then by (PM1) and (PM2) either $p(x,x)<p(x,y)$ or $p(y,y)<p(x,y).$

Suppose $p(x,x)<p(x,y)$ and let $\varepsilon :=(p(x,x)+p(x,y))/2.$  Then
$$
2p(x,x)<p(x,x)+p(x,y)=2\varepsilon\;\Rightarrow\; p(x,x)<\varepsilon\iff x\in B_p(x,\varepsilon)\,.$$

On the other side
$$
p(x,y)>p(x,x)=2\varepsilon-p(x,y)\;\Rightarrow\; p(x,y)>\varepsilon\;\Rightarrow\; y\notin B_p(x,\varepsilon)\,.
$$

The case $p(y,y)<p(x,y)$ can be treated similarly.

The family $\{B'_p(x,1/n) : n\in\Nat\}$ is a countable basis of neighborhoods at $x$.
\end{proof}

\begin{remark} We adopt the convention that $\,\bigcup\{A_i: i\in\emptyset\}=\emptyset$ (implying, by de Morgan rules, $\,\bigcap\{A_i: i\in\emptyset\}=X$), and so $\emptyset$ belongs to the family of arbitrary unions of sets in $\mathcal B$.  If one considers only unions over nonempty index sets, then we must say that the family $\mathcal B$ plus the empty set generates the topology $\tau_p$.
\end{remark}

\subsection{Convergent sequences, completeness  and the Contraction Principle}\label{Ss.seq-pm}
The convergence of  sequences with respect to  $\tau_p$ can be characterized in the following way.
\begin{prop}\label{p2.pm-sp}
  Let $(X,p)$ be a partial metric space. A sequence $(x_n)$ in $X$ is $\tau_p$-convergent to $x\in X$ if and only if
  \begin{equation}\label{eq1.seq-pm-sp}
  \lim_{n\to\infty}p(x,x_n)=p(x,x)\,.
  \end{equation}
\end{prop}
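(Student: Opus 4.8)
The plan is to read off the result directly from the two descriptions of the neighbourhood filter $\mathcal V_p(x)$ established in Theorem \ref{t1.pm-sp}: that the balls $B'_p(x,\varepsilon)=\{y\in X:p(x,y)<\varepsilon+p(x,x)\}$, $\varepsilon>0$, form a basis of $\mathcal V_p(x)$, and that, by \eqref{eq.neighb-pm-sp}, $V\in\mathcal V_p(x)$ iff $x\in B_p(x,\delta)\subset V$ for some $\delta>0$ (necessarily with $\delta>p(x,x)$, since $B_p(x,\delta)=\emptyset$ whenever $\delta\le p(x,x)$ by Remark \ref{re2.pm-sp}). Everything then reduces to transporting inequalities through (PM2).

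For the implication ``$\tau_p$-convergence $\Rightarrow$ \eqref{eq1.seq-pm-sp}'' I would fix $\varepsilon>0$ and use that $B'_p(x,\varepsilon)$ is a neighbourhood of $x$: there is $n_\varepsilon$ with $x_n\in B'_p(x,\varepsilon)$, i.e. $p(x,x_n)<p(x,x)+\varepsilon$, for all $n\ge n_\varepsilon$. On the other hand (PM2) gives $p(x,x)\le p(x,x_n)$ for every $n$. Hence $0\le p(x,x_n)-p(x,x)<\varepsilon$ for $n\ge n_\varepsilon$, and since $\varepsilon$ was arbitrary this is exactly \eqref{eq1.seq-pm-sp}.

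For the converse, assuming $\lim_n p(x,x_n)=p(x,x)$, I would take an arbitrary $V\in\mathcal V_p(x)$ and pick $\delta>0$ with $x\in B_p(x,\delta)\subset V$; then $p(x,x)<\delta$, so by convergence of the numerical sequence there is $n_\delta$ with $p(x,x_n)<\delta$, hence $x_n\in B_p(x,\delta)\subset V$, for all $n\ge n_\delta$. As $V$ ranges over $\mathcal V_p(x)$, this says precisely that $x_n\to x$ in $\tau_p$.

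The only delicate point is the one already flagged: a small ball centred at a ``partial'' point is empty, so in the converse one must exploit that any basic neighbourhood ball at $x$ automatically has radius exceeding $p(x,x)$; equivalently, one may run both directions uniformly with the shifted balls $B'_p$, for which $x\in B'_p(x,\varepsilon)$ holds for every $\varepsilon>0$. No completeness or separation hypothesis beyond $T_0$ is needed, and no uniqueness of the limit is claimed (indeed limits need not be unique for $\tau_p$).
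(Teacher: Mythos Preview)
Your proof is correct and follows essentially the same approach as the paper's. The only cosmetic difference is that in the converse direction you invoke the neighbourhood description \eqref{eq.neighb-pm-sp} and work with a ball $B_p(x,\delta)$ having $\delta>p(x,x)$, whereas the paper runs that direction with the shifted balls $B'_p(x,\varepsilon)$; as you yourself note, the two are interchangeable via $B'_p(x,\varepsilon)=B_p(x,\varepsilon+p(x,x))$.
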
\begin{proof}
 Suppose that $x_n\xrightarrow{\tau_p}x$. Given $\varepsilon >0$ let $n_0\in\mathbb{N}$ be such that , for all $n\ge n_0,\, x_n\in B_p(x,\varepsilon+p(x,x))\iff p(x,x_n)<\varepsilon+p(x,x)$.  Taking into account (PM2), it follows
 $$
 0\le p(x,x_n)-p(x,x)<\varepsilon\,,
 $$
 for all $n\ge n_0,$ showing that \eqref{eq1.seq-pm-sp} holds.

 Conversely, suppose that \eqref{eq1.seq-pm-sp} holds and let $V\in\mathcal V_p(x).$ Since, by Theorem \ref{t1.pm-sp}.(2), $\mathcal B'$ is also a basis for the topology $\tau_p$, there exists $\varepsilon >0$ such that  $B'_p(x,\varepsilon)\subseteq V$. Let $n_0\in \mathbb{N}$ be such that  $0\le p(x,x_n)-p(x,x)<\varepsilon$ for all $n\ge n_0$.
 Then
 $$
 0\le p(x,x_n)-p(x,x)<\varepsilon\iff p(x,x_n)<\varepsilon+p(x,x)\iff x_n\in B'_p(x,\varepsilon)\subseteq V\,,
 $$
 for all $n\ge n_0$, proving that $x_n\xrightarrow{\tau_p}x$.
 \end{proof}

\begin{remark}\label{re.pm-seq}
 Since the topology $\tau_p$ of a partial metric space is only $T_0$, a convergent sequence can have many limits. In fact, if $x_n\xrightarrow{\tau_p}x$, then $x_n\xrightarrow{\tau_p}y$ for any $y\in X$ such that  $p(x,y)=p(y,y)$.
\end{remark} Indeed
 $$
0\le  p(y,x_n)-p(y,y)\le p(y,x)+p(x,x_n)-p(x,x)-p(y,y)=p(x,x_n)-p(x,x)\longrightarrow 0\,.$$

For a nonempty subset $Y$ of a partial metric space $(X,p)$ and $x\in X$ put
\bequ\label{def.dist-partial metric space}
p(x,Y)=\inf\{p(x,y) : y\in Y\}\,.\eequ

The closure of a subset of $X$  admit the following characterization in terms of this distance function.

\begin{prop}[\cite{altun10}, Lemma 2]\label{p1.cl-dist-partial metric space}
 Let $Y$ be a nonempty subset of a partial metric space $(X,p)$ and $x\in X.$ Then
 \bequ\label{eq1.cl-dist-partial metric space}
 x\in \ov Y\iff p(x,Y)=p(x,x)\,.
\eequ\end{prop}\begin{proof}
Because
$$ p(x,x)\le p(x,y)\;\mbx{ for all }\; y\in X,$$
we have the following equivalences:
\begin{align*}
p(x,x)= p(x,Y)&\iff \forall\eps>0, \, \exists y\in Y,\; p(x,y)<\eps+p(x,x)\\
&\iff \forall\eps>0,\; Y\cap B'_p(x,\eps)\ne\ety\\
&\iff x\in \ov Y\,.\end{align*}
 \end{proof}

 To obtain uniqueness and to define a reasonable notion of completeness, a stronger notion of convergence is needed.
 \begin{defi}\label{def.prop-converg-pm-sp}
   One says that a sequence $(x_n)$ in a partial metric space \emph{converges properly} to $x\in X$ if and only if
   \begin{equation}\label{eq1.prop-converg-pm-sp}
   \lim_{n\to\infty}p(x,x_n)=p(x,x)=\lim_{n\to\infty} p(x_n,x_n)\,.
   \end{equation}
 \end{defi}

 In other words, $(x_n)$  converges properly to $x$ if and only if    $(x_n)$   converges to $x$ with respect to  $\tau_p$ and further
 \begin{equation}\label{eq2.prop-converg-pm-sp}
\lim_{n\to\infty}p(x_n,x_n)=p(x,x)\,.\end{equation}

\begin{prop}\label{p3.pm-sp}
Let $(X,p)$ be a partial metric space and  $(x_n)$  a sequence in $X$ that  converges properly to $x\in X$. Then
\begin{enumerate}
\item[{\rm (i)}]\;\; the limit is unique, and
\item[{\rm (ii)}]\;\; $\lim_{m,n\to\infty}p(x_m,x_n)=p(x,x).$\end{enumerate}
  \end{prop}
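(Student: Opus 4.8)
The plan is to establish (i) and (ii) directly from the definition of proper convergence, relying only on the partial metric axioms (PM1)--(PM4) and Proposition \ref{p2.pm-sp}. Write $a:=\lim_n p(x_n,x_n)=p(x,x)$ and recall that proper convergence gives simultaneously $\lim_n p(x,x_n)=p(x,x)$ and $\lim_n p(x_n,x_n)=p(x,x)$.

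For (ii), I would bound $p(x_m,x_n)$ from above and below. The upper bound comes from two applications of (PM4): $p(x_m,x_n)\le p(x_m,x)+p(x,x_n)-p(x,x)$, and the right-hand side tends to $p(x,x)+p(x,x)-p(x,x)=p(x,x)$ as $m,n\to\infty$ by proper convergence. For the lower bound, again by (PM4) applied in the form $p(x,x)\le p(x,x_m)+p(x_m,x_n)-p(x_m,x_m)$ (using symmetry (PM3)), hence $p(x_m,x_n)\ge p(x,x)-p(x,x_m)+p(x_m,x_m)$, whose limit is $p(x,x)-p(x,x)+p(x,x)=p(x,x)$. Combining the two estimates yields $\lim_{m,n\to\infty}p(x_m,x_n)=p(x,x)$, which is (ii).

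For (i), suppose $(x_n)$ also converges properly to $y\in X$, so $\lim_n p(y,x_n)=p(y,y)=\lim_n p(x_n,x_n)$. Then from (ii) with the roles of $x$ and $y$ we already get $p(x,x)=\lim_{m,n}p(x_m,x_n)=p(y,y)$; call this common value $a$. Now estimate $p(x,y)$ using (PM4): $p(x,y)\le p(x,x_n)+p(x_n,y)-p(x_n,x_n)$, and letting $n\to\infty$ gives $p(x,y)\le a+a-a=a=p(x,x)$. By (PM2) we also have $p(x,y)\ge p(x,x)=a$, so $p(x,y)=p(x,x)$; symmetrically $p(x,y)=p(y,y)$. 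Together with $p(x,x)=p(y,y)=p(x,y)$, axiom (PM1) forces $x=y$, proving uniqueness.

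The only mildly delicate point is bookkeeping with the $-p(y,y)$ terms in the triangle inequality (PM4) and making sure the limits are genuinely joint (as $m,n\to\infty$ together) rather than iterated; since all the bounding expressions are sums of terms each depending on a single index, the joint limit is immediate once each single-index limit is known. No step presents a real obstacle; the argument is a careful but routine manipulation of the four axioms, and it is essentially the partial-metric analogue of the standard uniqueness-of-limit proof in metric spaces.
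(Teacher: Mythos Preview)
Your argument is correct and follows essentially the same route as the paper's proof: the same (PM4)-based upper/lower sandwich for (ii) and the same (PM4)+(PM2)+(PM1) chain for uniqueness in (i). One small quibble: the lower-bound inequality $p(x,x)\le p(x,x_m)+p(x_m,x_n)-p(x_m,x_m)$ actually needs (PM2) (namely $p(x,x)\le p(x,x_n)$) together with (PM4), not (PM3), so just relabel that citation.
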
\begin{proof}
    Suppose that $x,y\in X$ are such that  $(x_n)$ converges properly  to both $x$ and $y$.
    Then
$$
p(x,y)\le p(x,x_n)+p(x_n,y)-p(x_n,x_n) \longrightarrow p(y,y)   \quad\mbox{as}\; n\to \infty\,,
$$
implies $p(x,y)\le p(y,y)$.  By (PM2) this implies  $p(x,y)=p(y,y).$

 Similarly,
 $$
p(x,y)\le p(x,y_n)+p(y_n,y)-p(y_n,y_n) \longrightarrow p(x,x)   \quad\mbox{as}\; n\to \infty\,,
$$
implies $p(x,y)\le p(x,x),$ and so $p(x,y)=p(x,x).$

It follows \begin{equation}\label{eq.uniq-prop-lim}
 p(x,y)=p(x,x)= p(y,y) \,,\end{equation}
so that, by (PM1), $x=y$.

 To prove (ii) observe that
 $$ p(x_m,x_n)\le p(x_m,x)+p(x,x_n)-p(x,x)$$
implying
 $$ p(x_m,x_n)-p(x,x)\le p(x_m,x)+p(x,x_n)-2p(x,x)\longrightarrow 0\;\;\mbox{as}\; m,n\to \infty\,.$$

 Also, by (PM2) and (PM4),
$$
   p(x,x)\le p(x,x_n) \le p(x,x_m)+p(x_m,x_n)-p(x_m,x_m)
$$
so that
 $$
 p(x,x)-p(x_m,x_n)\le p(x,x_m) -p(x_m,x_m) \longrightarrow 0\;\;\mbox{as}\; m,n\to \infty\,.$$

 Consequently, $\,\lim_{m,n\to\infty}p(x_m,x_n)=p(x,x)$.
  \end{proof}
\begin{remark}\label{re.converg-seq-pm-sp}
Some authors take the condition (ii) from Proposition \ref{p3.pm-sp} in the definition of a properly
 convergent sequence. As it was shown this  is equivalent to the condition from Definition \ref{def.prop-converg-pm-sp}
\end{remark}

The definition of Cauchy sequences in partial metric spaces takes the following form.
\begin{defi}\label{def.C-seq-pm-sp}
A sequence $(x_n)$ in a partial metric space $(X,p)$ is called a \emph{Cauchy sequence} if there exists $a\ge 0$ in $\mathbb{R}$ such that  for every $\varepsilon>0$ there exists $n_\varepsilon\in\mathbb{N}$ with
$$
|p(x_n,x_m)-a|<\varepsilon\,,
$$
for all $m,n\ge n_\varepsilon$, written also as $\lim_{m,n\to \infty}p(x_n,x_m)=a$.

The partial metric space $(X,p)$ is called \emph{complete} if every   Cauchy sequence is properly convergent to some $x\in X$.
\end{defi}

A mapping $f$ on a partial metric space $(X,p)$ is called a \emph{contraction} if there exists $0\le\alpha< 1$ such that
\begin{equation}
p(f(x),f(y))\le\alpha\, p(x,y)\,,
\end{equation}
for all $x,y\in X$.

The analogue of Banach Contraction Principle holds in partial metric spaces too.
\begin{theo}[\cite{matths92a},\cite{matths94}]\label{t.Ban-fixp-pm-sp}
Let $(X,p)$ be a complete partial metric spaces. Then every contraction $f:X\to X$ has a fixed point $x_0$ such that  $p(x_0,x_0)=0$.
  \end{theo}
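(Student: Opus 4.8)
The plan is to follow the classical Picard iteration argument, keeping careful track of the self-distance terms that distinguish a partial metric from an ordinary one. First I would fix an arbitrary $u\in X$ and set $u_n:=f^n(u)$ for $n\in\mathbb{N}\cup\{0\}$. Since $f$ is an $\alpha$-contraction, $p(u_{n+1},u_{n+2})=p(f(u_n),f(u_{n+1}))\le\alpha\,p(u_n,u_{n+1})$, so by induction $p(u_n,u_{n+1})\le\alpha^n\,p(u_0,u_1)$ for every $n$. For $n<m$, iterating the triangularity axiom (PM4) along the chain $u_n,u_{n+1},\dots,u_m$ gives $p(u_n,u_m)\le\sum_{k=n}^{m-1}p(u_k,u_{k+1})-\sum_{k=n+1}^{m-1}p(u_k,u_k)\le\sum_{k=n}^{m-1}\alpha^k\,p(u_0,u_1)\le\frac{\alpha^n}{1-\alpha}\,p(u_0,u_1)$, where the nonnegative self-distances have simply been dropped. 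Letting $n\to\infty$ one obtains $\lim_{m,n\to\infty}p(u_n,u_m)=0$, so $(u_n)$ is a Cauchy sequence in the sense of Definition~\ref{def.C-seq-pm-sp} with $a=0$. By completeness of $(X,p)$ it converges properly to some $x_0\in X$, and then Proposition~\ref{p3.pm-sp}(ii) forces $p(x_0,x_0)=\lim_{m,n\to\infty}p(u_m,u_n)=0$ (equivalently, $p(x_0,x_0)=\lim_n p(u_n,u_n)=0$ directly from Definition~\ref{def.prop-converg-pm-sp}).

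Next I would verify that $x_0$ is a fixed point. Using (PM4) and the contraction inequality, for every $n$,
\[
p(x_0,f(x_0))\le p(x_0,u_{n+1})+p(u_{n+1},f(x_0))-p(u_{n+1},u_{n+1})\le p(x_0,u_{n+1})+\alpha\,p(u_n,x_0)-p(u_{n+1},u_{n+1}).
\]
Because $(u_n)$ converges properly to $x_0$ and $p(x_0,x_0)=0$, we have $p(x_0,u_{n+1})\to 0$, $p(u_n,x_0)\to 0$ and $p(u_{n+1},u_{n+1})\to 0$, hence $p(x_0,f(x_0))\le 0$, i.e. $p(x_0,f(x_0))=0$; by \eqref{eq1.pm-sp} this yields $f(x_0)=x_0$. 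If uniqueness is wanted, it is immediate: two fixed points $x_0,x_1$ satisfy $p(x_0,x_1)=p(f(x_0),f(x_1))\le\alpha\,p(x_0,x_1)$, so $p(x_0,x_1)=0$ and $x_0=x_1$ again by \eqref{eq1.pm-sp}.

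I do not expect a genuine obstacle in this argument; the one point that needs care is the bookkeeping of the intermediate self-distances in the iterated triangle inequality (they work in our favour, being subtracted) together with the consistent use of the ``proper convergence'' notion, so that the Cauchy constant $a=0$ really does translate into $p(x_0,x_0)=0$. Everything else is a routine transcription of Banach's proof.
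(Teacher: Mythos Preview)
Your proof is correct and follows essentially the same Picard-iteration route as the paper's (sketched) proof: show the iterates form a Cauchy sequence with limit constant $a=0$, invoke completeness to obtain a proper limit $x_0$ with $p(x_0,x_0)=0$, and then use (PM4) together with the contraction inequality to force $p(x_0,f(x_0))=0$. The only cosmetic difference is that the paper concludes via (PM1) after also noting $p(f(x_0),f(x_0))=0$, whereas you appeal directly to \eqref{eq1.pm-sp}; these are equivalent.
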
\begin{proof} (Sketch) Let $f$ be an $\alpha$-contraction on $X$ with $0\le\alpha<1$.

   One shows first that for every $z\in X$ the sequence of iterates $(f^n(z))$ satisfies the condition
   $$
   \lim_{m,n\to\infty}p(f^n(z),f^m(z))=0\,,$$
   i.e. it is Cauchy. By the completeness of $(X,p)$ there exists $x_0\in X$ such that
   $$
   0=\lim_{n\to\infty}p(f^n(z),f^n(z))=p(x_0,x_0)=\lim_{n\to\infty}p(x_0,f^n(z))\,.$$

   But
\begin{align*}
 0&\le  p(x_0,f(x_0))\le p(x_0,f^n(x_0))+p(f^n(x_0),f(x_0))-p(f^n(x_0),f^n(x_0))\\
  &\le p(x_0,f^n(x_0))+\alpha p(f^{n-1}(x_0),x_0)-p(f^n(x_0),f^n(x_0))\longrightarrow 0\quad\mbox{as}\; n\to \infty\,.
  \end{align*}

  It follows $p(x_0,f(x_0))=0=p(x_0,x_0).$ The relations  $0\le p(f(x_0),f(x_0)) \le \alpha p(x_0,x_0)=0$ imply $p(f(x_0),f(x_0))=0,$ so that, by (PM1), $f(x_0)=x_0$.
   \end{proof}

\begin{remark}\label{re.Ban-fixp-pm-sp}
  O'Neill \cite{oneill96} considered   partial metrics that take values in $\mathbb{R}$ (not in $\mathbb{R}_+$ as in the case of Matthews' partial metric) and related them to domain theory. These kind of spaces are called by some authors \emph{dualistic partial metric space}. The extension of Banach fixed point theorem to this setting was given by Oltra and Valero  \cite{valero04} \,(see also \cite{valero05}). In this case the contraction condition is  given by
  $$
  \exists 0\le\alpha<1\;\;\mbox{such that}\;\: \forall x,y\in X,\quad |p(f(x),f(y))|\le\alpha |p(x,y)|\,.
  $$

  Extensions of various fixed point results from metric spaces to partial metric spaces were given by O. Valero in cooperation with other mathematicians, see \cite{valero12,valero14,valero13,valero15,valero08} (see also \cite{rus08}).
  \end{remark}

\subsection{Topology and order on partial metric spaces}

In this subsection we shall examine the behavior  of the specialization order \eqref{def.top-ord} with respect to  the topology $\tau(p)$ generated by a partial metric $p$.
\begin{prop}\label{p.ord-pm-sp}
Let $(X,p)$ be a partial metric space and $\le_p$ the specialization order on $X$.
\begin{enumerate}
\item The specialization order can be characterized by the following condition
\begin{equation}\label{eq1.ord-pm-sp}
x\le_p y \iff p(x,x)=p(x,y)\,.
\end{equation}
\item The open balls $B_p(x,\varepsilon)$ and $B'_p(x,\eps)$ are upward closed. Consequently every $\tau_p$-open sets is upward closed.
\item The Alexandrov  topology  $\,\tau_a(\le_p)\,$ generated  by $\,\le_p\,$ (see Proposition \ref{p.Alex-top}) is finer than $\tau(p)$.
The equality $\tau(p)=\tau_a(\le_p)$ holds if and only if
\begin{equation}\label{eq2.ord-pm-sp}
\forall x\in X,\; \exists \varepsilon_x >0,\;\; B_p(x,\varepsilon_x)=\uparrow\!\!x\,.
\end{equation} \end{enumerate}
  \end{prop}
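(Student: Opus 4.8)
The plan is to read everything off the description of the neighbourhood filter of $\tau_p$ given in Theorem \ref{t1.pm-sp}.(3) together with the triangle inequality (PM4). First I would record the (essentially free) remark that, since $x\in B_p(x,\delta)$ exactly when $p(x,x)<\delta$, Theorem \ref{t1.pm-sp}.(3) says that $\{B_p(x,\delta):\delta>p(x,x)\}=\{B'_p(x,\varepsilon):\varepsilon>0\}$ is a neighbourhood base at $x$ (the passage to the $B'_p$-balls, which always contain $x$, is what makes this clean). Then, for part (1), the relation $x\le_p y$, i.e. $x\in\overline{\{y\}}$, holds iff every neighbourhood of $x$ contains $y$, iff $y\in B'_p(x,\varepsilon)$ for all $\varepsilon>0$, iff $p(x,y)<\varepsilon+p(x,x)$ for all $\varepsilon>0$, iff $p(x,y)\le p(x,x)$; combining this with the inequality $p(x,x)\le p(x,y)$ from (PM2) yields exactly $p(x,y)=p(x,x)$, which is \eqref{eq1.ord-pm-sp}.

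For part (2), I would argue directly: if $y\in B_p(x,\varepsilon)$ and $y\le_p z$, then by (1) we have $p(y,z)=p(y,y)$, so (PM4) gives $p(x,z)\le p(x,y)+p(y,z)-p(y,y)=p(x,y)<\varepsilon$, i.e. $z\in B_p(x,\varepsilon)$; hence the ball is upward closed. Every $\tau_p$-open set is a union of such balls, so it is upward closed too (this last point also follows at once from Proposition \ref{p2.top-ord}.(1), since the balls are open by Theorem \ref{t1.pm-sp}.(3)).

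For part (3), the inclusion $\tau(p)\subset\tau_a(\le_p)$ is immediate: $\le_p$ is by construction the specialization order of $\tau(p)$, so Proposition \ref{p.Alex-top} forces $\tau(p)$ to consist of $\le_p$-upward closed sets, which are precisely the members of $\tau_a(\le_p)$; thus $\tau_a(\le_p)$ is finer than $\tau(p)$. For the equivalence, if \eqref{eq2.ord-pm-sp} holds then every upward closed set $A$ is $A=\bigcup_{x\in A}\uparrow\!\!x=\bigcup_{x\in A}B_p(x,\varepsilon_x)\in\tau(p)$, giving $\tau_a(\le_p)\subset\tau(p)$ and hence equality. Conversely, if $\tau(p)=\tau_a(\le_p)$, then for each $x$ the upward closed set $\uparrow\!\!x$ is $\tau(p)$-open, so (open balls being a basis, plus Proposition \ref{p1.pm-sp}.(1) to centre the ball at $x$) there is $\varepsilon_x>0$ with $x\in B_p(x,\varepsilon_x)\subset\uparrow\!\!x$; since $B_p(x,\varepsilon_x)$ is upward closed and contains $x$, also $\uparrow\!\!x\subset B_p(x,\varepsilon_x)$, whence $B_p(x,\varepsilon_x)=\uparrow\!\!x$, which is \eqref{eq2.ord-pm-sp}.

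I do not expect a genuine obstacle here: every step is short bookkeeping using results already proved. The one place that calls for a little care is keeping the two families of balls $B_p$ and $B'_p$ straight — an open ball $B_p(x,\delta)$ can be empty when $\delta\le p(x,x)$ and need not contain its centre — but this is neutralised by working with the neighbourhood base from Theorem \ref{t1.pm-sp}.(3) and by invoking Proposition \ref{p1.pm-sp}.(1) whenever a ball has to be recentred at $x$.
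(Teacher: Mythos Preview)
Your proposal is correct and follows essentially the same route as the paper: part (1) via the $B'_p$-neighbourhood base and (PM2), part (2) via (PM4) plus the characterization from (1), and part (3) via Proposition \ref{p.Alex-top} together with the open-ball basis. The only cosmetic difference is in the last reverse inclusion of (3): you deduce $\uparrow\!\!x\subset B_p(x,\varepsilon_x)$ from the ball being upward closed and containing $x$, whereas the paper computes directly that $y\in\uparrow\!\!x$ gives $p(x,y)=p(x,x)<\varepsilon_x$; both are one-line arguments.
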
\begin{proof}
    (1)\; Suppose $x\le_p y$. By definition $x\le_p y\iff x\in\overline{\{y\}}$ and
\begin{align*}
 x\in\overline{\{y\}}&\iff\forall \varepsilon>0,\; \{y\}\cap B'_p(x,\varepsilon)\ne\emptyset\\
 &\iff \forall \varepsilon>0,\;p(x,y)<\varepsilon +p(x,x)\\
&\iff p(x,y)\le p(x,x)\\
&\iff  p(x,y)= p(x,x)\quad\mbx{(by (PM2))}\,.
\end{align*}

(2)\; Let $y\in B_p(x,\varepsilon)$ and $y\le_p z\iff p(y,z)=p(y,y)$. Then
$$
p(x,z)\le p(x,y)+p(y,z)-p(y,y)=p(x,y)<\varepsilon\,,$$
that is $z\in B_p(x,\varepsilon)$.  The proof is similar for $B'_p(x,\varepsilon)$.

If $U\subseteq X$ is $\tau_p$-open and   $x\in U$,  then there exists $\varepsilon_x>0$ such that
$x\in B_p(x,\varepsilon_x)\subseteq U$.   If  $x\le_p y$,
then, since $B_p(x,\varepsilon_x)$ is upward closed,  $y\in B_p(x,\varepsilon_x)\subseteq U$. Consequently $U$ is upward closed.

(3)\; Since the Alexandov topology is the finest such that the induced order specialization agrees with $\le_p$ (Proposition \ref{p.Alex-top}), it follows   $\tau(p)\subseteq \tau_a(\le_p)$.

Suppose now that the condition \eqref{eq2.ord-pm-sp} holds and let $Z\in \tau_a(\le_p)$. Since the $\tau_a(\le_p)$-open sets are upward closed (see Remark \ref{re.Alex-top}), it follows
$$
Z=\bigcup\left\{\uparrow\!\!x : x\in Z\right\}=\bigcup\left\{B_p(x,\varepsilon_x) : x\in Z\right\}\in\tau(p)\,.$$

Consequently, $\tau_a(\le_p)\subseteq \tau(p)$, so that $\tau_a(\le_p)= \tau(p)$.

Conversely, suppose that $\tau_a(\le_p)= \tau(p)$.  Then for every $x\in X,\, \uparrow\!\! x\in\tau(p)$, implying   the existence of $\varepsilon_x >0$ such that  $x\in B_p(x,\varepsilon_x)\subseteq \uparrow\!\!x$.

If  $y\in \uparrow\!\!x$, then $p(x,y)=p(x,x) <\varepsilon_x,$ that is $y\in B_p(x,\varepsilon_x)$, showing that $\uparrow\!\!x\sse $ $B_p(x,\varepsilon_x) $ and
so $B_p(x,\varepsilon_x)= $ $\uparrow\!\!x$.
\end{proof}
\begin{remark}
  In terms of the specialization order $\le_p$ of a partial metric space $(X,p)$,  Remark \ref{re.pm-seq} says in fact that if a sequence $(x_n)$ in $X$ converges to $x\in X$, then it converges to every $y$ with $y\le_p x$. Also, the equalities \eqref{eq.uniq-prop-lim} say that if $(x_n)$ converges properly to $x$ and $y$, then $x\le_p y$ and $y\le_p x$, and so  $x=y$.
\end{remark}

Proposition \ref{p1.top-ord} allows us to characterize the $T_1$ property of partial metric spaces.
\begin{prop}[\cite{zhang17}]\label{p1.sep-partial metric space} Let $(X,p)$ be a partial metric space.
\ben
\item The space $X$ is $T_1$ if and only if \bequ\label{eq.T1-partial metric space}
p(x,x)<p(x,y)\,,\eequ
for every $x\ne y$ in $X$.
\item The following are equivalent: \ben\item[\rm (i)] the space $X$ is $T_2$;\item[\rm (ii)]  for every $x\ne y$ in $X$
\bequ\label{eq.T2-partial metric space}
\inf\{p(x,z)-p(x,x)+p(y,z)-p(y,y) :z\in X\}>0;\eequ
\item[\rm(iii)]every convergent sequence has a unique limit.\een
\een\end{prop}\begin{proof}
  (1)\; By Proposition \ref{p1.top-ord} and the definition \eqref{eq1.ord-pm-sp} of the specialization order $\le_p$ in $(X,p)$,
  the topology $\tau_p$ is $T_1$ if and only if $p(x,y)=p(x,x)$ implies $x=y$, for all $x,y\in X$. This is equivalent to $p(x,x)<p(x,y)$ for all $x\ne y$ in $X$.

  (2)\; (i)\;$\Ra$\;(ii). Suppose that $\tau_p$ is $T_2$ and let $x\ne y$ be two points in $X$. Then there exists $\eps>0$ such that
  $$
  B'_p(x,\eps)\cap  B'_p(y,\eps)=\ety\,.$$

  This means that, for every $z\in X$,
  $$
  p(x,z)\ge \eps+p(x,x)\;\mbx{ or }\;  p(y,z)\ge \eps+p(y,y)\,,$$
  implying $$\inf\{p(x,z)-p(x,x)+p(y,z)-p(y,y) :z\in X\}\ge\eps>0.$$

   (ii)\;$\Ra$\;(iii).  We prove the equivalent implication $\neg$(iii)\;$\Ra\neg$(ii). Suppose that there exist a sequence $(x_n)$ in $X$ and $x\ne  y$ in $X$
   such that
   \begin{align*}
     x_n\to x&\iff 0\le p(x,x_n)-p(x,x)\to 0\;\mbx{ and }\;\\
     x_n\to y&\iff 0\le p(y,x_n)-p(y,y)\to 0\,.
   \end{align*}

   It follows
   \begin{align*}
   0&\le \inf\{p(x,z)-p(x,x)+p(y,z)-p(y,y) :z\in X\}\\ &\le \inf\{p(x,x_n)-p(x,x)+p(y,x_n)-p(y,y) :n\in\Nat\} =0\,,
      \end{align*}
      that is, \eqref{eq.T2-partial metric space} fails.

      (iii)\;$\Ra$\;(i).   We prove again the equivalent implication $\neg$(i)\;$\Ra\neg$(iii).

      If $X$ is not $T_2$, then there exist $x\ne y$ in $X$ such that  $ B'_p(x,1/n)\cap B'_p(y,1/n)\ne\ety$  for every $n\in\Nat.$
      Taking a point $x_n$ in each of these sets, it follows that, for every $n\in\Nat,$
      \begin{align*}
        0&\le p(x,x_n-p(x,x)<\f 1n\; \mbx{ and }\\
        0&\le p(y,x_n-p(y,y)<\f 1n\,,
                     \end{align*}
which shows that  $x_n\to x$ and $x_n\to y.$
\end{proof}

\begin{remark}
  The paper \cite{zhang17} also contains a discussion on the validity of $T_3$ and $T_4$ separation axioms as well as on the second countability and separability of partial metric space.
\end{remark}

\subsection{The specialization order in quasi-metric spaces}
In this subsection we shall describe the specialization order in a quasi-metric space.
\begin{prop}\label{p1.order-qm}
Let $(X,q)$ be a quasi-metric space.
\begin{enumerate}
  \item The specialization order $\le_q$ corresponding to $q$ is given by
  \begin{equation}\label{eq1.ord-qm}
  x\le_q y\iff q(x,y)=0\,.\end{equation}
  \item Every open set is upward closed.
\end{enumerate}
\end{prop}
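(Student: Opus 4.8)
The plan is simply to unwind the definition \eqref{def.top-ord} of the specialization order $\le_q$, namely $x\le_q y\iff x\in\overline{\{y\}}$, using the fact that the open balls $B_q(x,r)$, $r>0$, form a base of neighborhoods of $x$ for the topology $\tau_q$. Note first that, $q$ being a quasi-metric, $\tau_q$ is $T_0$ by Proposition \ref{p.top-qsm1}, so $\le_q$ is genuinely a partial order by Proposition \ref{p1.top-ord}.

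For part (1), I would argue as follows. By definition $x\in\overline{\{y\}}$ if and only if $y$ belongs to every $\tau_q$-open set containing $x$; since the balls $B_q(x,r)$ form a neighborhood base at $x$, this is equivalent to $y\in B_q(x,r)$ for every $r>0$, that is to $q(x,y)<r$ for all $r>0$, which in turn is equivalent to $q(x,y)=0$. Hence $x\le_q y\iff q(x,y)=0$, which is exactly \eqref{eq1.ord-qm}.

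For part (2), this is an instance of Proposition \ref{p2.top-ord}(1), which asserts that in any topological space every open set is upward closed with respect to its specialization order; but it is just as quick to verify it directly using (1): if $U$ is $\tau_q$-open, $x\in U$ and $x\le_q y$, pick $r>0$ with $B_q(x,r)\subset U$; then $q(x,y)=0<r$, so $y\in B_q(x,r)\subset U$, proving $\uparrow\!\!U=U$.

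There is no real obstacle in this proof. The only points requiring care are to keep the direction of the closure relation straight --- it is the point $x$ that lies in $\overline{\{y\}}$, not the reverse --- and to remember that, because of the convention $x_n\xrightarrow{q}x\iff q(x,x_n)\to0$ adopted in \eqref{char-rho-conv1}, the basic open neighborhoods of $x$ are the balls $B_q(x,r)$ having $x$ as their \emph{first} argument. With these conventions fixed, both assertions follow at once from the axioms (QM1)--(QM3) defining a quasi-metric.
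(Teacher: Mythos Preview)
Your proof is correct and essentially identical to the paper's: both unwind $x\in\overline{\{y\}}$ via the neighborhood base of balls $B_q(x,r)$ to obtain $q(x,y)=0$, and both prove (2) by picking a ball $B_q(x,r)\subset U$ and observing $q(x,y)=0<r$. Your additional remarks that $\tau_q$ is $T_0$ (so $\le_q$ is a genuine order) and that (2) is already covered by Proposition~\ref{p2.top-ord}(1) are helpful observations the paper leaves implicit or to a subsequent remark.
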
\begin{proof}
(1)\;For  $x,y\in X$,
\begin{align*}
x\le_q y\iff& x\in\overline{\{y\}}
\iff \forall \varepsilon >0,\; y\in B_q(x,\varepsilon)\\
\iff& \forall \varepsilon >0,\; q(x,y)<\varepsilon
\iff q(x,y)=0\,.
\end{align*}

(2)\; Let us show first that an open ball $B_q(x,\varepsilon)$  is upward closed.
Indeed, $y\in B_q(x,\varepsilon)$ and $y\le_q z$ imply
$$
q(x,z)\le q(x,y)+q(y,z)=q(x,y)<\varepsilon\,.$$

Now if $U\subseteq X$ is $\tau_q$-open, then for every $x\in U$ there exists $\varepsilon_x>0$ such that  $B_q(x,\varepsilon_x)\subseteq U$. If $x\le_q y$, then $y\in B_q(x,\varepsilon_x)\subseteq U$.\end{proof}
\begin{remark}
  If $q$ is only a quasi-semimetric (see Definition \ref{def.qm}), then \eqref{eq1.ord-qm} defines only a preorder $\le_q$, which is an order if and only if $q$ is a quasi-metric.
\end{remark}

Indeed
\begin{align*}
  (x\le_q y\wedge y\le_q x)\iff& \big(q(x,y)=0\wedge q(y,x)=0\big)\\\iff& x=y\,.
\end{align*}

 A contraction principle holds in this case too. A mapping $f$ on a quasi-metric space $(X,q)$ is called
 a contraction if there exists $\alpha\in[0,1)$ such that
 \begin{equation}\label{def.contr-qm}
 q(f(x),f(y))\le\alpha\, q(x,y)\,,
 \end{equation}
 for all $x,y\in X$.
 \begin{theo}[Contraction Principle in quasi-metric spaces, \cite{matths92b}]\label{t.contr-qm} Let $(X,q)$  be a quasi-metric space such that  the associated metric space $(X,q^s)$ is complete. Then every contraction on $(X,q)$ has a fixed point.
 \end{theo}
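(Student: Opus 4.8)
The plan is to reduce the statement to the classical Banach Contraction Principle (Theorem \ref{t.fp-Ban}) applied to the associated metric $q^s$. The point is that the contraction inequality for $q$ is symmetric enough to survive the passage to $q^s=\max\{q,\bar q\}$, so no genuinely new argument beyond the metric case is needed.

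First I would verify that $f$ is also an $\alpha$-contraction with respect to the conjugate quasi-metric $\bar q(x,y)=q(y,x)$: for all $x,y\in X$,
$$\bar q(f(x),f(y)) = q(f(y),f(x)) \le \alpha\, q(y,x) = \alpha\, \bar q(x,y).$$
Combining this with \eqref{def.contr-qm}, for all $x,y\in X$ one obtains
$$q^s(f(x),f(y)) = \max\{q(f(x),f(y)),\bar q(f(x),f(y))\} \le \alpha\,\max\{q(x,y),\bar q(x,y)\} = \alpha\, q^s(x,y),$$
so $f$ is an $\alpha$-contraction on $(X,q^s)$. By Definition \ref{def.qm}, $q^s$ is in fact a metric (here is where axiom (QM3) is used), and by hypothesis $(X,q^s)$ is a complete metric space.

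Then I would simply invoke Theorem \ref{t.fp-Ban}: starting from an arbitrary $x_1\in X$, the Picard iterates $x_{n+1}=f(x_n)$ form a $q^s$-Cauchy sequence which $q^s$-converges to a point $x_0\in X$ with $f(x_0)=x_0$. This $x_0$ is the required fixed point of $f$ in $(X,q)$, and $q^s$-convergence of the iterates is a priori stronger than $q$-convergence, so nothing is lost. One can also record uniqueness: if $f(x)=x$ and $f(y)=y$, then $q^s(x,y)=q^s(f(x),f(y))\le\alpha\, q^s(x,y)$ forces $q^s(x,y)=0$, i.e. $x=y$.

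There is essentially no hard step in this argument; the only things to be careful about are the (routine) check that the contraction constant $\alpha$ is preserved when passing to $q^s$, and the observation that $q^s$ is a bona fide metric rather than merely a semimetric, so that Banach's theorem applies verbatim.
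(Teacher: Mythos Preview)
Your argument is correct: the contraction inequality \eqref{def.contr-qm} passes to $\bar q$ by symmetry and hence to $q^s=\max\{q,\bar q\}$, after which the classical Theorem~\ref{t.fp-Ban} applies verbatim on the complete metric space $(X,q^s)$. The paper does not actually supply a proof of this theorem---it is only stated with a reference to \cite{matths92b}---but your reduction to the metric case is precisely the standard route, so there is nothing to add.
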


\subsection{Partial metrics and quasi-metrics}
In this subsection we put in evidence some relations between partial metrics and quasi-metrics.
\begin{prop}\label{p1.pm-qm}
Let $(X,p)$ be a partial metric space. Then the mapping $q:X^2\to\mathbb{R}_+$ given by
\begin{equation}\label{eq1.pm-qm}
q(x,y)=p(x,y)-p(x,x),\quad x,y\in X\,,
\end{equation}
is a quasi-metric on $X$. The topology $\tau(p)$ generated by $p$ agrees with the topology $\tau(q)$ generated by $q$ and  the corresponding specialization orders $\le_p$ and $\le_q$ coincide as well.
  \end{prop}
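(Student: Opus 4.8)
The plan is to check the three quasi-metric axioms for $q$ directly from (PM1)--(PM4), then to identify the balls of $q$ with the balls $B'_p$ of $p$ so that the equality of topologies drops out of the basis statements already proved, and finally to read off the coincidence of the specialization orders.

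First I would verify (QM1)--(QM3). Nonnegativity of $q$ is precisely (PM2), and $q(x,x)=p(x,x)-p(x,x)=0$, so in particular $q$ maps into $\mathbb{R}_+$. For the triangle inequality, (PM4) gives $p(x,z)\le p(x,y)+p(y,z)-p(y,y)$, whence
$q(x,z)=p(x,z)-p(x,x)\le\big(p(x,y)-p(x,x)\big)+\big(p(y,z)-p(y,y)\big)=q(x,y)+q(y,z)$.
For (QM3), $q(x,y)=q(y,x)=0$ means $p(x,y)=p(x,x)$ and $p(y,x)=p(y,y)$; combining this with the symmetry (PM3) yields $p(x,x)=p(x,y)=p(y,y)$, and then (PM1) forces $x=y$.

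Next I would compare the two topologies. Straight from the definitions,
$B_q(x,\varepsilon)=\{y\in X:q(x,y)<\varepsilon\}=\{y\in X:p(x,y)<\varepsilon+p(x,x)\}=B'_p(x,\varepsilon)$.
By Theorem \ref{t1.pm-sp}.(2) the family $\{B'_p(x,\varepsilon):x\in X,\ \varepsilon>0\}$ is a basis for $\tau(p)$, while by the way the topology of a quasi-semimetric space is introduced, together with Proposition \ref{p.top-qsm1}.(1) (the balls $B_q(x,\varepsilon)$ are $\tau(q)$-open and furnish a neighborhood basis at each point), the family $\{B_q(x,\varepsilon):x\in X,\ \varepsilon>0\}$ is a basis for $\tau(q)$. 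Since these two families of sets are literally the same, $\tau(p)=\tau(q)$.

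Finally, since the specialization order of a topological space is determined by its topology, the equality $\tau(p)=\tau(q)$ immediately gives that $\le_p$ and $\le_q$ coincide; alternatively one may match the explicit descriptions, as Proposition \ref{p.ord-pm-sp}.(1) gives $x\le_p y\iff p(x,x)=p(x,y)$ and Proposition \ref{p1.order-qm}.(1) gives $x\le_q y\iff q(x,y)=0\iff p(x,y)=p(x,x)$, which is the same condition. There is no genuine obstacle here; the only point that deserves a line of care is the justification that the $q$-balls really do form a basis for $\tau(q)$, which is covered by the facts about quasi-semimetric topologies recalled earlier in the paper.
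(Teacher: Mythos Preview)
Your proof is correct and follows essentially the same line as the paper's. The paper leaves the quasi-metric axioms as a ``routine verification'' and then compares the balls directly via the identities $B_p(x,\varepsilon)=B_q(x,\varepsilon-p(x,x))$ (for $\varepsilon>p(x,x)$) and $B_q(x,\varepsilon)=B_p(x,\varepsilon+p(x,x))$ to get the two inclusions $\tau(p)\subset\tau(q)$ and $\tau(q)\subset\tau(p)$; your route through the identification $B_q(x,\varepsilon)=B'_p(x,\varepsilon)$ together with Theorem~\ref{t1.pm-sp}.(2) is the same computation, just packaged via the already-established fact that the $B'_p$-balls form a basis for $\tau(p)$, and the treatment of the specialization orders is identical.
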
\begin{proof}
     It is a routine verification to show that the mapping $q$ defined by \eqref{eq1.pm-qm} is a quasi-metric on $X$.

For $0<\varepsilon\le p(x,x),\, B_p(x,\varepsilon)=\emptyset\in\tau(q)$. If $ \varepsilon>p(x,x)$, then
$B_p(x,\varepsilon)=B_q(x,\varepsilon-p(x,x))\in\tau(q),$ relations that imply $\tau(p)\subseteq \tau(q)$.

Since, for every $\varepsilon >0,\, B_q(x,\varepsilon)=B_p(x,\varepsilon+p(x,x))\in\tau(p),$  it follows   $\tau(q)\subseteq \tau(p)$.

Taking into account  \eqref{eq1.ord-pm-sp}
\begin{align*}
x\le_py\iff& p(x,y)=p(x,x)\iff q(x,y)=0\iff \forall \varepsilon >0,\; y\in B_q(x,\varepsilon)\\
\iff& x\in\overline{\{y\}}^q\iff x\le_q y\,.
  \end{align*}
  \end{proof}
 \begin{remark}
  It follows that
  \begin{equation}\label{def.m-pm}
  q^s(x,y)=q(x,y)+q(y,x)=2p(x,y)-p(x,x)-p(y,y),\, x,y\in X\,,
  \end{equation}
  is a metric on $X$, called the associate metric to the partial metric $p$.
 \end{remark}

Indeed, writing $p^s$ as
\begin{equation}\label{def2.m-pm}
  q^s(x,y)=p(x,y)-p(x,x)+p(x,y)-p(y,y),\, x,y\in X\,,
\end{equation}
and taking into account (PM2), it follows $q^s(x,y)\ge 0.$  Also, $q^s(x,y)=0$ implies $p(x,x)=p(x,y)=p(y,y),$ so that, by (PM1), $x=y$.
The symmetry of $q^s$ is obvious and the triangle inequality  $q^s(x,z)\le q^s(x,y)+q^s(y,z)$ follows from (PM4).

 The next result shows that the completeness of the partial metric space $(X,p)$ is equivalent to the completeness of the   associate metric space $(X,q^s)$.
 \begin{prop}\label{p2.pm-qm} Let $(X,p)$ be a partial metric space and $q^s$ the associated metric to $p$ given by \eqref{def.m-pm}.
 \begin{enumerate}
 \item \;{\rm(\cite{oneill95})} \; The following inequality
\begin{equation}\label{eq1.p2.pm-qm}
 |p(x,y)-p(z,w)|\le q^s(x,z)+q^s(y,w)\,,
  \end{equation}
is true for all $x,y,z,w\in X$.
 \item The convergence and completeness properties of the spaces $(X,p)$ and $(X,q^s)$ are related in the following way:

 {\rm (i)}\;\;\; a sequence $(x_n)$ in $X$ is properly convergent to $x\in X$ if and only if  $x_n\xrightarrow{q^s}x$;

 {\rm (ii)}\;\; a sequence $(x_n)$ in $X$ is $p$-Cauchy if and only if  it is $q^s$-Cauchy;

  {\rm (iii)}\, the partial metric space $(X,p)$ is complete if and only if  the associated  metric space $(X,q^s)$ is complete.

\item The mapping   $p(\cdot,\cdot)$ is continuous on $X\times X$ with respect to   the metric $d((x,y),(z,w))=q^s(x,z)+q^s(y,w)$ generating the product topology $\tau(q^s)\times\tau(q^s)$ on $X\times X$.

In particular, the mapping $\beta:X\to [0,\infty)$, given by $\beta(x)=p(x,x),\, x\in X,$ is $q^s$-continuous.
  \end{enumerate}
   \end{prop}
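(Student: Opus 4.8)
The plan is to prove the three parts in the order (2), (3), (1), since the O'Neill inequality (2) is the technical core from which everything else follows almost formally; throughout I use $q(x,y)=p(x,y)-p(x,x)$, $\bar q(x,y)=q(y,x)=p(x,y)-p(y,y)$, and $q^s(x,y)=q(x,y)+\bar q(x,y)=2p(x,y)-p(x,x)-p(y,y)$, together with (PM2), (PM3), (PM4). First I would prove (2): fix $x,y,z,w\in X$ and apply (PM4) twice, $p(x,y)\le p(x,z)+p(z,y)-p(z,z)$ and $p(z,y)\le p(z,w)+p(w,y)-p(w,w)$, which combine (using (PM3)) to $p(x,y)-p(z,w)\le (p(x,z)-p(z,z))+(p(y,w)-p(w,w))$; each difference on the right is nonnegative by (PM2) and equals $q(z,x)\le q^s(x,z)$, resp. $q(w,y)\le q^s(y,w)$, so $p(x,y)-p(z,w)\le q^s(x,z)+q^s(y,w)$, and interchanging $(x,y)$ with $(z,w)$ gives the reverse inequality. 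Part (3) is then immediate: (2) says $p\colon (X\times X,d)\to\mathbb{R}$ is $1$-Lipschitz, where $d((x,y),(z,w))=q^s(x,z)+q^s(y,w)$ is the $\ell^1$-sum of two copies of $q^s$ and hence generates the product topology $\tau(q^s)\times\tau(q^s)$; in particular $|\beta(x)-\beta(z)|\le 2q^s(x,z)$ (take $y=x$, $w=z$), so $\beta$ is $q^s$-Lipschitz, hence continuous.

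For (1)(i) I would write $q^s(x,x_n)=(p(x,x_n)-p(x,x))+(p(x,x_n)-p(x_n,x_n))$, a sum of two terms nonnegative by (PM2); hence $q^s(x,x_n)\to0$ if and only if both $p(x,x_n)\to p(x,x)$ and $p(x_n,x_n)\to p(x,x)$, which is exactly proper convergence. For (1)(ii): if $(x_n)$ is $p$-Cauchy with $\lim_{m,n}p(x_n,x_m)=a$, then $m=n$ gives $p(x_n,x_n)\to a$, whence $q^s(x_n,x_m)=2p(x_n,x_m)-p(x_n,x_n)-p(x_m,x_m)\to0$; conversely, if $(x_n)$ is $q^s$-Cauchy, then (2) with $y=x_n$, $w=x_m$ gives $|p(x_n,x_n)-p(x_m,x_m)|\le 2q^s(x_n,x_m)\to0$, so $p(x_n,x_n)\to a$ for some $a\ge0$, and then $2p(x_n,x_m)=q^s(x_n,x_m)+p(x_n,x_n)+p(x_m,x_m)\to 2a$. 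Finally (1)(iii) combines (i) and (ii): the $p$-Cauchy and the $q^s$-Cauchy sequences coincide, and each such sequence converges properly in $(X,p)$ precisely when it converges in $(X,q^s)$, to the same point, so one space is complete if and only if the other is.

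I do not anticipate a real obstacle. The one spot requiring care is the converse in (1)(ii): one must first deduce that the self-distances $(p(x_n,x_n))$ form a convergent real sequence (this is what forces (2) to be proved before (1)), and only then identify the common limit $a$ of the double sequence $(p(x_n,x_m))$. The remaining verifications are routine unwindings of the definitions, where the only thing to watch is the correct formula $q^s(x,y)=2p(x,y)-p(x,x)-p(y,y)$.
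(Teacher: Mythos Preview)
Your proof is correct and close in spirit to the paper's, but with one organizational difference worth noting: you prove (2) first and then invoke it to handle the converse of (1)(ii), whereas the paper proves (1) before (2), establishing the converse of (1)(ii) directly by writing the $q^s$-Cauchy condition as the sum of two nonnegative terms $p(x_m,x_n)-p(x_n,x_n)$ and $p(x_m,x_n)-p(x_m,x_m)$, subtracting to get $|p(x_m,x_m)-p(x_n,x_n)|\to 0$, and then estimating $|p(x_m,x_n)-p(x_{m'},x_{n'})|$ by a triangle-inequality argument. Your route via the O'Neill inequality is a bit slicker (one line instead of a half-page computation), and your bound $p(x,y)-p(z,w)\le q(z,x)+q(w,y)$ is actually sharper than the paper's, which pads the right-hand side with extra nonnegative terms before reaching $q^s(x,z)+q^s(y,w)$. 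Otherwise the arguments coincide.
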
\begin{proof} (1)\; By the triangle inequality (PM4)
\begin{align*}
  p(x,y)&\le p(x,z)+p(z,y)-p(z,z)\\&\le p(x,z)+p(z,w)+p(w,y)-p(w,w)-p(z,z)\,,\\
 \end{align*}
so that
\begin{align*}
 p(x,y)- p(z,w)&\le p(x,z)+\underbrace{p(x,z)-p(x,x)}_{\ge 0}-p(z,z)+\\
 &+p(y,w)+\underbrace{p(y,w)-p(y,y)}_{\ge 0}-p(w,w)\\
 &=q^s(x,z)+q^s(y,w)\,.
\end{align*}

Because
$$  p(z,w)-p(x,y)\le q^s(z,x)+q^s(w,y)=q^s(x,z)+q^s(y,w)\,,$$
  the inequality \eqref{eq1.p2.pm-qm} follows.\smallskip

   (2).(i)\; By definition
 $$
 x_n\xrightarrow{q^s}x\iff p(x_n,x)-p(x,x)+p(x_n,x)-p(x_n,x_n)\longrightarrow 0.$$

Since $p(x_n,x)-p(x,x)\ge 0$ and $p(x_n,x)-p(x_n,x_n)\ge 0$ the last condition from above is equivalent to
$$
\begin{cases}
 p(x_n,x)\longrightarrow p(x,x)\quad \\
  p(x_n,x)-p(x_n,x_n)\longrightarrow 0
\end{cases}\iff\;\;
\begin{cases}
 p(x_n,x)\longrightarrow p(x,x) \\
  p(x_n,x_n)\longrightarrow p(x,x)
\end{cases}$$
that is, to the fact that $(x_n)$ converges properly to $x$.
\smallskip

(ii)\; I.\;\emph{Any} $p$-\emph{Cauchy sequence is } $q^s$-\emph{Cauchy}.
\smallskip

   Let $(x_n)$ be a $p$-Cauchy sequence in $X$, that is
   $$
   \lim_{m,n\to \infty}p(x_m,x_n)=a\,,
      $$
       for some $a\in\mathbb{R}_+\,.$
Then $\lim_{k\to\infty}p(x_k,x_k)=a$, so that
$$
q^s(x_m,x_n)=2\,p(x_m,x_n) -p(x_n,x_n)  -p(x_m,x_m)\longrightarrow 0\quad \mbox{as}\;\; m,n\to\infty\,,
$$
which shows that the sequence $(x_n)$ is $q^s$-Cauchy.

 \vspace{2mm}

 II.\; \emph{Any} $q^s$-\emph{Cauchy sequence is } $p$-\emph{Cauchy}.

Let $(x_n)$ be a $q^s$-Cauchy sequence in $X$. We   show now that the net $(p(x_m,x_n))_{(m,n)\in \mathbb{N}^2}$ is Cauchy in $\mathbb{R}_+\,$. By \eqref{eq1.p2.pm-qm},
\bequ\label{ineq.p2.pm-qm}
|p(x_m,x_n)-p(x_{m'},x_{n'})|\le q^s(x_m,x_{m'})+q^s(x_n,x_{n'})\,.\eequ

Given  $\varepsilon >0$ let $n_0\in \Nat$ be such that
\beqs
  q^s(x_m,x_{m'})<\eps/2\quad\mbx{and}\quad q^s(x_n,x_{n'})<\eps/2\,,
\eeqs
for all $m,m'\ge n_0$ and all $n,n'\ge n_0.$  But then, the inequality \eqref{ineq.p2.pm-qm} yields
$$
|p(x_m,x_n)-p(x_{m'},x_{n'})|<\eps\,,$$
for all $m,m',n,n'\ge n_0$.

Since  the net $(p(x_m,x_n))_{(m,n)\in \mathbb{N}^2}$ is Cauchy in $\mathbb{R}_+\,,$  it converges to some $a\in\mathbb{R}_+$, which means that the sequence $(x_n)$ is $p$-Cauchy.
\vspace{2mm}

(iii)\;  This follows from the definition of the completeness of the partial metric space $(X,p)$ and from (i) and (ii).

(3)\; If $q^s(x_n,x)\to 0$ and $q^s(y_n,y)\to 0$, then, by \eqref{eq1.p2.pm-qm},
$$
|p(x_n,y_n)-p(x,y)|\le q^s(x_n,x)+q^s(y_n,y)\longrightarrow 0\,,$$
proving the $q^s$-continuity of $p(\cdot,\cdot)$ at $(x,y)\in X\times X.$
\end{proof}

\begin{remark}\label{re.C-seq-pm-sp}
Definition  \ref{def.C-seq-pm-sp} of a Cauchy sequence in a partial metric space is taken from \cite{matths94} (see also  \cite{kopper-matths09}). In \cite{matths92a} the following equivalent definition is proposed: a sequence $(x_n)$ in a partial metric space $(X,p)$ is called a Cauchy sequence if  for every $\varepsilon>0$ there exists $n_\varepsilon\in\mathbb{N}$ such that
$$
0\le p(x_n,x_m)-p(x_m,x_m)<\varepsilon\,,
$$
for all $m,n\ge n_\varepsilon$.
\end{remark}

 Indeed, the equality \eqref{def2.m-pm} shows that this is equivalent to the fact that $(x_n)$ is  $q^s$-Cauchy, which in its turn is equivalent to the fact that $(x_n)$ is $p$-Cauchy.

 \begin{remark} Another metric on a partial metric space $(X,p)$ is given by $d(x,y)=0$ if $x=y$ and $d(x,y)=p(x,y)$ for $x\ne y$. In this case $\tau_{q^s}\subseteq \tau_d$ and  the metric space $(X,d)$ is complete if and only if
 the partial metric space $(X,p)$ is complete. This result can be used to show that some fixed points results in partial metric spaces can be obtained directly from their analogues in the metric case, see \cite{rezap13}.
A similar situation  occurs  in the case of the so called cone-metric spaces,    see, for instance, the survey paper \cite{radenov11}.
\end{remark}

 \begin{example}\label{ex3.pm-N} Some topological properties of the partial metric spaces   $2^\mathbb{N}$ and $S^\infty$ from  Examples \ref{ex1.pm-N} and \ref{ex2.pm-N} are examined in \cite{oneill95}.

 In the case of the partial metric $(2^\mathbb{N},p)$, the associated quasi-metric is
   $$q(x,y)=\sum_{k\in x\setminus y}2^{-k}\,,$$
 and the associated metric is
   $$q^s(x,y)=\sum_{k\in x\Delta y}2^{-k}\,,$$
   where $x\Delta y=(x\setminus y)\cup (y\setminus x)$ is the symmetric difference of the sets $x,y\subseteq\mathbb{N}$.

   It is shown in \cite{oneill95} that:
   \begin{itemize}
     \item The spaces $2^\mathbb{N}$ and $S^\infty$ are complete.
     \item The associated metric space $(2^\mathbb{N},q^s)$ is compact (and so separable).
 \item The associated metric space $(S^\infty,q^s)$ is  separable if and only if the set $S$ is countable.
 \item The associated metric space $(S^\infty,q^s)$ is  compact if and only if the set $S$ is finite.
   \end{itemize}
 \end{example}

\subsection{The existence of suprema in partial metric spaces}
In this subsection we shall prove that every increasing sequence in a partial metric space has a supremum and it is properly convergent to its supremum. We agree to call a mapping $f:(X_1,p_1)\to (X_2,p_2)$ \emph{properly continuous} if $(f(x_n))$ properly converges to $f(x)$ for every sequence $(x_n)$ in $X_1$ properly convergent to $x$.
\begin{prop}\label{p.sup-pm}
Let $(X,p)$ be a partial metric space and $\le_p$ the specialization order corresponding to $p$.
\begin{enumerate}
\item If $(X,p)$ is complete, then every increasing sequence $x_1\le_px_2\le_p\dots$ in $X$ has a supremum $x$ and the sequence $(x_n)$ converges properly to $x$.
\item    Let   $ (X_1,p_1), (X_2,p_2)$ be complete partial metric spaces with the specialization orders $\le_1,\,\le_2,$  respectively, and $f:(X_1,p_1)\to (X_2,p_2)$ a mapping. If $f$ is properly continuous and monotonic, then $f$ preserves suprema of increasing sequences, that is, $\sup_nf(x_n)= f(x)$  for every increasing sequence $x_1\le_{1}  x_2\le_{1}\dots$ in $X_1$ with $\sup_nx_n=x$
  \end{enumerate}  \end{prop}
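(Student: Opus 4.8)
The plan is to prove (1) directly: show that an increasing chain is a Cauchy sequence in the sense of Definition \ref{def.C-seq-pm-sp}, invoke completeness to produce a proper limit, and then check that this limit is the least upper bound of the chain; part (2) will then follow quickly from (1) applied in both spaces together with the uniqueness of proper limits (Proposition \ref{p3.pm-sp}.(i)).

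For (1), I would first record that, by Proposition \ref{p.ord-pm-sp}.(1), $x_n\le_p x_{n+1}$ means $p(x_n,x_n)=p(x_n,x_{n+1})$, and hence, by transitivity of $\le_p$, that $p(x_n,x_m)=p(x_n,x_n)$ whenever $n\le m$. From (PM2) and (PM3) one gets $p(x_{n+1},x_{n+1})\le p(x_{n+1},x_n)=p(x_n,x_n)$, so the sequence $\bigl(p(x_n,x_n)\bigr)_n$ is nonincreasing and bounded below, hence converges to some $a\ge 0$. Since $p(x_n,x_m)=p(x_{\min(m,n)},x_{\min(m,n)})\to a$ as $m,n\to\infty$, the sequence $(x_n)$ is Cauchy, and completeness yields $x\in X$ to which $(x_n)$ converges properly; in particular $\lim_n p(x,x_n)=p(x,x)=\lim_n p(x_n,x_n)=a$.

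Next I would check that $x$ is the least upper bound of $\{x_n\}$. For the upper-bound half, fix $n$; for $m\ge n$ the triangle inequality (PM4) gives $p(x_n,x)\le p(x_n,x_m)+p(x_m,x)-p(x_m,x_m)=p(x_n,x_n)+p(x_m,x)-p(x_m,x_m)$, and letting $m\to\infty$ (using $p(x_m,x)\to a$ and $p(x_m,x_m)\to a$) gives $p(x_n,x)\le p(x_n,x_n)$; combined with (PM2) this is the equality $p(x_n,x)=p(x_n,x_n)$, i.e. $x_n\le_p x$. For minimality, if $y$ satisfies $x_n\le_p y$ for all $n$, i.e. $p(x_n,y)=p(x_n,x_n)$, then (PM4) gives $p(x,y)\le p(x,x_n)+p(x_n,y)-p(x_n,x_n)=p(x,x_n)\to p(x,x)$, so $p(x,y)\le p(x,x)$, whence $p(x,y)=p(x,x)$ by (PM2), that is $x\le_p y$. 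Thus $x=\sup_n x_n$ and $(x_n)$ converges properly to $x$, proving (1).

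For (2), let $x_1\le_1 x_2\le_1\cdots$ with $\sup_n x_n=x$; by (1) applied in $X_1$ this chain converges properly to its supremum, which is $x$. Assuming $f$ increasing, $\bigl(f(x_n)\bigr)$ is increasing in $X_2$, so by (1) applied in $X_2$ it converges properly to $\sup_n f(x_n)$; since $f$ is properly continuous, $\bigl(f(x_n)\bigr)$ also converges properly to $f(x)$, and uniqueness of proper limits (Proposition \ref{p3.pm-sp}.(i)) forces $\sup_n f(x_n)=f(x)$. (If $f$ is decreasing one runs the dual argument with infima in $X_2$, obtaining $\inf_n f(x_n)=f(x)$.) The only delicate point --- the main obstacle --- is the verification that the proper limit $x$ is the least upper bound, and in particular its upper-bound half: because $p(x,x)$ need not vanish in a partial metric space, one must feed both limits $p(x,x_n)\to p(x,x)$ and $p(x_n,x_n)\to p(x,x)$ into the triangle inequality at the right moment; everything else is routine bookkeeping with (PM2), (PM3), (PM4) and the transitivity of $\le_p$.
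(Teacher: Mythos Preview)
Your argument is correct and follows essentially the same route as the paper: you show the chain is Cauchy via $p(x_n,x_m)=p(x_{\min(m,n)},x_{\min(m,n)})$, use completeness to produce a proper limit, and then verify it is the supremum using the same two applications of (PM4); part (2) is likewise handled by applying (1) in both spaces and invoking uniqueness of proper limits. Your write-up is in fact a bit cleaner in the minimality step (the paper has a small slip there, writing $=p(x_n,y)$ where $=p(x,x_n)$ is meant), and you correctly flag that the ``monotonic'' hypothesis really only yields the stated conclusion when $f$ is increasing, the decreasing case producing $\inf_n f(x_n)=f(x)$ instead.
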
\begin{proof}
      (1)\; We show first that the sequence $(x_n)$ is Cauchy. Indeed,
      $$
      x_n\le_px_{n+k}\iff p(x_n,x_{n+k})-p(x_n,x_n)=0\,,
      $$
      so that, taking into account Remark \ref{re.C-seq-pm-sp}, it follows that $(x_n)$ is Cauchy.   The completeness hypothesis implies the existence of $x\in X$  such that  the sequence $(x_n)$ is  properly convergent to $x$, that is
      \begin{equation}\label{eq1.sup-pm}
      \lim_np(x,x_n)=p(x,x)=\lim_np(x_n,x_n)\,.
      \end{equation}

      We show that $x=\sup_nx_n$, that is
      \begin{equation}\label{eq2.sup-pm}
      \begin{aligned}
      &{\rm (i)}\quad x_n\le x\;\mbox{for all  } n\in\mathbb{N};\\
      &{\rm (ii)}\quad \mbox{if}\;\;  x_n\le y\;\mbox{for all  } n\in\mathbb{N},\; \mbox{then}\; x\le y\,.
      \end{aligned}\end{equation}

We have for all $n,k\in\mathbb{N}$
\begin{align*}
  p(x_n,x)\le &p(x_n,x_{n+k})+p(x_{n+k},x)-p(x_{n+k},x_{n+k})\\
 = &p(x_n,x_{n})+p(x_{n+k},x)-p(x_{n+k},x_{n+k})\,.
  \end{align*}

  Letting $k\to \infty$ and taking into account \eqref{eq1.sup-pm}, one obtains $p(x_n,x)\le p(x_n,x_n)$, so that, by (PM2) from Definition \ref{def.pm},   $p(x_n,x)= p(x_n,x_n)$,   that is $x_n\le_p x$.

  Suppose now that $x_n\le_p y$  for all $n\in\mathbb{N}$. Then
  \begin{align*}
    p(x,y)\le &p(x,x_n)+p(x_n,y)-p(x_n,x_n)\\
    = &p(x_n,y)=p(x_n,x_n)\,,
      \end{align*}
for all $n\in\mathbb{N}$. Letting $n\to \infty $ one obtains (by \eqref{eq1.sup-pm}, $p(x,y)\le p(x,x)$.
It follows $p(x,y) = p(x,x)$, that is $x\le_p y$. Consequently, both conditions (i) and (ii) from \eqref{eq2.sup-pm} hold.

(2)\; Let  $x_1\le_{1}  x_2\le_{1}\dots$ be an increasing sequence in $X$ with $ \sup_nx_n= x$. Then $(x_n)$ is $p_1$-properly convergent to $x$. Then the sequence $(f(x_n))$ is $\le_2$-increasing and properly convergent to $f(x)$. By (1), this implies that $\sup_nf(x_n)=f(x)$.
\end{proof}

\begin{remark}
  It is possible that the property from the first statement of Proposition \ref{p.sup-pm} characterizes the completeness of the partial metric space $(X,p)$ (like in Theorem \ref{t2.Jachym}). Concerning the second statement, I don't know whether the Scott continuity is equivalent to the continuity of the mapping $f$.
\end{remark}

 \subsection{Caristi's fixed point theorem and   completeness in partial metric spaces}
 In this subsection we shall present, following Romaguera \cite{romag10} the equivalence of Caristi's fixed point theorem   to the completeness of the underlying partial metric space.

Let $(X,p)$ be a partial metric space. Recall the Caristi condition for a mapping $f:X\to X$:
\begin{equation}\tag{Car$_\varphi$}\label{def.Caristi-pm}
p(x,f(x))\le \varphi(x)-\varphi(f(x))\,,
\end{equation}
for all $x\in X$. Here $\varphi $ is a function $\varphi:X\to \mathbb{R}$. According to the continuity properties of the function $\varphi$ we distinct two kinds of Caristi conditions. One says that the mapping  $f$ is\\

\textbullet\; $p$-Caristi if \eqref{def.Caristi-pm} holds for some $p$-lsc bounded from below function $\varphi:X\to\mathbb{R}$;

\textbullet\; $q^s$-Caristi if \eqref{def.Caristi-pm} holds for some $q^s$-lsc bounded from below function $\varphi:X\to\mathbb{R}$,\\

\noindent where $q^s$ is the metric associated to $p$ by \eqref{def.m-pm}.

As it was shown in \cite{romag10} the completeness of a partial metric space $(X,p)$  cannot be characterized by the existence of fixed points of   $p$-Caristi mappings.
\begin{example}
Consider  the  set $\mathbb{N}$ with the partial metric $p(m,n)=\max\{m^{-1},n^{-1}\}$. The associated metric $q^s$  is given by $q^s(m,n)=|m^{-1}-n^{-1}|,\, m,n\in\mathbb{N}$. If $0<\varepsilon<\left[n(n+1)\right]^{-1}$, then $B_{q^s}(n,\varepsilon)=\{n\}$, that is the topology $\tau(q^s)$ is the discrete metric on $\mathbb{N}$, and so the only convergent sequences are the ultimately constant ones. The space $(\mathbb{N},q^s)$ is not complete because the sequence $x_n=n,\,n\in\mathbb{N},$ is $q^s$-Cauchy and not $q^s$-convergent. On the other side there are no $p$-Caristi maps on $\mathbb{N}$.
\end{example}

To obtain a characterization of this kind, another notion is needed.
\begin{defi}\label{def.0-compl-pm}
Let $(X,p)$ be a partial metric space. A sequence $(x_n)$ in $X$ is called 0-\emph{Cauchy} if and only if $\lim_{m,n\to\infty}p(x_m,x_n)=0$. The partial metric space $(X,p)$ is called 0-\emph{complete} if every 0-Cauchy sequence $(x_n)$  is convergent with respect to  $\tau_p$ to some $x\in X$ such that  $p(x,x)=0.$
 \end{defi}

 \begin{remark}\label{re.0-compl-pm}
   The above definition is given in  \cite{romag10}. Taking into account Proposition \ref{p2.pm-qm}, the following  assertions  hold:
   \begin{align*}
      &\begin{cases}
     \lim_{m,n\to\infty} p(x_n,x_m)=0,\\
      \lim_{n\to\infty} p(x,x_n)=p(x,x),\\
      p(x,x)=0,
   \end{cases}\iff
     \begin{cases}
     \lim_{m,n\to\infty} p(x_n,x_m)=0,\\
      \lim_{n\to\infty} p(x,x_n)=p(x,x),\\
      p(x,x)=0=\lim_{n\to\infty}p(x_n,x_n),
   \end{cases}\\
   &\Rightarrow \begin{cases}
     (x_n)\;\mbox{is } q^s\mbox{-Cauchy, and}\\
   x_n\xrightarrow{q^s} x\,.  \end{cases}
   \end{align*}

   Consequently,  a partial metric space $(X,p)$ is 0-complete if and only if  every 0-Cauchy sequence is properly convergent if and only if
   every 0-Cauchy sequence is $q^s$-convergent.
   \end{remark}
   \begin{remark}\label{re2.0-compl-pm}
   It is obvious that a complete partial metric space is 0-complete, but the converse is not true (see \cite{romag10}).\end{remark}

   Notice also the following property.
   \begin{remark}[\cite{karapin11b}]\label{re.0-converg-pm}
   Let $(X,p)$ be a partial metric space, $(x_n)$  a sequence in $X$ and $x\in X$. If $\lim_{n\to \infty}p(x_n,x)=0 $, then    $\lim_{n\to \infty}p(x_n,y)=p(x,y)$ for every $y\in Y$.
   \end{remark}

Indeed,
$$
p(x_n,y)\le p(x_n,x)+p(x,y)-p(x,x)\le p(x_n,x)+p(x,y)\,,
$$
implies $p(x_n,y)-p(x,y)\le p(x_n,x)$,
while
$$
p(x,y)\le p(x,x_n)+p(x_n,y)-p(x_n,x_n)\le p(x,x_n)+p(x_n,y)\,,
$$
implies $p(x,y)-p(x_n,y)\le p(x,x_n)$.

Consequently
$$|p(x,y)-p(x_n,y)|\le p(x,x_n)\longrightarrow 0\,.$$

 The characterization result is the following one.
 \begin{theo}[\cite{romag10}, \cite{Kirk-Shah}]\label{t1.Caristi-pm}
 Let $(X,p)$ be a partial metric space. Then $(X,p)$ is $0$-complete if and only if  every $q^s$-Caristi mapping on $X$ has a fixed point.
 \end{theo}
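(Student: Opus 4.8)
The plan is to prove the two implications separately, in both cases passing through the associated metric $q^s$ from \eqref{def.m-pm} and using the description of $0$-completeness recorded in Remark \ref{re.0-compl-pm}: $(X,p)$ is $0$-complete precisely when every $0$-Cauchy sequence is $q^s$-convergent. Two elementary observations will be used throughout. First, a $0$-Cauchy sequence $(x_n)$ automatically has $p(x_n,x_n)\to 0$ (since $p(x_n,x_n)\le p(x_n,x_m)$ by (PM2)), hence it is $q^s$-Cauchy; and being $q^s$-Cauchy it is $q^s$-convergent as soon as it has a $q^s$-convergent subsequence, in which case it converges properly (Proposition \ref{p2.pm-qm}) to a limit $x$ with $p(x,x)=0$. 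Second, $\varphi$ may always be replaced by $\varphi-\inf\varphi$, so we assume $\varphi\ge 0$ in the Caristi condition \eqref{def.Caristi-pm}.

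\textbf{The direction ``every $q^s$-Caristi map has a fixed point'' $\Rightarrow$ $0$-complete.} I would argue by contradiction, mimicking the metric construction behind Theorem \ref{t.Hu}. Take a $0$-Cauchy sequence which is not $q^s$-convergent; by the remarks above it has no $q^s$-convergent subsequence, so each value is attained finitely often and we may pass to a subsequence of pairwise distinct terms, and then (shrinking again) arrange $p(x_n,x_m)\le 2^{-n}$ for all $m\ge n$, so that $\sum_n p(x_n,x_{n+1})<\infty$. The set $Y=\{x_n:n\in\mathbb N\}$ is then $q^s$-closed and $q^s$-discrete. Define $f:X\to X$ by $f(x_n)=x_{n+1}$ and $f(x)=x_1$ for $x\in X\setminus Y$, and
\[
\varphi(x_n)=\sum_{k\ge n}p(x_k,x_{k+1}),\qquad \varphi(x)=\varphi(x_1)+p(x,x_1)\ \ (x\notin Y).
\]
Then $\varphi\ge 0$ and \eqref{def.Caristi-pm} holds with equality in both cases. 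Since $x\mapsto p(x,x_1)$ is $q^s$-continuous (Proposition \ref{p2.pm-qm}) and $\varphi(x_n)\le\varphi(x_1)\le\varphi(x_1)+p(x_n,x_1)$, a routine check shows every sublevel set $\{\varphi\le c\}$ is $q^s$-closed, i.e. $\varphi$ is $q^s$-lsc. Thus $f$ is a $q^s$-Caristi map; but $f(x_n)=x_{n+1}\ne x_n$ and $f(x)=x_1\in Y$ for $x\notin Y$, so $f$ has no fixed point, contradicting the hypothesis. Hence every $0$-Cauchy sequence is $q^s$-convergent, i.e. $(X,p)$ is $0$-complete.

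\textbf{The direction $0$-complete $\Rightarrow$ every $q^s$-Caristi map has a fixed point.} Given $f$ with $q^s$-lsc bounded below $\varphi\ge 0$ satisfying \eqref{def.Caristi-pm}, I would introduce the Br{\o}ndsted-type order
\[
y\preceq x\ :\Longleftrightarrow\ y=x\ \text{ or }\ p(x,y)\le\varphi(x)-\varphi(y).
\]
Using \eqref{eq1.pm-sp}, (PM2), (PM3) and (PM4) one verifies that $\preceq$ is a partial order, that $y\preceq x$ with $y\ne x$ forces $\varphi(y)<\varphi(x)$ (hence $\varphi$ is injective on every $\preceq$-chain), and that \eqref{def.Caristi-pm} says exactly $f(x)\preceq x$ for all $x$. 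The core step is that every $\preceq$-chain $C$ has a lower bound: choosing $x_1,x_2,\dots\in C$ with $\varphi(x_n)$ strictly decreasing to $\alpha:=\inf_C\varphi$, injectivity and the chain property make the $x_n$ pairwise distinct with $x_{n+1}\preceq x_n$, whence $p(x_n,x_m)\le\varphi(x_n)-\alpha\to 0$ for $m\ge n$, so $(x_n)$ is $0$-Cauchy. By $0$-completeness it converges properly to some $z$ with $p(z,z)=0$; using $q^s$-lsc of $\varphi$, $q^s$-continuity of $p$ (Proposition \ref{p2.pm-qm}), Remark \ref{re.0-converg-pm}, \eqref{eq1.pm-sp} and uniqueness of proper limits (Proposition \ref{p3.pm-sp}), one shows — comparing an arbitrary $x\in C$ with the $x_n$ inside the chain — that $z\preceq x$ for all $x\in C$. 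Zorn's lemma then produces a $\preceq$-minimal element $z^{*}$, and $f(z^{*})\preceq z^{*}$ together with minimality gives $f(z^{*})=z^{*}$.

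\textbf{Main obstacle.} The delicate point is that $0$-completeness is strictly weaker than completeness of $(X,p)$ (equivalently of $(X,q^s)$; see Remark \ref{re2.0-compl-pm}), so both arguments must be arranged so that the sequences one extracts — from chains in the second direction, and implicitly from the construction in the first — are genuinely $0$-Cauchy in $p$, not merely $q^s$-Cauchy. In the second direction this is exactly what the strictly decreasing choice of $\varphi(x_n)$ secures via the Caristi inequality; in the first direction it is built into the definition of $\varphi$ on $Y$. The remaining technical care is in checking that $Y$ is $q^s$-closed and discrete and that the glued $\varphi$ is $q^s$-lower semicontinuous, which is where the explicit inequality $\varphi(x_n)\le\varphi(x_1)+p(x_n,x_1)$ is needed.
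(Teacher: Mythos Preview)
Your proposal is correct. The converse direction (fixed-point property $\Rightarrow$ $0$-completeness) is essentially the paper's argument: both construct a $q^s$-Caristi shift map on a non-convergent $0$-Cauchy sequence, with only cosmetic differences in the choice of $\varphi$ (you use tails $\sum_{k\ge n}p(x_k,x_{k+1})$, the paper uses $2^{-n}$; both glue with $p(x,x_1)+\text{const}$ off the sequence).

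The forward direction, however, is genuinely different. The paper gives a direct Cantor-type construction: it defines $A_x=\{y:p(x,y)+\varphi(y)\le\varphi(x)\}$, builds a nested sequence $A_{x_0}\supset A_{x_1}\supset\cdots$ of $q^s$-closed sets with $p$-diameters $\to 0$ by choosing each $x_{n+1}\in A_{x_n}$ close to the infimum of $\varphi$ on $A_{x_n}$, obtains a $0$-Cauchy sequence, and verifies that the limit $z$ and $f(z)$ both lie in $\bigcap A_{x_n}$, forcing $p(z,f(z))=0$. Your route is the Br{\o}ndsted--Zorn one: a partial order encoding the Caristi inequality, chains shown to have lower bounds via a $0$-Cauchy extraction, and a minimal element fixed by $f$. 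Your approach is conceptually cleaner and immediately identifies the role of the order structure that underlies Caristi--Ekeland theory, at the cost of invoking Zorn's lemma; the paper's argument is more constructive, avoiding the Axiom of Choice beyond dependent choices, and exhibits the fixed point as the limit of an explicitly built $0$-Cauchy sequence. One small point to make explicit in your write-up: the case where $\alpha=\inf_C\varphi$ is attained on the chain should be separated out (then the minimizer is already a lower bound), since otherwise a strictly decreasing selection need not exist.
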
\begin{proof}
 Suppose that $(X,p)$ is 0-complete and let $f:X\to X$ be a $q^s$-Caristi mapping for some $q^s$-lsc bounded for below function $\varphi:X\to\mathbb{R}$. For $x\in X$ let
 $$
 A_x:=\{y\in X : p(x,y)+\varphi(y)\le\varphi(x)\}\,.$$

 Then, by \eqref{def.Caristi-pm}, $f(x)\in A_x$ and $A_x$ is $q^s$-closed because, taking into account Proposition \ref{p2.pm-qm}, the mapping $p(x,\cdot)+\varphi(\cdot)$ is $q^s$-lsc.

 Starting with an arbitrary $x_0\in X$ we shall construct inductively a sequence of $q^s$-closed sets $A_{x_n}$ such that, for all $k\in\mathbb{N},$
 \begin{equation}\label{eq1.Caristi-pm}\begin{aligned}
  {\rm (i)}\;\;  &x_k\in A_{x_{k-1}}\quad\mbox{and}\quad  A_{x_{k}}\subseteq A_{x_{k-1}} \\
 {\rm (ii)}\;\;  &p(x_k,x)<\frac1{2^k}\;\;\mbox{for all}\;\; x\in A_{x_{k}}\,.
 \end{aligned} \end{equation}

 Suppose that $x_k $ and $A_{x_{k}},\, k=0,1,\dots,n,$  satisfy the conditions \eqref{eq1.Caristi-pm}. Choose $x_{n+1}\in A_{x_{n}}$ such that
 $$
 \varphi(x_{n+1})<\inf\varphi\left(A_{x_{n}}\right)+\frac1{2^{n+1}}\,.$$

 If $y\in A_{x_{n+1}},$ then
\begin{align*}
 p(x_n,y) \le&p(x_n,x_{n+1}) +p(x_{n+1},y)-p(x_{n+1},x_{n+1})\\
 \le&\varphi(x_n)-\varphi(x_{n+1})+\varphi(x_{n+1})-\varphi(y)-p(x_{n+1},x_{n+1})\le \varphi(x_n)-\varphi(y)\,,
\end{align*}
which shows that $y\in A_{x_n}$, and so  $A_{x_{n+1}}\subseteq A_{x_{n}}$.

For $x\in A_{x_{n+1}}\subseteq A_{x_{n}},$
\begin{align*}
  p(x_{n+1},x)\le& \varphi(x_{n+1})-\varphi(x)\le \inf\varphi\left(A_{x_n}\right)+\frac1{2^{n+1}}-\varphi(x)\\
  \le&\varphi(x)+\frac1{2^{n+1}}-\varphi(x)=\frac1{2^{n+1}}\,.
\end{align*}

For $m>n,\, x_m\in A_{x_{m-1}}\subseteq A_{x_n},$ so that $p(x_n,x_m)<1/2^n,$ showing that the sequence $(x_n)$ is 0-Cauchy. It follows that there exists $z\in X$ with $p(z,z)=0$ such that
$$
\lim_n p(x_n,z)=0\,.$$

By Remark  \ref{re.0-compl-pm}, $x_n\xrightarrow{q^s}z$. Since each set $A_{x_n}$ is $q^s$-closed and $x_{n+k}\in A_{x_{n+k-1}}\subseteq A_{x_n}$ for all $k\in \mathbb{N},$ it follows $z\in A_{x_n}$, for all $n\in \mathbb{N}.$

Also, the inequalities
\begin{align*}
  p(x_n,f(z))\le& p(x_n,z)+p(z,f(z))\le \varphi(x_n)-\varphi(z)+\varphi(z)-\varphi(f(z))\\
  \le&\varphi(x_n)-\varphi(f(z))\,,
\end{align*}
show that $f(z)\in \bigcap_{n=1}^\infty A_{x_n}$. Consequently, $p(x_n,f(z))<1/2^n$ and, by the $q^s$-lsc of $p(\cdot,f(z)),$
$$
0\le p(z,f(z))\le \liminf _n p(x_n,f(z))\le \lim_n1/2^n=0\,,
$$
so that $p(z,f(z))=0$. From
$$
p(f(z),f(z))\le p(f(z),z)+p(z,f(z))-p(z,z)=0\,,$$
follows
$$
p(z,f(z))=p(z,z)=p(f(z),f(z))=0\,,$$
which implies $f(z)=z$.

To prove the converse, suppose that the partial metric space $(X,p)$ is not 0-complete. Then there exists a 0-Cauchy sequence $(x_n)_{n=0}^\infty$ that is not properly convergent in $(X,p)$. Passing, if necessary, to  a subsequence we can suppose further that the points $x_n$ are pairwise distinct and
\begin{equation}\label{eq2.Caristi-pm}
p(x_n,x_{n+1})<\frac1{2^{n+1}}\quad\mbox{for all   } n\in\mathbb{N}_0:=\mathbb{N}\cup\{0\}\,.
\end{equation}

Let
$$
A:=\{x_n: n\in\mathbb{N}_0\}\,.
$$

By Proposition \ref{p2.pm-qm} the sequence $(x_n)$ is $q^s$-Cauchy and not $q^s$-convergent, so it has no limit points, implying  that the set $A$ is $q^s$-closed.

Consider the functions $f:X\to X$ and  $\varphi:X\to[0,\infty)$ given by
\begin{equation}\label{eq.Caristi-pm}
f(x)=
\begin{cases}
  x_0 \;\; &\mbox{for  } x\in X\smallsetminus A,\\
  x_{n+1}  &\mbox{for  } x=x_n,\, n\in\mathbb{N}_0\,,
\end{cases}\quad
\mbox{and}\quad
\varphi(x)=
\begin{cases}
  p(x_0,x)+1 \;\; &\mbox{for  } x\in X\smallsetminus A,\\
  1/2^n &\mbox{for  } x=x_n,\, n\in\mathbb{N}_0\,.
\end{cases}\end{equation}

It is obvious that $f$ has no fixed points.

I.\; \emph{The function $\varphi$ is $q^s$-lsc.}

Let $(y_n)$ be a sequence in $X$ $q^s$-convergent to some $y\in X$.

If $y\in X\smallsetminus A,$ then there exists $n_0\in\mathbb{N}$ such that  $y_n\in X\smallsetminus A$ for all $n\ge n_0$ (because the set $X\setminus A$ is $q^s$-open). Since $p(x_0,\cdot)$ is $q^s$-continuous  (Proposition \ref{p2.pm-qm}), it follows $\varphi(y)=\lim_n\varphi(y_n)$.

Suppose now that  $y=x_k$ for some $k\in \mathbb{N}_0$ and
denote by  $(y_{m_j})_{j\in\mathbb{N}},\; m_1<m_2<\dots,$ the  terms of the sequence $(y_n)$ that  belong to $A$.  If the set $\{m_j:j\in \mathbb{N}\}$  is infinite,  then we must have $y_{m_j}=x_k,\, j\ge j_0,$
 for some  $j_0\in\mathbb{N}$ (because $(x_n)$ has no convergent subsequences). Since $\varphi(x)\ge 1\ge 2^{-k}$ for $x\in X\setminus A$, it follows   $\inf\{\varphi(x_i) : i\ge n\}=\varphi(x_k)$ for all n, and so $\liminf_n\varphi(y_{n})=\sup_n\inf\{\varphi(x_i) : i\ge n\}=\varphi(x_k)$.

If the set $\{m_j : j\in\mathbb{N}\}$ is finite, then there exists $n_0\in\mathbb{N}$ such that $y_n\in X\setminus A$ for all $n\ge n_0$. This implies
\begin{equation}
\varphi(x_k)=\frac1{2^k}\le 1\le \lim_n[p(x_0,y_{n})+1]=\lim_n\varphi(y_{n})\,.
\end{equation}

Consequently $\varphi(y)\le\liminf_n\varphi(y_n)$ in both cases.

II. \; \emph{$f$ is a Caristi mapping with respect to  $\varphi$}.

Indeed, if $x\in X\smallsetminus A$, then $f(x)=x_0$ and
\begin{align*}
  p(x,f(x))= p(x,x_0)
  =\varphi(x)-1=\varphi(x)-\varphi(f(x))\,.
\end{align*}

If $x=x_k$ for some $k\in\mathbb{N}_0,$ then $f(x_k)=x_{k+1}$ and  , by \eqref{eq2.Caristi-pm},
\begin{align*}
  p(x_k,f(x_k))=& p(x_k,x_{k+1})<\frac1{2^{k+1}}\\=&\frac1{2^{k}}-\frac1{2^{k+1}}=\varphi(x_k)-\varphi(f(x_k))\,.
\end{align*}

Consequently, $f$ is a $q^s$-Caristi mapping without fixed points.
\end{proof}

\begin{remark}
Caristi-type fixed point theorems in complete partial metric spaces were also  proved by Karapinar \emph{et al.} in \cite{karapin13} and \cite{karapin11}. Since a complete partial metric space is 0-complete, but the converse is not true (see \cite{romag10}), these results follow from those proved by Romaguera \cite{romag10}
\end{remark}

Another definition of Caristi condition in partial metric spaces was given by Acar,  Altun and Romaguera \cite{altun-romag13}. A mapping $f:X\to X$ is called \emph{AR-Caristi} if

\begin{equation}\tag{AR-Car$_\varphi$}\label{def.AR-Car-pm}
p(x,f(x))\le p(x,x)+\varphi(x)-\varphi(f(x))\,,
\end{equation}
 for some $q^s$-lsc bounded from below function $\varphi:X\to\mathbb{R}.$
 \begin{theo}
 [Acar,  Altun and Romaguera \cite{altun-romag13}] \label{t2.AR-Car-pm} A partial metric space $(X,p)$ is complete if and only if every AR-Caristi mapping on X has a fixed point.
 \end{theo}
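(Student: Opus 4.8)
The plan is to transfer everything to the associated metric space $(X,q^s)$, where $q(x,y):=p(x,y)-p(x,x)$ is the quasi-metric of Proposition~\ref{p1.pm-qm} and $q^s(x,y)=2p(x,y)-p(x,x)-p(y,y)$ the associated metric. I would use the following facts from Proposition~\ref{p2.pm-qm}: $(X,p)$ is complete if and only if $(X,q^s)$ is complete; $p(\cdot,\cdot)$, and hence $x\mapsto p(x,x)$, is $q^s$-continuous; $p$-Cauchy and $q^s$-Cauchy sequences are the same, and proper convergence coincides with $q^s$-convergence. Note also that the AR-Caristi inequality \eqref{def.AR-Car-pm} is nothing but $q(x,f(x))\le\varphi(x)-\varphi(f(x))$.

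For the ``only if'' part, assume $(X,q^s)$ is complete and let $f$ be AR-Caristi for a $q^s$-lsc, bounded below $\varphi$. I would define the set-valued map $F:X\rightrightarrows X$ by $F(x)=A_x:=\{y\in X:\ q(x,y)+\varphi(y)\le\varphi(x)\}$ and verify the four hypotheses of the Dancs--Heged{\H{u}}s--Medvegyev theorem (Theorem~\ref{t1.DHM}) on the complete metric space $(X,q^s)$: (i) each $A_x$ is $q^s$-closed, since $y\mapsto q(x,y)+\varphi(y)=p(x,y)-p(x,x)+\varphi(y)$ is $q^s$-lsc; (ii) $x\in A_x$, trivially; (iii) $y\in A_x$ implies $A_y\subset A_x$, by the triangle inequality for the quasi-metric $q$; (iv) $q^s(x_n,x_{n+1})\to 0$ along every sequence with $x_{n+1}\in A_{x_n}$. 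Granting these, Theorem~\ref{t1.DHM} supplies $x_*$ with $A_{x_*}=\{x_*\}$, and since \eqref{def.AR-Car-pm} gives $f(x_*)\in A_{x_*}$, we conclude $f(x_*)=x_*$.

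The hard part is step (iv): the relation $x_{n+1}\in A_{x_n}$ controls $q(x_n,x_{n+1})$ but not the reverse quantity $q(x_{n+1},x_n)$, so the asymmetry of $q$ must be dealt with by hand. Here I would argue that $\varphi$ is nonincreasing along the sequence and bounded below, so $\delta_n:=\varphi(x_n)-\lim_k\varphi(x_k)\to 0$; telescoping together with the triangle inequality for $q$ gives $q(x_n,x_m)\le\delta_n$ for all $m\ge n$, i.e. $p(x_n,x_n)\le p(x_n,x_m)\le p(x_n,x_n)+\delta_n$ by (PM2), and since moreover $p(x_m,x_m)\le p(x_m,x_n)$ we get $p(x_m,x_m)\le p(x_n,x_n)+\delta_n$ for $m\ge n$. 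Letting $m\to\infty$ and then $n\to\infty$ forces $\limsup_m p(x_m,x_m)\le\liminf_n p(x_n,x_n)$, so $p(x_n,x_n)$ converges to some $a\ge 0$; then $p(x_n,x_{n+1})\to a$, hence $q^s(x_n,x_{n+1})=2p(x_n,x_{n+1})-p(x_n,x_n)-p(x_{n+1},x_{n+1})\to 0$. (Alternatively one can bypass Theorem~\ref{t1.DHM} and imitate Theorem~\ref{t1.Caristi-pm}: build a decreasing chain $A_{x_0}\supset A_{x_1}\supset\cdots$ with $\diam_{q^s}A_{x_n}\to 0$; the same self-distance estimate is exactly what makes the $q^s$-diameters vanish, and the fixed point then sits in the singleton $\bigcap_n A_{x_n}$.)

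For the ``if'' part I would argue by contraposition, following the converse half of Theorem~\ref{t1.Caristi-pm}. If $(X,p)$ is not complete then neither is $(X,q^s)$, so there is a $q^s$-Cauchy sequence with no $q^s$-limit; passing to a subsequence I may assume its terms are pairwise distinct and $q^s(x_n,x_{n+1})<2^{-(n+1)}$ for every $n$. A non-convergent Cauchy sequence has no cluster point, so $A:=\{x_n:n\ge 0\}$ is $q^s$-closed and $X\setminus A$ is $q^s$-open. I then define $f(x_n)=x_{n+1}$, $f(x)=x_0$ for $x\in X\setminus A$, and $\varphi(x_n)=2^{-n}$, $\varphi(x)=q(x,x_0)+1$ for $x\in X\setminus A$ (with $q(x,x_0)=p(x,x_0)-p(x,x)\ge 0$). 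Then $\varphi\ge 0$, $f$ has no fixed point, and $f$ is AR-Caristi for $\varphi$, because on $A$ we have $q(x_n,x_{n+1})\le q^s(x_n,x_{n+1})<2^{-(n+1)}=\varphi(x_n)-\varphi(x_{n+1})$, while off $A$ one has $q(x,x_0)=\varphi(x)-\varphi(x_0)$. The one remaining check is the $q^s$-lower semicontinuity of $\varphi$: at a point of the open set $X\setminus A$ it follows from the $q^s$-continuity of $q(\cdot,x_0)$, and at $x_k\in A$ one uses that, since $(x_n)$ has no cluster point, any $q^s$-convergent sequence with limit $x_k$ is eventually either equal to $x_k$ or contained in $X\setminus A$, so its $\varphi$-values are eventually $\ge\min\{2^{-k},1\}=\varphi(x_k)$. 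Thus $f$ is a fixed-point-free AR-Caristi map, contradicting the hypothesis, so $(X,p)$ must be complete.
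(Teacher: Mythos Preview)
Your argument is correct, but for the ``only if'' direction it is considerably more laborious than the paper's. You reduce AR--Caristi to the quasi-metric Caristi condition $q(x,f(x))\le\varphi(x)-\varphi(f(x))$ and then push through a Dancs--Heged{\H u}s--Medvegyev argument on $(X,q^s)$; the delicate step~(iv), where you squeeze the self-distances $p(x_n,x_n)$ to force $q^s(x_n,x_{n+1})\to 0$, is fine but takes real work. The paper instead makes one algebraic move: set $\psi:=\beta+2\varphi$ with $\beta(x)=p(x,x)$; doubling \eqref{def.AR-Car-pm} and subtracting $p(x,x)+p(f(x),f(x))$ gives exactly $q^s(x,f(x))\le\psi(x)-\psi(f(x))$, so the classical Caristi theorem in the complete metric space $(X,q^s)$ applies immediately. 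This bypasses the asymmetry of $q$ altogether---the extra $p(x,x)$ in the AR--Caristi condition is precisely what lets one pass from the quasi-metric to the symmetric $q^s$ in one step. Your approach has the merit of showing that the DHM machinery still works at the quasi-metric level, but the paper's substitution is the cleaner route.

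Your ``if'' direction is essentially the same as the paper's, with harmless cosmetic differences: you use the subsequence condition $q^s(x_n,x_{n+1})<2^{-(n+1)}$ (the paper uses $p(x_n,x_{n+1})-p(x_n,x_n)<2^{-(n+1)}$) and $\varphi(x)=q(x,x_0)+1$ off $A$ (the paper uses $p(x_0,x)+1$). Both choices work.
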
\begin{proof}
   Suppose that $(X,p)$ is complete. Let $f:X\to X$ be a mapping satisfying the condition \eqref{def.AR-Car-pm} for some $q^s$-lsc bounded from below function $\varphi:X\to\mathbb{R}$.
 By Proposition \ref{p2.pm-qm} the function  $\beta:X\to [0,\infty)$ given by $\beta(x)=p(x,x),\, x\in X$, is $q^s$-continuous, so that the function $\psi:=\beta+2\varphi$ is $q^s$-lsc and bounded from below (by $\,2\inf \varphi(X)$).

Putting  $\varphi=2^{-1}(\psi-\beta)$ in \eqref{def.AR-Car-pm} and  taking into account the definition \eqref{def.m-pm} of the metric $q^s$ associated to the partial metric $p$, one obtains
 \begin{equation}\label{eq.AR-Car-pm}
   q^s(x,f(x))\le \psi(x)-\psi(f(x))\,.
\end{equation}

   Since, by Proposition \ref{p2.pm-qm} the metric space $(X,q^s)$ is complete, we can apply Caristi's fixed point theorem (Theorem \ref{t.caristi1}) to the mapping $f$ and the $q^s$-lsc function $\psi$ to conclude that $f$ has a fixed point.

  The proof of the converse follows the same line as that of the corresponding implication in Theorem \ref{t1.Caristi-pm}.

   Suppose that $(x_n)_{n\in\mathbb{N}_0}$ ($\mathbb{N}_0=\{0,1,2,\dots\}$) is a Cauchy sequence in $(X,p)$ which is not convergent. Passing to a subsequence, if necessary,   we can suppose further that
   \begin{equation}\label{eq1.AR-Car-pm}
   p(x_n,x_{n+1})- p(x_n,x_{n})<\frac1{2^{n+1}}\,,
   \end{equation}
   for all $n\in\mathbb{N}_0$ (see Remark \ref{re.C-seq-pm-sp}).  It follows that the set
   $$
A:=\{x_n: n\in\mathbb{N}_0\}\,.
$$
is $q^s$-closed in $(X,q^s)$.

   Define the mappings $f:X\to X$ and $\varphi:X\to[0,\infty)$ by the formulae \eqref{eq.Caristi-pm}. Then $\varphi$ is $q^s$-lsc. It is obvious that the mapping $f$ has no  fixed points, so it remains to show that  it satisfies the condition \eqref{def.AR-Car-pm}.

   For $ x\in X\smallsetminus A,$
   \begin{align*}
     p(x,f(x))=&\,p(x,x_0)=\varphi(x)-\varphi(f(x))\\
     \le& \, p(x,x)+ \varphi(x)-\varphi(f(x))\,,
   \end{align*}
while for $x=x_n\in A$,
\begin{align*}
     p(x_n,f(x_n))= &\, p(x_{n},x_{n+1})<p(x_{n},x_{n+1})+\frac1{2^{n+1}}\\
     = &\,  p(x_n,x_n)+ \varphi(x_n)-\varphi(f(x_n))\,.
   \end{align*}
   \end{proof}

  \begin{remark} One can think to use the relations
  $$
  \psi=\beta+2\varphi \iff \varphi=\frac12(\psi-\beta)\,,
  $$
  in the proof of the converse. Indeed if $(X,p)$ is not complete, then $(X,q^s)$ is not complete (see Proposition \ref{p2.pm-qm}), so, by Corollary  \ref{c.Car-compl}, there exists a mapping $f:X\to X$ without fixed points which satisfies \eqref{eq.AR-Car-pm} for some $q^s$-lsc bounded from below function $\psi:X\to\mathbb{R}$.
  The function  $\varphi=\frac12(\psi-\beta)$ is $q^s$-lsc (because $\beta $ is $q^s$-continuous) and replacing $\psi$ by $ \beta+2\varphi$ in
  \eqref{eq.AR-Car-pm} one obtains \eqref{def.AR-Car-pm}.

  Unfortunately, it is not sure that the  function $\varphi=\frac12(\psi-\beta)$ is bounded form below, in order to obtain a contradiction.
 \end{remark}

 \begin{remark} Caristi's Fixed Point Theorem for set-valued mappings on partial metric spaces is discussed in a recent paper by Alsiary and Latif \cite{latif15}.
\end{remark}

\subsection{Ekeland Variational Principle (EkVP) in partial metric spaces}

In this subsection we shall show that in partial metric spaces Caristi's FPT is also equivalent to weak Ekeland principle.

\begin{theo}[Ekeland Variational Principle - weak form (wEk)]\label{t.wEk-pm} Let $(X,p)$ be a $0$-complete partial metric space and $\varphi:X\to\mathbb{R}\cup\{+\infty\}$ a $q^s$-lsc   bounded below proper function. Then for every
$\varepsilon >0$ there exists $x_\varepsilon\in X$ such that
\begin{equation}\label{eq1.wEk-pm}
\forall x\in X\smallsetminus\{x_\varepsilon\}, \;\; \varphi(x_\varepsilon)<\varphi(x)+\varepsilon p(x,x_\varepsilon)\,.
\end{equation}
\end{theo}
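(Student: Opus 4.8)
The plan is to obtain \eqref{eq1.wEk-pm} as the ``Caristi $\Rightarrow$ Ekeland'' half of the equivalence announced at the beginning of the subsection, by rerunning the argument from the proof of Theorem \ref{t1.Caristi-pm} and stopping one step earlier. First I would normalise: replacing $\varphi$ by $\varphi/\varepsilon$ — which is again $q^s$-lsc, bounded below and proper, and turns \eqref{eq1.wEk-pm} into $\varphi(x_\varepsilon)<\varphi(x)+p(x,x_\varepsilon)$ — it suffices to treat $\varepsilon=1$. Then I argue by contradiction: assume that for every $z\in X$ there is $y\ne z$ with $\varphi(y)+p(y,z)\le\varphi(z)$. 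Note that $\dom\varphi\ne\emptyset$ since $\varphi$ is proper, and that, by (PM2) and symmetry, whenever $\varphi(z)<\infty$ this forces $\varphi(y)<\infty$ and $y\in A_z$ in the notation below.

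Next, fix $x_0\in\dom\varphi$ and, for $x\in\dom\varphi$, set $A_x:=\{y\in X:\ p(x,y)+\varphi(y)\le\varphi(x)\}$. By Proposition \ref{p2.pm-qm} the map $p(x,\cdot)$ is $q^s$-continuous, so $y\mapsto p(x,y)+\varphi(y)$ is $q^s$-lsc and $A_x$ is $q^s$-closed; moreover $A_x\subseteq\dom\varphi$, and $A_x$ is nonempty because the standing assumption provides some $y\ne x$ lying in $A_x$. Exactly as in the proof of Theorem \ref{t1.Caristi-pm} I would construct inductively a sequence $(x_n)$ in $\dom\varphi$ with $x_{n+1}\in A_{x_n}$ and $\varphi(x_{n+1})<\inf\varphi(A_{x_n})+2^{-(n+1)}$ (the infimum being finite because $\varphi$ is bounded below); this yields $A_{x_{n+1}}\subseteq A_{x_n}$ and $p(x_{n+1},x)\le 2^{-(n+1)}$ for all $x\in A_{x_{n+1}}$, whence $p(x_n,x_m)\le 2^{-n}$ for $m>n$, i.e. $(x_n)$ is $0$-Cauchy.

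By $0$-completeness there is $z\in X$ with $p(z,z)=0$ and $\lim_n p(x_n,z)=0$; by Remark \ref{re.0-compl-pm}, $x_n\xrightarrow{q^s}z$, and since each $A_{x_n}$ is $q^s$-closed and contains $x_{n+k}$ for all $k$, we get $z\in\bigcap_n A_{x_n}$, in particular $\varphi(z)\le\varphi(x_n)<\infty$. I then claim $A_z\subseteq\{z\}$. Indeed, for $y\in A_z$, combining (PM4) with $z\in A_{x_n}$ and $y\in A_z$ gives $p(x_n,y)\le p(x_n,z)+p(z,y)\le(\varphi(x_n)-\varphi(z))+(\varphi(z)-\varphi(y))=\varphi(x_n)-\varphi(y)$, so $y\in A_{x_n}$ for every $n$; hence $p(x_n,y)\le 2^{-n}\to 0$, and by (PM4) again $p(y,y)\le p(y,x_n)+p(x_n,y)-p(x_n,x_n)\to 0$, so $(x_n)$ converges properly both to $z$ and to $y$, and Proposition \ref{p3.pm-sp} forces $y=z$. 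Thus no $y\ne z$ satisfies $\varphi(y)+p(y,z)\le\varphi(z)$, contradicting the standing assumption at the point $z$. Undoing the normalisation, this $z$ is the required $x_\varepsilon$ (and it lies in $\dom\varphi$).

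The routine parts are the rescaling and the inductive construction, both copied from the proof of Theorem \ref{t1.Caristi-pm}. The points needing genuine care are twofold: that the sublevel-type sets $A_x$ are $q^s$-closed, which rests on the continuity of $p$ with respect to the associated metric $q^s$ (Proposition \ref{p2.pm-qm}); and the bookkeeping with the value $+\infty$ — this causes no trouble, since the whole construction takes place inside $\dom\varphi$ and its limit $z$ is forced back into $\dom\varphi$ by the $q^s$-closedness of the sets $A_{x_n}$. (The opposite implication, that the conclusion implies Caristi's fixed point theorem in $(X,p)$, is immediate — from a $q^s$-Caristi map $f$ without fixed points one applies \eqref{eq1.wEk-pm} with $\varepsilon=1$ and $x=f(x_\varepsilon)\ne x_\varepsilon$ to contradict the Caristi inequality at $x_\varepsilon$ — but it is not needed here.)
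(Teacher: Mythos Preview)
Your proof is correct. Both you and the paper argue by contradiction and rely on the same descending-sets machinery underlying Theorem~\ref{t1.Caristi-pm}, but the packaging differs. The paper does not rerun that construction: it restricts to the $q^s$-closed, $0$-complete set $Y=\{x: \varphi(x)+\varepsilon p(x_0,x)\le \varphi(x_0)+\varepsilon p(x_0,x_0)\}$ (on which $\varphi$ is real-valued), defines $f(x)=y_x$ there, checks that $f$ is a $q^s$-Caristi map on $Y$ with respect to $\tilde\varphi=\varepsilon^{-1}\varphi|_Y$, and then invokes Theorem~\ref{t1.Caristi-pm} as a black box to obtain a fixed point, contradicting $f(x)\ne x$. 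Your approach unfolds the proof of Theorem~\ref{t1.Caristi-pm} inside the argument and finishes via the uniqueness of proper limits (Proposition~\ref{p3.pm-sp}); this is more self-contained and sidesteps the auxiliary subspace $Y$ by keeping the whole construction inside $\dom\varphi$ and using $p(z,z)=0$ directly. The paper's route, by contrast, makes the equivalence ``Caristi $\Leftrightarrow$ wEk'' explicit and keeps the two theorems cleanly separated.
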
\begin{proof}
  Suppose on the contrary that there exists $\varepsilon >0$ such that
  \begin{equation}\label{eq2.wEk-pm}
\forall x\in X,\; \exists y_x\in X\smallsetminus\{x\} \;\;\mbox{with}\;\;   \varphi(x)\ge\varphi(y_x)+\varepsilon p(x,y_x)\,.
\end{equation}

Consider a point   $x_0\in X$ such that  $\varphi(x_0)\le\inf\varphi(X)+\varepsilon$ and let
\begin{equation}\label{eq2a.wEk-pm}
Y:=\{x\in X : \varphi(x)+\varepsilon p(x_0,x)\le \varphi(x_0)+\varepsilon\, p(x_0,x_0)\}\,.
\end{equation}

By  Proposition \ref{p2.pm-qm}, the function $p(x_0,\cdot)$ is $q^s$-continuous, hence  the function $\varphi(\cdot)+\varepsilon\, p(x_0,\cdot)$  is $q^s$-lsc. Consequently,  the set $Y$ is $p^s$-closed, and so 0-complete. Indeed if $(x_n)$ is a 0-Cauchy sequence in $Y$, then it has a $\tau_p$-limit $x\in X$ such that  $p(x,x)=0$. But this implies $x_n\xrightarrow{q^s}x$ (see Remark \ref{re.0-compl-pm}) and so $x\in Y$. Also $Y\ne\emptyset$ because $x_0\in Y$ and $\varphi$ is finite on $Y$ (i.e. $\varphi(x)\in \mathbb{R}$ for all $x\in Y$).

Observe that the element $y_x$ given by \eqref{eq2.wEk-pm} belongs to $Y$ for every $x\in Y$.
Indeed, if $x\in Y$, then
\begin{align*}
 \varphi(y_x)+\varepsilon p(x_0,y_x)&\le\varphi(x)-\varepsilon p(x,y_x)+\varepsilon p(x_0,y_x)\\
 &\le\varphi(x_0)+\varepsilon \,p(x_0,x_0)+\varepsilon[p(x_0,y_x)-p(x_0,x)-p(x,y_x)]\\
 &\le\varphi(x_0)+\varepsilon \,p(x_0,x_0)\,,
 \end{align*}
 because $\,p(x_0,y_x)-p(x_0,x)-p(x,y_x)\le 0$. This last inequality follows from
$$
p(x_0,y_x)\le p(x_0,x)+p(x,y_x)-p(x,x)\le p(x_0,x)+p(x,y_x)\,.
$$

Put now $\tilde \varphi:=\varepsilon^{-1}\varphi|_Y:Y\to\mathbb{R}$ and let $f:Y\to Y$ be  defined  by
   $f(x)=y_x$, where, for $x\in Y$, $y_x\ne x$ is the element of $Y$ satisfying \eqref{eq2.wEk-pm}.

   Then the  inequality \eqref{eq2.wEk-pm} is equivalent to
   $$
   p(x,f(x))\le\tilde\varphi(x)-\tilde\varphi(f(x))\;\; x\in Y\,,
   $$
   which shows that $f$ is a Caristi mapping with respect to  $\tilde \varphi$. Since $f$ has no fixed points, this is in contradiction to Caristi's fixed point theorem (Theorem \ref{t1.Caristi-pm})
 \end{proof}

We show now that the converse implication also holds.

\begin{prop}\label{p.Caristi-Ek-pm}   Ekeland's Variational Principle in its weak form (Theorem \ref{t.wEk-pm}) implies Caristi's Fixed Point Theorem (Theorem \ref{t1.Caristi-pm}).
  \end{prop}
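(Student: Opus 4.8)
The plan is to derive Caristi's Fixed Point Theorem (Theorem \ref{t1.Caristi-pm}) from the weak form of Ekeland's Variational Principle (Theorem \ref{t.wEk-pm}) by the standard argument, adapted to the partial metric setting. Suppose $(X,p)$ is $0$-complete and $f:X\to X$ satisfies the Caristi condition \eqref{def.Caristi-pm} for some $q^s$-lsc bounded below function $\varphi:X\to\mathbb{R}$, i.e. $p(x,f(x))\le\varphi(x)-\varphi(f(x))$ for all $x\in X$. Since $\varphi$ is real-valued, bounded below and $q^s$-lsc (in particular proper), I may apply Theorem \ref{t.wEk-pm} with $\varepsilon=1$ to obtain a point $x_0\in X$ such that
\begin{equation}\label{eq.Ek-to-Car}
\forall x\in X\smallsetminus\{x_0\},\qquad \varphi(x_0)<\varphi(x)+p(x,x_0)\,.
\end{equation}

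The key step is then to show that $x_0$ is a fixed point of $f$. Apply the Caristi inequality at the point $x_0$: $p(x_0,f(x_0))\le\varphi(x_0)-\varphi(f(x_0))$, which rearranges to $\varphi(f(x_0))+p(x_0,f(x_0))\le\varphi(x_0)$. If $f(x_0)\ne x_0$, then taking $x=f(x_0)$ in \eqref{eq.Ek-to-Car} gives the strict inequality $\varphi(x_0)<\varphi(f(x_0))+p(f(x_0),x_0)$; by the symmetry axiom (PM3), $p(f(x_0),x_0)=p(x_0,f(x_0))$, so this reads $\varphi(x_0)<\varphi(f(x_0))+p(x_0,f(x_0))$, directly contradicting the rearranged Caristi inequality. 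Hence $f(x_0)=x_0$.

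I do not anticipate a serious obstacle here: the only point requiring a moment's care is that Theorem \ref{t.wEk-pm} is stated for $\varphi:X\to\mathbb{R}\cup\{+\infty\}$ proper, whereas the $\varphi$ in Caristi's theorem is finite-valued; this is harmless, since a finite bounded below $q^s$-lsc function is in particular a proper bounded below $q^s$-lsc function, so the hypotheses of Theorem \ref{t.wEk-pm} are met. (If one prefers, one could also note that replacing $\varphi$ by $\varphi-\inf\varphi(X)$ makes no difference to either condition.) Thus the two theorems are equivalent, and this completes the proof of Proposition \ref{p.Caristi-Ek-pm}, showing that in $0$-complete partial metric spaces the weak Ekeland Variational Principle and Caristi's Fixed Point Theorem are equivalent formulations.
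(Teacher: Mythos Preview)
Your proof is correct and follows essentially the same argument as the paper: apply Theorem \ref{t.wEk-pm} with $\varepsilon=1$ to obtain an Ekeland point $x_0$, and observe that if $f(x_0)\ne x_0$ then plugging $x=f(x_0)$ into the strict Ekeland inequality contradicts the Caristi condition at $x_0$. Your explicit mention of the symmetry axiom (PM3) to identify $p(f(x_0),x_0)$ with $p(x_0,f(x_0))$ is a nice touch that the paper leaves implicit.
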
\begin{proof}
Let $(X,p)$ be a 0-complete partial metric space, $\varphi:X\to\mathbb{R}$ a $q^s$-lsc bounded from below function and $f:X\to X$ a Caristi mapping with respect to  $\varphi.$  By  Theorem \ref{t.wEk-pm} applied to $\varphi$ for  $\varepsilon =1$   there exists  a point $x_1\in X$ such that
$$
\varphi(x_1)<\varphi(x)+p(x_1,x) \,,
$$
for all $x\in X\smallsetminus\{x_1\}$. Supposing $f(x_1)\ne x_1,$ we can take $x=f(x_1)$ in the above inequality  to obtain
$$
p(x_1,f(x_1))>\varphi(x_1)-\varphi(f(x_1))\,,
$$
in contradiction to the inequality \eqref{def.Caristi-pm} satisfied by $f$.

Consequently  $f(x_1)=x_1$, that is $x_1$ is  a fixed point of $f$.
  \end{proof}

  \begin{remark}\label{re.wEk-0-compl-pm}
   It follows that the validity of  Ekeland's Variational Principle in its weak form, as given in Theorem \ref{t.wEk-pm}, is also equivalent to
   the 0-completeness of the partial metric space $(X,p)$.
  \end{remark}

We shall present now  the version of Ekeland Variational Principle that can be obtained from Theorem \ref{t2.AR-Car-pm}.

\begin{theo}[Ekeland Variational Principle 2 - weak form (wEk2)]\label{t.AR-wEk-pm} Let $(X,p)$ be a complete partial metric space and $\varphi:X\to\mathbb{R}\cup\{\infty\}$ a  $q^s$-lsc   bounded below proper function. Then for every
$\varepsilon >0$ there exists $x_\varepsilon\in X$ such that
\begin{equation}\label{eq1.AR-wEk-pm}
\forall x\in X\smallsetminus\{x_\varepsilon\}, \;\; \varphi(x_\varepsilon)+\varepsilon p(x_\varepsilon,x_\varepsilon)< \varphi(x)+\varepsilon p(x,x_\varepsilon)\,.
\end{equation}
\end{theo}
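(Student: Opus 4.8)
The plan is to derive this from Theorem~\ref{t2.AR-Car-pm} in exactly the same way Theorem~\ref{t.wEk-pm} was derived from Theorem~\ref{t1.Caristi-pm}, i.e. by contradiction: assume that for some $\varepsilon>0$ the conclusion fails at every point, pick a near-minimizer $x_0$ with $\varphi(x_0)\le\inf\varphi(X)+\varepsilon\, p(x_0,x_0)$ (or simply $\varphi(x_0)\le\inf\varphi(X)+\varepsilon$), restrict attention to a suitable $q^s$-closed sublevel set $Y$, and build on $Y$ a selection $f(x)=y_x$ which turns out to be an AR-Caristi mapping without fixed points, contradicting Theorem~\ref{t2.AR-Car-pm}.

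Concretely, first I would negate \eqref{eq1.AR-wEk-pm}: there is $\varepsilon>0$ such that for every $x\in X$ there exists $y_x\in X\smallsetminus\{x\}$ with
\begin{equation*}
\varphi(x)+\varepsilon\, p(x,x)\ge \varphi(y_x)+\varepsilon\, p(y_x,x)\,.
\end{equation*}
Then I would fix $x_0$ with $\varphi(x_0)\le\inf\varphi(X)+\varepsilon$ and set
\begin{equation*}
Y:=\{x\in X : \varphi(x)+\varepsilon\, p(x_0,x)\le\varphi(x_0)+\varepsilon\, p(x_0,x_0)\}\,.
\end{equation*}
As in the proof of Theorem~\ref{t.wEk-pm}, since $p(x_0,\cdot)$ is $q^s$-continuous (Proposition~\ref{p2.pm-qm}), the set $Y$ is $q^s$-closed, hence a complete partial metric space in its own right (here I use that $(X,p)$ is complete, so a $q^s$-closed subspace is complete), it is nonempty ($x_0\in Y$) and $\varphi$ is finite on $Y$. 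The key verification is that $Y$ is invariant under $x\mapsto y_x$: for $x\in Y$,
\begin{equation*}
\varphi(y_x)+\varepsilon\, p(x_0,y_x)\le\varphi(x)+\varepsilon\, p(x,x)-\varepsilon\, p(y_x,x)+\varepsilon\, p(x_0,y_x)\,,
\end{equation*}
and the triangularity axiom (PM4), $p(x_0,y_x)\le p(x_0,x)+p(x,y_x)-p(x,x)$, gives $p(x_0,y_x)-p(x,y_x)\le p(x_0,x)-p(x,x)$, whence the right-hand side is $\le\varphi(x)+\varepsilon\, p(x_0,x)\le\varphi(x_0)+\varepsilon\, p(x_0,x_0)$, so $y_x\in Y$.

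Next I would define $f:Y\to Y$ by $f(x)=y_x$ and $\tilde\varphi:=\varepsilon^{-1}\varphi|_Y$, which is $q^s$-lsc and bounded below on $Y$. The defining inequality for $y_x$ then reads
\begin{equation*}
p(x,f(x))\le p(x,x)+\tilde\varphi(x)-\tilde\varphi(f(x))\,,\qquad x\in Y\,,
\end{equation*}
i.e. $f$ is an AR-Caristi mapping on the complete partial metric space $(Y,p)$ in the sense of \eqref{def.AR-Car-pm}. Since $y_x\ne x$ for all $x$, $f$ has no fixed point, contradicting Theorem~\ref{t2.AR-Car-pm}. The only genuinely delicate point — the one I expect to need care — is the algebra with the $p(x,x)$ terms: one must check that (PM4) is being used in the right direction so that the self-distance corrections cancel favorably both in the invariance of $Y$ and in rewriting the negated conclusion as the AR-Caristi inequality. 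Everything else (closedness of $Y$, completeness of a closed subspace, finiteness of $\varphi$ on $Y$) is routine and parallels the proof of Theorem~\ref{t.wEk-pm}. I would also remark afterwards, as in Remark~\ref{re.wEk-0-compl-pm}, that combined with Theorem~\ref{t2.AR-Car-pm} this makes (wEk2) equivalent to the completeness of $(X,p)$; the converse implication is obtained by running the construction in the proof of Theorem~\ref{t2.AR-Car-pm} and observing that the mapping produced there violates \eqref{eq1.AR-wEk-pm}.
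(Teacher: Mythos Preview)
Your proposal is correct and matches the paper's own proof essentially step for step: the same negation, the same choice of $x_0$ and of the $q^s$-closed set $Y$, the same invariance verification via (PM4), and the same reduction to an AR-Caristi mapping $f(x)=y_x$ with $\tilde\varphi=\varepsilon^{-1}\varphi|_Y$ contradicting Theorem~\ref{t2.AR-Car-pm}. The ``delicate point'' you flag (the self-distance bookkeeping) is exactly where the paper spends its one nontrivial computation, and your handling of it is the same.
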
\begin{proof} Suppose,   by contradiction, that there exists an $\varepsilon>0$ such that
\begin{equation}\label{eq2.AR-wEk-pm}
\forall x\in X,\; \exists y_x\in X\smallsetminus\{x\} \;\;\mbox{with}\;\;   \varphi(x)+\varepsilon p(x,x)\ge\varphi(y_x)+\varepsilon p(x,y_x) \,,
\end{equation}
and let $x_0\in X$ be such that  $\varphi(x_0)\le \varepsilon+\inf \varphi(X)$.

To get rid of the points where $\varphi$ takes the value $+\infty$, consider again the set $Y$ given by
\eqref{eq2a.wEk-pm}. Then $Y$ is nonempty ($x_0\in Y$) and $q^s$-closed and so complete with respect to  the partial metric $p$. Indeed if $(x_n)$ is a Cauchy sequence in $(X,p)$ then, by the definition of the completeness,  it converges properly to some $x\in X$.
By Proposition \ref{p2.pm-qm}, $(x_n)$ is $q^s$-convergent to $x$ and so  $x\in Y$.

Observe that $x\in Y$ implies that the element $y_x $ given by \eqref{eq2.AR-wEk-pm} also belongs
to $Y$. Indeed, if $x\in Y$, then
\begin{align*}
 \varphi(y_x)+\varepsilon p(x_0,y_x)&\le\varphi(x)+\varepsilon [p(x_0,y_x)- p(x,y_x)+p(x,x)]\\
 &\le\varphi(x_0)+\varepsilon [p(x_0,x_0)-p(x_0,x)+p(x_0,y_x)-p(x,y_x)+p(x,x)]\\
 &\le\varphi(x_0)+\varepsilon \,p(x_0,x_0)\,,
 \end{align*}
 because
 $$p(x_0,y_x)-p(x_0,x)-p(x,y_x)+p(x,x)\le 0 \iff
p(x_0,y_x)+p(x,x)\le p(x_0,x)+p(x,y_x) \,,
$$
and the last inequality is true, by the triangle inequality (PM4) from Definition \ref{def.pm}.

Taking again $\tilde \varphi=\varepsilon^{-1}\varphi|_Y$ and $f:Y\to Y$ defined by $f(x)=y_x,$ where for $\, x\in Y$ the element $y_x\in Y$ is given by \eqref{eq2.AR-wEk-pm}, the function $\tilde \varphi $ is $q^s$-lsc and  $f $ is a mapping on $Y$ without fixed points, satisfying \eqref{def.AR-Car-pm} for $ \varphi=\tilde \varphi$.
\end{proof}

The converse implication holds in this case too. The proof is similar to that of Proposition \ref{p.Caristi-Ek-pm}.
\begin{prop}\label{p.AR-Caristi-Ek2-pm}   Ekeland's Variational Principle in its weak form, as given in Theorem  \ref{t.AR-wEk-pm}, implies Caristi's Fixed Point Theorem, as given in Theorem \ref{t2.AR-Car-pm}.
  \end{prop}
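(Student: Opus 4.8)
The plan is to follow verbatim the scheme used for Proposition~\ref{p.Caristi-Ek-pm}, replacing only the ordinary Caristi condition \eqref{def.Caristi-pm} by the AR-Caristi condition \eqref{def.AR-Car-pm} and the weak Ekeland inequality \eqref{eq1.wEk-pm} by its self-distance-corrected form \eqref{eq1.AR-wEk-pm}. So I would start from a complete partial metric space $(X,p)$, a $q^s$-lsc bounded below function $\varphi:X\to\mathbb{R}$, and a map $f:X\to X$ satisfying \eqref{def.AR-Car-pm} with respect to $\varphi$; the goal is to produce a fixed point of $f$ using only the conclusion of Theorem~\ref{t.AR-wEk-pm}.

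First I would apply Theorem~\ref{t.AR-wEk-pm} to the function $\varphi$ with $\varepsilon=1$, obtaining a point $x_1\in X$ such that $\varphi(x_1)+p(x_1,x_1)<\varphi(x)+p(x,x_1)$ for every $x\in X\smallsetminus\{x_1\}$. Then I would argue by contradiction: assuming $f(x_1)\neq x_1$, substitute $x=f(x_1)$ into this strict inequality and use the symmetry axiom (PM3), namely $p(f(x_1),x_1)=p(x_1,f(x_1))$, to rewrite it as $p(x_1,f(x_1))>p(x_1,x_1)+\varphi(x_1)-\varphi(f(x_1))$. This directly contradicts \eqref{def.AR-Car-pm} evaluated at $x=x_1$. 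Hence $f(x_1)=x_1$, and $x_1$ is the desired fixed point.

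There is essentially no obstacle here: the delicate work --- removing the points where $\varphi=+\infty$, checking closedness of the sublevel set used there, and verifying its completeness in the partial metric --- has already been absorbed into the statement and proof of Theorem~\ref{t.AR-wEk-pm}, so this implication is a one-line deduction. The only point requiring care is the bookkeeping of self-distances: the AR-Caristi inequality carries the extra term $p(x_1,x_1)$ on its right-hand side, which is matched exactly by the extra term $\varepsilon\,p(x_\varepsilon,x_\varepsilon)$ on the left-hand side of \eqref{eq1.AR-wEk-pm}, and it is this matching that makes the contradiction close. I would finish by noting, in the spirit of Remark~\ref{re.wEk-0-compl-pm}, that combined with Theorem~\ref{t2.AR-Car-pm} this shows the weak Ekeland principle of Theorem~\ref{t.AR-wEk-pm} is itself equivalent to the completeness of $(X,p)$.
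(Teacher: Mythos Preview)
Your proposal is correct and follows essentially the same argument as the paper: apply Theorem~\ref{t.AR-wEk-pm} with $\varepsilon=1$ to obtain a point $x_1$, then substitute $x=f(x_1)$ (assuming $f(x_1)\neq x_1$) to derive $p(x_1,f(x_1))>p(x_1,x_1)+\varphi(x_1)-\varphi(f(x_1))$, contradicting \eqref{def.AR-Car-pm}. Your observation about the matching of the self-distance terms is exactly the point, and the concluding remark about equivalence with completeness is also made in the paper (Remark~\ref{re1.wEk2-compl-pm}).
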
\begin{proof}
Let $(X,p)$ be a  complete partial metric space, $\varphi:X\to\mathbb{R}$ a $q^s$-lsc bounded from below function and $f:X\to X$ a mapping satisfying \eqref{def.AR-Car-pm}.  Applying Theorem \ref{t.wEk-pm} to $\varphi$ for  $\varepsilon =1$ it follows the existence of a point $x_1\in X$ such that
$$
\varphi(x_1)+p(x_1,x_1)<\varphi(x)+p(x_1,x) \,,
$$
for all $x\in X\smallsetminus\{x_1\}$. Supposing $f(x_1)\ne x_1,$ we can take $x=f(x_1)$ in the above inequality  to obtain
$$
p(x_1,f(x_1))>p(x_1,x_1)+\varphi(x_1)-\varphi(f(x_1))\,,
$$
in contradiction to the inequality \eqref{def.AR-Car-pm} satisfied by $f$.

Consequently  $f(x_1)=x_1$, that is $x_1$ is  a fixed point of $f$.
  \end{proof}

   \begin{remark}\label{re1.wEk2-compl-pm}
   It follows that the validity of  Ekeland's Variational Principle in its weak form, as given in Theorem \ref{t.AR-wEk-pm},   is  equivalent to
   the completeness of the partial metric space $(X,p)$.
  \end{remark}

  \begin{remark}\label{re2.wEk2-compl-pm}
  A version of Ekeland Variational Principle in  partial metric spaces was proved by Aydi, Karapinar and Vetro \cite{karapin15}.
  \end{remark}

  \subsection{Dislocated metric spaces}

This class of spaces was considered by Hitzler and Seda \cite{hitzler00} in connection with some problems in logic programming.  A \emph{dislocated metric} on a set $X$ is a function $\rho:X\times X\to\mathbb{R}_+$ satisfying the conditions:
\begin{align*}
{\rm (DM1)}&\quad \rho(x,y)=0\; \Rightarrow\; x=y;\\
{\rm (DM2)}&\quad \rho(x,y)=\rho(y,x);\\
{\rm (DM3)}&\quad \rho(x,y)\le\rho(x,z)+\rho(z,y),
\end{align*}
for all $x,y,z\in X$. If $\rho$ satisfies only (DM1) and (DM3), then it is called a \emph{dislocated quasi-metric}. The pair $(X,\rho)$ is called a \emph{dislocated metric } (resp. a \emph{dislocated quasi-metric}) \emph{space}.

These spaces are close to partial metric spaces (in this case it is also  possible that $\rho(x,x)>0$ for some $x\in X$), with the exception that a dislocated metric satisfies the usual triangle inequality (DM3) instead of the inequality (PM4) from Definition \ref{def.pm}. In fact, any partial metric is a dislocated metric.

For $x\in X$ and $r>0$ the open ball $B(x,r)$ is defined by $B(x,r)=\{y\in X : \rho(x,y)<r\}$.

Hitzler and Seda \cite{hitzler00}  defined a kind of topology on a dislocated metric space $(X,\rho)$ in the following way. Instead of the membership relation $\in\,$ they considered   a relation $\prec$ in $X\times2^X$,  defined for $(x,A)\in X\times2^X$ by
$$
x \prec A \iff \exists \epsic>0\;\mbx{ such that }\; B(x,\epsic)\subseteq A\,.$$

The d-\emph{neighborhood system }$\mathcal V(x)$ of a point $x\in X$ is defined by the condition
$$
V\in\mathcal V(x) \iff V\subseteq X\quad\mbx{ and }\quad x\prec V\,.$$

(Here ``d-" comes from ``dislocated-").

The neighborhood axioms are satisfied with the relation $\prec$ instead of $\in$.
\begin{align*}
  &{\rm (V1)}\;\; V\in\mathcal V(x)\;\Longrightarrow\; x\prec V;\\
   &{\rm (V2)}\;\; V\in\mathcal V(x)\;\mbx{ and }\; V\subseteq U\; \Longrightarrow\; U\in\mathcal V(x);\\
 &{\rm (V3)}\;\; U,V\in\mathcal V(x)\; \Longrightarrow\; U\cap V\in\mathcal V(x);  \\
  &{\rm (V4)}\;\; V\in\mathcal V(x)\; \Longrightarrow\;\exists  W\in \mathcal V(x),\, W\subseteq V\;\mbx{ such that }\;   V\in\mathcal V(y)\;\mbx{ for all }\; y\prec W.
\end{align*}

It is easy to check the validity of these properties. As a sample, let us check (V4). For  $V\in \mathcal V(x)$ let $\epsic >0$ be such that $B(x,\epsic)\subseteq V$. If $y\prec B(x,\epsic)$, then there exists $\epsic'>0$ such that
$B(y,\epsic')\subseteq B(x,\epsic)\subseteq V,$ so that $V\in\mathcal V(y)$. It follows that we can take $W=B(x,\epsic)$.

The so defined ``neighborhood system" is not a proper neighborhood system (i.e., with respect to the relation $\in$), because the relation $x\in V$ is not always satisfied -- it is not sure that $x\in B(x,\epsic)$ and further, the ball $B(x,\epsic)$ could be empty for some $\epsic$.
\begin{example}
Let $X$ be a set of cardinality at least 2. Define $\rho(x,x)=1$ and $\rho(x,y)=2$ if $x\ne y$, for all $x,y\in X$. Then $B(x,\epsic)=\emptyset$ for $0<\epsic\le 1$, implying that every subset of $X$ (including the empty set) is a d-neighborhood of $x$.
\end{example}

In fact the following properties hold.
\begin{prop}[\cite{hitzler00}, Proposition 3.2]\label{p1.hitz}
Let $(X,\rho)$  be  a dislocated metric space.
\begin{enumerate}
\item The following conditions are equivalent:

{\rm (i)}\; \; $\rho$ is a metric.

{\rm (ii)}\;\: $\rho(x,x)=0$\; for all \; $x\in X$.

{\rm (iii)}\; $B(x,\epsic)\ne\emptyset$\; for all \; $x\in X$ and $\epsic >0$.
\item The subset $\ker \rho :=\{x\in X : \rho(x,x)=0\}$ is a metric space with respect to $\rho$.
\end{enumerate}\end{prop}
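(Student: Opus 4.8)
The plan is to establish part (1) through the cyclic chain of implications (i) $\Rightarrow$ (ii) $\Rightarrow$ (iii) $\Rightarrow$ (i), and then to read off part (2) directly from the dislocated-metric axioms.

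The first two implications are immediate. For (i) $\Rightarrow$ (ii) one only recalls that a metric has vanishing self-distances by definition. For (ii) $\Rightarrow$ (iii), if $\rho(x,x)=0$ then $\rho(x,x)=0<\epsic$ for every $\epsic>0$, hence $x\in B(x,\epsic)$ and in particular $B(x,\epsic)\neq\emptyset$; here the argument works outright precisely because self-distances vanish, so balls centered at a point always contain that point.

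The real content is the implication (iii) $\Rightarrow$ (i). Fix $x\in X$ and $\epsic>0$. Since $B(x,\epsic)\neq\emptyset$, choose $y_\epsic\in X$ with $\rho(x,y_\epsic)<\epsic$; then, by (DM2) and (DM3),
$$
\rho(x,x)\le\rho(x,y_\epsic)+\rho(y_\epsic,x)=2\rho(x,y_\epsic)<2\epsic\,.
$$
As $\epsic>0$ was arbitrary, $\rho(x,x)=0$, and as $x$ was arbitrary, $\rho$ has vanishing self-distances throughout $X$. Combined with (DM1), (DM2), (DM3) this yields all the metric axioms: in particular $\rho(x,y)=0\iff x=y$, the forward direction being (DM1) and the backward one following from $\rho(x,x)=0$. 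Hence $\rho$ is a metric. This short triangle-inequality estimate is the only nontrivial point in the whole proposition, and it is the step I would flag as the main obstacle, modest though it is.

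For part (2), I would simply restrict $\rho$ to $\ker\rho\times\ker\rho$, where $\ker\rho=\{x\in X:\rho(x,x)=0\}$. Nonnegativity, symmetry (DM2) and the triangle inequality (DM3) pass to the restriction unchanged; reflexivity $\rho(x,x)=0$ holds for every $x\in\ker\rho$ by the very definition of $\ker\rho$; and for $x,y\in\ker\rho$ one has $\rho(x,y)=0\Rightarrow x=y$ by (DM1), while $x=y\Rightarrow\rho(x,y)=\rho(x,x)=0$. Thus $(\ker\rho,\rho|_{\ker\rho})$ satisfies all the metric axioms, so it is a metric space (vacuously so when $\ker\rho=\emptyset$, as happens in the example preceding the proposition).
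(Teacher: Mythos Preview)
Your proof is correct. Note, however, that the paper does not supply its own proof of this proposition: it is quoted verbatim from \cite{hitzler00} and stated without proof, so there is nothing in the paper to compare your argument against. Your cyclic implication, with the triangle-inequality estimate $\rho(x,x)\le 2\rho(x,y_\epsic)<2\epsic$ for (iii) $\Rightarrow$ (ii), is the natural (and essentially only) way to do it.
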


A sequence $(x_n)$ in $X$ is called d-\emph{convergent} to $x\in X$ if
$$
\forall V\in \mathcal V(x),\; \exists n_0\in\mathbb{N}\;\mbx{ such that }\; x_n\in V\;\mbx{ for all }\; n\ge  n_0\,.$$

The sequence $(x_n)$ in $X$ is called $\rho$-\emph{convergent} to $x$ if $\lim_{n\to\infty}\rho(x,x_n)=0$.
\begin{remark}
Note again that this type of convergence is not a proper convergence. For instance, if $\rho(x,x)>0$, then the constant sequence $x_n=x,\, n\in\mathbb{N}$, is not $\rho$-convergent to $x$.
\end{remark}

The following property holds.
\begin{prop}[\cite{hitzler00}, Proposition 3.9]\label{p2.hitz}
Let  $(X,\rho)$ be    a dislocated metric space. A sequence $(x_n)$ in $X$ is $\rho$-convergent to $x\in X$ if and only if it is d-convergent to $x$.
\end{prop}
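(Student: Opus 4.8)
The plan is to prove the two implications separately, using only the definitions of the d-neighborhood system $\mathcal V(x)$, the relation $\prec$, and $\rho$-convergence, together with the triangle inequality (DM3). Throughout, fix a sequence $(x_n)$ in $X$ and a point $x\in X$.

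First I would prove that d-convergence implies $\rho$-convergence. Suppose $(x_n)$ is d-convergent to $x$. The key observation is that every ball $B(x,\varepsilon)$ with $\varepsilon>0$ is a d-neighborhood of $x$: indeed $x\prec B(x,\varepsilon)$ trivially holds because $B(x,\varepsilon)\subset B(x,\varepsilon)$, so by the definition of $\mathcal V(x)$ we get $B(x,\varepsilon)\in\mathcal V(x)$. (Note that we do \emph{not} need $x\in B(x,\varepsilon)$ here; the relation $\prec$ is what matters.) Hence, given any $\varepsilon>0$, d-convergence yields $n_0$ with $x_n\in B(x,\varepsilon)$ for all $n\ge n_0$, i.e. $\rho(x,x_n)<\varepsilon$ for all $n\ge n_0$. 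Since $\varepsilon>0$ was arbitrary, $\lim_{n\to\infty}\rho(x,x_n)=0$, which is exactly $\rho$-convergence.

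Next I would prove that $\rho$-convergence implies d-convergence. Suppose $\lim_{n\to\infty}\rho(x,x_n)=0$, and let $V\in\mathcal V(x)$ be arbitrary. By definition of $\mathcal V(x)$ there is $\varepsilon>0$ with $B(x,\varepsilon)\subset V$. By $\rho$-convergence choose $n_0$ so that $\rho(x,x_n)<\varepsilon$ for all $n\ge n_0$; then $x_n\in B(x,\varepsilon)\subset V$ for all $n\ge n_0$. Since $V\in\mathcal V(x)$ was arbitrary, $(x_n)$ is d-convergent to $x$. This direction is essentially a restatement of definitions and requires nothing beyond unwinding $\prec$.

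There is no real obstacle here — the only subtlety, which should be remarked on explicitly, is that the argument never uses the membership relation $x\in B(x,\varepsilon)$ (which may fail when $\rho(x,x)>0$), only the relation $x\prec B(x,\varepsilon)$; this is precisely why Hitzler and Seda replace $\in$ by $\prec$ in the neighborhood axioms. One should also note for safety that the triangle inequality (DM3) is not actually needed for this particular equivalence — it would be needed only to check that the balls behave well under nesting (as in axiom (V4)), which is already established. Thus the proof is short and entirely formal.

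\begin{proof}
Suppose first that $(x_n)$ is d-convergent to $x$, and let $\varepsilon>0$. Since $B(x,\varepsilon)\subset B(x,\varepsilon)$, we have $x\prec B(x,\varepsilon)$, hence $B(x,\varepsilon)\in\mathcal V(x)$. By d-convergence there exists $n_0\in\mathbb{N}$ such that $x_n\in B(x,\varepsilon)$ for all $n\ge n_0$, that is $\rho(x,x_n)<\varepsilon$ for all $n\ge n_0$. As $\varepsilon>0$ was arbitrary, $\lim_{n\to\infty}\rho(x,x_n)=0$, so $(x_n)$ is $\rho$-convergent to $x$.

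Conversely, suppose $\lim_{n\to\infty}\rho(x,x_n)=0$ and let $V\in\mathcal V(x)$. By the definition of $\mathcal V(x)$ there is $\varepsilon>0$ with $B(x,\varepsilon)\subset V$. Choose $n_0\in\mathbb{N}$ such that $\rho(x,x_n)<\varepsilon$ for all $n\ge n_0$; then $x_n\in B(x,\varepsilon)\subset V$ for all $n\ge n_0$. Since $V\in\mathcal V(x)$ was arbitrary, $(x_n)$ is d-convergent to $x$.
\end{proof}

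\begin{remark}
  Observe that neither implication above uses the fact that $x$ belongs to the ball $B(x,\varepsilon)$ (which may fail when $\rho(x,x)>0$); only the relation $x\prec B(x,\varepsilon)$ is used. This is precisely the reason for replacing the membership relation $\in$ by the relation $\prec$ in the definition of the d-neighborhood system.
\end{remark}
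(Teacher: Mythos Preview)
Your proof is correct and follows essentially the same approach as the paper's own proof: both directions are handled by recognizing that $B(x,\varepsilon)\in\mathcal V(x)$ (via $x\prec B(x,\varepsilon)$) and then unwinding the definitions. Your additional remark about not needing $x\in B(x,\varepsilon)$ is a nice clarification that the paper leaves implicit.
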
\begin{proof}
  Suppose that $(x_n)$ is d-convergent to $x$. For   $\epsic >0,\, B(x,\epsic)$ is a d-neighborhood of $x$, so there exists $n_0\in \mathbb{N}$ such that $x_n\in B(x,\epsic)\iff \rho(x,x_n)<\epsic$, for all $n\ge n_0$, showing that $\rho(x,x_n)\to 0$.

  Suppose now that    $\rho(x,x_n)\to 0$. For    $V\in \mathcal V(x)$ let $\epsic >0$ be such that $B(x,\epsic)\subseteq V$. By hypothesis there exists $n_0\in \mathbb{N}$ such that $\rho(x,x_n)<\epsic$ for all $n\ge n_0$. It follows $x_n\in B(x,\epsic)\subseteq V$ for all $n\ge n_0$.
\end{proof}

A sequence $(x_n)$ in $X$ is called $\rho$-\emph{Cauchy} if for every $\epsic >0$ there exists $n_0\in \mathbb{N}$ such that $\rho(x_n,x_m)<\epsic$ for all $m,n\ge n_0$. The dislocated metric space $(X,\rho)$ is called \emph{complete} if every Cauchy sequence is $\rho$-convergent. Hitzler and Seda \cite[Theorem 2.7]{hitzler00} proved   that  Banach's  contraction principle holds in complete dislocated metric spaces.

Pasicki \cite{pasicki15} defined  a topology $\tau_\rho$ on a dislocated metric space $(X,\rho)$ in the following way.
The family of subsets $\{B(x,r) : x\in X,\, r>0\}$ satisfies  $X=\bigcup\{B(x,r) : x\in X,\, r>0\}$, so it is a subbase for a topology $\tau_\rho$ on $X$ (see Kelley \cite[Theorem 12, p. 47]{Kelley}).

It follows that a subset $U$ of $X$ is a neighborhood of $x\in X$ if and only if
\begin{equation}
 \exists  n\in \mathbb{N},\; \exists y_1,\dots,y_n\in  X,\;\exists r_1,\dots,r_n>0, \;\mbx{such that } x\in B(y_1,r_1)\cap\dots\cap B(y_n,r_n)\subseteq U\,.\end{equation}

 Denote by  $\mathcal U_\rho(x)$ the neighborhood system  of a point $x\in X$ with respect to $\tau_\rho$.

\begin{remark} Let $(x_n)$ be a sequence in a dislocated metric space $(X,\rho)$ and $x\in X$.
  If $\lim_n\rho(x,x_n)=0$, then the sequence $(x_n)$ is $\tau_\rho$-convergent to $x\in X$.
  \end{remark}

  Indeed, for any $\tau_\rho$-neighborhood $U$ of $x$ there exists $y\in X$ and $\epsic >0$ such that $x\in B(y,\epsic)\subseteq U$.  Then $\epsic-\rho(x,y)>0$ so that, by hypothesis, there exists $n_0\in \mathbb{N}$ such that $ \rho(x,x_n)<\epsic-\rho(x,y)$ for all $n\ge n_0$. It follows
$$
\rho(y,x_n)\le\rho(y,x)+\rho(x,x_n)<\rho(x,y)+\epsic-\rho(x,y)=\epsic\,,$$
that is $x_n\in B(y,\epsic)\subseteq U$ for all $n\ge n_0$, showing  that $(x_n)$ is $\tau_\rho$-convergent to $x$.

\begin{remark} I don't know a characterization of the $\tau_\rho$-convergence in  terms of the sequence $(\rho(x,x_n))_{n\in\mathbb{N}}$.
  \end{remark}

  Apparently unaware of Hitzler and Seda paper \cite{hitzler00}, Amini-Harandi \cite{harandi12}
  defined dislocated metric spaces calling them metric-like spaces. He  defined the balls by analogy with partial metric spaces:
  $$
  \tilde B(x,\epsic)=\{y\in X :|\rho(x,y)-\rho(x,x)|<\epsic\}\, .$$
  The family of balls $\tilde B(x,r),\, x\in X,\, \epsic >0\,$ form the base of a topology $\tilde \tau_\rho$ on the dislocated metric space $(X,\rho)$.

  A sequence $(x_n)$ in $X$ is $\tilde \tau_\rho$-convergent to $x\in X$ if and only if $\lim_{n\to\infty}\rho(x,x_n)=\rho(x,x)$.

  A sequence $(x_n)$ in $X$ is called \emph{Cauchy} if there exists the limit $\lim_{m,n\to \infty}\rho(x_n,x_m)\in\mathbb{R}$.  The space $(X,\rho)$ is called complete if for every Cauchy sequence $(x_n)$ in $X$ there exists $x\in X$ such that
  $$
  \lim_{n\to\infty}\rho(x,x_n)=\rho(x,x)=\lim_{m,n\to\infty}\rho(x_m,x_n)\,,$$
(compare with Subsection \ref{Ss.seq-pm}, Definition \ref{def.prop-converg-pm-sp}).

  The paper \cite{hitzler00} contains some fixed point theorems in complete dislocated metric spaces.
  The same approach  is adopted in the paper \cite{karapin13b} (and possibly in other papers).

  \begin{remark}
     In fact, in a preliminary version of the paper \cite{pasicki15}, Pasicki  called these spaces near metric spaces. After the reviewer draw his attention to Hitzler and Seda paper, he changed    to dislocated metric spaces. There are a lot of papers dealing with fixed point results in dislocated metric spaces (or in metric like spaces), as it can be seen by a simple search on   MathSciNet, ZbMATH or ScholarGoogle. I don't know if there are some converse results -- i.e. completeness implied by the validity of some fixed point results.  \end{remark}

\subsection{Other generalized metric spaces}
In this subsection we  shall present some completeness results in other classes of generalized metric spaces:  dislocated metric spaces, $w$-spaces and $\tau$-spaces. Good surveys of various generalizations of metric spaces are given in the papers by Ansari \cite{ansari11},  Berinde and Choban \cite{berinde13}, and in the books \cite{Deza}, \cite{Kirk-Shah} and \cite{Rus-PP}.\\

$w$-\textbf{distances}

This notion was introduced by Kada \emph{et al}. \cite{suzuki-taka96}. Let $(X,\rho)$ be a metric space.
A mapping $p:X\times X\to\mathbb{R}_+$ is called a $w$-\emph{distance} if, for all $x,y,z\in X$,
\begin{itemize}
\item[{\rm(w1)}]\quad $p(x,y)\le p(x,z)+p(z,y);$
\item[{\rm(w2)}]\quad $p(x,\cdot)$ is $\rho$-lsc;
\item[{\rm(w3)}]\quad $\forall \varepsilon >0,\; \exists\delta >0$\; such that  \; $p(x,y)<\delta$ \; and\; $p(x,z)<\delta$\; implies\; $p(y,z)<\varepsilon$.
\end{itemize}\vspace{2mm}

$\tau$-\textbf{distances}

A more involved notion was introduced by Suzuki \cite{suzuki01}. Let $(X,\rho)$ be a metric space and $\eta:X\times\mathbb{R}_+\to\mathbb{R}_+$.
A mapping $p:X\times X\to\mathbb{R}_+$ is called a $\tau$-\emph{distance} if

\begin{itemize}
\item[{\rm ($\tau$1)}]\;\;  $p(x,y)\le p(x,z)+p(z,y)$\; for all\; $x,y,z\in X$;
\item[{\rm($\tau$2)}]\;\; for every $x\in X$ the function $\eta(x,\cdot)$ is concave and continuous, $\eta(x,0)=0$ and $\eta(x,t)\ge t$ for all $(x,t)\in X\times \mathbb{R}_+$;
\item[{\rm($\tau$3)}]\;\; $\lim_nx_n=x$ and $\lim_n\left(\sup_{m\ge n}\eta(z_n,p(z_n,x_m))\right)=0$\; imply\; $p(w,x)\le \liminf_np(w,x_n)$ for all $w\in X$;
\item[{\rm($\tau$4)}]\;\; $\lim_n\left(\sup_{m\ge n}p(x_n,y_m)\right)=0\;$ and\;$\lim_n\eta(x_n,t_n)=0$\; imply\; $\lim_n\eta(y_n,t_n)=0$;
\item[{\rm($\tau$5)}]\;\; $\lim_n \eta(z_n,p(z_n,x_n))=0$\; and \; $\lim_n \eta(z_n,p(z_n,y_n))=0$\; imply\; $\lim_n\rho(x_n,y_n)=0$\,.
\end{itemize}

\begin{remark}
  It was shown in \cite{suzuki01} that ($\tau$2) can be replaced by \\

 ($\tau$2$'$)\quad for every $x\in X$ the function $\eta(x,\cdot)$ is increasing and $\inf_{t>0}\eta(x,t)=0$\,.
  \end{remark}

  Lin and Du \cite{lin-du06,lin-du08} propose a slightly  simplified version of a $\tau$-function.

  Let $(X,\rho)$ be a metric space.
A mapping $p:X\times X\to\mathbb{R}_+$ is called a (LD$\tau$)-\emph{distance} if
\begin{itemize}
\item[{\rm (LD-$\tau$1)}]\;\;  $p(x,y)\le p(x,z)+p(z,y)$\; for all\; $x,y,z\in X$;
\item[{\rm(LD-$\tau$2)}]\;\; for every $x\in X$  and every  sequence $(y_n)$  in $X$ converging to some $y\in X$, if  for some $M>0,\, p(x,y_n)\le M,$ for all $n$, then $p(x,y)\le M$;
\item[{\rm(LD-$\tau$3)}]\;\; if $(x_n$) and  $(y_n)$ are sequences in $X$ such that
$\lim_n\left(\sup_{m\ge n} p(x_n,x_m)\right)=0$ and  $\lim_np(x_n,y_n)=0$, then
    $\lim_n\rho(x_n,y_n)=0$;
\item[{\rm(LD-$\tau$4)}]\;\; for all $x,y,z\in X,\,  p(x,y)=p(x,z)=0$ implies $y=z$ \,.
\end{itemize}

\begin{remark}\hfill

1. If, for every $x\in X$, $p(x,\cdot)$ is lsc, then condition (LD-$\tau2$) is satisfied.

2. If $p$ satisfies (LD-$\tau3$), then every sequence $(x_n)$ in $X$ satisfying
  $$\lim_n\left(\sup_{m\ge n} p(x_n,x_m)\right)=0$$ is a Cauchy sequence.
\end{remark}

 Lin and Du proved in  \cite{lin-du06,lin-du08} variational principles of Ekeland type for this kind of function, and for the $w$-distance in \cite{lin-du07}.\\

  \textbf{Tataru distance}

  This was defined by Tataru \cite{tataru92} in the following way. Let $X$ be a subset of a Banach space $E$. A family $\{T(t) : t\in\mathbb{R}_+\}$ of mappings on $X$ is called a
\emph{strongly continuous semigroup of nonexpansive mappings} on $X$ if
\begin{itemize}
\item[{\rm (Sg1)}]\;\; for every $t\in \mathbb{R}_+,\, T(t)$ is a nonexpansive mapping on $X$;
\item[{\rm (Sg2)}]\;\; $T(0)x=x$\; for all\;  $x\in X$;
\item[{\rm (Sg3)}]\;\; $T(s+t)=T(s)T(t)$ \; for all\; $s,t\in\mathbb{R}_+$;
\item[{\rm (Sg4)}]\;\; for each $x\in X$ the mapping $T(\cdot)x:\mathbb{R}_+\to X$ is continuous.
\end{itemize}

 The \emph{Tataru distance} corresponding to a strongly continuous semigroup $\{T(t) : t\in\mathbb{R}_+\}$ of nonexpansive mappings on $X$  is  defined for $x,y\in X$ by
\begin{equation}\label{def.Tataru-dist}
p(x,y)=\inf\{t+\|T(t)x-y\| : t\in\mathbb{R}_+\}\,.\end{equation}

It was shown by Suzuki, \cite{suzuki01} and \cite{suzuki08c}, that any $w$-distance is a $\tau$-distance, but the converse does not hold -- for instance, the Tataru distance is a  $w$-distance but not a $\tau$-distance. The paper \cite{suzuki08c} contains many examples of $w$-distances  and $\tau$-distances, other $\tau$-distances which are not $w$-distances, and   conditions under which the  Tataru distance is a $\tau$-distance.

Various fixed point results, Ekeland-type  principles  and completeness   for $\tau$-distances were proved by Suzuki in \cite{suzuki01,suzuki04,suzuki05,suzuki06,suzuki09,suzuki09b,suzuki10}.

Fixed points for contractions and completeness results in quasi-metric spaces endowed with a $w$-distance were proved by Alegre \emph{et al}. \cite{alegre-romag14}, for single-valued maps, and by Mar\'{\i}n \emph{et al}. \cite{marin-romag11}, for set-valued ones. Similar results in the case of partial metric spaces were obtained by Altun and Romaguera \cite{altun-romag12}.

A mapping $f$  on a metric space $(X,\rho)$ for which there exist a $w$-distance $p$ on $X$ and a number $\alpha\in[0,1)$ such that
\begin{equation}\label{def.w-contr}
p(f(x),f(x'))\le \alpha p(x,y)\quad\mbox{for all   } x,x'\in X\,,
\end{equation}
is called \emph{weakly contractive} (or a \emph{weak contraction}). In the case of a set-valued mapping
$F:X\rightrightarrows X$, the condition \eqref{def.w-contr} is replaced by
\begin{equation}\label{def2.w-contr}
\forall x,x'\in X,\; \exists y\in F(x),\, y'\in F(x'),\; \mbox{ such that  }\; p(y,y')\le \alpha p(x,y)\,.
\end{equation}

Direct and converse fixed point results involving   completeness for weakly contractive mappings and for other types of mappings (e.g. Kannan maps) on metric spaces endowed with a $w$-distance were proved in \cite{taka12,taka10,suzuki-taka96,taka98,suzuki-taka96b,taka99,taka13} (see also the books \cite{Kirk-Shah} and \cite{Taka}). For instance, in \cite{suzuki-taka96b} it is proved that a metric space $X$ is complete if and only if  every weakly contractive mapping on $X$ has a fixed point. Also, the result of Borwein \cite{borw83} (see Corollary \ref{c.Borw-compl}), on the completeness of convex subsets of normed spaces on which every contraction has a fixed point, is rediscovered.\\

\textbf{Branciari's distance -- generalized metric spaces}

Branciari \cite{branciari00} (see \cite{samet10} for some corrections) introduced a new class of
spaces, called \emph{generalized metric spaces}, in the following way. A function $d:X\times X\to\mathbb{R}_+$, where $X$ is a nonempty set, is called a \emph{generalized metric} if the following conditions hold
\begin{itemize}
  \item[{\rm (GM1)}]\;\; $d(x,y)= 0\iff x=y$;
  \item[{\rm (GM2)}]\;\; $d(x,y)= d(y,x)$;
  \item[{\rm (GM3)}] \;\; $d(x,y)\le d(x,u)+d(u,v)+d(v,y)$\,,
  \end{itemize}
  for all  pairwise distinct points  $x,y,u,v\in X$ . The generalized triangle inequality (GM3) causes several troubles concerning the topology of these spaces (it is not always Hausdorff and  the distance function $d(\cdot,\cdot)$ is continuous only under a supplementary condition, see \cite[Ch. 13]{Kirk-Shah}) and the completeness.  Branciari loc. cit. proved a Banach Contraction Principle within this context (some flaws in the original proof are corrected in   \cite[Ch. 13]{Kirk-Shah}).

  Ghosh and Deb Ray \cite{ghosh12} considered Suzuki's generalized contractions for these spaces and proved direct fixed point results as well as  converse completeness results.\\

\textbf{Probabilistic metric spaces}

Completeness as well as  relations between  completeness   and  fixed point results in probabilistic metric spaces are explored in the papers \cite{razani08}, \cite{saadat09}, \cite{grab-cho08}, \cite{saadat05}. We do not enter into  the details of this matter.

\section*{Appendix -- A pessimistic  conclusion}
  In conclusion we quote from the review of the paper \cite{park86}.\\

\textbf{MR835839 (87m:54125) }
Park, Sehie; Rhoades, B. E.
\emph{Comments on characterizations for metric completeness}.
Math. Japon. 31 (1986), no. 1, 95--97.\\

\begin{quotation}
There are many papers in which the completeness of a metric space is characterized by using a
fixed point theorem. In the present paper the authors prove two very simple and general theorems
which ``\textbf{encompass some previous as well as future theorems of this type}".\end{quotation}
\hfill (Reviewed by J. Matkowski)\\

Under these circumstances, it seems that the  best we can hope  to do in this domain is to prove some particular cases of these very general results.\\

\textbf{Acknowledgement.} This is an expanded version of a talk delivered at the \emph{International Conference on Nonlinear Operators, Differential Equations and Applications} (ICNODEA 2015), Cluj-Napoca, Romania, July 14-17, 2015.

\providecommand{\bysame}{\leavevmode\hbox to3em{\hrulefill}\thinspace}
\providecommand{\MR}{\relax\ifhmode\unskip\space\fi MR }


\begin{thebibliography}{999}

\bibitem{karapin11b}
T.~Abdeljawad, E.~Karap{\i}nar, and K.~Ta{\c{s}}, \emph{Existence and
  uniqueness of a common fixed point on partial metric spaces}, Appl. Math.
  Lett. \textbf{24} (2011), no.~11, 1900--1904.

\bibitem{abramsky}
S.~Abramsky and A.~Jung, \emph{Domain theory}, Handbook of logic in computer
  science, {V}ol.\ 3, Handb. Log. Comput. Sci., vol.~3, Oxford Univ. Press, New
  York, 1994, pp.~1--168.

\bibitem{altun-romag13}
{\"O}.~Acar, I.~Altun, and S.~Romaguera, \emph{Caristi's type mappings on
  complete partial metric spaces}, Fixed Point Theory \textbf{14} (2013),
  no.~1, 3--9.

\bibitem{razani08}
A.~Aghajani and A.~Razani, \emph{Some completeness theorems in the {M}enger
  probabilistic metric space}, Appl. Sci. \textbf{10} (2008), 1--8.

\bibitem{alegre-romag14}
C.~Alegre, J.~Mar{\'{\i}}n, and S.~Romaguera, \emph{A fixed point theorem for
  generalized contractions involving {$w$}-distances on complete quasi-metric
  spaces}, Fixed Point Theory Appl. (2014), 2014:40, 8 pp.

\bibitem{valero12}
M.~A. Alghamdi, N.~Shahzad, and O.~Valero, \emph{On fixed point theory in
  partial metric spaces}, Fixed Point Theory Appl. (2012), 2012:175, 25 pp.

\bibitem{valero14}
\bysame, \emph{New results on the {B}aire partial quasi-metric space, fixed
  point theory and asymptotic complexity analysis for recursive programs},
  Fixed Point Theory Appl. (2014), 2014:14, 18 pp.

\bibitem{saadat09}
M.~Alimohammady, A.~Esmaeli, and R.~Saadati, \emph{Completeness results in
  probabilistic metric spaces}, Chaos Solitons Fractals \textbf{39} (2009),
  no.~2, 765--769.

\bibitem{latif15}
T.~Alsiary and A.~Latif, \emph{Generalized {C}aristi fixed point results in
  partial metric spaces}, J. Nonlinear Convex Anal. \textbf{16} (2015), no.~1,
  119--125.

\bibitem{altun-romag12}
I.~Altun and S.~Romaguera, \emph{Characterizations of partial metric
  completeness in terms of weakly contractive mappings having fixed point},
  Appl. Anal. Discrete Math. \textbf{6} (2012), no.~2, 247--256.

  \bibitem{altun10}
I.~Altun, F.~Sola, and H.~Simsek, \emph{Generalized contractions on partial
  metric spaces}, Topology Appl. \textbf{157} (2010), no.~18, 2778--2785.

\bibitem{amato84}
P.~Amato, \emph{A method for reducing fixed-point problems to completeness
  problems and vice versa}, Boll. Un. Mat. Ital. B (6) \textbf{3} (1984),
  no.~2, 463--476 (Italian).

\bibitem{amato86}
\bysame, \emph{The completion classes of a metric space}, Rend. Circ. Mat.
  Palermo (2) Suppl. (1986), no.~12, 157--168 (Italian), Second topology
  conference (Taormina, 1984).

\bibitem{amato93}
\bysame, \emph{Some properties of completion classes for normed spaces}, Note
  Mat. \textbf{13} (1993), no.~1, 123--134.

\bibitem{harandi12}
A.~Amini-Harandi, \emph{Metric-like spaces, partial metric spaces and fixed
  points}, Fixed Point Theory Appl. (2012), 2012:204, 10 pp.

\bibitem{angel87}
V.~G. Angelov, \emph{Fixed point theorem in uniform spaces and applications},
  Czechoslovak Math. J. \textbf{37(112)} (1987), no.~1, 19--33.

\bibitem{angel88}
\bysame, \emph{A converse to a contraction mapping theorem in uniform spaces},
  Nonlinear Anal. \textbf{12} (1988), no.~10, 989--996.

\bibitem{angel94}
\bysame, \emph{Corrigendum: ``{A} converse to a contraction mapping theorem in
  uniform spaces'' [{N}onlinear {A}nal.\ {\bf 12} (1988), no.\ 10, 989--996;
  {MR}0962764 (89k:54101)]}, Nonlinear Anal. \textbf{23} (1994), no.~11, 1491.

\bibitem{angel04}
\bysame, \emph{An extension of {K}irk-{C}aristi theorem to uniform spaces},
  Antarct. J. Math. \textbf{1} (2004), no.~1, 47--51.

\bibitem{anisiu16}
M.~Anisiu and V.~Anisiu, \emph{On the characterization of partial metric spaces
  and quasimetrics}, Fixed Point Theory \textbf{17} (2016), no.~1, 37--46.

\bibitem{ansari11}
Q.~H. Ansari and L.-J. Lin, \emph{Ekeland-type variational principles and
  equilibrium problems}, Topics in nonconvex optimization, Springer Optim.
  Appl., vol.~50, Springer, New York, 2011, pp.~147--174.

  \bibitem{arut15}
A.~V. Arutyunov, \emph{Caristi's condition and existence of a minimum of a
  lower bounded function in a metric space. {A}pplications to the theory of
  coincidence points}, Proc. Steklov Inst. Math. \textbf{291} (2015), no.~1,
  24--37, Translation of Tr. Mat. Inst. Steklova {{\bf{2}}91} (2015), 30--44.

\bibitem{arut-gel09}
A.~V. Arutyunov and B.~D. Gel'man, \emph{The minimum of a functional in a
  metric space, and fixed points}, Zh. Vychisl. Mat. Mat. Fiz. \textbf{49}
  (2009), no.~7, 1167--1174.

  \bibitem{arut-zhuk19a}
A.~V. Arutyunov and S.~E. Zhukovskiy, \emph{Variational principles in
  analysis and existence of minimizers for functions on metric spaces}, SIAM J.
  Optim. \textbf{29} (2019), no.~2, 994--1016.

  \bibitem{arut-zhuk19b}
A.~V. Arutyunov, E.~S. Zhukovskiy, and S.~E. Zhukovskiy,
  \emph{Caristi-like condition and the existence of minima of mappings in
  partially ordered spaces}, J. Optim. Theory Appl. \textbf{180} (2019), no.~1,
  48--61.

\bibitem{karapin13}
H.~Aydi, E.~Karap{\i}nar, and P.~Kumam, \emph{A note on `{M}odified proof of
  {C}aristi's fixed point theorem on partial metric spaces, {J}ournal of
  {I}nequalities and {A}pplications 2013, 2013:210'}, J. Inequal. Appl. (2013),
  2013:355, 3 pp.

\bibitem{karapin15}
H.~Aydi, E.~Karap{\i}nar, and C.~Vetro, \emph{On {E}keland's variational
  principle in partial metric spaces}, Appl. Math. Inf. Sci. \textbf{9} (2015),
  no.~1, 257--262.

\bibitem{babu82}
A.~C. Babu, \emph{A converse to a generalized {B}anach contraction principle},
  Publ. Inst. Math. (Beograd) (N.S.) \textbf{32(46)} (1982), 5--6.

\bibitem{banach22}
{S}. {Banach}, \emph{{Sur les op\'erations dans les ensembles abstraits et leur
  application aux \'equations int\'egrales.}}, {Fund. Math.} \textbf{3} (1922),
  133--181 (French).

\bibitem{bao-cs-soub16}
T.~Q. Bao, S.~Cobza\c{s}, and A.~Soubeyran, \emph{Variational principles and
  completeness in pseudo-quasimetric spaces},
  Ann. Oper. Res. \textbf{269} (2018), no. 1-2, 53-79, doi.org/10.1007/s10479-016-2368-0.

\bibitem{bao-mord15a}
T.~Q. Bao, B.~S. Mordukhovich, and A.~Soubeyran, \emph{Fixed points and
  variational principles with applications to capability theory of wellbeing
  via variational rationality}, Set-Valued Var. Anal. \textbf{23} (2015),
  no.~2, 375--398.

\bibitem{bao-mord15b}
\bysame, \emph{Variational analysis in psychological modeling}, J. Optim.
  Theory Appl. \textbf{164} (2015), no.~1, 290--315.

\bibitem{bao-soubey15a}
T.~Q. Bao and A.~Soubeyran, \emph{Variational analysis and applications to
  group dynamics},  J. Optim. Theory Appl. \textbf{170} (2016), no.~2, 458--475.

\bibitem{bao-thera15}
T.~Q. Bao and M.~A. Th\'era, \emph{On extended versions of   {D}ancs-{H}eged\"{u}s-{M}edvegyev's fixed-point theorem},
  Optimization \textbf{66} (2017), no.~6, 875--887.

\bibitem{berinde10}
V.~Berinde and M.~Choban, \emph{Remarks on some completeness conditions
  involved in several common fixed point theorems}, Creat. Math. Inform.
  \textbf{19} (2010), no.~1, 1--10.

\bibitem{berinde13}
\bysame, \emph{Generalized distances and their associate metrics. {I}mpact on
  fixed point theory}, Creat. Math. Inform. \textbf{22} (2013), no.~1, 23--32.

\bibitem{besag59}
C.~Bessaga, \emph{On the converse of the {B}anach ``fixed-point principle``},
  Colloq. Math. \textbf{7} (1959), 41--43.

\bibitem{blanqui14}
{F}. {Blanqui}, \emph{{A point on fixpoints in posets}}, {arXiv: 1502.06021}
  (2014), 10 pp.

\bibitem{borw83}
J.~M. Borwein, \emph{Completeness and the contraction principle}, Proc. Amer.
  Math. Soc. \textbf{87} (1983), no.~2, 246--250.

\bibitem{bourbaki50}
N.~Bourbaki, \emph{Sur le th\'eor\`eme de {Z}orn}, Arch. Math. (Basel)
  \textbf{2} (1949--1950), 434--437 (1951).

\bibitem{branciari00}
A.~Branciari, \emph{A fixed point theorem of {B}anach-{C}accioppoli type on a
  class of generalized metric spaces}, Publ. Math. Debrecen \textbf{57} (2000),
  no.~1-2, 31--37.

\bibitem{kopper-matths09}
M.~Bukatin, R.~Kopperman, S.~Matthews, and H.~Pajoohesh, \emph{Partial metric
  spaces}, Amer. Math. Monthly \textbf{116} (2009), no.~8, 708--718.

\bibitem{burke1}
J.~V. Burke and S.~Deng, \emph{Weak sharp minima revisited. {I}. {B}asic
  theory}, Control Cybern. \textbf{31}   (2002), No. 3, 439-469,  Well-posedness in optimization and related topics
  (Warsaw, 2001).

\bibitem{burke2}
\bysame, \emph{Weak sharp minima revisited. {II}. {A}pplication to linear
  regularity and error bounds}, Math. Program. \textbf{104} (2005), no.~2-3,
  Ser. B, 235--261.

\bibitem{burke3}
\bysame, \emph{Weak sharp minima revisited. {III}. {E}rror bounds for
  differentiable convex inclusions}, Math. Program. \textbf{116} (2009),
  no.~1-2, Ser. B, 37--56.

\bibitem{taka12}
C.-S. Chuang, L.-J. Lin, and W.~Takahashi, \emph{Fixed point theorems for
  single-valued and set-valued mappings on complete metric spaces}, J.
  Nonlinear Convex Anal. \textbf{13} (2012), no.~3, 515--527.

\bibitem{cobz11}
S.~Cobza{\c{s}}, \emph{Completeness in quasi-metric spaces and {E}keland
  {V}ariational {P}rinciple}, Topology Appl. \textbf{158} (2011), no.~8,
  1073--1084.

\bibitem{cobz12}
\bysame, \emph{Ekeland variational principle in asymmetric locally convex
  spaces}, Topology Appl. \textbf{159} (2012), no.~10-11, 2558--2569.

\bibitem{Cobz}
\emph{Functional analysis in asymmetric normed spaces}, Frontiers in
  Mathematics, Birkh\"auser/Springer Basel AG, Basel, 2013.
  
  \bibitem{cobz19}
\bysame,  \emph{Ekeland, {T}akahashi and {C}aristi principles in
  quasi-pseudometric spaces}, Topology Appl. \textbf{265} (2019), 106831, 22 p.

\bibitem{CMN}
S.~Cobza\c{s}, R.~Miculescu, and A.~Nicolae, \emph{Lipschitz functions},
  Lecture Notes in Mathematics, vol. 2241, Springer, Cham, 2019.

\bibitem{conel59}
E.~H. Connell, \emph{Properties of fixed point spaces}, Proc. Amer. Math. Soc.
  \textbf{10} (1959), 974--979.

\bibitem{dancs-heged83}
S.~Dancs, M.~Heged{\H{u}}s, and P.~Medvegyev, \emph{A general ordering and
  fixed-point principle in complete metric space}, Acta Sci. Math. (Szeged)
  \textbf{46} (1983), no.~1-4, 381--388.

\bibitem{davis55}
A.~C. Davis, \emph{A characterization of complete lattices}, Pacific J. Math.
  \textbf{5} (1955), 311--319.

\bibitem{Deimling}
K.~Deimling, \emph{Nonlinear functional analysis}, Springer-Verlag, Berlin,
  1985.

  \bibitem{demarr65}
R.~DeMarr, \emph{Partially ordered spaces and metric spaces}, Amer. Math.
  Monthly \textbf{72} (1965), 628--631.

\bibitem{Deza}
M.~M. Deza and E.~Deza, \emph{Encyclopedia of distances}, third ed., Springer,
  Heidelberg, 2014.

\bibitem{dhomp09c}
S.~Dhompongsa, W.~Inthakon, and A.~Kaewkhao, \emph{Edelstein's method and fixed
  point theorems for some generalized nonexpansive mappings}, J. Math. Anal.
  Appl. \textbf{350} (2009), no.~1, 12--17.

\bibitem{dhomp09b}
S.~Dhompongsa and A.~Kaewcharoen, \emph{Fixed point theorems for nonexpansive
  mappings and {S}uzuki-generalized nonexpansive mappings on a {B}anach
  lattice}, Nonlinear Anal. \textbf{71} (2009), no.~11, 5344--5353.

\bibitem{dhomp09}
S.~Dhompongsa and H.~Yingtaweesittikul, \emph{Fixed points for multivalued
  mappings and the metric completeness}, Fixed Point Theory Appl. (2009), Art.
  ID 972395, 15 pp.

\bibitem{edel61}
M.~Edelstein, \emph{An extension of {B}anach's contraction principle}, Proc.
  Amer. Math. Soc. \textbf{12} (1961), 7--10.

\bibitem{edel62}
\bysame, \emph{On fixed and periodic points under contractive mappings}, J.
  London Math. Soc. \textbf{37} (1962), 74--79.

\bibitem{edel63}
\bysame, \emph{A theorem on fixed points under isometries}, Amer. Math. Monthly
  \textbf{70} (1963), 298--300.

\bibitem{edel69}
\bysame, \emph{A short proof of a theorem of {L}. {J}anos}, Proc. Amer. Math.
  Soc. \textbf{20} (1969), 509--510.

\bibitem{elek09}
M.~Elekes, \emph{On a converse to {B}anach's fixed point theorem}, Proc. Amer.
  Math. Soc. \textbf{137} (2009), no.~9, 3139--3146.

\bibitem{florinski87}
A.~A. {Florinskij}, \emph{{On mappings that are contractive in equivalent
  metrics}}, {Vestn. Leningr. Univ., Ser. I} \textbf{1987} (1987), no.~3,
  123--124, (Russian, English translation in Vestn. Leningr. Univ., Math. 20,
  no. 3 (1987), 91--93).

\bibitem{florinski91}
\bysame, \emph{{On the existence of connected extensions of metric spaces and
  the Banach contraction mapping theorem}}, {Vestn. Leningr. Univ., Ser. I}
  \textbf{1991} (1991), no.~4, 18--22, (Russian, English translation in Vestn.
  Leningr. Univ., Math. 24, no. 4 (1991), 17--20).

\bibitem{florinski97}
\bysame, \emph{{Connected extension of metric spaces and fixed points of Banach
  contractions}}, {Probl. Mat. Anal.} \textbf{17} (1997), 227--237, (Russian,
  English translation in J. Math. Sci. (NY) 97, no. 4 (1999), 4329--4335).

\bibitem{frink42}
{O}. {Frink}, \emph{{Topology in lattices}}, {Trans. Amer. Math. Soc.}
  \textbf{51} (1942), 569--582.

\bibitem{suzuki11}
J.~Garc{\'{\i}}a-Falset, E.~Llorens-Fuster, and T.~Suzuki, \emph{Fixed point
  theory for a class of generalized nonexpansive mappings}, J. Math. Anal.
  Appl. \textbf{375} (2011), no.~1, 185--195.

\bibitem{georgiev88}
P.~G. Georgiev, \emph{The strong {E}keland variational principle, the strong
  drop theorem and applications}, J. Math. Anal. Appl. \textbf{131} (1988),
  no.~1, 1--21.

\bibitem{ghosh12}
P.~Ghosh and A.~Deb~Ray, \emph{A characterization of completeness of
  generalized metric spaces using generalized {B}anach contraction principle},
  Demonstratio Math. \textbf{45} (2012), no.~3, 717--724.

\bibitem{Keimel}
G.~Gierz, K.~H. Hofmann, K.~Keimel, J.~D. Lawson, M.~Mislove, and D.~S. Scott,
  \emph{Continuous lattices and domains}, Encyclopedia of Mathematics and its
  Applications, vol.~93, Cambridge University Press, Cambridge, 2003.

\bibitem{Goebel-Kirk}
K.~Goebel and W.~A. Kirk, \emph{Topics in metric fixed point theory}, Cambridge
  Studies in Advanced Mathematics, vol.~28, Cambridge University Press,
  Cambridge, 1990.

\bibitem{Larrecq}
J.~Goubault-Larrecq, \emph{Non-{H}ausdorff topology and domain theory: Selected
  topics in point-set topology}, New Mathematical Monographs, vol.~22,
  Cambridge University Press, Cambridge, 2013.

\bibitem{grab-cho08}
M.~Grabiec, Y.~J. Cho, and R.~Saadati, \emph{Completeness and fixed poitns in
  probabilistic quasi-pseudo-metric spaces}, Bull. Stat. Econ. \textbf{2}
  (2008), no.~A08, 39--47.

\bibitem{Dug-Gran}
A.~Granas and J.~Dugundji, \emph{Fixed point theory}, Springer Monographs in
  Mathematics, Springer-Verlag, New York, 2003.

\bibitem{rezap13}
R.~H. Haghi, Sh. Rezapour, and N.~Shahzad, \emph{Be careful on partial metric
  fixed point results}, Topology Appl. \textbf{160} (2013), no.~3, 450--454.

  \bibitem{zhang17}
S.~Han, J.~Wu, and D.~Zhang, \emph{Properties and principles on partial metric
  spaces}, Topology Appl. \textbf{230} (2017), 77--98.

\bibitem{hitzler00}
P.~Hitzler and A.~K. Seda, \emph{{Dislocated topologies}}, {J. Electr. Eng.}
  \textbf{51} (2000), no.~12/s, 3--7.

\bibitem{hitzler01}
\bysame, \emph{{A ``converse'' of the Banach contraction mapping theorem}}, {J.
  Electr. Eng.} \textbf{52} (2001), no.~10/s, 3--6.

\bibitem{hitzler02}
\bysame, \emph{The fixed-point theorems of {P}riess-{C}rampe and {R}ibenboim in
  logic programming}, Valuation theory and its applications, {V}ol. {I}
  ({S}askatoon, {SK}, 1999), Fields Inst. Commun., vol.~32, Amer. Math. Soc.,
  Providence, RI, 2002, pp.~219--235.

\bibitem{saadat05}
S.~B. Hosseini and R.~Saadati, \emph{Completeness results in probabilistic
  metric spaces. {I}}, Commun. Appl. Anal. \textbf{9} (2005), no.~3-4,
  549--553.

\bibitem{Howard-Rub}
P.~Howard and J.~E. Rubin, \emph{Consequences of the axiom of choice},
  Mathematical Surveys and Monographs, vol.~59, American Mathematical Society,
  Providence, RI, 1998, With 1 IBM-PC floppy disk (3.5 inch; WD).

\bibitem{hu67}
T.~K. Hu, \emph{On a fixed-point theorem for metric spaces}, Amer. Math.
  Monthly \textbf{74} (1967), 436--437.

\bibitem{huang05}
H.~Huang, \emph{Global weak sharp minima and completeness of metric space},
  Acta Math. Sci. Ser. B Engl. Ed. \textbf{25} (2005), no.~2, 359--366.

\bibitem{taka10}
S.~Iemoto, W.~Takahashi, and H.~Yingtaweesittikul, \emph{Nonlinear operators,
  fixed points and completeness of metric spaces}, Fixed point theory and its
  applications, Yokohama Publ., Yokohama, 2010, pp.~93--101.

\bibitem{ivanov76}
A.~A. Ivanov, \emph{Fixed points of mappings of metric spaces}, Studies in
  topology, II, Zap. Nau\v cn. Sem. Leningrad. Otdel. Mat. Inst. Steklov.
  (LOMI) \textbf{66} (1976), 5--102, 207 (Russian).

\bibitem{jachym94}
J.~Jachymski, \emph{An iff fixed point criterion for continuous self-mappings
  on a complete metric space}, Aequationes Math. \textbf{48} (1994), no.~2-3,
  163--170.

\bibitem{jachym97}
\bysame, \emph{Some consequences of fundamental ordering principles in metric
  fixed point theory}, Ann. Univ. Mariae Curie-Sk\l odowska Sect. A \textbf{51}
  (1997), no.~2, 123--134, Proceedings of {W}orkshop on {F}ixed {P}oint
  {T}heory ({K}azimierz {D}olny, 1997).

\bibitem{jachym98b}
\bysame, \emph{Fixed point theorems in metric and uniform spaces via the
  {K}naster-{T}arski principle}, Nonlinear Anal. \textbf{32} (1998), no.~2,
  225--233.

\bibitem{jachym98a}
\bysame, \emph{Some consequences of the {T}arski-{K}antorovitch ordering
  theorem in metric fixed point theory}, Quaestiones Math. \textbf{21} (1998),
  no.~1-2, 89--99.

\bibitem{jachym00}
\bysame, \emph{A short proof of the converse to the contraction principle and
  some related results}, Topol. Methods Nonlinear Anal. \textbf{15} (2000),
  no.~1, 179--186, Dedicated to Juliusz Schauder, 1899--1943.

\bibitem{jachym01}
\bysame, \emph{Order-theoretic aspects of metric fixed point theory}, Handbook
  of metric fixed point theory, Kluwer Acad. Publ., Dordrecht, 2001,
  pp.~613--641.

\bibitem{jachym03}
\bysame, \emph{Converses to fixed point theorems of {Z}ermelo and {C}aristi},
  Nonlinear Anal. \textbf{52} (2003), no.~5, 1455--1463.

\bibitem{jachym11c}
\bysame, \emph{Equivalent conditions for generalized contractions on (ordered)
  metric spaces}, Nonlinear Anal. \textbf{74} (2011), no.~3, 768--774.

\bibitem{jachym11b}
\bysame, \emph{A stationary point theorem characterizing metric completeness},
  Appl. Math. Lett. \textbf{24} (2011), no.~2, 169--171.

\bibitem{radenov11}
S.~Jankovi{\'c}, Z.~Kadelburg, and S.~Radenovi{\'c}, \emph{On cone metric
  spaces: a survey}, Nonlinear Anal. \textbf{74} (2011), no.~7, 2591--2601.

\bibitem{janos67}
L.~Jano{\v{s}}, \emph{A converse of {B}anach's contraction theorem}, Proc.
  Amer. Math. Soc. \textbf{18} (1967), 287--289.

\bibitem{janos70}
\bysame, \emph{A converse of the generalized {B}anach's contraction theorem},
  Arch. Math. (Basel) \textbf{21} (1970), 69--71.

\bibitem{jiang00}
G.-J. Jiang, \emph{On characterization of metric completeness}, Turkish J.
  Math. \textbf{24} (2000), no.~3, 267--272.

\bibitem{suzuki-taka96}
O.~Kada, T.~Suzuki, and W.~Takahashi, \emph{Nonconvex minimization theorems and
  fixed point theorems in complete metric spaces}, Math. Japon. \textbf{44}
  (1996), no.~2, 381--391.

\bibitem{karapin11}
E.~Karapinar, \emph{Generalizations of {C}aristi {K}irk's theorem on partial
  metric spaces}, Fixed Point Theory Appl. (2011), 2011:4, 7 pp.

\bibitem{karap-romag15}
E.~Karap{\i}nar and S.~Romaguera, \emph{On the weak form of {E}keland's
  variational principle in quasi-metric spaces}, Topology Appl. \textbf{184}
  (2015), 54--60.

\bibitem{karapin13b}
E.~Karap{\i}nar and P.~Salimi, \emph{Dislocated metric space to metric spaces
  with some fixed point theorems}, Fixed Point Theory Appl. (2013), 2013:222,
  19 pp.

\bibitem{kasahara68}
S.~Kasahara, \emph{Classroom {N}otes: {A} remark on the converse of
  {B}anach's {C}ontraction {T}heorem}, Amer. Math. Monthly \textbf{75} (1968),
  no.~7, 775--776.

\bibitem{keimel08}
K.~Keimel, \emph{Topological cones: functional analysis in a {${\rm
  T}_0$}-setting}, Semigroup Forum \textbf{77} (2008), no.~1, 109--142.

\bibitem{keimel15}
\bysame, \emph{Weak topologies and compactness in asymmetric functional
  analysis}, Topology Appl. \textbf{185/186} (2015), 1--22.

\bibitem{Kelley}
J.~L. {Kelley}, \emph{{General topology. 2nd ed.}}, {Graduate Texts in
  Mathematics}, vol.~27, Springer-Verlag, New York - Heidelberg - Berlin, 1975.

\bibitem{kelly63}
J.~C. Kelly, \emph{Bitopological spaces}, Proc. London Math. Soc. (3)
  \textbf{13} (1963), 71--89.

\bibitem{khamsi15}
M.~A. Khamsi, \emph{Generalized metric spaces: {A} survey}, J. Fixed Point
  Theory Appl. \textbf{17} (2015), no.~3, 455--475.

\bibitem{Kirk-Shah}
W.~Kirk and N.~Shahzad, \emph{Fixed point theory in distance spaces}, Springer,
  Cham, 2014.

\bibitem{kirk01}
W.~A. Kirk, \emph{Contraction mappings and extensions}, Handbook of metric
  fixed point theory, Kluwer Acad. Publ., Dordrecht, 2001, pp.~1--34.

\bibitem{kirk01b}
W.~A. Kirk and L.~M. Saliga, \emph{The {B}r\'ezis-{B}rowder order principle and
  extensions of {C}aristi's theorem}, Nonlinear Anal. \textbf{47} (2001),
  no.~4, 2765--2778.

\bibitem{Hdb-MFP}
W.~A. Kirk and B.~Sims (eds.), \emph{Handbook of metric fixed point theory},
  Kluwer Academic Publishers, Dordrecht, 2001.

\bibitem{klee55}
V.~L. Klee, Jr., \emph{Some topological properties of convex sets}, Trans.
  Amer. Math. Soc. \textbf{78} (1955), 30--45.

\bibitem{klimes82}
J.~Klime{\v{s}}, \emph{Characterizations of completeness for semilattices by
  using of fixed points}, Scripta Fac. Sci. Natur. Univ. Purk. Brun.
  \textbf{12} (1982), no.~10, 507--513.

\bibitem{klimes84b}
\bysame, \emph{A characterization of a semilattice completeness}, Scripta Fac.
  Sci. Natur. Univ. Purk. Brun. \textbf{14} (1984), no.~8, 399--407.

\bibitem{klimes84a}
\bysame, \emph{Fixed point characterization of completeness on lattices for
  relatively isotone mappings}, Arch. Math. (Brno) \textbf{20} (1984), no.~3,
  125--132.

\bibitem{klimes85}
\bysame, \emph{A characterization of inductive posets}, Arch. Math. (Brno)
  \textbf{21} (1985), no.~1, 39--42.

\bibitem{kopper88}
R.~D. Kopperman, \emph{All topologies come from generalized metrics},
 Amer. Math. Monthly \textbf{95} (1988), No.2, 89-97.

\bibitem{kopper93}
\bysame, \emph{Which topologies are quasimetrizable?}, Topology Appl.
  \textbf{52} (1993), no.~2, 99--107.

\bibitem{lahiri09}
B.~K. Lahiri, M.~K. Chakrabarty, and A.~Sen, \emph{Converse of {B}anach's
  contraction principle and star operation}, Proc. Nat. Acad. Sci. India Sect.
  A \textbf{79} (2009), no.~4, 367--374.

\bibitem{leader77}
S.~Leader, \emph{A topological characterization of {B}anach contractions},
  Pacific J. Math. \textbf{69} (1977), no.~2, 461--466.

\bibitem{leader82}
\bysame, \emph{Uniformly contractive fixed points in compact metric spaces},
  Proc. Amer. Math. Soc. \textbf{86} (1982), no.~1, 153--158.

\bibitem{leader83}
\bysame, \emph{Equivalent {C}auchy sequences and contractive fixed points in
  metric spaces}, Studia Math. \textbf{76} (1983), no.~1, 63--67.

\bibitem{lee-choi14}
W.~Lee and Y.~Choi, \emph{A survey on characterizations of metric
  completeness}, Nonlinear Anal. Forum \textbf{19} (2014), 265--276.

\bibitem{lin-du06}
L.-J. Lin and W.-S. Du, \emph{Ekeland's variational principle, minimax theorems
  and existence of nonconvex equilibria in complete metric spaces}, J. Math.
  Anal. Appl. \textbf{323} (2006), no.~1, 360--370.

\bibitem{lin-du07}
\bysame, \emph{Some equivalent formulations of the generalized {E}keland's
  variational principle and their applications}, Nonlinear Anal. \textbf{67}
  (2007), no.~1, 187--199.

\bibitem{lin-du08}
\bysame, \emph{On maximal element theorems, variants of {E}keland's variational
  principle and their applications}, Nonlinear Anal. \textbf{68} (2008), no.~5,
  1246--1262.

\bibitem{liu96}
Z.~{Liu}, \emph{{Fixed points and completeness}}, {Turkish J. Math.}
  \textbf{20} (1996), no.~4, 467--472.

\bibitem{liu02}
Z.~Liu and S.~M. Kang, \emph{On characterizations of metric completeness},
  Indian J. Math. \textbf{44} (2002), no.~2, 183--187.

\bibitem{liu03}
\bysame, \emph{On characterizations of {$\leq$}-completeness and metric
  completeness}, Southeast Asian Bull. Math. \textbf{27} (2003), no.~2,
  325--331.

\bibitem{manka87}
R.~Ma{\'n}ka, \emph{Connection between set theory and the fixed point
  property}, Colloq. Math. \textbf{53} (1987), no.~2, 177--184.

\bibitem{manka88a}
\bysame, \emph{Some forms of the axiom of choice}, Jbuch. Kurt-G\"odel-Ges.
  (1988), 24--34.

\bibitem{manka88b}
\bysame, \emph{Turinici's fixed point theorem and the axiom of choice}, Rep.
  Math. Logic (1988), no.~22, 15--19 (1989).

\bibitem{marin-romag11}
J.~Mar{\'{\i}}n, S.~Romaguera, and P.~Tirado, \emph{Weakly contractive
  multivalued maps and {$w$}-distances on complete quasi-metric spaces}, Fixed
  Point Theory Appl. (2011), 2011:2, 9 pp.

\bibitem{markow76}
G.~Markowsky, \emph{Chain-complete posets and directed sets with applications},
  Algebra Universalis \textbf{6} (1976), no.~1, 53--68.

\bibitem{matths92c}
S.~G. Matthews, \emph{The cycle contraction mapping theorem}, Research Report,
  no. 228, University of Warwick, UK, 1992, 17 p.

\bibitem{matths92a}
\bysame, \emph{Partial metric spaces}, Research Report, no. 212, University of
  Warwick, UK, 1992, 21 p.

\bibitem{matths92b}
\bysame, \emph{The topology of partial metric spaces}, Research Report, no.
  222, University of Warwick, UK, 1992, 20 p.

\bibitem{matths94}
\bysame, \emph{Partial metric topology}, Papers on general topology and
  applications ({F}lushing, {NY}, 1992), Ann. New York Acad. Sci., vol. 728,
  New York Acad. Sci., New York, 1994, pp.~183--197.

\bibitem{meyers65}
P.~R. Meyers, \emph{Some extensions of {B}anach's contraction theorem}, J. Res.
  Nat. Bur. Standards Sect. B \textbf{69B} (1965), 179--184.

\bibitem{meyers67}
\bysame, \emph{A converse to {B}anach's contraction theorem}, J. Res. Nat. Bur.
  Standards Sect. B \textbf{71B} (1967), 73--76.

\bibitem{mukherj84}
R.~N. Mukherjee and T.~Som, \emph{An application of {M}eyer's theorem on
  converse of {B}anach's contraction principle}, Bull. Inst. Math. Acad. Sinica
  \textbf{12} (1984), no.~3, 253--255.

\bibitem{nemyts36}
V.~V. {Nemytski\u{\i}}, \emph{{The fixed point method in analysis}}, {Uspekhi
  Mat. Nauk} \textbf{1} (1936), 141--174 (Russian).

\bibitem{Nicolae}
A.-M. Nicolae, \emph{On completeness and fixed points}, Babe\c s-Bolyai
  University, Faculty of mathematics and Computer Science, Cluj-Napoca, 2008,
  Master Thesis.

  \bibitem{Niculescu}
C.~P. Niculescu and L.-E. Persson, \emph{Convex functions and their
  applications. {A} contemporary approach}, CMS Books in Mathematics/Ouvrages
  de Math\'{e}matiques de la SMC, Springer, Cham, 2018, (2nd edition).

\bibitem{nieto07b}
J.~J. Nieto, R.~L. Pouso, and R.~Rodr{\'{\i}}guez-L{\'o}pez, \emph{Fixed point
  theorems in ordered abstract spaces}, Proc. Amer. Math. Soc. \textbf{135}
  (2007), no.~8, 2505--2517.

\bibitem{nieto05}
J.~J. Nieto and R.~Rodr{\'{\i}}guez-L{\'o}pez, \emph{Contractive mapping
  theorems in partially ordered sets and applications to ordinary differential
  equations}, Order \textbf{22} (2005), no.~3, 223--239 (2006).

\bibitem{nieto07a}
\bysame, \emph{Existence and uniqueness of fixed point in partially ordered
  sets and applications to ordinary differential equations}, Acta Math. Sin.
  (Engl. Ser.) \textbf{23} (2007), no.~12, 2205--2212.

  \bibitem{ng03}
K.~F. Ng and X.~Y. Zheng, \emph{Global weak sharp minima on {B}anach spaces},
  SIAM J. Control Optim. \textbf{41} (2003), no.~6, 1868--1885.

\bibitem{valero04}
S.~Oltra and O.~Valero, \emph{Banach's fixed point theorem for partial metric
  spaces}, Rend. Istit. Mat. Univ. Trieste \textbf{36} (2004), no.~1-2, 17--26
  (2005).

\bibitem{oneill95}
S.~J. O'Neill, \emph{Two topologies are better than one},
Preprint, University of Warwick, 1995.

\bibitem{oneill96}
\bysame, \emph{Partial metrics, valuations, and domain theory}, Ann. New
  York Acad. Sci. \textbf{806} (1996), 304--315, Papers on general topology and
  applications ({G}orham, {ME}, 1995).

\bibitem{opoitsev76}
V.~I. Opoitsev, \emph{A converse of the contraction mapping principle}, Uspekhi
  Mat. Nauk \textbf{31} (1976), no.~4 (190), 169--198 (Russian).

\bibitem{vetro12}
D.~Paesano and P.~Vetro, \emph{Suzuki's type characterizations of completeness
  for partial metric spaces and fixed points for partially ordered metric
  spaces}, Topology Appl. \textbf{159} (2012), no.~3, 911--920.

\bibitem{palczew86}
B.~Palczewski and A.~Miczko, \emph{On some converses of generalized {B}anach
  contraction principles}, Nonlinear functional analysis and its applications
  ({M}aratea, 1985), NATO Adv. Sci. Inst. Ser. C Math. Phys. Sci., vol. 173,
  Reidel, Dordrecht, 1986, pp.~335--351.

\bibitem{palczew87}
\bysame, \emph{Converses of generalized {B}anach contraction principles and
  remarks on mappings with a contractive iterate at the point}, Univ. u Novom
  Sadu Zb. Rad. Prirod.-Mat. Fak. Ser. Mat. \textbf{17} (1987), no.~1, 71--91.

\bibitem{park86}
S.~Park and B.~E. Rhoades, \emph{Comments on characterizations for metric
  completeness}, Math. Japon. \textbf{31} (1986), no.~1, 95--97.

\bibitem{pasicki15}
L.~Pasicki, \emph{Dislocated metric and fixed point theorems}, Fixed Point
  Theory Appl. (2015), 2015:82, 14 pp.

\bibitem{penot86}
J.-P. Penot, \emph{The drop theorem, the petal theorem and {E}keland's
  variational principle}, Nonlinear Anal. \textbf{10} (1986), no.~9, 813--822.

\bibitem{petalas-vid93}
C.~Petalas and T.~Vidalis, \emph{A fixed point theorem in non-{A}rchimedean
  vector spaces}, Proc. Amer. Math. Soc. \textbf{118} (1993), no.~3, 819--821.

\bibitem{petrusel04}
{A}. {Petru\c sel}, \emph{{Multivalued weakly Picard operators and
  applications}}, {Sci. Math. Jpn.} \textbf{59} (2004), no.~1, 169--202.

\bibitem{petrusel12}
{A}. Petru\c{s}el and {G}. {Petru\c{s}el}, \emph{{Multivalued Picard
  operators}}, {J. Nonlinear Convex Anal.} \textbf{13} (2012), no.~1,
  157--171.

\bibitem{petrusel-rus06}
A.~Petru{\c{s}}el and I.~A. Rus, \emph{Fixed point theorems in ordered
  {$L$}-spaces}, Proc. Amer. Math. Soc. \textbf{134} (2006), no.~2, 411--418.

  \bibitem{Phe93}
R.~R. Phelps, \emph{Convex functions, monotone operators and differentiability},
  second ed., Lecture Notes in Mathematics, vol. 1364, Springer-Verlag, Berlin,
  1993.

\bibitem{priess90}
S.~Prie{\ss}-Crampe, \emph{Der {B}anachsche {F}ixpunktsatz f\"ur ultrametrische
  {R}\"aume}, Results Math. \textbf{18} (1990), no.~1-2, 178--186.

\bibitem{ran-reur04}
A.~C.~M. Ran and M.~C.~B. Reurings, \emph{A fixed point theorem in partially
  ordered sets and some applications to matrix equations}, Proc. Amer. Math.
  Soc. \textbf{132} (2004), no.~5, 1435--1443.

\bibitem{romag10}
S.~Romaguera, \emph{A {K}irk type characterization of completeness for partial
  metric spaces}, Fixed Point Theory Appl. (2010), Art. ID 493298, 6 pp.

\bibitem{romag-tirado15}
S.~Romaguera and P.~Tirado, \emph{A characterization of {S}myth complete
  quasi-metric spaces via {C}aristi's fixed point theorem}, Fixed Point Theory
  Appl. (2015), 2015:183, 13 pp.

\bibitem{romag10b}
S.~Romaguera and O.~Valero, \emph{Domain theoretic characterisations of
  quasi-metric completeness in terms of formal balls}, Math. Structures Comput.
  Sci. \textbf{20} (2010), no.~3, 453--472.

\bibitem{Rubin}
H.~Rubin and J.~E. Rubin, \emph{Equivalents of the axiom of choice. {II}},
  Studies in Logic and the Foundations of Mathematics, vol. 116, North-Holland
  Publishing Co., Amsterdam, 1985.

\bibitem{Rus79}
I.~A. Rus, \emph{Metrical fixed point theorems}, Universitatea ``Babe\c
  s-Bolyai'', Facultatea de Matematica, Cluj-Napoca, 1979.

\bibitem{rus93}
\bysame, \emph{Weakly {P}icard mappings}, Comment. Math. Univ. Carolin.
  \textbf{34} (1993), no.~4, 769--773.

\bibitem{Rus01}
\bysame, \emph{Generalized contractions and applications}, Cluj University
  Press, Cluj-Napoca, 2001.

\bibitem{rus03}
\bysame, \emph{Picard operators and applications}, Sci. Math. Jpn. \textbf{58}
  (2003), no.~1, 191--219.

\bibitem{rus08}
\bysame, \emph{Fixed point theory in partial metric spaces}, An. Univ. Vest
  Timi\c s. Ser. Mat.-Inform. \textbf{46} (2008), no.~2, 149--160.

\bibitem{Rus-PP}
I.~A. Rus, A.~Petru{\c{s}}el, and G.~Petru{\c{s}}el, \emph{Fixed point theory},
  Cluj University Press, Cluj-Napoca, 2008.

\bibitem{samet10}
B.~Samet, \emph{Discussion on ``{A} fixed point theorem of
  {B}anach-{C}accioppoli type on a class of generalized metric spaces'' by {A}.
  {B}ranciari}, Publ. Math. Debrecen \textbf{76} (2010), no.~3-4, 493--494.

\bibitem{scott72}
D.~Scott, \emph{Continuous lattices}, Toposes, algebraic geometry and logic
  ({C}onf., {D}alhousie {U}niv., {H}alifax, {N}. {S}., 1971), Springer, Berlin,
  1972, pp.~97--136. Lecture Notes in Math., Vol. 274.

\bibitem{valero13}
N.~Shahzad and O.~Valero, \emph{On 0-complete partial metric spaces and
  quantitative fixed point techniques in denotational semantics}, Abstr. Appl.
  Anal. (2013), Art. ID 985095, 11 pp.

\bibitem{valero15}
\bysame, \emph{A {N}emytskii-{E}delstein type fixed point theorem for partial
  metric spaces}, Fixed Point Theory Appl. (2015), 2015:26, 15 pp.

\bibitem{taka98}
N.~Shioji, T.~Suzuki, and W.~Takahashi, \emph{Contractive mappings, {K}annan
  mappings and metric completeness}, Proc. Amer. Math. Soc. \textbf{126}
  (1998), no.~10, 3117--3124.

\bibitem{smithson71}
R.~E. Smithson, \emph{Fixed points of order preserving multifunctions}, Proc.
  Amer. Math. Soc. \textbf{28} (1971), 304--310.

\bibitem{smithson73}
\bysame, \emph{Fixed points in partially ordered sets}, Pacific J. Math.
  \textbf{45} (1973), 363--367.

\bibitem{Stolt-Hansen}
V.~Stoltenberg-Hansen, I.~Lindstr{\"o}m, and E.~R. Griffor, \emph{Mathematical
  theory of domains}, Cambridge Tracts in Theoretical Computer Science,
  vol.~22, Cambridge University Press, Cambridge, 1994.

\bibitem{subrahm75}
P.~V. Subrahmanyam, \emph{Completeness and fixed-points}, Monatsh. Math.
  \textbf{80} (1975), no.~4, 325--330.

\bibitem{suliv81a}
F.~Sullivan, \emph{A characterization of complete metric spaces}, Proc. Amer.
  Math. Soc. \textbf{83} (1981), no.~2, 345--346.

\bibitem{suliv81b}
\bysame, \emph{Ordering and completeness of metric spaces}, Nieuw Arch. Wisk.
  (3) \textbf{29} (1981), no.~2, 178--193.

\bibitem{suzuki01}
T.~Suzuki, \emph{Generalized distance and existence theorems in complete metric
  spaces}, J. Math. Anal. Appl. \textbf{253} (2001), no.~2, 440--458.

\bibitem{suzuki04}
\bysame, \emph{Several fixed point theorems concerning {$\tau$}-distance},
  Fixed Point Theory Appl. (2004), no.~3, 195--209.

\bibitem{suzuki05}
\bysame, \emph{Counterexamples on {$\tau$}-distance versions of generalized
  {C}aristi's fixed point theorems}, Bull. Kyushu Inst. Technol. Pure Appl.
  Math. (2005), no.~52, 15--20.

\bibitem{suzuki06}
\bysame, \emph{The strong {E}keland variational principle}, J. Math. Anal.
  Appl. \textbf{320} (2006), no.~2, 787--794.

\bibitem{suzuki08b}
\bysame, \emph{Fixed point theorems and convergence theorems for some
  generalized nonexpansive mappings}, J. Math. Anal. Appl. \textbf{340} (2008),
  no.~2, 1088--1095.

\bibitem{suzuki08}
\bysame, \emph{A generalized {B}anach contraction principle that characterizes
  metric completeness}, Proc. Amer. Math. Soc. \textbf{136} (2008), no.~5,
  1861--1869.

\bibitem{suzuki08c}
\bysame, \emph{{$w$}-distances and {$\tau$}-distances}, Nonlinear Funct. Anal.
  Appl. \textbf{13} (2008), no.~1, 15--27.

\bibitem{suzuki09}
\bysame, \emph{Some notes on {$\tau$}-distance versions of {E}keland's
  variational principle}, Bull. Kyushu Inst. Technol. Pure Appl. Math. (2009),
  no.~56, 19--28.

\bibitem{suzuki09b}
\bysame, \emph{Some notes on {$\tau$}-distance versions of {E}keland's
  variational principle}, Bull. Kyushu Inst. Technol. Pure Appl. Math. (2009),
  no.~56, 19--28.

\bibitem{suzuki10b}
\bysame, \emph{Characterizations of reflexivity and compactness via the strong
  {E}keland variational principle}, Nonlinear Anal. \textbf{72} (2010), no.~5,
  2204--2209.

\bibitem{suzuki10}
\bysame, \emph{Some notes on the class of contractions with respect to
  {$\tau$}-distance}, Bull. Kyushu Inst. Technol. Pure Appl. Math. (2010),
  no.~57, 9--18.

\bibitem{suzuki-taka96b}
T.~Suzuki and W.~Takahashi, \emph{Fixed point theorems and characterizations of
  metric completeness}, Topol. Methods Nonlinear Anal. \textbf{8} (1996),
  no.~2, 371--382 (1997).

\bibitem{taka91}
W.~Takahashi, \emph{Existence theorems generalizing fixed point theorems for
  multivalued mappings}, Fixed point theory and applications ({M}arseille,
  1989), Pitman Res. Notes Math. Ser., vol. 252, Longman Sci. Tech., Harlow,
  1991, pp.~397--406.

\bibitem{taka99}
\bysame, \emph{Existence theorems in metric spaces and characterizations of
  metric completeness}, N{LA}98: {C}onvex analysis and chaos ({S}akado, 1998),
  Josai Math. Monogr., vol.~1, Josai Univ., Sakado, 1999, pp.~67--85.

\bibitem{Taka}
\bysame, \emph{Nonlinear functional analysis. fixed point theory and its
  applications}, Yokohama Publishers, Yokohama, 2000.

\bibitem{taka13}
W.~Takahashi, N.-C. Wong, and J.-C. Yao, \emph{Fixed point theorems for general
  contractive mappings with {$W$}-distances in metric spaces}, J. Nonlinear
  Convex Anal. \textbf{14} (2013), no.~3, 637--648.

\bibitem{taskovic92}
M.~R. Taskovi{\'c}, \emph{The axiom of choice, fixed point theorems, and
  inductive ordered sets}, Proc. Amer. Math. Soc. \textbf{116} (1992), no.~4,
  897--904.

\bibitem{taskovic04}
\bysame, \emph{Axiom of choice---100th next}, Math. Morav. \textbf{8} (2004),
  no.~1, 39--62.

\bibitem{taskovic12}
\bysame, \emph{The axiom of infinite choice}, Math. Morav. \textbf{16} (2012),
  no.~1, 1--32.

\bibitem{tataru92}
D.~Tataru, \emph{Viscosity solutions of {H}amilton-{J}acobi equations with
  unbounded nonlinear terms}, J. Math. Anal. Appl. \textbf{163} (1992), no.~2,
  345--392.

  \bibitem{turin05}
M.~{Turinici}, \emph{{Strong variational principles and generic
  well-posedness}}, {Demonstr. Math.} \textbf{38} (2005), no.~4, 935--941.

\bibitem{valero05}
O.~Valero, \emph{On {B}anach fixed point theorems for partial metric spaces},
  Appl. Gen. Topol. \textbf{6} (2005), no.~2, 229--240.

\bibitem{valero08}
\bysame, \emph{On {B}anach's fixed point theorem and formal balls}, Appl. Sci.
  \textbf{10} (2008), 256--258.

\bibitem{wang99}
G.~Wang, B.~L. Chen, and L.~S. Wang, \emph{A new converse to {B}anach's
  contraction mapping theorem: a nonlinear convergence principle}, Gongcheng
  Shuxue Xuebao \textbf{16} (1999), no.~1, 135--138 (Chinese).

\bibitem{ward57}
L.~E. Ward, Jr., \emph{Completeness in semi-lattices}, Canad. J. Math.
  \textbf{9} (1957), 578--582.

\bibitem{weston77}
J.~D. Weston, \emph{A characterization of metric completeness}, Proc. Amer.
  Math. Soc. \textbf{64} (1977), no.~1, 186--188.

\bibitem{witt50}
E.~{Witt}, \emph{{On {Z}orn's theorem}}, {Rev. Mat. Hisp.-Amer., IV.
  Ser.} \textbf{10} (1950), 82--85 (Spanish).

\bibitem{wolk57}
E.~S. Wolk, \emph{Dedekind completeness and a fixed-point theorem}, Canad. J.
  Math. \textbf{9} (1957), 400--405.

\bibitem{wong66}
J.~S.~W. Wong, \emph{Generalizations of the converse of the contraction mapping
  principle}, Canad. J. Math. \textbf{18} (1966), 1095--1104.

\bibitem{xiang07}
S.-W. Xiang, \emph{Equivalence of completeness and contraction property}, Proc.
  Amer. Math. Soc. \textbf{135} (2007), no.~4, 1051--1058.

\bibitem{Zeidler1}
E.~Zeidler, \emph{Nonlinear functional analysis and its applications. {I}.
  {F}ixed-point theorems}, Springer-Verlag, New York, 1986, Translated from the
  German by Peter R. Wadsack.

\end{thebibliography}
\end{document}